\documentclass[11pt]{article}
\usepackage{amssymb,amsmath,amsthm,amsfonts}
\usepackage{mathrsfs}
\usepackage{enumerate}

\usepackage[
bookmarks=true,         
bookmarksnumbered=true, 
colorlinks=true, pdfstartview=FitV, linkcolor=blue, citecolor=blue,
urlcolor=blue]{hyperref}

\usepackage{color}

\DeclareMathOperator*{\esssup}{ess\,sup}
\DeclareMathOperator*{\essinf}{ess\,inf}
\DeclareMathOperator*{\esslimsup}{ess\,lim\,sup}
\DeclareMathOperator*{\essliminf}{ess\,lim\,inf}
\newcommand{\dd}{d}

\newtheorem{lemma}{Lemma}[section]
\newtheorem{remark}{Remark}[section]
\newtheorem{example}{Example}[section]
\newtheorem{theorem}{Theorem}[section]

\newtheorem{definition}{Definition}[section]
\newtheorem{proposition}{Proposition}[section]

\newtheorem{assumption}{Assumption}[section]

\makeatletter
   
   \@addtoreset{equation}{section}
\makeatother

\begin{document}
\title{Stochastic differential games with random coefficients and state constraints: associated stochastic Hamilton--Jacobi--Bellman--Isaacs equations with Neumann boundary conditions}

\author{Rainer Buckdahn$^{1,2}$,\,\, Juan Li$^{3,2}$\thanks{Corresponding authors.},\,\, Yanwei Li$^4$\footnotemark[1]\\
	{\small $^1$Laboratoire de Math\'{e}matiques de Bretagne Atlantique, Univ Brest, }\\
	{\small UMR CNRS 6205, 6 avenue Le Gorgeu, 29200 Brest, France.}\\
	{\small $^2$Research Center for Mathematics and Interdisciplinary Sciences,}\\
	{\small Shandong University, Qingdao 266237, P.~R.~China.}\\
	{\small $^3$School of Mathematics and Statistics, Shandong University, Weihai 264209, P.~R.~China.}\\
	{\small $^4$School of Mathematical Sciences, Peking University, Beijing 100871, P.~R.~China.}\\
	{\small \textit{E-mails: rainer.buckdahn@univ-brest.fr; juanli@sdu.edu.cn; liyanwei@math.pku.edu.cn}}\\
}
\date{\today}
\maketitle
\vspace{-2em}

\textbf{Abstract}.
We study a stochastic differential game in which the state process is governed by a reflected stochastic differential equation (RSDE) and the recursive payoff is defined by a generalized backward stochastic differential equation (GBSDE). Unlike the Markovian models in \cite{Li08sicon,Fan}, all coefficients may depend directly on $\omega$, so the value functions are random fields. We show that the lower and upper value functions are viscosity solutions of stochastic Hamilton--Jacobi--Bellman--Isaacs (HJBI) equations with nonlinear Neumann boundary conditions, thereby extending the framework of \cite{Qiu22AAP} to reflected state constraints. Under weak boundary regularity, we derive estimates for RSDEs and GBSDEs, prove regularity of the value functions, and establish the dynamic programming principle (DPP) as well as the existence of viscosity solutions. Finally, under additional assumptions in a one-dimensional linear setting, we prove a comparison theorem and the uniqueness of bounded $(L_y,0)$-viscosity solutions, where $L_y$ is the constant coefficient of $y$ in the running generator. To prove the comparison theorem, we homogenize the boundary condition and construct smooth adapted comparison fields by solving deterministic uniformly parabolic Neumann problems in finite dimensions.

\textbf{Keywords}. Stochastic differential game, RSDE, GBSDE, stochastic HJBI equation, viscosity solution, Neumann boundary condition, uniqueness.

\section{Introduction}

In this paper, we study stochastic differential games with state constraints in a non-Markovian framework and derive the associated stochastic HJBI equations with nonlinear Neumann boundary conditions. Our main probabilistic tools are RSDEs and GBSDEs. RSDEs were introduced by Lions and Sznitman in \cite{Lions1984} and are closely related to the Skorokhod problem. They keep the state process in a prescribed domain $D$. Whenever the process reaches the boundary, a reflection term pushes it back into the domain. Further developments include \cite{Aida,Saisho}. The boundary local time makes the estimates delicate, and their derivation often requires strong smoothness assumptions on the boundary. One aim of this paper is to weaken these assumptions. GBSDEs were introduced by Pardoux and Zhang in \cite{Pardoux and Zhang} to give a probabilistic representation of partial differential equations (PDEs) with nonlinear Neumann boundary conditions. Unlike classical backward stochastic differential equations (BSDEs) of \cite{Pardoux and Peng}, a GBSDE contains an additional integral with respect to an increasing process, which gives rise to the nonlinear boundary term. Pardoux and Zhang proved the existence and the uniqueness of solutions to GBSDEs, established a comparison theorem, and obtained the corresponding nonlinear Feynman--Kac formula. Related work includes reflected GBSDEs and obstacle problems \cite{Ren06}, mean-field equations \cite{Feng}, and stochastic PDEs with Neumann boundary conditions \cite{Ren19}.

Buckdahn and Li \cite{Li08sicon} studied zero-sum recursive stochastic differential games with deterministic coefficients and proved that the value function is the unique viscosity solution of the associated Isaacs equation with a Cauchy boundary condition. They established the DPP, used invariance on the Cameron--Martin space to show that the value function is deterministic, and proved the existence and the uniqueness of the viscosity solution for the associated HJBI PDEs by BSDE methods. Subsequent works in the Markovian framework include, e.g., \cite{Fan}. For random coefficients, the study of stochastic optimal control goes back to Peng \cite{Peng1992}, who introduced a completely nonlinear stochastic Hamilton--Jacobi--Bellman (HJB) equation and studied classical solutions under strong regularity assumptions. Qiu \cite{Qiu18sicon} developed a viscosity solution theory for stochastic HJB equations, based on the random field decomposition introduced in \cite{33}, and considered nonrecursive control problems. The DPP leads to a backward stochastic partial differential equation (BSPDE). Applying the random field decomposition to this BSPDE yields a pathwise differential form suitable for viscosity methods. Later the work \cite{Qiu22AAP} treated recursive stochastic differential games with random coefficients and established the existence and the uniqueness of viscosity solutions for the associated stochastic HJBI equation, thereby extending \cite{Li08sicon} beyond the Markovian setting. For the classical foundations of viscosity solution theory, see \cite{Lions1992}; further viscosity theories for stochastic HJB equations and stochastic PDEs can be found in \cite{Pham and Zhang,Qiu19,Tang}.

In many economic and financial applications, the state process shall remain in a prescribed region, so a boundary condition is essential; see Achdou et al. \cite{ach}. Such constrained stochastic control and game problems have been studied extensively. In the Markovian setting, Li and Tang \cite{Li and Tang} studied recursive stochastic control, and Xiao et al. \cite{Fan} studied stochastic differential games. In the non-Markovian setting, Bayraktar and Qiu \cite{Qiu19AAP} considered weak and strong solutions of the stochastic HJB equation with a Neumann boundary condition in one dimension. The present work extends the non-Markovian analysis to stochastic differential games with reflected state constraints. Although a classical Neumann condition involves only first derivatives, earlier probabilistic studies often assumed a $C^2$ boundary or convexity to obtain regularity of the reflected process. We get estimates for the reflected state process on certain nonconvex and nonsmooth domains, while a stronger continuity estimate for the reflection process requires additional first-order boundary regularity.

Thus, the Neumann condition is not imposed independently of the game. It is generated jointly by the reflection term in the state equation and the boundary integral in the GBSDE. The same boundary geometry therefore enters the estimates for the reflected process, the DPP, and the viscosity inequalities. This coupling is one of the main differences from the unconstrained case and requires probabilistic and PDE arguments to be developed under a common set of geometric assumptions.

Let us now be more precise, we study a stochastic differential game whose state process is governed by the following controlled RSDE:
\begin{equation}\label{eq RSDE}
\left\{\begin{aligned}
\dd X_s^{t,x;\theta,\gamma}
={}&b\left(s,X_s^{t,x;\theta,\gamma},\theta_s,\gamma_s\right)\dd s+\sigma\left(s,X_s^{t,x;\theta,\gamma},\theta_s,\gamma_s\right)\dd W_s
+\vec n\left(X_s^{t,x;\theta,\gamma}\right)\dd K_s^{t,x;\theta,\gamma}, s\in[t,T];\\
\dd K_s^{t,x;\theta,\gamma}
={}&\mathbf{1}_{\partial D}\left(X_s^{t,x;\theta,\gamma}\right)\dd K_s^{t,x;\theta,\gamma},
\quad \text{$K^{t,x;\theta,\gamma}$ is nondecreasing},\quad t\leq s\leq T;\\
X_t^{t,x;\theta,\gamma}={}&x\in\overline D.
\end{aligned}\right.
\end{equation}
Here $\vec n$ denotes an inward normal, as specified below. Let $\Theta_0$ and $\Gamma_0$ be two nonempty compact metric spaces. Denote by $\Theta$ and $\Gamma$ the sets of all $\Theta_0$-valued and $\Gamma_0$-valued processes, respectively, adapted to the filtration generated by the driving Brownian motion. The controls are $\theta \in \Theta$ and $\gamma \in \Gamma$, and the superscripts in $(X^{t,x;\theta,\gamma},K^{t,x;\theta,\gamma})$ indicate the initial condition and the controls. The payoff to player I (and cost for player II) is defined by
\begin{equation*}
J(t,x;\theta,\gamma):=Y_t^{t,x;\theta,\gamma},
\end{equation*}
where $\left(Y^{t,x;\theta,\gamma}, Z^{t,x;\theta,\gamma}\right)$ is the solution of the GBSDE
\begin{equation}\label{1.3}
\left\{\begin{aligned}
-\dd Y_s^{t,x;\theta,\gamma}
={}&f\left(s,X_s^{t,x;\theta,\gamma},Y_s^{t,x;\theta,\gamma},Z_s^{t,x;\theta,\gamma},\theta_s,\gamma_s\right)\dd s\\
&\quad+g\left(s,X_s^{t,x;\theta,\gamma},Y_s^{t,x;\theta,\gamma},\theta_s,\gamma_s\right)\dd K_s^{t,x;\theta,\gamma}
-Z_s^{t,x;\theta,\gamma}\dd W_s,\quad s\in[t,T];\\
Y_T^{t,x;\theta,\gamma}
={}&\Phi\left(X_T^{t,x;\theta,\gamma}\right).
\end{aligned}\right.
\end{equation}
As in \cite{Li08sicon,Qiu22AAP}, we consider nonanticipative strategies (see Definition \ref{strategy}). We denote by $\mathcal{A}$ (resp. $\mathcal{M}$) the set of all nonanticipative strategies for player I (resp. player II), and we define the lower value function of our stochastic differential game by
\begin{equation}\label{lower function}
V(t, x)=\underset{\mu \in \mathcal{M}}{\essinf}\ \underset{\theta \in \Theta}{\esssup}\ J(t,x;\theta,\mu(\theta)), \quad t \in[0, T],
\end{equation}
and the upper value function is given by
\begin{equation}\label{upper function}
U(t, x)=\underset{\alpha \in \mathcal{A}}{\esssup}\ \underset{\gamma \in \Gamma}{\essinf}\ J(t,x;\alpha(\gamma),\gamma), \quad t \in[0, T].
\end{equation}

All coefficients in the problem are random and may depend directly on $\omega \in \Omega$. The DPP (see \cite{Peng1992}) shows that the value functions $V$ and $U$ are random fields and satisfy the stochastic HJBI equations
\begin{equation}\label{HJBI1}
\left\{\begin{array}{l}
-\dd V(t, x)=\mathbb{H}_{-}\left(t, x, D^2 V, D \psi, D V, V, \psi\right) \dd t-\psi(t, x) \dd W_t,\quad(t, x) \in Q:=[0, T) \times D; \\
\mathbb{G}_-\left(t, x,\partial_tV,\frac{\partial V}{\partial \vec n}, D^2 V, D \psi, D V, V, \psi\right) = 0,\quad x \in \partial D;\\
\mathbb{G}_+\left(t, x,\partial_tV,\frac{\partial V}{\partial \vec n}, D^2 V, D \psi, D V, V, \psi\right) = 0,\quad x \in \partial D;\\
	V(T, x)=\Phi(x), \quad x \in \overline{D},
\end{array}\right.
\end{equation}
and
\begin{equation}\label{HJBI2}
\left\{\begin{array}{l}
-\dd U(t, x)=\mathbb{H}_{+}\left(t, x, D^2 U, D \zeta, D U, U, \zeta\right) \dd t-\zeta(t, x) \dd W_t, \quad(t, x) \in Q; \\
	\mathbb{K}_-\left(t, x,\partial_tU,\frac{\partial U}{\partial \vec n}, D^2 U, D \zeta, D U, U, \zeta\right) = 0,\quad x \in \partial D;\\
	\mathbb{K}_+\left(t, x,\partial_tU,\frac{\partial U}{\partial \vec n}, D^2 U, D \zeta, D U, U, \zeta\right) = 0,\quad x \in \partial D;\\
	U(T, x)=\Phi(x), \quad x \in \overline{D},
\end{array}\right.
\end{equation}
respectively, with
\begin{align}
H(t, x, A, B, p, y, z,\theta,\gamma)&=\operatorname{tr}\left(\frac{1}{2} \sigma \sigma^{*}(t, x, \theta, \gamma) A+\sigma(t, x, \theta, \gamma) B^{*}\right)+b^{*}(t, x, \theta, \gamma) p\notag\\
&+f\left(t, x, y, z+p^{*} \sigma(t, x, \theta, \gamma), \theta, \gamma\right),\notag\\
\mathbb{H}_{-}(t, x, A, B, p, y, z)&= \underset{\theta \in \Theta_0}{\esssup}\ \underset{\gamma \in \Gamma_0}{\essinf}\ H(t, x, A, B, p, y, z,\theta,\gamma),\notag\\
\mathbb{H}_{+}(t, x, A, B, p, y, z)&= \underset{\gamma \in \Gamma_0}{\essinf}\ \underset{\theta \in \Theta_0}{\esssup}\ H(t, x, A, B, p, y, z,\theta,\gamma),\notag\displaybreak[3]\\
G_-(t,x,\alpha,\beta,A, B, p, y, z,\theta,\gamma)&=\left(\alpha+H(t, x, A, B, p, y, z,\theta,\gamma)\right)\vee\left(\beta+g(t,x,y,\theta,\gamma)\right),\notag\\
G_+(t,x,\alpha,\beta,A, B, p, y, z,\theta,\gamma)&=\left(\alpha+H(t, x, A, B, p, y, z,\theta,\gamma)\right)\wedge\left(\beta+g(t,x,y,\theta,\gamma)\right),\notag\\
\mathbb{G}_{-}(t,x,\alpha,\beta,A, B, p, y, z)&= \underset{\theta \in \Theta_0}{\esssup}\ \underset{\gamma \in \Gamma_0}{\essinf}\ G_-(t,x,\alpha,\beta,A, B, p, y, z,\theta,\gamma),\notag\\
\mathbb{G}_{+}(t,x,\alpha,\beta,A, B, p, y, z)&= \underset{\theta \in \Theta_0}{\esssup}\ \underset{\gamma \in \Gamma_0}{\essinf}\ G_+(t,x,\alpha,\beta,A, B, p, y, z,\theta,\gamma),\notag\\
\mathbb{K}_{-}(t,x,\alpha,\beta,A, B, p, y, z)&=\underset{\gamma \in \Gamma_0}{\essinf}\ \underset{\theta \in \Theta_0}{\esssup}\ G_-(t,x,\alpha,\beta,A, B, p, y, z,\theta,\gamma),\notag\\
\mathbb{K}_{+}(t,x,\alpha,\beta,A, B, p, y, z)&=\underset{\gamma \in \Gamma_0}{\essinf}\ \underset{\theta \in \Theta_0}{\esssup}\ G_+(t,x,\alpha,\beta,A, B, p, y, z,\theta,\gamma),\notag\\
\frac{\partial}{\partial \vec n}&:=\left\langle\vec n,\frac{\partial}{\partial x}\right\rangle.
\label{h-}
\end{align}
Here $(t,x,A,B,p,y,z)\in[0,T]\times\overline D\times\mathbb R^{d\times d}
\times\mathbb R^{d\times m}\times\mathbb R^d\times\mathbb R\times\mathbb R^m$,
and the pairs of random fields $(V,\psi)$ and $(U,\zeta)$ are unknown.

Following \cite{Qiu22AAP}, we use the following decomposition of $V(t,x)$:
\begin{equation*}
V(t, x)=V(T, x)-\int_t^T \mathfrak{d}_s V(s, x) \dd s-\int_t^T \mathfrak{d}_\omega V(s, x) \dd W_s, \quad(t, x) \in[0, T] \times D.
\end{equation*}
This decomposition defines the linear operators $\mathfrak d_t$ and $\mathfrak d_\omega$, whose uniqueness follows from the Doob--Meyer decomposition theorem. These operators coincide with those discussed in \cite{33}. The stochastic HJBI equations (\ref{HJBI1}) and (\ref{HJBI2}) may then be written equivalently as
\begin{equation}\label{HJBI3}
\left\{\begin{array}{l}
-\mathfrak{d}_t V(t, x)-\mathbb{H}_{-}\left(t, x, D^2 V(t, x), D \mathfrak{d}_\omega V(t, x), D V(t, x), V(t, x), \mathfrak{d}_\omega V(t, x)\right)=0,\quad(t, x) \in Q; \\
\mathbb{G}_-\left(t, x,\mathfrak{d}_t V(t, x),\frac{\partial V}{\partial \vec n}(t, x), D^2 V(t, x), D \mathfrak{d}_\omega V(t, x), D V(t, x), V(t, x), \mathfrak{d}_\omega V(t, x)\right) = 0,\quad x \in \partial D;\\
\mathbb{G}_+\left(t, x,\mathfrak{d}_t V(t, x),\frac{\partial V}{\partial \vec n}(t, x), D^2 V(t, x), D \mathfrak{d}_\omega V(t, x), D V(t, x), V(t, x), \mathfrak{d}_\omega V(t, x)\right) = 0,\quad x \in \partial D;\\
	V(T, x)=\Phi(x), \quad x \in \overline{D},
\end{array}\right.
\end{equation}
and
\begin{equation}\label{HJBI4}
\left\{\begin{array}{l}
-\mathfrak{d}_t U(t, x)-\mathbb{H}_{+}\left(t, x, D^2 U(t, x), D \mathfrak{d}_\omega U(t, x), D U(t, x), U(t, x), \mathfrak{d}_\omega U(t, x)\right)=0, \quad(t, x) \in Q; \\
\mathbb{K}_-\left(t, x,\mathfrak{d}_t U(t, x),\frac{\partial U}{\partial \vec n}(t, x), D^2 U(t, x), D \mathfrak{d}_\omega U(t, x), D U(t, x), U(t, x), \mathfrak{d}_\omega U(t, x)\right) = 0,\quad x \in \partial D;\\
\mathbb{K}_+\left(t, x,\mathfrak{d}_t U(t, x),\frac{\partial U}{\partial \vec n}(t, x), D^2 U(t, x), D \mathfrak{d}_\omega U(t, x), D U(t, x), U(t, x), \mathfrak{d}_\omega U(t, x)\right) = 0,\quad x \in \partial D;\\
	U(T, x)=\Phi(x), \quad x \in \overline{D},
\end{array}\right.
\end{equation}
respectively. Solving (\ref{HJBI1}) and (\ref{HJBI2}) for pairs $(V, \psi)$ and $(U, \zeta)$ is equivalent to finding $V$ and $U$ satisfying (\ref{HJBI3}) and (\ref{HJBI4}), respectively. While viscosity solutions in \cite{Qiu22AAP} are defined under Cauchy boundary conditions, we treat nonlinear Neumann boundary conditions by extending the construction of the nonlinear operators introduced in \cite{Qiu22AAP}.

In addition to the existence result, we establish a comparison theorem in a one-dimensional linear setting. The state equation is driven by a common noise with respect to which the coefficients are adapted and by an independent uniformly nondegenerate noise. After homogenizing the Neumann boundary condition, we first prove a weak comparison result between viscosity semisolutions and smooth adapted random fields. We then construct smooth upper and lower comparison fields by solving deterministic uniformly parabolic Neumann problems in finite dimensions. This gives the comparison theorem and the uniqueness of bounded $(L_y,0)$-viscosity solutions, where $L_y$ is the constant coefficient of $y$ in the running generator. These additional assumptions are used only in Section~\ref{sec:uniqueness}; the general existence theorem remains valid under Assumption \ref{HSDG}.

The paper is organized as follows. Section 2 introduces the function spaces and preliminary results. Section 3 studies RSDEs and the associated GBSDEs on domains with weak boundary regularity. Section 4 proves regularity of the value functions, establishes the DPP, and defines viscosity solutions of stochastic HJBI equations with nonlinear Neumann boundary conditions. Section 5 proves the existence result by combining BSDE techniques with the DPP. Section 6 proves the comparison theorem and the uniqueness of bounded $(L_y,0)$-viscosity solutions in a one-dimensional linear setting. Finally, Appendix \ref{sec:domain-discussion} discusses the domain assumptions, and Appendix \ref{app:DPP} proves the DPP.
\section{Preliminaries}

Throughout this paper, $T>0$ is a fixed finite time horizon, $(\Omega,\mathcal{F},P)$ is a complete probability space, and $W=\{W_t:t \in [0,T]\}$ is an $m$-dimensional standard Brownian motion on it. The filtration $\left\{{\mathcal{F}}_t\right\}_{t\in[0,T]}$ is the natural one generated by $W$ and augmented by the $P$-null sets in $\mathcal{F}$. The Euclidean norm of a vector $x\in\mathbb{R}^n$ and a matrix $z\in\mathbb{R}^{n\times m}$ is denoted by $|x|$ and $|z|=\sqrt{\operatorname{tr}(zz^*)}$, respectively, where $z^*$ is the transpose of $z$. We write $\left\langle x,y\right\rangle$ for the inner product of $x,y\in\mathbb{R}^n$ and $\langle A,B\rangle:=\operatorname{tr}(AB^*)$ for the Frobenius inner product of matrices of the same size.

The following spaces will be used throughout the paper:\\
- $\mathcal{T}_{\tau_1, \tau_2}$ represents the set of all $\left\{\mathcal{F}_t\right\}_{t\in[0,T]}$-stopping times with values in the stochastic interval $\left[\tau_1, \tau_2\right]$. We write $\mathcal{T}^{0}:=\mathcal{T}_{0,T}$ and, instead of $\mathcal{T}_{\tau,T}$, we write $\mathcal{T}^{\tau}$. We also let $\mathcal{T}_{+}^{\tau}$ denote the set of all $\left\{\mathcal{F}_t\right\}_{t\in[0,T]}$-stopping times with values in the stochastic interval $\left(\tau, T\right]$;\\
- $L^p\left(\mathcal{F}_\tau;\mathbb{R}^n\right)$ (where $\tau \in \mathcal{T}_{0, T}$) is the set of $\mathcal{F}_{\tau}$-measurable, $\mathbb{R}^n$-valued random variables $\xi$ such that $E\left[|\xi|^p\right]<\infty$;\\
- $\mathcal{S}^p\left(0,T;\mathbb{R}^n\right)$ (also denoted by $\mathcal{S}^p\left(\mathbb{R}^n\right)$ or $\mathcal{S}^p$) denotes the set of $\mathbb{R}^n$-valued, $\left\{\mathcal{F}_t\right\}_{t\in[0,T]}$-adapted continuous processes $\left(y_t\right)_{t\in[0,T]}$ such that $E\left[\sup\limits_{t\in[0,T]}\left|y_t\right|^p\right]<\infty$;\\
- Given a Banach space $\mathbb B$, $\mathcal{S}^{\infty}\left(0,T;\mathbb B\right)$ denotes the space of
$\mathbb B$-valued, adapted continuous processes $Y$ such that
$\|Y\|_{\mathcal S^\infty}:=
\|\sup\limits_{0\leq t\leq T}\|Y_t\|_{\mathbb B}\|_{L^\infty}<\infty$;\\
- For $p,q\geq1$, $L^{p,q}\left(0, T;\mathbb{R}^n\right)$ denotes the set of $\mathbb{R}^n$-valued and $\left\{\mathcal{F}_t\right\}_{t\in[0,T]}$-progressively measurable processes $\left(z_t\right)_{t \in[0, T]}$ such that $E\left[\left(\int_0^T\left|z_s\right|^q\dd s\right)^\frac{p}{q}\right]<\infty$; when $q=p$, we write $L^p\left(0, T;\mathbb{R}^n\right)$;\\
- The local spaces are defined by requiring the corresponding integrability conditions on every deterministic compact time interval \([0,T_0]\), with \(0<T_0<T\):
\begin{equation*}
\begin{aligned}
L^p_{\mathrm{loc}}(0,T;\mathbb R^n)
&:=\bigcap_{0<T_0<T}L^p(0,T_0;\mathbb R^n),\\
L^{p,q}_{\mathrm{loc}}(0,T;\mathbb R^n)
&:=\bigcap_{0<T_0<T}L^{p,q}(0,T_0;\mathbb R^n),\qquad
\mathcal S^p_{\mathrm{loc}}(0,T;\mathbb R^n)
:=\bigcap_{0<T_0<T}\mathcal S^p(0,T_0;\mathbb R^n);
\end{aligned}
\end{equation*}
- For $0\leq t\leq T$, $\mathcal{A}(t,T;\mathbb{R})$ represents the set of all real-valued, continuous, increasing, $\{\mathcal{F}_s\}_{s\in[t,T]}$-progressively measurable processes $(A_s)_{s\in[t,T]}$ such that $A_t=0$ and $E[e^{\lambda A_T}]<\infty$ for every $\lambda>0$;\\
- $C^k([0,T];\mathbb{R}^n)$ is the set of $k$-times continuously differentiable functions from $[0,T]$ to $\mathbb{R}^n$;\\
- $C^k_b([0,T];\mathbb{R}^n)$ is the set of functions in $C^k([0,T];\mathbb{R}^n)$ with bounded derivatives up to order $k$;\\
The analogous definitions apply when $\mathbb{R}^n$ (or $\mathbb{R}$) is replaced by a Banach space $\mathbb{B}$.

We next introduce the space of test functions used in \cite{Qiu22AAP}.
\begin{definition}\label{testfunction}
For $u \in \mathcal{S}^4\left(0, T;C_b^1\left(\mathbb{R}^d\right)\right)$ with $D u, D^2 u \in \mathcal{S}_{\mathrm{loc}}^4\left(0, T;C_b^1\left(\mathbb{R}^d\right)\right)$, we say $u \in \mathcal{C}_{\mathcal{F}}^3([0,T]\times\mathbb{R}^d)$ if there exists $\left(\mathfrak{d}_t u, \mathfrak{d}_\omega u\right) \in L_{\mathrm{loc}}^4\left(0, T;C_b^1\left(\mathbb{R}^d\right)\right) \times L_{\mathrm{loc}}^{4,2}\left(0, T;C_b^2\left(\mathbb{R}^d\right)\right)$ with $D \mathfrak{d}_t u$, $D \mathfrak{d}_\omega u \in \mathcal{S}_{\mathrm{loc}}^4(0, T;C_b^1\left(\mathbb{R}^d\right))$ such that $P$-a.s.,
\begin{equation*}
\begin{aligned}
u(r, x)={}&u\left(T_0, x\right)-\int_r^{T_0} \mathfrak{d}_s u(s, x) \dd s \\
&-\int_r^{T_0} \mathfrak{d}_\omega u(s, x) \dd W_s,\quad 0 \leq r \leq T_0<T,\quad x \in \mathbb{R}^d.
\end{aligned}
\end{equation*}
\end{definition}
\begingroup\emergencystretch=1em
For our random coefficients, we need to use the following It\^o--Wentzell formula; see
\cite[pp.~118--119]{Kunita} or \cite[Lemma~4.1]{Qiu18sicon}.\par
\endgroup
\begin{lemma}
Let $\phi\in\mathcal{C}_{\mathcal F}^3([0,T]\times\mathbb R^d)$ and fix $0<T_0<T$. Then
\begin{equation*}
\begin{aligned}
\phi(t,x)={}&\phi(T_0,x)-\int_t^{T_0}\mathfrak d_s\phi(s,x)\dd s-\int_t^{T_0}\mathfrak d_\omega\phi(s,x)\dd W_s,\quad t\in[0,T_0].
\end{aligned}
\end{equation*}
Put
\begin{equation*}
L\phi(t,x)=\mathfrak d_t\phi(t,x)+\operatorname{tr}\left\{\frac{1}{2}\sigma_t\sigma_t^*D^2\phi(t,x)+\sigma_tD\mathfrak d_\omega\phi(t,x)\right\}+D\phi(t,x)b_t,\quad t\in[0,T_0].
\end{equation*}
Then, for the It\^o process
\begin{equation*}
\left\{\begin{aligned}
\dd X_t={}&b_t\dd t+\sigma_t\dd W_t+\vec n_t\dd K_t,\\
X_0={}&x,
\end{aligned}\right.
\end{equation*}	
with progressively measurable processes $(b_t,\sigma_t,\vec n_t,K_t)_{t\in[0,T]}$ satisfying $(b_t)_{t\in[0,T]}\in L^2([0,T];\mathbb R^d)$, $(\sigma_t)_{t\in[0,T]}\in L_{\mathrm{loc}}^2([0,T];\mathbb R^{d\times m})$, $(K_t)_{t\in[0,T]}\in\mathcal A(0,T;\mathbb R)$, and $\vec n_t\in L^\infty([0,T];\mathbb R^d)$, we have
\begin{equation*}
\begin{aligned}
\phi(t,X_t)-\phi(0,x)
={}&\int_0^tL\phi(s,X_s)\dd s
+\int_0^t\left(\mathfrak d_\omega\phi(s,X_s)+D\phi(s,X_s)\sigma_s\right)\dd W_s\\
&+\int_0^tD\phi(s,X_s)\vec n_s\dd K_s,\quad t\in[0,T_0].
\end{aligned}
\end{equation*}
\end{lemma}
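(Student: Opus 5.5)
The natural route is to reduce the statement to the classical It\^o--Wentzell formula of \cite[pp.~118--119]{Kunita} (equivalently \cite[Lemma~4.1]{Qiu18sicon}) for continuous semimartingales, with the reflection term treated as an additional finite-variation part. The process $X$ is a continuous semimartingale. Its martingale part is $\int_0^\cdot\sigma_s\,\dd W_s$, and its finite-variation part is $\int_0^\cdot b_s\,\dd s+\int_0^\cdot\vec n_s\,\dd K_s$; the latter has finite variation because $\vec n$ is bounded and $K$ is continuous and increasing. The random field $\phi$ is, for each fixed $x$, a continuous semimartingale on $[0,T_0]$, with drift $\mathfrak d_s\phi(s,x)$ and martingale density $\mathfrak d_\omega\phi(s,x)$. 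The regularity built into $\mathcal C^3_{\mathcal F}$ supplies what Kunita's theorem needs, namely enough spatial smoothness of the local characteristics:
\begin{itemize}
\item $D\phi$ and $D^2\phi$ lie in $\mathcal S^4_{\mathrm{loc}}(C^1_b)$;
\item $\mathfrak d_t\phi$ lies in $L^4_{\mathrm{loc}}(C^1_b)$;
\item $\mathfrak d_\omega\phi$ lies in $L^{4,2}_{\mathrm{loc}}(C^2_b)$, with $D\mathfrak d_\omega\phi\in\mathcal S^4_{\mathrm{loc}}(C^1_b)$.
\end{itemize}

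The key steps, in order, are as follows.

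First, apply the It\^o--Wentzell formula on $[0,T_0]$. This gives
\[
\dd\phi(t,X_t)=\mathfrak d_t\phi(t,X_t)\,\dd t+\mathfrak d_\omega\phi(t,X_t)\,\dd W_t+D\phi(t,X_t)\,\dd X_t+\tfrac12\operatorname{tr}\big(\sigma_t\sigma_t^*D^2\phi(t,X_t)\big)\dd t+\operatorname{tr}\big(\sigma_t D\mathfrak d_\omega\phi(t,X_t)\big)\dd t .
\]
The last term is the cross-variation $\dd\langle D\phi(\cdot,x),X\rangle$ evaluated at $x=X_t$; it comes from the martingale part $\int\mathfrak d_\omega\phi\,\dd W$ of the gradient field. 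No second-order term in $\dd K$ appears, because $K$ is continuous with finite variation and so has zero quadratic variation and zero covariation with $W$.

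Second, substitute $\dd X_t=b_t\,\dd t+\sigma_t\,\dd W_t+\vec n_t\,\dd K_t$ and collect terms:
\begin{itemize}
\item the $\dd t$ terms combine into $L\phi(t,X_t)$;
\item the $\dd W$ terms combine into $\mathfrak d_\omega\phi(t,X_t)+D\phi(t,X_t)\sigma_t$;
\item the remaining term is $D\phi(t,X_t)\vec n_t\,\dd K_t$.
\end{itemize}
Integrating from $0$ to $t$ then gives the claimed identity.

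The main obstacle is justifying that Kunita's formula applies under only local integrability near $T$, and that every integral is well defined. I would handle this by working on $[0,T_0]$ with $T_0<T$ fixed, where all the local norms are finite, and by localizing with stopping times $\tau_N$. One natural choice is to take $\tau_N$ as the first time $\int_0^t|\sigma_s|^2\dd s+K_t$ exceeds $N$. On $[0,\tau_N]$ the integrands are dominated as follows:
\begin{itemize}
\item by the bounded quantities $\sup_x|D\phi|$ and $\sup_x|D^2\phi|$ (in $\mathcal S^4$), times $|b|$, $|\sigma|^2$ and $\|\vec n\|_\infty$;
\item and, for the martingale term, by $\|\mathfrak d_\omega\phi\|_{C^2_b}$ in $L^{4,2}$.
\end{itemize}
Hence the stochastic integrals are genuine martingales and the Lebesgue--Stieltjes integral in $\dd K$ is finite, since $E[K_{T}]<\infty$ follows from $E[e^{\lambda K_T}]<\infty$. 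Letting $N\to\infty$ and using the continuity of both sides in $t$ yields the formula on all of $[0,T_0]$, $P$-a.s.

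If direct citation of Kunita is considered insufficient, a fallback is to mollify $\phi$ in $x$ and apply the finite-dimensional It\^o formula to $\phi^\varepsilon(t,\cdot)$ composed with $X$. One would then pass to the limit using the $C^1_b$ and $C^2_b$ bounds, which give uniform convergence of $\phi^\varepsilon$, $D\phi^\varepsilon$ and $D^2\phi^\varepsilon$ on compacts.
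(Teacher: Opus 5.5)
Your proposal is correct and takes essentially the paper's approach: the paper gives no separate argument and simply invokes the It\^o--Wentzell formula of \cite[pp.~118--119]{Kunita} (cf.\ \cite[Lemma~4.1]{Qiu18sicon}), and your key point — that $\int_0^\cdot\vec n_s\,\dd K_s$ is just an extra continuous finite-variation part of the semimartingale $X$, contributing only the first-order term $D\phi(s,X_s)\vec n_s\,\dd K_s$ — is exactly what makes that citation cover the reflected case. One caveat, on your fallback only: mollifying $\phi$ in $x$ does not by itself reduce matters to the classical It\^o formula, because $\phi^\varepsilon$ is still a random field whose time dependence is a semimartingale (and $\phi$ is already $C^3$ in $x$), so the standard self-contained route is instead to write $\phi^\varepsilon(t,X_t)=\int_{\mathbb R^d}\phi(t,y)\eta_\varepsilon(X_t-y)\,\dd y$, apply the It\^o product rule for each fixed $y$, use stochastic Fubini and integration by parts in $y$, and then let $\varepsilon\downarrow0$.
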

The following Kuratowski--Ryll-Nardzewski measurable selection theorem will be used in the proof of the existence result; see \cite{Qiu22AAP}.
\begin{theorem}\label{mst}
Let $(\Lambda,\mathcal M)$ be a measurable space equipped with a nonnegative measure $\mu$, and let $(\mathcal O,\mathcal B(\mathcal O))$ be a Polish space. Suppose that $F$ is a set-valued map from $\Lambda$ to $\mathcal B(\mathcal O)$ which is weakly measurable in the following sense: {\rm(i)} for $\mu$-a.e. $\lambda\in\Lambda$, $F(\lambda)$ is a nonempty closed subset of $\mathcal O$; {\rm(ii)} for every open set $O\subset\mathcal O$, $\{\lambda:F(\lambda)\cap O\neq\varnothing\}\in\mathcal M$. Then there exists a measurable map $f\colon(\Lambda,\mathcal M)\to(\mathcal O,\mathcal B(\mathcal O))$ such that $f(\lambda)\in F(\lambda)$ for $\mu$-a.e. $\lambda\in\Lambda$.
\end{theorem}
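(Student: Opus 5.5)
The statement immediately above is the Kuratowski--Ryll-Nardzewski measurable selection theorem, in a version that allows a $\mu$-null exceptional set. My plan is to reduce it to the classical version, in which $F(\lambda)$ is nonempty and closed for every $\lambda$, and then prove that version by the standard approximation argument in a Polish space.

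\emph{Reduction.} Let $N$ be the set where (i) fails. If $N\in\mathcal M$, I would take $\Lambda_0:=\Lambda\setminus N$. If $N$ is only $\mu$-null in the sense that it is contained in some $N'\in\mathcal M$ with $\mu(N')=0$, I would take $\Lambda_0:=\Lambda\setminus N'$. On $\Lambda_0$ I restrict $F$ and the $\sigma$-field $\mathcal M$; condition (ii) clearly survives this restriction. Once a measurable selector $f_0$ on $\Lambda_0$ is built, I set $f:=f_0$ on $\Lambda_0$ and $f:\equiv o_0$ on the complement, for a fixed point $o_0\in\mathcal O$. This $f$ is measurable and satisfies $f(\lambda)\in F(\lambda)$ for $\mu$-a.e.\ $\lambda$.

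\emph{Classical selection.} Fix a complete metric $\rho$ on $\mathcal O$ and a countable dense set $\{o_k\}_{k\ge1}$. I would construct measurable maps $f_n\colon\Lambda_0\to\{o_k\}$ inductively with two properties:
\[
\operatorname{dist}(f_n(\lambda),F(\lambda))<2^{-n},\qquad \rho(f_n(\lambda),f_{n-1}(\lambda))<2^{-n+1}.
\]
\begin{itemize}
\item[(a)] For $n=0$, let $f_0(\lambda)$ be $o_k$ with $k$ the least index such that $F(\lambda)\cap B(o_k,1)\neq\varnothing$. Such a $k$ exists because $F(\lambda)$ is nonempty and $\{o_k\}$ is dense. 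The set $\{f_0=o_k\}$ is a finite Boolean combination of sets of the form in (ii), so $f_0$ is measurable.
\item[(b)] Given $f_{n-1}$, on each set $\{f_{n-1}=o_j\}$ let $f_n(\lambda)$ be $o_k$ with $k$ the least index such that
\[
F(\lambda)\cap B(o_k,2^{-n})\cap B(o_j,2^{-n+1})\neq\varnothing.
\]
Such a $k$ exists because $\operatorname{dist}(o_j,F(\lambda))<2^{-n+1}$ gives a point of $F(\lambda)$ in the open ball $B(o_j,2^{-n+1})$, and density then gives a suitable $o_k$. The intersection of the two balls is open, so (ii) again gives measurability of each level set.
\end{itemize}

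\emph{Limit step.} The second property makes $(f_n(\lambda))$ uniformly Cauchy, so by completeness $f(\lambda):=\lim_n f_n(\lambda)$ exists. A pointwise limit of measurable maps into a metric space is measurable. The first property gives $\operatorname{dist}(f(\lambda),F(\lambda))=0$, and since $F(\lambda)$ is closed, $f(\lambda)\in F(\lambda)$.

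\emph{Main obstacle.} The only delicate point is the measurability of each level set $\{f_n=o_k\}$. I would express it as
\[
\{f_{n-1}=o_j\}\cap\{F\cap O_k\neq\varnothing\}\cap\bigcap_{i<k}\{F\cap O_i=\varnothing\},
\]
where $O_i:=B(o_i,2^{-n})\cap B(o_j,2^{-n+1})$, so that it is built from countably many sets of the form in (ii) and lies in $\mathcal M$. Completeness of $\rho$ and closedness of the values $F(\lambda)$ are exactly what make the limit a genuine selector.
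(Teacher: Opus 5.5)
The paper does not prove this statement: it quotes the Kuratowski--Ryll-Nardzewski theorem from the literature (via \cite{Qiu22AAP}) and only applies it in the proof of Theorem~\ref{existence}. Your self-contained argument is the standard proof and it is correct. Restricting to a measurable conull set $\Lambda_0$, which is what ``$\mu$-a.e.'' provides, reduces the statement to the everywhere version. The level sets of each $f_n$ are countable Boolean combinations of sets from (ii), and completeness of $\rho$ together with closedness of $F(\lambda)$ make the limit a genuine selector.

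There is one small arithmetic slip. In step (b), a point $z\in F(\lambda)\cap B(o_k,2^{-n})\cap B(o_j,2^{-n+1})$ only gives
$\rho(f_n(\lambda),f_{n-1}(\lambda))\leq\rho(o_k,z)+\rho(z,o_j)<3\cdot 2^{-n}$.
It does not give $<2^{-n+1}$. This is harmless, since $\sum_n 3\cdot 2^{-n}<\infty$ still makes $(f_n)$ uniformly Cauchy.

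If you want the bound exactly as stated, add $\rho(o_k,o_j)<2^{-n+1}$ as an extra, deterministic condition on the index $k$. Such a $k$ exists because the open set $B(y,2^{-n})\cap B(o_j,2^{-n+1})$ contains the point $y\in F(\lambda)$, and hence contains some $o_k$.

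Compared with the paper's bare citation, your route makes explicit where each hypothesis enters:
\begin{itemize}
\item separability supplies the countable dense set and the countably many level sets;
\item completeness gives existence of the limit;
\item closedness of $F(\lambda)$ gives that the limit belongs to $F(\lambda)$.
\end{itemize}
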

\section{Estimates for RSDEs and GBSDEs}
This section derives estimates for RSDEs and GBSDEs with random coefficients under weak boundary regularity. Let $D\subset\mathbb{R}^d$ be a bounded, connected, open set. For $x\in \partial D$ and $r>0$, put
$N_{x,r}:=\{\vec n\in\mathbb{R}^d:|\vec n|=1\ \text{and}\ B(x-r\vec n,r)\cap D=\varnothing\}$ and $N_x:=\bigcup_{r>0}N_{x,r}$, where $B(x,r)$ is the open ball with center $x$ and radius $r$. At a nonsmooth boundary point, $N_x$ does not need to be a singleton. Accordingly, the notation $\vec n(X_s)\dd K_s$ means that a progressively measurable selection $\vec n_s\in N_{X_s}$ is used $\dd K_s$-a.e.; when $N_x$ is a singleton, $\vec n(x)$ denotes its unique element.
\begin{assumption}\label{HD}
\textnormal{(i)} (Outer ball condition) There exists a constant $r_0>0$ such that $N_x=N_{x,r_0}\neq \varnothing$ for all $x\in\partial D$;\\
\textnormal{(ii)} There exist $\delta>0$ and $\beta\geq 1$ such that, for all $x\in \partial D$, there is a unit vector $l_x$ satisfying
\begin{equation*}
\left\langle l_x,\vec n \right\rangle\geq\frac{1}{\beta},
\qquad \text{for every }y\in B(x,\delta)\cap\partial D
\text{ and every }\vec n\in N_y;
\end{equation*}
\textnormal{(iii)} There exist a nonnegative function $f_D\in C^2_b(\mathbb{R}^d)$ and a constant $\gamma_D >0$ such that, with the same $r_0$ as in (i), for all $x\in\partial D$ and all $\vec n\in N_x$,
\begin{equation*}
\frac{\gamma_D}{r_0}\leq\left\langle D f_D(x),\vec n\right\rangle\leq \frac{2\gamma_D}{r_0}.
\end{equation*}
\end{assumption}
\begin{assumption}\label{HD'}
There exists a function $\phi_D\in C_{\mathrm{Lip}}^1(\mathbb{R}^d)$ such that $D=\{x:\phi_D(x)>0\}$, $\partial D=\{x:\phi_D(x)=0\}$ and $|D\phi_D(x)|=1$ on $\partial D$.
\end{assumption}

Appendix \ref{sec:domain-discussion} shows that Assumption \ref{HD'} implies Assumption \ref{HD}. It also gives two examples showing that Assumption \ref{HD} allows nonsmooth or nonconvex boundaries.

Consider the following RSDE:
\begin{equation}\label{RSDE}
\left\{\begin{array}{l}
\dd X_s^{t,\xi}=b\left(s, X_s^{t,\xi}\right) \dd s+\sigma\left(s, X_s^{t,\xi}\right) \dd W_s+\vec n(X_s^{t,\xi})\dd K_s^{t,\xi}, \quad t \leq s \leq T;\\
\dd K_s^{t,\xi}=\mathbf{1}_{\partial D}(X_s^{t,\xi})\dd K_s^{t,\xi}, \quad \text{$K^{t,\xi}$ is nondecreasing},\quad t \leq s \leq T;\\
X_t^{t,\xi}=\xi \in \overline{D},
\end{array}\right.
\end{equation}
Here $t\in\mathcal T^0$, $\xi\in L^{2p}(\mathcal{F}_t;\overline D)$, and $p\geq2$ are given. The coefficients $b\colon\Omega\times[0,T]\times\overline D\to\mathbb R^d$ and $\sigma\colon\Omega\times[0,T]\times\overline D\to\mathbb R^{d\times m}$ are measurable. For every $x\in\overline D$, the processes $b(\cdot,\cdot,x)$ and $\sigma(\cdot,\cdot,x)$ are progressively measurable. We impose the following assumption.
\begin{assumption}\label{HRSDE}
\begin{enumerate}[(i)]
\item The coefficients $b$ and $\sigma$ are uniformly bounded and, for every
$t\in[0,T]$, $b(t,\cdot)$ is continuous on $\overline D$, $P$-a.s.
\item There exists $K\geq0$ such that, for $\dd t\otimes\dd P$-almost every
$(t,\omega)$ and all $x_1,x_2\in\overline D$,
\begin{equation*}
\left\langle x_1-x_2,b(t,x_1)-b(t,x_2)\right\rangle\leq K|x_1-x_2|^2,
\end{equation*}
\begin{equation*}
|\sigma(t,x_1)-\sigma(t,x_2)|\leq K|x_1-x_2|.
\end{equation*}
\end{enumerate}
\end{assumption}
\begingroup\emergencystretch=3em
Under the Assumptions \ref{HD} and \ref{HRSDE}, RSDE (\ref{RSDE}) has a unique
solution
$(X^{t,\xi},K^{t,\xi})\in
\mathcal{S}^{2p}(t,T;\overline{D})\times\mathcal{A}(t,T;\mathbb R)$.
See \cite{Lions1984,Saisho} for the proof. The following estimates also hold.\par
\endgroup
\begin{proposition}\label{estimate RSDE}
Suppose the Assumptions \ref{HD} and \ref{HRSDE} hold. Then, for all $p\geq2$, $t,s\in\mathcal T^0$ with $t\leq s$, $\xi\in L^{2p}(\mathcal{F}_t;\overline{D})$, and $\xi'\in L^{2p}(\mathcal{F}_s;\overline{D})$, there exists $C_p\geq 0$, depending on $K,T,b,\sigma,D,p$, such that $P$-a.s.,
\begin{equation*}
E\left[\sup\limits_{r\in[t,T]} |X_r^{t,\xi}-X_r^{s,\xi'}|^{2p}\Big|\mathcal{F}_t\right]\leq C_p\left(E\left[|\xi-\xi'|^{2p}\big|\mathcal{F}_t\right]+E[|s-t|^p\mid\mathcal F_t]\right),
\end{equation*}
\begin{equation*}
E\left[|K_s^{t,\xi}|^{2p}\Big|\mathcal{F}_t\right]\leq C_pE[|s-t|^p\mid\mathcal F_t],
\end{equation*}
\begin{equation}\label{*3}
E\left[\sup\limits_{r\in[t,T]} |K_r^{t,\xi}|^{2p}\Big|\mathcal{F}_t\right]\leq C_p,
\end{equation}
where we put $X_r^{s,\xi'}=\xi'$ and $K_r^{s,\xi'}=0$, for $r\leq s$.
Moreover, if also Assumption \ref{HD'} holds, then $P$-a.s.,
\begin{equation*}
E\left[\sup\limits_{r\in[t,T]} |K_r^{t,\xi}-K_r^{s,\xi'}|^{2p}\Big|\mathcal{F}_t\right]\leq C_p\left(E\left[|\xi-\xi'|^{2p}\big|\mathcal{F}_t\right]+E[|s-t|^p\mid\mathcal F_t]\right),
\end{equation*}
and, for every $\lambda>0$, there exists a constant $C_{\lambda}>0$ such that $P$-a.s.,
\begin{equation}\label{*4}
E\left[e^{\lambda K_T^{t,\xi}}\Big|\mathcal{F}_t\right]\leq C_{\lambda}.
\end{equation}
\end{proposition}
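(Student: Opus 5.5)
The estimates follow the classical Lions--Sznitman/Saisho scheme, run conditionally on $\mathcal F_t$. The outer ball condition replaces convexity, and the auxiliary functions $f_D$ (Assumption \ref{HD}(iii)) and $\phi_D$ (Assumption \ref{HD'}) play the role of the distance function.

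\textbf{Step 1 (bound on $K$).} Apply It\^o's formula to $f_D(X_r^{t,\xi})$ on $[t,r]$:
\[
f_D(X_r)-f_D(\xi)=\int_t^r\big(Df_D\, b+\tfrac12\operatorname{tr}(\sigma\sigma^*D^2f_D)\big)(X_u)\,\dd u+\int_t^r Df_D\,\sigma(X_u)\,\dd W_u+\int_t^r\langle Df_D(X_u),\vec n_u\rangle\,\dd K_u .
\]
Since $\vec n_u\in N_{X_u}$ $\dd K$-a.e., the last integral is at least $\frac{\gamma_D}{r_0}(K_r-K_t)$. The functions $f_D$, $Df_D$, $D^2f_D$, $b$ and $\sigma$ are all bounded, so
\[
K_r-K_t\le C\Big(1+(r-t)+\big|\textstyle\int_t^r Df_D\,\sigma\,\dd W\big|\Big).
\]
The BDG inequality (conditionally on $\mathcal F_t$) then gives \eqref{*3}.

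For the short-time bound $E[|K_s^{t,\xi}|^{2p}\mid\mathcal F_t]\le C_p E[|s-t|^p\mid\mathcal F_t]$, the constant term $f_D(X_s)-f_D(\xi)$ is in the way, since it is only $O(|X_s-\xi|)$. I would write it as $\int_t^s Df_D\cdot\dd X+\tfrac12\int_t^s\operatorname{tr}(\sigma\sigma^*D^2 f_D)\,\dd u$, which is exactly It\^o's formula. Then the $\dd K$-term on the right is at most $\frac{2\gamma_D}{r_0}\dd K$, and the lower bound gives nothing directly. The better route is Assumption \ref{HD}(ii). Work locally on the event that $X$ stays in $B(\xi,\delta)$. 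Project the equation onto $l_x$:
\[
\beta^{-1}(K_s-K_t)\le\langle l_x, X_s-\xi\rangle-\int_t^s\langle l_x,b\rangle\,\dd u-\int_t^s\langle l_x,\sigma\,\dd W\rangle .
\]
This is insufficient too, because $|X_s-\xi|$ is itself bounded via $K$. To close the loop, use Saisho's argument: for $s$ close to $t$ the displacement $|X_s-\xi|$ is controlled by $C\sup_{t\le u\le s}|\int_t^u(b\,\dd u+\sigma\,\dd W)|$ on the event of staying in $B(\xi,\delta)$. This comes from combining the projection bound with the outer ball inequality $\langle \vec n_u, y-X_u\rangle\ge -\frac1{2r_0}|y-X_u|^2$. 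Off that event, bound the probability by Chebyshev and use \eqref{*3} together with H\"older.

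\textbf{Step 2 (stability in $X$).} Apply It\^o's formula to
\[
e^{-\frac{1}{r_0}(f_D(X_r)+f_D(X'_r))}\,|X_r-X'_r|^2 ,
\]
where $X=X^{t,\xi}$ and $X'=X^{s,\xi'}$, taken on $[s,T]$; on $[t,s]$ the difference is handled by Step 1. The outer ball condition gives
\[
\langle X_r-X'_r,\vec n(X_r)\rangle\,\dd K_r\le\frac{1}{2r_0}|X_r-X'_r|^2\,\dd K_r .
\]
This term is exactly compensated by the exponential weight, because $\langle Df_D,\vec n\rangle\ge \gamma_D/r_0$ (choose the weight constant $\ge 1/(2\gamma_D)$). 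The monotonicity of $b$ and the Lipschitz bound on $\sigma$ (Assumption \ref{HRSDE}), followed by BDG and Gronwall, give the $2p$-th moment bound. At time $s$ the initial gap is $|X^{t,\xi}_s-\xi'|\le|\xi-\xi'|+|X^{t,\xi}_s-\xi|$, and the second term is $O(|s-t|^{1/2})$ in $L^{2p}$ by Step 1.

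\textbf{Step 3 (under Assumption \ref{HD'}).} Here $K$ admits an explicit representation. Apply It\^o's formula to $\phi_D(X)$, using $\langle D\phi_D,\vec n\rangle=1$ on $\partial D$ (the normal is $D\phi_D$ there). This gives
\[
K_r=\phi_D(X_r)-\phi_D(\xi)-\int b\cdot D\phi_D-\int D\phi_D\,\sigma\,\dd W-\tfrac12(\text{second-order term}).
\]
Since $\phi_D$ is only $C^1_{\mathrm{Lip}}$, I would first mollify $\phi_D$, pass to the limit, and use $|D\phi_D|=1$ on $\partial D$. The Lipschitz continuity of $D\phi_D$ and of $\phi_D$, together with Step 2, then yield the stability estimate for $K^{t,\xi}-K^{s,\xi'}$. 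The exponential moment \eqref{*4} follows from the same representation: the martingale part has bounded quadratic variation, so $E[e^{\lambda\int D\phi_D\sigma\,\dd W}]$ is bounded by Novikov-type bounds, and all remaining terms are bounded.

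The main obstacle is the mollification in Step 3, namely justifying It\^o's formula for a merely $C^{1,1}$ function $\phi_D$. It is handled by Lipschitz continuity of $D\phi_D$, which makes the second derivatives bounded a.e.; the possible lack of $C^2$ regularity has to be controlled uniformly along the mollification.
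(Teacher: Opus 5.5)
Your proof of (\ref{*3}), your Step~2 on $[s,T]$, and the mollification scheme of Step~3 coincide with the paper's. The real difference is in the short-time estimates. There you have misdiagnosed the difficulty: $|X_r-\xi|$ does not have to be controlled through $K$.

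Run your own Step~2 computation with $X'$ frozen at the point $\xi\in\overline D$, i.e.\ apply It\^o's formula to $e^{-f_D(X_r)/\gamma_D}|X_r-\xi|^2$ on $[t,r]$.
\begin{itemize}
\item The bracket multiplying $\dd K$ is $2\langle X_r-\xi,\vec n_r\rangle-\gamma_D^{-1}\langle Df_D(X_r),\vec n_r\rangle|X_r-\xi|^2\le0$, by Remark~\ref{HDD}(ii), which holds for every $x'\in\overline D$.
\item The $\dd r$-terms are bounded because $D$ is bounded.
\item BDG plus Young then give (\ref{estimate5}), namely $E[\sup_{r\in[t,s]}|X_r-\xi|^{2p}\mid\mathcal F_t]\le C_pE[|s-t|^p\mid\mathcal F_t]$.
\end{itemize}
Since $f_D$ is Lipschitz, $f_D(X_s)-f_D(\xi)$ is then of order $|s-t|^{1/2}$ in $L^{2p}$. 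That is exactly what the target allows, so the term is not in the way. The $f_D$ identity therefore yields $E[|K_s^{t,\xi}|^{2p}\mid\mathcal F_t]\le C_pE[|s-t|^p\mid\mathcal F_t]$ immediately. This is what the paper does.

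Your Saisho-type localization can be made to work, but it is heavier. It also relies on Assumption~\ref{HD}(ii), which the paper's estimates never use. As sketched, it leaves two points open.
\begin{itemize}
\item The probability of leaving the ball must be bounded through the oscillation $\omega$ of $\xi+\int_t^\cdot b\,\dd u+\int_t^\cdot\sigma\,\dd W$ on $[t,s]$. By the deterministic estimate, leaving before $s$ forces $\omega\ge c$ for a fixed $c>0$. Bounding it through $|X-\xi|$ instead would be circular.
\item Since $s$ is a stopping time, Chebyshev plus H\"older produce a term like $(E[|s-t|^{2p}\mid\mathcal F_t])^{1/2}$. This is not dominated by $C\,E[|s-t|^p\mid\mathcal F_t]$. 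You have to bound the bad event pathwise instead, e.g.\ by $\mathbf 1_{\{\omega\ge c\}}\le(\omega/c)^{2p}$, $K_s\le C(1+\sup_{[t,s]}|M|)$ with $M=\int_t^\cdot Df_D\sigma\,\dd W$, and Young's inequality.
\end{itemize}

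Three smaller points.
\begin{itemize}
\item In Step~2, the factor $2$ in $\dd|X-X'|^2$ makes the $\dd K$-bracket at most $\frac{1-c\gamma_D}{r_0}|X-X'|^2$. So the weight constant must satisfy $c\ge1/\gamma_D$ (the paper's choice), not $c\ge1/(2\gamma_D)$. Your displayed choice $c=1/r_0$ works only if $\gamma_D\ge r_0$.
\item In Step~3, boundedness of $D^2\phi_D^\varepsilon$ suffices for bounds on $K$ but not for the stability estimate. The difference of the $\dd l$-integrands contains $\operatorname{tr}(\sigma\sigma^*D^2\phi_D^\varepsilon)$, whose Lipschitz constant is of order $1/\varepsilon$. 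It also contains $D\phi_D^\varepsilon\, b$, with $b$ only one-sided Lipschitz. Balancing against the mollification error $C\varepsilon K$ therefore gives only a H\"older-$\frac12$ rate. The paper does not resolve this either (it refers to Feng and Pardoux--Zhang), so this step needs an extra argument or extra regularity.
\item Your exponential-martingale proof of (\ref{*4}) is simpler than the paper's Gronwall argument. Run with $f_D$ in place of $\phi_D$, it shows that (\ref{*4}) already holds under Assumption~\ref{HD} alone.
\end{itemize}
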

\begin{proof}
First, by (\ref{RSDE}), if $r\geq s\geq t$, we have
\begin{equation*}
\begin{split}
X_r^{t,\xi}-X_r^{s,\xi'}={}&X_s^{t,\xi}-\xi'+\int_s^r\left[ b\left(l,X_l^{t,\xi}\right)-b\left(l,X_l^{s,\xi'}\right)\right]\dd l+\int_s^r \left[\sigma\left(l,X_l^{t,\xi}\right)-\sigma\left(l,X_l^{s,\xi'}\right)\right]\dd W_l\\
&+\int_s^r \vec n(X_l^{t,\xi})\dd K_l^{t,\xi}-\int_s^r \vec n(X_l^{s,\xi'})\dd K_l^{s,\xi'}.
\end{split}
\end{equation*}
For simplicity, we set
\begin{equation*}
\begin{gathered}
 X_l:=X_l^{t,\xi},\qquad X'_l:=X_l^{s,\xi'},\\
 b_l:=b(l,X_l),\quad b'_l:=b(l,X'_l),\quad
 \sigma_l:=\sigma(l,X_l),\quad \sigma'_l:=\sigma(l,X'_l),\\
 R_l:=\exp\left\{-\frac{f_D(X_l)+f_D(X'_l)}{\gamma_D}\right\},\qquad
 q_l:=D f_D(X_l)\sigma_l+D f_D(X'_l)\sigma'_l,
\end{gathered}
\end{equation*}
Using $f_D$ and $\gamma_D$ from Assumption \ref{HD}, we apply It\^o's formula to
$R_l|X_l-X'_l|^2$ over the interval $[s,r]$. We also put
\begin{equation*}
\begin{split}
 a_l={}&\langle D f_D(X_l),b_l\rangle+\langle D f_D(X'_l),b'_l\rangle
 +\frac{1}{2}\operatorname{tr}\left(D^2 f_D(X_l)\sigma_l\sigma_l^*
 +D^2 f_D(X'_l)\sigma'_l{\sigma'_l}^*\right).
\end{split}
\end{equation*}
Then
\begin{align}
 R_r|X_r-X'_r|^2
={}&R_s|X_s-X'_s|^2\notag\\
&+\int_s^rR_l\bigg[
 2\langle X_l-X'_l,b_l-b'_l\rangle+|\sigma_l-\sigma'_l|^2
 -\frac{|X_l-X'_l|^2}{\gamma_D}a_l\notag\\
 &\qquad\qquad+\frac{|X_l-X'_l|^2}{2\gamma_D^2}|q_l|^2
 -\frac{2}{\gamma_D}\left\langle q_l,
 (X_l-X'_l)^*(\sigma_l-\sigma'_l)\right\rangle\bigg]\dd l\notag\\
&+\int_s^rR_l\left[
 2(X_l-X'_l)^*(\sigma_l-\sigma'_l)
 -\frac{|X_l-X'_l|^2}{\gamma_D}q_l\right]\dd W_l\notag\displaybreak[2]\\
&+\int_s^rR_l\left[
 2\langle X_l-X'_l,\vec n(X_l)\rangle
 -\frac{|X_l-X'_l|^2}{\gamma_D}
  \langle D f_D(X_l),\vec n(X_l)\rangle\right]\dd K_l^{t,\xi}\notag\\
&+\int_s^rR_l\left[
 2\langle X'_l-X_l,\vec n(X'_l)\rangle
 -\frac{|X_l-X'_l|^2}{\gamma_D}
  \langle D f_D(X'_l),\vec n(X'_l)\rangle\right]\dd K_l^{s,\xi'}.
  \label{ito}
\end{align}
Remark \ref{HDD} in Appendix \ref{sec:domain-discussion} shows that the latter two integrals are nonpositive. Moreover, using the uniform boundedness assumptions, the one-sided Lipschitz condition for $b$, and the Lipschitz continuity of $\sigma$ in Assumption~\ref{HRSDE}, we obtain
\begin{equation}\label{*6}
\begin{split}
&|X_r^{t,\xi}-X_r^{s,\xi'}|^2 \\
\leq &C \bigg\{ |X_s^{t,\xi}-\xi'|^2+\int_s^r |X_l^{t,\xi}-X_l^{s,\xi'}|^2 \dd l\bigg\}\\
&-\frac{1}{\gamma_D}\int_s^r R_l|X_l-X'_l|^2q_l\,\dd W_l
+2\int_s^r R_l(X_l-X'_l)^*(\sigma_l-\sigma'_l)\,\dd W_l.
\end{split}
\end{equation}
For any $p\geq2$, the Burkholder--Davis--Gundy and H\"{o}lder inequalities yield the following estimate, $P$-a.s.:
\begin{equation*}
\begin{aligned}
&E\left[\sup\limits_{r\in[s,v]}|X_r-X'_r|^{2p}\mid\mathcal F_t\right]\\
\leq &C_p\bigg\{E\left[|X_s^{t,\xi}-\xi'|^{2p}\mid\mathcal F_t\right]
+E\left[\int_s^v|X_l-X'_l|^{2p}\dd l\mid\mathcal F_t\right]+E\left[\left(\int_s^v|X_l-X'_l|^4\dd l\right)^{p/2}
\bigg|\mathcal F_t\right]\bigg\}\\
\leq &C_p\bigg\{E\left[|X_s^{t,\xi}-\xi'|^{2p}\mid\mathcal F_t\right]
+E\left[\int_s^v|X_l-X'_l|^{2p}\dd l\mid\mathcal F_t\right]\bigg\},
\qquad s\leq v\leq T.
\end{aligned}
\end{equation*}
Writing the stopped integrals on $[0,T]$ with their indicator functions,
Gronwall's inequality then gives
\begin{equation}\label{estimate2}
E[\sup\limits_{r\in[s,T]}|X_r^{t,\xi}-X_r^{s,\xi'}|^{2p}|\mathcal{F}_t]\leq CE[|X_s^{t,\xi}-\xi'|^{2p}|\mathcal{F}_t].
\end{equation}
On the other hand,
\begin{equation}\label{estimate3}
\begin{aligned}
&E\left[\sup\limits_{r\in[t,s]}|X_r^{t,\xi}-X_r^{s,\xi'}|^{2p}\bigg|\mathcal{F}_t\right]
={}E\left[\sup\limits_{r\in[t,s]}|X_r^{t,\xi}-\xi'|^{2p}\bigg|\mathcal{F}_t\right]\\
\leq &C_pE\left[\sup\limits_{r\in[t,s]}|X_r^{t,\xi}-\xi|^{2p}\bigg|\mathcal{F}_t\right]
+C_pE\left[|\xi-\xi'|^{2p}\bigg|\mathcal{F}_t\right].
\end{aligned}
\end{equation}
We next estimate $|X_r^{t,\xi}-\xi|^{2p}$. As in (\ref{ito}) and
(\ref{*6}), we apply It\^o's formula to
\begin{equation*}
 \exp\left\{-\frac{1}{\gamma_D}f_D\left(X_r^{t,\xi}\right)\right\}
 |X_r^{t,\xi}-\xi|^2
\end{equation*}
over the interval $[t,r]$. This gives
\begin{equation*}
\begin{aligned}
&|X_r^{t,\xi}-\xi|^{2}\\
\leq &C\bigg\{\int_t^r \left(1+|X_l^{t,\xi}-\xi|+|X_l^{t,\xi}-\xi|^{2}\right)\dd l\\
&\quad-\int_t^r \frac{1}{\gamma_D}\exp\left\{-\frac{1}{\gamma_D}f_D(X_l^{t,\xi})\right\}
D f_D(X_l^{t,\xi})|X_l^{t,\xi}-\xi|^2\sigma(l,X_l^{t,\xi})\dd W_l\\
&\quad+\int_t^r 2\exp\left\{-\frac{1}{\gamma_D}f_D(X_l^{t,\xi})\right\}
(X_l^{t,\xi}-\xi)^*\sigma(l,X_l^{t,\xi})\dd W_l\bigg\}.
\end{aligned}
\end{equation*}
Since $D$ is bounded, the following estimate holds, $P$-a.s.:
\begin{equation}\label{estimate5}
E\left[\sup\limits_{r\in[t,s]}|X_r^{t,\xi}-\xi|^{2p}\bigg|\mathcal{F}_t\right]\leq C_pE[|s-t|^p\mid\mathcal F_t].
\end{equation}
Combining (\ref{estimate2}), (\ref{estimate3}) and (\ref{estimate5}), we see that
\begin{equation*}
E\left[\sup\limits_{r\in[t,T]} |X_r^{t,\xi}-X_r^{s,\xi'}|^{2p}|\mathcal{F}_t\right]\leq C_p\left(E\left[|\xi-\xi'|^{2p}\big|\mathcal{F}_t\right]+E[|s-t|^p\mid\mathcal F_t]\right).
\end{equation*}
Next, applying It\^o's formula to $f_D(X_r^{t,\xi})$ yields
\begin{equation}\label{itok}
\begin{split}
f_D(X_r^{t,\xi})={}&f_D(\xi)+\int_t^r \left(D f_D(X_l^{t,\xi})b(l,X_l^{t,\xi})+\frac{1}{2}\operatorname{tr}\{D^2 f_D(X_l^{t,\xi})\sigma\sigma^*(l,X_l^{t,\xi})\}\right)\dd l\\
&\quad+\int_t^rD f_D(X_l^{t,\xi})\sigma(l,X_l^{t,\xi})\dd W_l+\int_t^r D f_D(X_l^{t,\xi})\vec n(X_l^{t,\xi})\dd K_l^{t,\xi}.
\end{split}
\end{equation}
By Assumption \ref{HD}(iii),
\begin{equation*}
\begin{aligned}
K_r^{t,\xi}\leq &C\bigg\{f_D(X_r^{t,\xi})-f_D(\xi)
-\int_t^r \left(D f_D(X_l^{t,\xi})b(l,X_l^{t,\xi})
+\frac{1}{2}\operatorname{tr}\{D^2 f_D(X_l^{t,\xi})\sigma\sigma^*(l,X_l^{t,\xi})\}\right)\dd l\\
&\quad-\int_t^rD f_D(X_l^{t,\xi})\sigma(l,X_l^{t,\xi})\dd W_l\bigg\}.
\end{aligned}
\end{equation*}
Then the Burkholder--Davis--Gundy inequality, together with (\ref{estimate5}),
gives
\begin{equation*}
\begin{split}
E\left[|K_s^{t,\xi}|^{2p}|\mathcal{F}_t\right]\leq CE\left[\sup\limits_{r\in[t,s]}|X_r^{t,\xi}-\xi|^{2p}\bigg|\mathcal{F}_t\right]+CE[|s-t|^p\mid\mathcal F_t]\leq CE[|s-t|^p\mid\mathcal F_t].
\end{split}
\end{equation*}
If $D$ also satisfies Assumption \ref{HD'}, Appendix \ref{sec:domain-discussion} gives $\vec n(x)=D\phi_D(x)$, where $\phi_D$ is from Assumption \ref{HD'} and satisfies $|D\phi_D(x)|=1$ for all $x\in\partial D$. The function $f_D$ can then be replaced by $\phi_D^\varepsilon\in C^{2}_b(\mathbb{R}^d)$, a standard mollification of $\phi_D$. Then (\ref{itok}) becomes
\begin{equation*}
\begin{split}
\phi_D^\varepsilon(X_r^{t,\xi})={}&\phi_D^\varepsilon(\xi)+\int_t^r \left(D\phi_D^\varepsilon(X_l^{t,\xi})b(l,X_l^{t,\xi})+\frac{1}{2}\operatorname{tr}\{D^2\phi_D^\varepsilon(X_l^{t,\xi})\sigma\sigma^*(l,X_l^{t,\xi})\}\right)\dd l\\
&\quad+\int_t^rD\phi_D^\varepsilon(X_l^{t,\xi})\sigma(l,X_l^{t,\xi})\dd W_l+\int_t^r \left\langle D\phi_D^\varepsilon(X_l^{t,\xi}),D\phi_D(X_l^{t,\xi})\right\rangle \dd K_l^{t,\xi}.
\end{split}
\end{equation*}
Consequently,
\begin{equation}\label{itok3}
\begin{split}
&\int_t^r \left\langle D\phi_D^\varepsilon(X_l^{t,\xi}),D\phi_D(X_l^{t,\xi})\right\rangle \dd K_l^{t,\xi}\\
={}&\phi_D^\varepsilon(X_r^{t,\xi})-\phi_D^\varepsilon(\xi)-\int_t^r \left(D\phi_D^\varepsilon(X_l^{t,\xi})b(l,X_l^{t,\xi})+\frac{1}{2}\operatorname{tr}\{D^2\phi_D^\varepsilon(X_l^{t,\xi})\sigma\sigma^*(l,X_l^{t,\xi})\}\right)\dd l\\
&-\int_t^rD\phi_D^\varepsilon(X_l^{t,\xi})\sigma(l,X_l^{t,\xi})\dd W_l.
\end{split}
\end{equation}
Then an argument analogous to \cite[Theorem~3.2]{Feng} or
\cite[Proposition~3.2]{Pardoux and Zhang} yields
\begin{equation*}
\begin{aligned}
&E\left[\sup\limits_{r\in[t,T]} |\int_t^r \left\langle D\phi_D^\varepsilon(X_l^{t,\xi}),D\phi_D(X_l^{t,\xi})\right\rangle \dd K_l^{t,\xi}-\int_t^r \left\langle D\phi_D^\varepsilon(X_l^{s,\xi'}),D\phi_D(X_l^{s,\xi'})\right\rangle \dd K_l^{s,\xi'}|^{2p}\Big|\mathcal{F}_t\right]\\
\leq &C\left(E\left[|\xi-\xi'|^{2p}\big|\mathcal{F}_t\right]+E[|s-t|^p\mid\mathcal F_t]\right).
\end{aligned}
\end{equation*}
Since, on $\partial D$, $D\phi_D^\varepsilon$ converges uniformly to $D\phi_D$ and $|D\phi_D(x)|^2=1$, the integrands converge uniformly to 1 on the support of $\dd K$. Indeed, the corresponding error up to time $r$ is bounded by $\|D\phi_D^\varepsilon-D\phi_D\|_\infty K_r$. Therefore, the preceding moment bounds allow to take $\varepsilon\downarrow0$, giving
\begin{equation}\label{kk}
\begin{aligned}
E\left[\sup\limits_{r\in[t,T]} |K_r^{t,\xi}-K_r^{s,\xi'}|^{2p}\Big|\mathcal{F}_t\right]\leq C\left(E\left[|\xi-\xi'|^{2p}\big|\mathcal{F}_t\right]+E[|s-t|^p\mid\mathcal F_t]\right).
\end{aligned}
\end{equation}
Let $G^{\varepsilon}_r=\int_t^r \left\langle D\phi_D^\varepsilon(X_l^{t,\xi}),D\phi_D(X_l^{t,\xi})\right\rangle \dd K_l^{t,\xi}-\phi_D^\varepsilon(X_r^{t,\xi})$, $r\in[t,T]$. Then, for $\lambda>0$, applying It\^o's formula to $e^{\frac{\lambda}{2}G^{\varepsilon}_r}$ yields
\begin{equation*}
\begin{split}
e^{\frac{\lambda}{2}G^{\varepsilon}_s}={}&e^{-\frac{\lambda}{2}\phi_D^\varepsilon(\xi)}-\frac{\lambda}{2}\int_t^s e^{\frac{\lambda}{2}G^{\varepsilon}_l}\left(D\phi_D^\varepsilon(X_l^{t,\xi})b(l,X_l^{t,\xi})
+\frac{1}{2}\operatorname{tr}\{D^2\phi_D^\varepsilon(X_l^{t,\xi})\sigma\sigma^*(l,X_l^{t,\xi})\}\right)\dd l\\
&\quad-\frac{\lambda}{2}\int_t^s e^{\frac{\lambda}{2}G^{\varepsilon}_l}D\phi_D^\varepsilon(X_l^{t,\xi})\sigma(l,X_l^{t,\xi})\dd W_l\\
&\quad+\frac{\lambda^2}{8}\int_t^s e^{\frac{\lambda}{2}G^{\varepsilon}_l}|D\phi_D^\varepsilon(X_l^{t,\xi})\sigma(l,X_l^{t,\xi})|^2\dd l,\quad s\in[t,T].
\end{split}
\end{equation*}
Thus, by the Burkholder--Davis--Gundy inequality,
\begin{equation*}
E[e^{\lambda G^{\varepsilon}_r}|\mathcal{F}_t]\leq C_\lambda E\left[\int_t^r e^{\lambda G^{\varepsilon}_l}\dd l\bigg|\mathcal{F}_t\right]+C_\lambda,\quad r\in[t,T].
\end{equation*}
The uniform convergence of $\phi_D^\varepsilon$ and $D\phi_D^\varepsilon$, together with the boundedness of $D$, allows to pass to the limit in the preceding estimate, and from Gronwall's inequality it follows
\begin{equation*}
E\left[e^{\lambda K_r^{t,\xi}}\Big|\mathcal{F}_t\right]\leq C_{\lambda},\qquad r\in[t,T].
\end{equation*}
This completes the proof.
\end{proof}
\begin{remark}
When $D$ does not satisfy Assumption \ref{HD'}, we retain continuous dependence of $X^{t,\xi}$ on $(t,\xi)$ and of $K$ on $t$, but estimate (\ref{kk}) does not need to hold. Indeed, if only $C_1 \leq \left\langle D f_D(x),\vec n(x)\right\rangle\leq C_2$ is assumed for some $C_1,C_2>0$, then (\ref{itok}) yields only
\begingroup\medmuskip=3mu
\begin{equation*}
\begin{aligned}
K_r^{t,\xi}\leq &C_1^{-1}\bigg\{f_D(X_r^{t,\xi})-f_D(\xi)
-\int_t^r \left(D f_D(X_l^{t,\xi})b(l,X_l^{t,\xi})
+\frac{1}{2}\operatorname{tr}\{D^2 f_D(X_l^{t,\xi})\sigma\sigma^*(l,X_l^{t,\xi})\}\right)\dd l\\
&\quad-\int_t^rD f_D(X_l^{t,\xi})\sigma(l,X_l^{t,\xi})\dd W_l\bigg\},\\
K_r^{s,\xi'}\geq &C_2^{-1}\bigg\{f_D(X_r^{s,\xi'})-f_D(\xi')
-\int_s^r \left(D f_D(X_l^{s,\xi'})b(l,X_l^{s,\xi'})
+\frac{1}{2}\operatorname{tr}\{D^2 f_D(X_l^{s,\xi'})\sigma\sigma^*(l,X_l^{s,\xi'})\}\right)\dd l\\
&\quad-\int_s^rD f_D(X_l^{s,\xi'})\sigma(l,X_l^{s,\xi'})\dd W_l\bigg\}.
\end{aligned}
\end{equation*}
\endgroup
For simplicity, denote by $A_r^{t,\xi}$ and $A_r^{s,\xi'}$, respectively, the expressions in braces on the right-hand sides above, and take the difference between $K_r^{t,\xi}$ and $K_r^{s,\xi'}$. This gives
\begin{equation}\label{3.23}
K_r^{t,\xi}-K_r^{s,\xi'}\leq C_1^{-1} A_r^{t,\xi}-C_2^{-1}A_r^{s,\xi'}=C_1^{-1}\left(A_r^{t,\xi}-A_r^{s,\xi'}\right)+\left(C_1^{-1}-C_2^{-1}\right)A_r^{s,\xi'}.
\end{equation}
However, for $0\leq t\leq s\leq T$, the conditional expectation $E[\cdot\mid\mathcal{F}_t]$ of the right-hand side of (\ref{3.23}) cannot be controlled by $E[|\xi-\xi'|\mid\mathcal{F}_t]+|s-t|$ but only by $E[|\xi-\xi'|\mid\mathcal{F}_t]+|T-s|$. The reason is that, without Assumption \ref{HD'}, the normal map is set-valued and not necessarily continuous.
\end{remark}
We will use the following stability estimate in Section 4.
\begin{proposition}\label{SRSDE}
Let $(X_t^{i,s,x},K_t^{i,s,x})_{t\in[s,T]}$ be the solution of RSDE (\ref{RSDE}) with coefficients $(b^i,\sigma^i)$, $i=1,2$, and initial data $(s,x)$. Here $b^i,\sigma^i$ satisfy Assumption \ref{HRSDE}, and $D$ fulfills the Assumptions \ref{HD} and \ref{HD'}. Putting $\overline{b}_t=\sup\limits_{x\in\overline{D}}|b^1(t,x)-b^2(t,x)|$ and $\overline{\sigma}_t=\sup\limits_{x\in\overline{D}}|\sigma^1(t,x)-\sigma^2(t,x)|$, we have, for all $p\geq2$,
\begin{equation*}
E\left[\sup\limits_{s\leq r\leq T}|X_r^{1,s,x}-X_r^{2,s,x}|^{2p}+ \sup\limits_{s\leq r\leq T}|K_r^{1,s,x}-K_r^{2,s,x}|^{2p}\bigg|\mathcal{F}_s\right]\leq C E\left[\int_s^T (|\overline{b}_r|^{2p}+|\overline{\sigma}_r|^{2p})\dd r\bigg|\mathcal{F}_s\right].
\end{equation*}
\end{proposition}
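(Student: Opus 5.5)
The plan mirrors the proof of Proposition \ref{estimate RSDE}, now with two different coefficient pairs and a common initial condition. Write $X^i:=X^{i,s,x}$, $K^i:=K^{i,s,x}$, $b^i_l:=b^i(l,X^i_l)$, $\sigma^i_l:=\sigma^i(l,X^i_l)$. Apply It\^o's formula on $[s,r]$ to
\begin{equation*}
R_l|X^1_l-X^2_l|^2,\qquad R_l:=\exp\Big\{-\frac{f_D(X^1_l)+f_D(X^2_l)}{\gamma_D}\Big\},
\end{equation*}
with $f_D,\gamma_D$ from Assumption \ref{HD}(iii). The $\dd K^1$ and $\dd K^2$ integrals have exactly the structure of (\ref{ito}), so Remark \ref{HDD} makes them nonpositive and we drop them. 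In the $\dd l$ and $\dd W$ terms, split
\begin{equation*}
b^1(l,X^1_l)-b^2(l,X^2_l)=\big[b^1(l,X^1_l)-b^1(l,X^2_l)\big]+\big[b^1(l,X^2_l)-b^2(l,X^2_l)\big],
\end{equation*}
and split $\sigma$ in the same way. The first bracket is controlled by the one-sided Lipschitz and Lipschitz conditions of Assumption \ref{HRSDE}. The second bracket is bounded by $\overline b_l$ or $\overline\sigma_l$. Young's inequality then gives
\begin{equation*}
|X^1_r-X^2_r|^2\le C\int_s^r\big(|X^1_l-X^2_l|^2+\overline b_l^2+\overline\sigma_l^2\big)\dd l+M_r,
\end{equation*}
where $M$ is a stochastic integral with integrand bounded by $C(|X^1-X^2|^2+|X^1-X^2|\,(|X^1-X^2|+\overline\sigma))$. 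Raising to the power $p$ and applying BDG, H\"older, Young and conditional Gronwall exactly as in (\ref{estimate2}) yields
\begin{equation*}
E\Big[\sup_{r\in[s,T]}|X^1_r-X^2_r|^{2p}\Big|\mathcal F_s\Big]\le C\,E\Big[\int_s^T(\overline b_r^{2p}+\overline\sigma_r^{2p})\dd r\Big|\mathcal F_s\Big].
\end{equation*}
The initial term vanishes because both solutions start from the same $(s,x)$.

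For $K$ we use Assumption \ref{HD'}, as in the second part of Proposition \ref{estimate RSDE}. Let $\phi^\varepsilon_D$ be the mollification of $\phi_D$. By the identity (\ref{itok3}) for each $i$,
\begin{equation*}
\int_s^r\langle D\phi^\varepsilon_D(X^i_l),D\phi_D(X^i_l)\rangle\,\dd K^i_l=\phi^\varepsilon_D(X^i_r)-\phi^\varepsilon_D(x)-\int_s^r\mathcal L^{i,\varepsilon}_l\,\dd l-\int_s^rD\phi^\varepsilon_D(X^i_l)\sigma^i_l\,\dd W_l,
\end{equation*}
where $\mathcal L^{i,\varepsilon}_l:=D\phi^\varepsilon_D(X^i_l)b^i_l+\frac12\operatorname{tr}\{D^2\phi^\varepsilon_D(X^i_l)\sigma^i_l{\sigma^i_l}^*\}$. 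We take the difference of these identities for $i=1,2$. Its right-hand side is controlled by $|X^1_r-X^2_r|$, by $\int_s^r|X^1-X^2|\,\dd l$, by $\int_s^r(\overline b+\overline\sigma)\,\dd l$, and by a stochastic integral with integrand of size $C_\varepsilon|X^1-X^2|+C\overline\sigma$. Each of these has $2p$-th conditional moment bounded by the right-hand side of the claim, using the $X$-estimate above together with BDG.

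The main obstacle is the step $\varepsilon\downarrow0$. The difficulty is that the constants involving $D^2\phi^\varepsilon_D$, and the Lipschitz constant of $D\phi^\varepsilon_D$ multiplying $|X^1-X^2|$, may blow up as $\varepsilon\to0$, since $\phi_D$ is only $C^1_{\mathrm{Lip}}$. I would address this the way the paper handles (\ref{kk}). Note that $\phi_D\in C^1_{\mathrm{Lip}}$ makes $D\phi^\varepsilon_D$ Lipschitz uniformly in $\varepsilon$, and $|D^2\phi^\varepsilon_D|\le\mathrm{Lip}(D\phi_D)$ uniformly, so the constants above stay bounded in $\varepsilon$. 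The error term $\|D\phi^\varepsilon_D-D\phi_D\|_{\infty,\partial D}(K^1_T+K^2_T)$ tends to zero in $L^{2p}$ by (\ref{*3}). Passing to the limit, and using $|D\phi_D|=1$ on $\partial D$, gives the bound for $\sup_r|K^1_r-K^2_r|^{2p}$. Adding the two estimates proves the proposition.
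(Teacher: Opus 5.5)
Your $X$-estimate is correct and is the paper's argument: It\^o's formula for $R_l|X^1_l-X^2_l|^2$, Remark \ref{HDD} to discard the $\dd K^i$-terms, splitting $b$ and $\sigma$, then BDG and Gronwall. For $K$ you also follow the paper's route: It\^o's formula for $\phi^\varepsilon_D(X^1)-\phi^\varepsilon_D(X^2)$ via (\ref{itok3}), then $\varepsilon\downarrow0$ as for (\ref{kk}). However, your resolution of the step you single out as the main obstacle does not work. The difference of the second-order terms contains $\frac12\operatorname{tr}\{(D^2\phi^\varepsilon_D(X^1_l)-D^2\phi^\varepsilon_D(X^2_l))\sigma^1_l{\sigma^1_l}^*\}$. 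Bounding this by $C|X^1_l-X^2_l|$ requires a Lipschitz constant for $D^2\phi^\varepsilon_D$, i.e.\ a bound on $D^3\phi^\varepsilon_D$. The uniform bound $|D^2\phi^\varepsilon_D|\le\mathrm{Lip}(D\phi_D)$ only makes this difference $O(1)$. Since $\phi_D$ is merely $C^1$ with Lipschitz gradient, $D^2\phi_D$ may jump, and $\|D^3\phi^\varepsilon_D\|_\infty$ is in general of order $\mathrm{Lip}(D\phi_D)/\varepsilon$. So the coefficient of $\int_s^r|X^1_l-X^2_l|\dd l$ blows up like $\varepsilon^{-1}$, while the replacement error $\|D\phi^\varepsilon_D-D\phi_D\|_\infty(K^1_T+K^2_T)$ is only $O(\varepsilon)$. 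Optimizing over $\varepsilon$ gives merely a square-root rate, $\Delta^{1/2}$ instead of $\Delta$, where $\Delta$ is the $2p$-th root of the right-hand side of the claim.

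There is a second, smaller problem. Your bound on the drift difference uses $|b^1(l,X^1_l)-b^1(l,X^2_l)|\le C|X^1_l-X^2_l|$, which Assumption \ref{HRSDE} does not give. It only provides the one-sided condition. That suffices for the $X$-estimate, where $b$ is paired with $X^1-X^2$, but not for an absolute bound on $D\phi^\varepsilon_D(X^2_l)(b^1(l,X^1_l)-b^1(l,X^2_l))$.

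The paper's proof takes the same route and only refers to ``the same argument as for (\ref{kk})'', without showing that the constant there is independent of $\varepsilon$. So it does not supply the missing step either. One way to close it along your lines is to assume that $D^2\phi_D$ is Lipschitz (e.g.\ $\phi_D\in C^3_b$) and that $b^i$ is Lipschitz in $x$, as in Assumption \ref{HSDG}(iii)(b), which is what the later sections use. Then no mollification is needed. Since $\vec n=D\phi_D$ and $|D\phi_D|=1$ on $\partial D$, It\^o's formula for $\phi_D(X^i)$ gives $K^i_r$ exactly. Consequently $|K^1_r-K^2_r|$ is bounded by $C|X^1_r-X^2_r|$, by $C\int_s^r(|X^1_l-X^2_l|+\overline b_l+\overline\sigma_l)\dd l$, and by a stochastic integral with integrand of size $C(|X^1_l-X^2_l|+\overline\sigma_l)$. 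Your $X$-estimate plus BDG then yields the claim.
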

\begin{proof}
For simplicity, we put $\overline{X}_t^{s,x}=X_t^{1,s,x}-X_t^{2,s,x}$ and $\overline{K}_t^{s,x}=K_t^{1,s,x}-K_t^{2,s,x}$. We apply It\^o's formula to $\exp\left\{-\frac{1}{\gamma_D}\left[f_D(X_t^{1,s,x})+f_D(X_t^{2,s,x})\right]\right\}|\overline{X}_t^{s,x}|^2$ on the interval $[s,T]$ and use the following inequalities:
\begin{equation*}
\begin{aligned}
&\left\langle x_1-x_2,b^1(t,x_1)-b^2(t,x_2)\right\rangle\\
={}&\left\langle x_1-x_2,b^1(t,x_1)-b^1(t,x_2)\right\rangle
+\left\langle x_1-x_2,b^1(t,x_2)-b^2(t,x_2)\right\rangle\\
\leq &C|x_1-x_2|^2+|\overline{b}_t|^2,\\
&|\sigma^1(t,x_1)-\sigma^2(t,x_2)|
\leq K|x_1-x_2|+|\overline{\sigma}_t|.
\end{aligned}
\end{equation*}
For convenience, we set
\begin{equation*}
\begin{aligned}
R_r&:=\exp\left\{-\frac{f_D(X_r^{1,s,x})+f_D(X_r^{2,s,x})}{\gamma_D}\right\},\\
\delta\sigma_r&:=\sigma^1(r,X_r^{1,s,x})-\sigma^2(r,X_r^{2,s,x}),\\
q_r&:=D f_D(X_r^{1,s,x})\sigma^1(r,X_r^{1,s,x})
 +D f_D(X_r^{2,s,x})\sigma^2(r,X_r^{2,s,x}).
\end{aligned}
\end{equation*}
Computations similar to those in the proof of Proposition \ref{estimate RSDE} give
\begin{equation*}
\begin{aligned}
|\overline X_t^{s,x}|^2\leq &C\bigg\{\int_s^t|\overline X_r^{s,x}|^2\dd r
+\int_s^t\bigl(|\overline b_r|^2+|\overline\sigma_r|^2\bigr)\dd r\bigg\}\\
&\quad+2\int_s^tR_r(\overline X_r^{s,x})^*\delta\sigma_r\dd W_r
-\frac1{\gamma_D}\int_s^tR_r|\overline X_r^{s,x}|^2q_r\dd W_r.
\end{aligned}
\end{equation*}
Then, the Burkholder--Davis--Gundy and H\"{o}lder inequalities give the following $P$-a.s. estimate:
\begin{equation*}
E\left[\sup\limits_{s\leq r\leq T}|\overline{X}_r^{s,x}|^{2p}\bigg|\mathcal{F}_s\right]\leq CE\left[\int_s^T |\overline{X}_r^{s,x}|^{2p}\dd r\bigg|\mathcal{F}_s\right]+ CE\left[\int_s^T \left(|\overline{b}_r|^{2p}+|\overline{\sigma}_r|^{2p}\right)\dd r\bigg|\mathcal{F}_s\right].
\end{equation*}
Gronwall's inequality, in turn, yields, $P$-a.s.,
\begin{equation*}
E\left[\sup\limits_{s\leq r\leq T}|\overline{X}_r^{s,x}|^{2p}\bigg|\mathcal{F}_s\right]\leq CE\left[\int_s^T \left(|\overline{b}_r|^{2p}+|\overline{\sigma}_r|^{2p}\right)\dd r\bigg|\mathcal{F}_s\right].
\end{equation*}
Moreover, applying It\^o's formula to $\phi_D^\varepsilon(X_t^{1,s,x})-\phi_D^\varepsilon(X_t^{2,s,x})$ (cf. (\ref{itok3})), we obtain
\begin{equation*}
E\left[\sup\limits_{s\leq r\leq T}|\overline{K}_r^{s,x}|^{2p}\bigg|\mathcal{F}_s\right]\leq CE\left[\int_s^T \left(|\overline{b}_r|^{2p}+|\overline{\sigma}_r|^{2p}\right)\dd r\bigg|\mathcal{F}_s\right].
\end{equation*}
The estimate for $\overline K$ follows from Assumption \ref{HD'} and the same argument as for (\ref{kk}).
\end{proof}

We next consider the GBSDE
\begin{equation}\label{GBSDE}
Y_s^{t,\xi}=\Phi(X_T^{t,\xi})+\int_s^T f(r,X_r^{t,\xi},Y_r^{t,\xi},Z_r^{t,\xi})\dd r+\int_s^T g(r,X_r^{t,\xi},Y_r^{t,\xi})\dd K_r^{t,\xi}-\int_s^T Z_r^{t,\xi}\dd W_r,\quad s\in[t,T],
\end{equation}
where $(X_s^{t,\xi},K_s^{t,\xi})_{s\in[t,T]}$ is the solution of RSDE (\ref{RSDE}). The mapping $\Phi\colon\Omega\times\overline{D}\to\mathbb{R}$ is $\mathcal{F}_T$-measurable. The functions $f\colon\Omega\times[0,T]\times\overline{D}\times\mathbb{R}\times\mathbb{R}^m\to\mathbb{R}$ and $g\colon\Omega\times[0,T]\times\overline{D}\times\mathbb{R}\to\mathbb{R}$ are progressively measurable in $(\omega,t)$ and satisfy the following assumption.
\begin{assumption}\label{HGBSDE}
\textnormal{(i)} For all $\omega\in\Omega$, $t\in[0,T]$, and $(x,y,z)\in\overline{D}\times\mathbb{R}\times\mathbb{R}^m$, the function $f(\omega,t,x,\cdot,z)$ is continuous, and $g\in \mathcal{C}_{\mathcal{F}}^3([0,T]\times\overline{D}\times\mathbb{R})$; see Definition \ref{testfunction}.\\
\textnormal{(ii)} There exist constants $\lambda_1,\lambda_2\in\mathbb{R}$ and $K\geq0$ such that, $\dd t\otimes\dd P$-a.e., the following conditions hold, for all
$(x,y,z),(x_1,y_1,z_1),(x_2,y_2,z_2)
\in\overline D\times\mathbb R\times\mathbb R^m$:
\begin{equation*}
\makebox[\linewidth][l]{\(
\begin{aligned}
\textnormal{(a)}\
&(y_1-y_2)\bigl(f(t,x,y_1,z)-f(t,x,y_2,z)\bigr)
\leq\lambda_1|y_1-y_2|^2,\\
&(y_1-y_2)\bigl(g(t,x,y_1)-g(t,x,y_2)\bigr)
\leq\lambda_2|y_1-y_2|^2;\\
\textnormal{(b)}\
&\left|\Phi(x_1)-\Phi(x_2)\right|
+\left|f(t,x_1,y,z_1)-f(t,x_2,y,z_2)\right|+\left|g(t,x_1,y)-g(t,x_2,y)\right|\\
&\leq K\left(\left|x_1-x_2\right|+\left|z_1-z_2\right|\right);\\
\textnormal{(c)}\
&|\Phi(x)|+|f(t,0,y,0)|\leq K(1+|x|+|y|),\\
&\bigl(|g|+|Dg|+|D^2g|+|\mathfrak d_tg|
 +|\mathfrak d_\omega g|+|D\mathfrak d_\omega g|\bigr)(t,x,y)
 \leq K(1+|x|+|y|).
\end{aligned}
\)}
\end{equation*}
\end{assumption}
Here $D=(D_x,D_y)$ stands for the gradient operator w.r.t.\ $(x,y)$.
Under these assumptions, \cite[Theorem~1.7]{Pardoux and Zhang} guarantees the existence and the uniqueness of a solution $\left(Y_s^{t, \xi}, Z_s^{t, \xi}\right)_{s \in[t, T]} \in \mathcal{S}^2(0,T;\mathbb{R}) \times L^2(0,T;\mathbb{R}^m)$ to GBSDE (\ref{GBSDE}). The following estimates also hold true.
\begin{proposition}\label{estimate GBSDE}
Suppose the Assumptions \ref{HD}, \ref{HD'}, \ref{HRSDE}, and \ref{HGBSDE} are fulfilled. Then there exists a constant $C\geq 0$ such that, for all $0\leq t\leq s \leq T$, $\xi\in L^2(\mathcal{F}_t;\overline{D})$, and $\xi'\in L^2(\mathcal{F}_s;\overline{D})$,
\begin{equation*}
\begin{split}
E\left[\sup\limits_{r \in[t, T]}\left|Y_r^{t, \xi}\right|^2+\int_t^T\left|Z_r^{t, \xi}\right|^2 \dd r \bigg| \mathcal{F}_t\right] \leq &C,\\
E\left[\sup\limits_{r \in\left[s, T\right]}\left|Y_r^{t, \xi}-Y_r^{s, \xi'}\right|^2+\int_s^T\left|Z_r^{t, \xi}-Z_r^{s, \xi'}\right|^2 \dd r \bigg|\mathcal{F}_{t}\right]
\leq &C\left\{\left(E\left[|\xi-\xi'|^4\big|\mathcal F_t\right]\right)^{1/4}+|s-t|^{\frac{1}{2}}\right\}.
\end{split}
\end{equation*}
\end{proposition}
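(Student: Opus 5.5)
The plan is a weighted It\^o argument for GBSDEs in the spirit of \cite[Proposition~1.1]{Pardoux and Zhang}, with conditional expectations given $\mathcal F_t$ throughout. All constants depending on the $K$-integral are controlled by the exponential moment (\ref{*4}) and the moment bounds of Proposition \ref{estimate RSDE}.

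\textbf{Step 1: the a priori bound.} Apply It\^o's formula to $e^{\mu r+\nu K_r^{t,\xi}}|Y_r^{t,\xi}|^2$ on $[s,T]$ for $s\in[t,T]$, with $\mu,\nu$ to be chosen. By Assumption \ref{HGBSDE}(ii)(a),(b),(c) and the boundedness of $D$,
\[
2yf(r,x,y,z)\le 2\lambda_1|y|^2+C|y|(1+|z|),\qquad 2yg(r,x,y)\le 2\lambda_2|y|^2+C|y|.
\]
Choosing $\mu$ large absorbs the $dr$-terms together with $\tfrac12|Z|^2$. Choosing $\nu\ge 2\lambda_2+1$ absorbs the $dK$-terms, leaving a remainder $\int e^{\nu K}dK$. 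Taking $E[\cdot\mid\mathcal F_t]$ then gives
\[
E\Big[|Y_s|^2+\int_s^T|Z_r|^2dr\,\Big|\,\mathcal F_t\Big]\le C\,E\big[e^{\nu K_T}(1+|\Phi(X_T)|^2)\,\big|\,\mathcal F_t\big]+C\,E\big[e^{\nu K_T}K_T\,\big|\,\mathcal F_t\big],
\]
and the right-hand side is bounded by a constant thanks to (\ref{*4}). The supremum in $r$ is obtained by Burkholder--Davis--Gundy applied to the martingale term.

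\textbf{Step 2: the difference estimate.} On $[s,T]$ both solutions are defined. Set $\hat Y=Y^{t,\xi}-Y^{s,\xi'}$ and $\hat Z=Z^{t,\xi}-Z^{s,\xi'}$. The $dK$-part of $d\hat Y$ has the form
\[
g(r,X_r,Y_r)\,dK_r-g(r,X'_r,Y'_r)\,dK'_r .
\]
The key move is to split this as
\[
\big[g(X,Y)-g(X,Y')\big]dK+\big[g(X,Y')-g(X',Y')\big]dK+g(X',Y')\,d(K-K').
\]
The first piece is handled by monotonicity (weight $e^{\nu K}$, as in Step 1) and the second by the Lipschitz property in $x$. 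The third piece is the main obstacle: $K-K'$ has no sign, and its integral against a non-smooth process is not a priori small. I would treat it by integration by parts, using that $g\in\mathcal C^3_{\mathcal F}$. Writing $h_r:=2e^{\mu r+\nu K_r}\hat Y_r\, g(r,X'_r,Y'_r)$, which is a semimartingale by It\^o--Wentzell (Lemma 2.1) applied to $g$ along $(X',Y')$, the identity is
\[
\int_s^T h_r\,d(K_r-K'_r)=h_T(K_T-K'_T)-h_s(K_s-K'_s)-\int_s^T(K_r-K'_r)\,dh_r-\langle h,K-K'\rangle .
\]
The bracket $\langle h,K-K'\rangle$ vanishes because $K-K'$ has finite variation. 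Each remaining term is bounded by
\[
\sup_r|K_r-K'_r|\times(\text{terms with bounded conditional moments})
\]
plus $dZ$- and $dK$-type terms. Here the growth bounds on $g,Dg,\mathfrak d_tg,\mathfrak d_\omega g$ in Assumption \ref{HGBSDE}(ii)(c), together with Step 1, are used.

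\textbf{Step 3: closing the estimate.} Apply Cauchy--Schwarz, conditionally on $\mathcal F_t$, to the products just obtained. By the $K$-continuity estimate (\ref{kk}) with $p=2$, they are at most
\[
C\Big(E\big[\sup_r|K_r-K'_r|^4\,\big|\,\mathcal F_t\big]\Big)^{1/4}\le C\Big(\big(E[|\xi-\xi'|^4\mid\mathcal F_t]\big)^{1/4}+|s-t|^{1/2}\Big).
\]
The fourth root arises because we are bounding the square of $\hat Y$ by a product that is linear in $K-K'$. The terminal term $|\Phi(X_T)-\Phi(X'_T)|^2$ and the $dr$ Lipschitz differences in $x$ are controlled by the $X$-estimate of Proposition \ref{estimate RSDE}, which gives the same or a better rate. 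One also needs $|Y_s-Y'_s|$ at the starting point; it is part of the same supremum, with $Y'$ started at $s$. The $\hat Z$-terms are absorbed as in Step 1. Finally, Burkholder--Davis--Gundy upgrades the bound to $\sup_{r\in[s,T]}$, yielding the claimed inequality.
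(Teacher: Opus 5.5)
Your proposal follows essentially the same route as the paper. Both proofs apply It\^o's formula to $e^{\lambda r+\mu K_r}|\hat Y_r|^2$ and split the boundary term into a monotone part, an $x$-Lipschitz part and the signed part $\hat Y_r\,g(r,X'_r,Y'_r)\,d\hat K_r$. They then remove $d\hat K$ by the product formula, using It\^o--Wentzell for $g$ along $(X',Y')$, and close with H\"older's inequality, the exponential moment (\ref{*4}) and the $K$-continuity estimate (\ref{kk}).

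The one ingredient your sketch leaves implicit, and which the paper proves separately, is the conditional bound $E\bigl[\bigl(\int_s^T|Z_r|^2\,dr\bigr)^2\mid\mathcal F_t\bigr]\le C$, for both $Z$ and $Z'$. This rests on the a.s. bound $|Y|\le C$, which follows from Step~1 and the flow property, and which you also need to use the growth bounds on $g$ and its derivatives. The product formula produces terms such as $\hat K_r|Z'_r|^2D^2_{yy}g$ and $\hat K_r\langle\hat Z_r,D_yg\,Z'_r\rangle$. Step~1 only gives second moments of $Z$, so it does not cover the H\"older step in your Step~3. The paper obtains the missing bound from the GBSDE via Burkholder--Davis--Gundy and a partition of $[s,T]$ into short subintervals.
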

\begin{proof}
The first estimate is proved in \cite{Pardoux and Zhang}, and the uniqueness of the GBSDE solution gives
\begin{equation}\label{jing}
\left|Y_s^{t, \xi}\right|=\left|Y_s^{s, X_s^{t, \xi}}\right| \leq C\left(1+\left|X_s^{t, \xi}\right|\right) \leq C,\quad P\text{-a.s.}
\end{equation}
Consequently, Assumption \ref{HGBSDE}(ii)(c) gives
\begin{equation}\label{eq:g-bound}
\begin{aligned}
&\bigl(|g|+|Dg|+|D^2g|+|\mathfrak d_tg|
 +|\mathfrak d_\omega g|+|D\mathfrak d_\omega g|\bigr)
 (s,X_s^{t,\xi},Y_s^{t,\xi})\\
&\qquad\leq K(1+|X_s^{t,\xi}|+|Y_s^{t,\xi}|)\leq C,
\quad P\text{-a.s.}
\end{aligned}
\end{equation}
We now focus on the second estimate stated in the proposition. For simplicity, set $\Psi:=\Psi^{t, \xi}$, $\Psi':=\Psi^{s, \xi'}$, and $\hat{\Psi}:=\Psi-\Psi'$, where $\Psi\in\{X,Y,Z,K\}$. We also put
\begin{equation*}
 \Delta_{t,s}(\xi,\xi'):=
 \left(E\left[|\xi-\xi'|^4\big|\mathcal F_t\right]\right)^{1/4}
 +|s-t|^{\frac{1}{2}}.
\end{equation*}
Let $R_D=\operatorname{diam}(D)$. Since $D$ is bounded,
$|\xi-\xi'|\leq R_D$, and hence
\begin{equation*}
\begin{split}
 \left(E\left[|\xi-\xi'|^4\big|\mathcal F_t\right]\right)^{\frac{1}{2}}
 \leq &R_D
 \left(E\left[|\xi-\xi'|^4\big|\mathcal F_t\right]\right)^{1/4},\\
 \left(E\left[|\xi-\xi'|^8\big|\mathcal F_t\right]\right)^{1/4}
 \leq &R_D
 \left(E\left[|\xi-\xi'|^4\big|\mathcal F_t\right]\right)^{1/4}.
\end{split}
\end{equation*}
Together with $|s-t|\leq \sqrt T\,|s-t|^{\frac{1}{2}}$, these inequalities show
that all conditional fourth- and eighth-moment terms appearing below are
bounded by a constant multiple of $\Delta_{t,s}(\xi,\xi')$.
For constants $\lambda, \mu \geq 0$ to be specified later, It\^o's formula applied to $e^{\lambda r+\mu K_r}\left|\hat{Y}_r\right|^2$ yields, for all $r \in[s, T]$,
\begin{equation}\label{itoGBSDE}
\begin{split}
&e^{\lambda r+\mu K_r}\left|\hat{Y}_r\right|^2+\int_r^T e^{\lambda l+\mu K_l}\left|\hat{Y}_l\right|^2\left(\lambda \dd l+\mu \dd K_l\right)+\int_r^T e^{\lambda l+\mu K_l}\left|\hat{Z}_l\right|^2 \dd l\\
={}&e^{\lambda T+\mu K_T}\left|\Phi\left(X_T\right)-\Phi\left(X_T^{\prime}\right)\right|^2+2 \int_r^T e^{\lambda l+\mu K_l} \hat{Y}_l\left(f\left(l, X_l,Y_l,Z_l\right)-f\left(l, X_l^{\prime},Y_l^{\prime},Z_l^{\prime}\right)\right) \dd l \\
& +2 \int_r^T e^{\lambda l+\mu K_l} \hat{Y}_l\left(g\left(l, X_l, Y_l\right) \dd K_l-g\left(l, X_l^{\prime}, Y_l^{\prime}\right) \dd K_l^{\prime}\right)-2 \int_r^T e^{\lambda l+\mu K_l}\hat{Z}_l\hat{Y}_l \dd W_l.
\end{split}
\end{equation}
It follows from Assumption \ref{HGBSDE} that
\begin{equation*}
\begin{split}
\hat{Y}_l\left(f\left(l, X_l,Y_l,Z_l\right)-f\left(l, X_l^{\prime},Y_l^{\prime},Z_l^{\prime}\right)\right)\leq &\lambda_1|\hat{Y}_l|^2+K|\hat{Y}_l|(|\hat{X}_l|+|\hat{Z}_l|)\\
\leq &|\hat{X}_l|^2+C|\hat{Y}_l|^2+\frac{1}{2}|\hat{Z}_l|^2,\\
\hat{Y}_l\left(g\left(l, X_l,Y_l\right)\dd K_l-g\left(l, X_l^{\prime},Y_l^{\prime}\right)\dd K_l^{\prime}\right)\leq &\left(\lambda_2|\hat{Y}_l|^2+K|\hat{X}_l||\hat{Y}_l|\right)\dd K_l+\hat{Y}_lg(l,X_l^{\prime},Y_l^{\prime})(\dd K_l-\dd K_l^{\prime}).
\end{split}
\end{equation*}
Substituting these inequalities in \eqref{itoGBSDE} and choosing suitable $\lambda$ and $\mu$ gives, for all $s\leq r\leq v$,
\begin{equation}\label{itoGBSDE2}
\begin{aligned}
&e^{\mu K_r}\left|\hat{Y}_r\right|^2+\frac{1}{2} \int_r^v e^{\mu K_l}\left|\hat{Z}_l\right|^2 \dd l \\
\leq &e^{\lambda v+\mu K_v}\left|\hat{Y}_v\right|^2+\int_r^v e^{\lambda l+\mu K_l}\left|\hat{X}_l\right|^2 \dd l+\int_r^v e^{\lambda l+\mu K_l}\left|\hat{X}_l\right|^2 \dd K_l\\
&\quad+2 \int_r^v e^{\lambda l+\mu K_l} \hat{Y}_l g\left(l, X_l^{\prime}, Y_l^{\prime}\right) \dd \hat{K}_l-2 \int_r^v e^{\lambda l+\mu K_l}\hat{Z}_l\hat{Y}_l \dd W_l.
\end{aligned}
\end{equation}
Next, we put $h_l=g(l,X_l',Y_l')$ and estimate the second-to-last integral.
For this end we use further down the product formula in order to transfer
the signed measure $\dd\hat K_l$ to terms
controlled by the moment bounds for $\hat K$, $Y'$, $Z'$, and the derivatives
of $g$. This also isolates the term involving the difference of the two
reflection measures. Let us be more precise. We begin with applying the It\^o--Wentzell formula to
$e^{\lambda l+\mu K_l}\hat{Y}_lh_l\hat{K}_l$. For this we set
\begin{equation*}
\begin{split}
 Q_l={}&\mathfrak d_\omega g(l,X_l',Y_l')
 +D_xg(l,X_l',Y_l')\sigma(l,X_l')
 +D_yg(l,X_l',Y_l')Z_l',\\
 B_l={}&\mathfrak d_tg(l,X_l',Y_l')
 +\frac{1}{2}\operatorname{tr}\left(
 \sigma\sigma^*(l,X_l')D^2_{xx}g(l,X_l',Y_l')\right)\\
 &+\left\langle D^2_{xy}g(l,X_l',Y_l'),
 \sigma(l,X_l')(Z_l')^*\right\rangle
 +\frac{1}{2}|Z_l'|^2D^2_{yy}g(l,X_l',Y_l')\\
 &+\sum_{i=1}^d\sum_{k=1}^m\sigma_{ik}(l,X_l')
 \partial_{x_i}(\mathfrak d_\omega g)_k(l,X_l',Y_l')
 +\left\langle D_y\mathfrak d_\omega g(l,X_l',Y_l'),Z_l'\right\rangle\\
 &+\left\langle D_xg(l,X_l',Y_l'),b(l,X_l')\right\rangle
 -D_yg(l,X_l',Y_l')f(l,X_l',Y_l',Z_l'),
\end{split}
\end{equation*}
and we obtain
\begin{equation}\label{3.29}
\begin{split}
&\int_r^v e^{\lambda l+\mu K_l}\hat Y_lh_l\,\dd \hat K_l\\
={}&e^{\lambda v+\mu K_v}\hat Y_vh_v\hat K_v
-e^{\lambda r+\mu K_r}\hat Y_rh_r\hat K_r
+\int_r^ve^{\lambda l+\mu K_l}\hat K_l
 \left(g_l^1\dd l+g_l^2\dd K_l'+g_l^3\dd K_l\right)\\
&-\int_r^ve^{\lambda l+\mu K_l}\hat K_l
 \left(\hat Y_lQ_l+h_l\hat Z_l\right)\dd W_l,
\end{split}
\end{equation}
where
\begin{equation*}
\begin{split}
 g_l^1={}&h_l\left(f(l,X_l,Y_l,Z_l)-f(l,X_l',Y_l',Z_l')\right)
 -\langle\hat Z_l,Q_l\rangle-\hat Y_l(B_l+\lambda h_l),\\
 g_l^2={}&-|h_l|^2-\hat Y_l\left[
 \left\langle D_xg(l,X_l',Y_l'),\vec n(X_l')\right\rangle
 -D_yg(l,X_l',Y_l')h_l\right],\\
 g_l^3={}&g(l,X_l,Y_l)h_l-\mu\hat Y_lh_l.
\end{split}
\end{equation*}
Combining (\ref{*3}), (\ref{*4}), (\ref{eq:g-bound}), (\ref{itoGBSDE2}), and (\ref{3.29}) yields, for $0\leq t\leq s\leq r\leq v$,
\begin{equation}\label{eq:product-bound}
\begin{split}
&E\bigg[e^{\mu K_r}|\hat{Y}_r|^2+\int_r^v e^{\mu K_l}|\hat{Z}_l|^2\dd l\bigg|\mathcal{F}_t\bigg]\\
\leq &CE\left[e^{\mu K_v}|\hat{Y}_v|^2|\mathcal{F}_t\right]+C\left(E\left[\sup\limits_{r\in[s,v]}|\hat{X}_r|^4\bigg|\mathcal{F}_t\right]\right)^{\frac{1}{2}}+C\left(E\left[\sup\limits_{r\in[s,v]}|\hat{K}_r|^4\bigg|\mathcal{F}_t\right]\right)^{\frac{1}{2}}\\
&\quad+CE\left[\int_r^v e^{\mu K_l}|\hat{K}_l|(|Z_l|^2+|Z_l^{\prime}|^2)\dd l\bigg|\mathcal{F}_t\right].
\end{split}
\end{equation}
Assumption \ref{HGBSDE}, GBSDE (\ref{GBSDE}), and (\ref{jing}) give
\begin{equation*}
\left|\int_r^u Z_l\dd W_l\right|\leq C+C\int_r^v |Z_l|\dd l+C\int_r^v \dd K_l,
\qquad u\in[r,v].
\end{equation*}
By successive application of the Burkholder--Davis--Gundy, Doob and H\"{o}lder inequalities, together with Proposition \ref{estimate RSDE}, we obtain
\begin{equation*}
\begin{aligned}
&E\left[\left(\int_r^v |Z_l|^2\dd l\right)^2\bigg|\mathcal{F}_t\right]
\leq CE\bigg[\sup\limits_{u\in[r,v]}\left|\int_r^u Z_l\dd W_l\right|^4\bigg|\mathcal{F}_t\bigg]
\leq C\left(\frac{4}{3}\right)^4E\bigg[\left|\int_r^v Z_l\dd W_l\right|^4\bigg|\mathcal{F}_t\bigg]\\
\leq &C+C|v-r|^2E\left[\left(\int_r^v |Z_l|^2\dd l\right)^2\bigg|\mathcal{F}_t\right],
\qquad 0\leq t\leq s\leq r\leq v.
\end{aligned}
\end{equation*}
We choose a partition $s=s_0<s_1<\cdots<s_n=T$ of $[s,T]$ such that
$\max\limits_{1\leq i\leq n}|s_{i-1}-s_i|
\leq \sqrt{1/(2C)}$. Then,
\begin{equation*}
E\left[\left(\int_s^T |Z_l|^2\dd l\right)^2\bigg|\mathcal{F}_t\right]\leq C,
\end{equation*}
and
Proposition \ref{estimate RSDE} gives
\begin{align}
&E\left[\int_r^T e^{\mu K_l}|\hat{K}_l||Z_l|^2\dd l\bigg|\mathcal{F}_t\right]\notag\displaybreak[1]
\leq CE\left[e^{\mu K_T}\sup\limits_{l\in[s,T]}|\hat{K}_l|
\int_r^T |Z_l|^2\dd l\bigg|\mathcal{F}_t\right]\notag\\
\leq &CE\left[e^{4\mu K_T}\bigg|\mathcal{F}_t\right]^{\frac{1}{4}}
E\left[\sup\limits_{l\in[s,T]}|\hat{K}_l|^4\bigg|\mathcal{F}_t\right]^{\frac{1}{4}}
E\left[\left(\int_r^T |Z_l|^2\dd l\right)^2\bigg|\mathcal{F}_t\right]^{\frac{1}{2}}\notag\\
\leq &C\Delta_{t,s}(\xi,\xi').
\label{estimate z1}
\end{align}
The same argument shows that $E\left[\displaystyle\int_r^T e^{\mu K_l}|\hat{K}_l||Z_l^{\prime}|^2\dd l\bigg|\mathcal{F}_t\right]\leq C\Delta_{t,s}(\xi,\xi')$. Consequently, combining (\ref{eq:product-bound}) with (\ref{estimate z1}) and both preceding estimates yields the following $P$-a.s. estimate:
\begin{equation}\label{3.33}
E\left[e^{\mu K_r}\left|\hat{Y}_r\right|^2+\int_r^Te^{\mu K_l}\left|\hat{Z}_l\right|^2 \dd l \bigg|\mathcal{F}_{t}\right] \leq C\Delta_{t,s}(\xi,\xi').
\end{equation}
Moreover, by the Burkholder--Davis--Gundy inequality and (\ref{eq:g-bound}), we obtain
\begin{align}
&E\left[\sup\limits_{r\in[s,T]}\left|\int_r^T e^{\lambda l+\mu K_l}\hat{K}_l\hat{Y}_l
\left[\mathfrak{d}_{\omega} g(l,X_l^{\prime},Y_l^{\prime})
+D_x g(l,X_l^{\prime},Y_l^{\prime})\sigma(l,X_l^{\prime})
+D_y g(l,X_l^{\prime},Y_l^{\prime})Z_l^{\prime}\right]\dd W_l\right|
\bigg|\mathcal{F}_t\right]\notag\\
\leq &CE\left[\left(\int_s^T e^{2\mu K_l}|\hat{K}_l\hat{Y}_l|^2\dd l\right)^{\frac{1}{2}}
\bigg|\mathcal{F}_t\right]
+CE\left[\left(\int_s^T e^{2\mu K_l}|\hat{K}_l\hat{Y}_lZ_l^{\prime}|^2\dd l\right)^{\frac{1}{2}}
\bigg|\mathcal{F}_t\right]\notag\displaybreak[1]\\
\leq &\frac18E\left[\sup\limits_{r\in[s,T]}e^{\mu K_r}|\hat{Y}_r|^2\bigg|\mathcal{F}_t\right]
+CE\left[\sup\limits_{l\in[s,T]}e^{\mu K_l}|\hat{K}_l|^2\bigg|\mathcal{F}_t\right]\notag\\
&\quad+\frac18E\left[\sup\limits_{r\in[s,T]}e^{\mu K_r}|\hat{Y}_r|^2\bigg|\mathcal{F}_t\right]
+CE\left[e^{\mu K_T}\sup\limits_{r\in[s,T]}|\hat{K}_r|^2
\int_s^T|Z_l^{\prime}|^2\dd l\bigg|\mathcal{F}_t\right]\notag\\
\leq &\frac14E\left[\sup\limits_{r\in[s,T]}e^{\mu K_r}|\hat{Y}_r|^2\bigg|\mathcal{F}_t\right]
+CE\left[\sup\limits_{l\in[s,T]}|\hat{K}_l|^4\bigg|\mathcal{F}_t\right]^{\frac{1}{2}}
+CE\left[\sup\limits_{l\in[s,T]}|\hat{K}_l|^8\bigg|\mathcal{F}_t\right]^{\frac14},\notag\\
&\text{and}\notag\\
&E\left[\sup\limits_{r\in[s,T]}\left|\int_r^T e^{\lambda l+\mu K_l}\hat{K}_l
 g(l,X_l^{\prime},Y_l^{\prime})\hat{Z}_l\dd W_l\right|\bigg|\mathcal{F}_t\right]\notag\\
\leq &C\left(E\left[\sup\limits_{l\in[s,T]}|\hat K_l|^4\bigg|\mathcal F_t\right]\right)^{\frac{1}{2}}
+\frac18E\left[\int_s^T e^{\mu K_l}|\hat Z_l|^2\dd l\bigg|\mathcal F_t\right].\label{estimate dw}
\end{align}
The constants in the preceding estimates are independent of $r$, and the
factors $1/8$ allow the terms containing $\hat Y$ and $\hat Z$ to be absorbed
in the left-hand side of (\ref{itoGBSDE2}). The remaining terms are
controlled by Proposition \ref{estimate RSDE}; this is the final absorption
step in the proof. The numerical fractions are used only for this absorption.

Taking the supremum over $r\in[s,T]$ and then the conditional expectation
given $\mathcal F_t$ in (\ref{itoGBSDE2}), we obtain the following
$P$-a.s. estimate:
\begin{align*}
&E\left[\sup\limits_{r \in[s, T]} e^{\mu K_r}\left|\hat{Y}_r\right|^2 \bigg| \mathcal{F}_t\right]\displaybreak[1]
\leq C\left(E\left[\sup\limits_{r \in[s, T]}\left|\hat{X}_r\right|^4 \bigg| \mathcal{F}_t\right]\right)^{\frac{1}{2}}
+C\left(E\left[\sup\limits_{r \in[s, T]}\left|\hat{K}_r\right|^8 \bigg| \mathcal{F}_t\right]\right)^{\frac{1}{4}}\\
&\quad+C\left(E\left[\sup\limits_{r \in[s, T]}\left|\hat{K}_r\right|^4 \bigg| \mathcal{F}_t\right]\right)^{\frac{1}{2}}
+C E\left[\int_s^T e^{\mu K_l}\left|\hat{Z}_l\right|^2 \dd l \bigg| \mathcal{F}_t\right]
\leq C\Delta_{t,s}(\xi,\xi').
\end{align*}
Consequently, Proposition \ref{estimate RSDE} and (\ref{3.33}) allow to conclude. This completes the proof.
\end{proof}

We also need the following stability estimate for GBSDEs.

\begin{proposition}\label{stability GBSDE}
Suppose Assumption \ref{HGBSDE} holds for $(\Phi^i,f^i,g^i)$, $i=1,2$.
Let $(X^{i,t,\xi}_s,K^{i,t,\xi}_s)_{s\in[t,T]}$ denote the solution of
RSDE (\ref{RSDE}) with the coefficients $(b^i,\sigma^i)$. Put
\begin{equation*}
\begin{gathered}
 \overline{\Phi}=\sup\limits_{x\in\overline{D}}|\Phi^1(x)-\Phi^2(x)|,\qquad
 \overline{f}_t=\sup\limits_{(x,y,z)\in\overline{D}\times\mathbb{R}\times\mathbb{R}^m}|f^1(t,x,y,z)-f^2(t,x,y,z)|,\\
 \overline{g}_t=\sup\limits_{(x,y)\in\overline{D}\times\mathbb{R}}|g^1(t,x,y)-g^2(t,x,y)|,\\
 \overline{b}_t=\sup\limits_{x\in\overline{D}}|b^1(t,x)-b^2(t,x)|,\qquad
 \overline{\sigma}_t=\sup\limits_{x\in\overline{D}}|\sigma^1(t,x)-\sigma^2(t,x)|.
\end{gathered}
\end{equation*}
For $i=1,2$, let $(Y^{i,t,\xi},Z^{i,t,\xi})$ solve
\begin{equation*}
Y_s^{i,t,\xi}=\Phi^i(X_T^{i,t,\xi})+\int_s^T f^i(r,X_r^{i,t,\xi},Y_r^{i,t,\xi},Z_r^{i,t,\xi})\dd r+\int_s^T g^i(r,X_r^{i,t,\xi},Y_r^{i,t,\xi})\dd K_r^{i,t,\xi}-\int_s^T Z_r^{i,t,\xi}\dd W_r,
\end{equation*}
for $s\in[t,T]$. Then
\begin{equation*}
\begin{aligned}
&E\left[\sup\limits_{r\in[s,T]}e^{\mu K_r^{1,t,\xi}}
\left|Y_r^{1,t,\xi}-Y_r^{2,t,\xi}\right|^2
+\int_s^Te^{\mu K_l^{1,t,\xi}}
\left|Z_l^{1,t,\xi}-Z_l^{2,t,\xi}\right|^2 \dd l \bigg|\mathcal{F}_{s}\right]\\
\leq &C\left(E\left[|\overline{\Phi}|^4\bigg|\mathcal{F}_s\right]\right)^{\frac{1}{2}}
+C\left(E\left[\sup\limits_{r\in[s,T]}|\overline{f}_r|^4\bigg|\mathcal{F}_s\right]\right)^{\frac{1}{2}}
+C\left(E\left[\sup\limits_{r\in[s,T]}|\overline{g}_r|^4\bigg|\mathcal{F}_s\right]\right)^{\frac{1}{2}}\\
&\quad+\left(E\left[\int_0^T\left(\left|\overline{b}_l\right|^4+\left|\overline{\sigma}_l\right|^4\right)\dd l\bigg|\mathcal{F}_s\right]\right)^{\frac{1}{2}}
+\left(E\left[\int_0^T\left(\left|\overline{b}_l\right|^4+\left|\overline{\sigma}_l\right|^4\right)\dd l\bigg|\mathcal{F}_s\right]\right)^{\frac{1}{4}}.
\end{aligned}
\end{equation*}
\end{proposition}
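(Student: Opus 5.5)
We follow the scheme of the proof of Proposition \ref{estimate GBSDE}, now with the pairs $(X^{1},K^{1})$ and $(X^{2},K^{2})$ coming from two different coefficient sets instead of two initial conditions. Put $\hat\Psi:=\Psi^{1,t,\xi}-\Psi^{2,t,\xi}$ for $\Psi\in\{X,K,Y,Z\}$ and $K:=K^{1,t,\xi}$.

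\textbf{Step 1: It\^o expansion.} We apply It\^o's formula to $e^{\lambda r+\mu K_r}|\hat Y_r|^2$ on $[r,T]$, with $r\geq s$. Each coefficient difference is split into a part with the same coefficient and a part with different coefficients, e.g.
\begin{equation*}
f^1(l,X^1_l,Y^1_l,Z^1_l)-f^2(l,X^2_l,Y^2_l,Z^2_l)
=\bigl[f^1(l,X^1_l,Y^1_l,Z^1_l)-f^1(l,X^2_l,Y^2_l,Z^2_l)\bigr]+\bigl[f^1-f^2\bigr](l,X^2_l,Y^2_l,Z^2_l).
\end{equation*}
The second bracket is bounded by $\overline f_l$. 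For $\Phi$ and $g$ we proceed in the same way, with $\overline\Phi$ and $\overline g_l$. Using Assumption \ref{HGBSDE}(ii) and choosing $\lambda,\mu$ large, we obtain the analogue of (\ref{itoGBSDE2}). Its right-hand side consists of
\begin{itemize}
\item the terminal term, bounded by $C(|\hat X_T|^2+\overline\Phi^2)e^{\lambda T+\mu K_T}$;
\item integrals of $e^{\mu K_l}(|\hat X_l|^2+\overline f_l^{\,2})$ against $\dd l$, and of $e^{\mu K_l}(|\hat X_l|^2+\overline g_l^{\,2})$ against $\dd K_l$;
\item the cross term $2\int_r^T e^{\lambda l+\mu K_l}\hat Y_l\,h_l\,\dd\hat K_l$ with $h_l=g^2(l,X^2_l,Y^2_l)$;
\item a martingale term.
\end{itemize}
The $\overline f,\overline g,\overline\Phi$ contributions are handled by Cauchy--Schwarz against $E[e^{2\mu K_T}\mid\mathcal F_s]\le C$ from (\ref{*4}) and $E[(K_T)^{4}\mid\mathcal F_s]\le C$ from (\ref{*3}). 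This produces the terms $C(E[\,\cdot\,^4\mid\mathcal F_s])^{1/2}$ in the statement.

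\textbf{Step 2: the cross term, which is the main obstacle.} The signed measure $\dd\hat K$ cannot be bounded by $|\hat K|$ directly. As in (\ref{3.29}), we apply the It\^o--Wentzell formula to $e^{\lambda l+\mu K_l}\hat Y_l h_l\hat K_l$, using $g^2\in\mathcal C^3_{\mathcal F}$. This moves the integration onto $\hat K_l$ times integrands $g^1_l,g^2_l,g^3_l,Q_l$. These integrands are controlled by the bounds (\ref{jing}), (\ref{eq:g-bound}) for the second system, with $|Z^1|^2+|Z^2|^2$ appearing quadratically. The difference with Proposition \ref{estimate GBSDE} is that now $g^1_l$ also contains the coefficient mismatch $f^1-f^2$, and $\dd\hat Y$ contains $\overline f,\overline g$-terms; these are absorbed as in Step 1.

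The $\hat K$-dependent terms are bounded via H\"older exactly as in (\ref{estimate z1}). We use the exponential moments (\ref{*4}), the bound $E[(\int_s^T|Z^i_l|^2\dd l)^2\mid\mathcal F_s]\le C$ (proved by the same partition argument), and Proposition \ref{SRSDE} with exponents $4$ and $8$. This yields
\begin{equation*}
E\Bigl[\sup|\hat K|^4\,\Big|\,\mathcal F_s\Bigr]\le C\,E\Bigl[\int_0^T(\overline b^{\,4}+\overline\sigma^{\,4})\,\dd l\,\Big|\,\mathcal F_s\Bigr].
\end{equation*}
The terms $(E[\sup|\hat K|^4\mid\mathcal F_s])^{1/2}$ then give the power $1/2$ in the statement, while terms like $E[\sup|\hat K|\cdots]$ with fourth-root H\"older factors give the power $1/4$. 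The same reasoning applies to $\hat X$.

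\textbf{Step 3: conclusion.} We take conditional expectations given $\mathcal F_s$ to obtain the analogue of (\ref{3.33}). Then we take the supremum in $r$ and estimate the stochastic integrals with Burkholder--Davis--Gundy as in (\ref{estimate dw}). The $\frac18$-type factors allow the $\sup e^{\mu K}|\hat Y|^2$ and $\int e^{\mu K}|\hat Z|^2$ contributions to be absorbed into the left-hand side. Collecting terms yields the claimed bound.

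The only delicate point is the bookkeeping in Step 2: every occurrence of $\dd\hat K$ must be converted through the product formula before any estimate is taken, since the reflection measures are only comparable through Assumption \ref{HD'} via Proposition \ref{SRSDE}.
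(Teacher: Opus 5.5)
Your proposal follows the paper's proof essentially step by step: It\^o's formula for $e^{\lambda r+\mu K^1_r}|\hat Y_r|^2$ with the coefficient mismatches split off as $\overline\Phi,\overline f_l,\overline g_l$; the signed measure $\dd\hat K$ converted by the It\^o--Wentzell product formula applied to $e^{\lambda l+\mu K^1_l}\hat Y_l\,g^2(l,X^2_l,Y^2_l)\hat K_l$; H\"older against the exponential moments of $K^1$; Burkholder--Davis--Gundy with absorption; and finally Proposition \ref{SRSDE} to turn the $\hat X,\hat K$ terms into $\overline b,\overline\sigma$. As in the paper, that last step and the use of (\ref{*4}) conditionally on $\mathcal F_s$ require both reflected processes to start at the conditioning time, i.e.\ $s=t$, which is the case used in Theorem \ref{continuity}; for $s>t$ the right-hand side would need additional terms involving $\hat X_s$ and factors such as $e^{cK^1_s}$.
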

\begin{proof}
The proof is similar to that of Proposition \ref{estimate GBSDE}. To simplify, we set $\Psi^i :=\Psi^{i,t, \xi}$ and $\hat{\Psi}:=\Psi^1 -\Psi^2$, where $\Psi\in\{X,Y,Z,K\}$ and $i=1,2$. For constants $\lambda, \mu \geq 0$ to be specified later, It\^o's formula applied to $e^{\lambda r+\mu K^1_r}\left|\hat{Y}_r\right|^2$ yields, for $r\in[t, T]$,
\begin{equation*}
\begin{split}
&e^{\lambda r+\mu K_r^1}\left|\hat{Y}_r\right|^2+\int_r^T e^{\lambda l+\mu K_l^1}\left|\hat{Y}_l\right|^2\left(\lambda \dd l+\mu \dd K_l^1\right)+\int_r^T e^{\lambda l+\mu K_l^1}\left|\hat{Z}_l\right|^2 \dd l\\
={}&e^{\lambda T+\mu K_T^1}\left|\Phi^1\left(X^1_T\right)-\Phi^2\left(X_T^{2}\right)\right|^2+2 \int_r^T e^{\lambda l+\mu K^1_l} \hat{Y}_l\left(f^1\left(l, X^1_l,Y^1_l,Z^1_l\right)-f^2\left(l, X_l^{2},Y_l^{2},Z_l^{2}\right)\right) \dd l \\
& +2 \int_r^T e^{\lambda l+\mu K^1_l} \hat{Y}_l\left(g^1\left(l, X^1_l, Y^1_l\right) \dd K^1_l-g^2\left(l, X_l^{2}, Y_l^{2}\right) \dd K_l^{2}\right)-2 \int_r^T e^{\lambda l+\mu K^1_l}\hat{Z}_l\hat{Y}_l \dd W_l.
\end{split}
\end{equation*}
It follows from Assumption \ref{HGBSDE} that
\begin{equation*}
\begin{split}
&2\hat{Y}_l\left(f^1\left(l, X^1_l,Y^1_l,Z^1_l\right)-f^2\left(l, X_l^{2},Y_l^{2},Z_l^{2}\right)\right)\\
\leq &|\overline{f}_l|^2+|\hat{X}_l|^2+C|\hat{Y}_l|^2+\frac{1}{2}|\hat{Z}_l|^2,\\
&\hat{Y}_l\left(g^1\left(l, X^1_l,Y^1_l\right)\dd K^1_l-g^2\left(l, X_l^{2},Y_l^{2}\right)\dd K_l^{2}\right)\\
\leq &\left(|\overline{g}_l|^2+C|\hat{Y}_l|^2+|\hat{X}_l|^2\right)\dd K^1_l+\hat{Y}_lg^2(l,X_l^{2},Y_l^{2})\dd \hat{K}_l.
\end{split}
\end{equation*}
By choosing suitable $\lambda,\mu\geq0$, we have, for every $r\in[s,T]$,
\begin{equation}\label{itoGBSDE2222}
\begin{aligned}
&e^{\mu K^1_r}\left|\hat{Y}_r\right|^2
+\frac{1}{2}\int_r^T e^{\mu K^1_l}\left|\hat{Z}_l\right|^2 \dd l\\
\leq &Ce^{\mu K^1_T}\left|\Phi^1\left(X^1_T\right)-\Phi^2\left(X_T^{2}\right)\right|^2
+C\int_r^T e^{\mu K^1_l}\left|\hat{X}_l\right|^2 \dd l
+C\int_r^T e^{\mu K^1_l}\left|\hat{X}_l\right|^2 \dd K^1_l
+C\int_r^T e^{\mu K^1_l}\left|\overline{f}_l\right|^2 \dd l\\
&\quad+C\int_r^T e^{\mu K^1_l}\left|\overline{g}_l\right|^2 \dd K^1_l
+2\int_r^T e^{\lambda l+\mu K^1_l}\hat{Y}_l g^2\left(l,X_l^{2},Y_l^{2}\right)\dd\hat{K}_l
-2\int_r^T e^{\lambda l+\mu K^1_l}\hat{Z}_l\hat{Y}_l\dd W_l.
\end{aligned}
\end{equation}
We now apply the It\^o--Wentzell formula to $e^{\lambda l+\mu K^1_l} \hat{Y}_l g^2\left(l, X_l^{2}, Y_l^{2}\right)\hat{K}_l$. The argument used for (\ref{3.29})--(\ref{estimate z1}) can be used also here. In particular, we use the estimate
\begin{equation*}
E\left[\int_r^T e^{\lambda l+\mu K^1_l}
 \left|\overline{g}_l\right|^2\dd K^1_l\bigg|\mathcal{F}_s\right]
\leq C
 E\left[\sup\limits_{l\in[s,T]}|\overline{g}_l|^4
 \bigg|\mathcal{F}_s\right]^{\frac{1}{2}}
 E\left[|K^1_T|^4\bigg|\mathcal{F}_s\right]^{1/4}.
\end{equation*}
This allows to obtain, for $s\leq r\leq T$,
\begin{equation*}
\begin{aligned}
&e^{\mu K_r^1}\left|\hat{Y}_r\right|^2
+E\left[\int_r^Te^{\mu K_l^1}\left|\hat{Z}_l\right|^2 \dd l \bigg|\mathcal{F}_{r}\right]\\
\leq &C\left(E\left[|\overline{\Phi}|^4\bigg|\mathcal{F}_r\right]\right)^{\frac{1}{2}}
+C\left(E\left[\sup\limits_{v\in[r,T]}|\overline{f}_v|^4\bigg|\mathcal{F}_r\right]\right)^{\frac{1}{2}}
+CE\left[\sup\limits_{l\in[r,T]}|\overline{g}_l|^4\bigg|\mathcal{F}_r\right]^{\frac{1}{2}}
E\left[|K^1_T|^4\bigg|\mathcal{F}_r\right]^{\frac{1}{4}}\\
&\quad+C\left(E\left[\sup\limits_{v\in[r,T]}|\hat{X}_v|^4\bigg|\mathcal{F}_r\right]\right)^{\frac{1}{2}}
+C\left(E\left[\sup\limits_{v\in[r,T]}|\hat{K}_v|^4\bigg|\mathcal{F}_r\right]\right)^{\frac{1}{2}}
+C\left(E\left[\sup\limits_{v\in[r,T]}|\hat{K}_v|^4\bigg|\mathcal{F}_r\right]\right)^{\frac{1}{4}}.
\end{aligned}
\end{equation*}
On the other hand, from (\ref{itoGBSDE2222}),
\begin{equation*}
\begin{aligned}
&\sup\limits_{r\in[s,T]}e^{\mu K^1_r}\left|\hat{Y}_r\right|^2
+\frac{1}{2}\int_s^T e^{\mu K^1_l}\left|\hat{Z}_l\right|^2 \dd l\\
\leq &Ce^{\mu K^1_T}\left|\overline{\Phi}\right|^2
+Ce^{\mu K^1_T}|\hat X_T|^2
+C\int_s^T e^{\mu K^1_l}\left|\hat{X}_l\right|^2 \dd l
+C\int_s^T e^{\mu K^1_l}\left|\hat{X}_l\right|^2 \dd K^1_l
+C\int_s^T e^{\mu K^1_l}\left|\overline{f}_l\right|^2 \dd l\\
&\quad+C\int_s^T e^{\mu K^1_l}\left|\overline{g}_l\right|^2 \dd K^1_l
+C\int_s^T e^{\mu K^1_l}\hat{Y}_l g^2\left(l,X_l^{2},Y_l^{2}\right)\dd\hat{K}_l
+C\sup\limits_{r\in[s,T]}\left|\int_r^T e^{\lambda l+\mu K^1_l}\hat{Z}_l\hat{Y}_l \dd W_l\right|.
\end{aligned}
\end{equation*}
Taking the conditional expectation $E[\cdot|\mathcal{F}_s]$, we first use
the Burkholder--Davis--Gundy inequality to control the stochastic integral.
The finite-variation term containing $\dd\hat K$ is handled by the same
product-formula argument as in (\ref{estimate dw}); the coefficient difference
$\overline g$ is kept separate from the differences of the state and solution
processes. Young's inequality absorbs the mixed terms involving $\hat Y$ and
$\hat Z$. Finally, Proposition \ref{SRSDE} converts the estimates for
$\hat X$ and $\hat K$ into the stated coefficient differences. We thus obtain
\begin{equation*}
\begin{aligned}
&E\left[\sup\limits_{r\in[s,T]}e^{\mu K_r^1}\left|\hat{Y}_r\right|^2
+\int_s^Te^{\mu K_l^1}\left|\hat{Z}_l\right|^2 \dd l \bigg|\mathcal{F}_{s}\right]\\
\leq &C\left(E\left[|\overline{\Phi}|^4\bigg|\mathcal{F}_s\right]\right)^{\frac{1}{2}}
+C\left(E\left[\sup\limits_{r\in[s,T]}|\overline{f}_r|^4\bigg|\mathcal{F}_s\right]\right)^{\frac{1}{2}}
+CE\left[\sup\limits_{l\in[s,T]}|\overline{g}_l|^4\bigg|\mathcal{F}_s\right]^{\frac{1}{2}}
E\left[|K^1_T|^4\bigg|\mathcal{F}_s\right]^{\frac{1}{4}}\\
&\quad+C\left(E\left[\sup\limits_{r\in[s,T]}|\hat{X}_r|^4\bigg|\mathcal{F}_s\right]\right)^{\frac{1}{2}}
+C\left(E\left[\sup\limits_{r\in[s,T]}|\hat{K}_r|^4\bigg|\mathcal{F}_s\right]\right)^{\frac{1}{2}}
+C\left(E\left[\sup\limits_{r\in[s,T]}|\hat{K}_r|^4\bigg|\mathcal{F}_s\right]\right)^{\frac{1}{4}}
\end{aligned}
\end{equation*}
\begin{equation*}
\begin{aligned}
\leq &C\left(E\left[|\overline{\Phi}|^4\bigg|\mathcal{F}_s\right]\right)^{\frac{1}{2}}
+C\left(E\left[\sup\limits_{r\in[s,T]}|\overline{f}_r|^4\bigg|\mathcal{F}_s\right]\right)^{\frac{1}{2}}
+CE\left[\sup\limits_{l\in[s,T]}|\overline{g}_l|^4\bigg|\mathcal{F}_s\right]^{\frac{1}{2}}\qquad\qquad\qquad\quad\\
&+E\left[\int_0^T\left(\left|\overline{b}_l\right|^4+\left|\overline{\sigma}_l\right|^4\right)\dd l\bigg|\mathcal{F}_s\right]^{\frac{1}{2}}
+E\left[\int_0^T\left(\left|\overline{b}_l\right|^4+\left|\overline{\sigma}_l\right|^4\right)\dd l\bigg|\mathcal{F}_s\right]^{\frac{1}{4}}.
\end{aligned}
\end{equation*}
This completes the proof.
\end{proof}
\section{Value functions and viscosity solutions}
In this section, we consider the stochastic differential game described in
the introduction under the following assumptions.
\begin{assumption}\label{HSDG}
\textnormal{(i)} $\Phi \in L^{\infty}\left(\Omega, \mathcal{F}_T;H^{1, \infty}\left(\mathbb{R}^d\right)\right)$.\\
\textnormal{(ii)} Each component of $b$, $\sigma$, $f$, $g$, $\mathfrak d_tg$, and $\mathfrak d_\omega g$ is predictable in $(\omega,t)$, Borel measurable in its other variables, and continuous in all variables, for fixed $\omega\in\Omega$. Moreover, for every $(\theta,\gamma)\in\Theta_0\times\Gamma_0$, $g(\cdot,\cdot,\cdot,\theta,\gamma)\in\mathcal C_{\mathcal F}^3([0,T]\times\overline{D}\times\mathbb R)\cap\mathcal S^8(C_b(\overline{D}\times\mathbb R))$.\\
\textnormal{(iii)} There exist constants $L>0$ and $L_f,L_g\in[0,L]$ such that
$\|\Phi\|_{L^{\infty}\left(\Omega,\mathcal{F}_T;H^{1,\infty}(\mathbb{R}^d)\right)}\leq L$, but also, such that, for all $(\omega,t)\in\Omega\times[0,T]$ and all
$(x,y,z,\theta,\gamma),(x_1,y_1,z_1,\theta,\gamma),(x_2,y_2,z_2,\theta,\gamma)
\in\mathbb{R}^d\times\mathbb{R}\times\mathbb{R}^m\times\Theta_0\times\Gamma_0$,
the following inequalities are fulfilled:
\begin{equation*}
\begin{aligned}
\textnormal{(a)}\quad &|b(t,x,\theta,\gamma)|+|\sigma(t,x,\theta,\gamma)|\leq L,\\
&|f(t,x,y,z,\theta,\gamma)|\leq L(1+|y|+|z|),\\
\textnormal{(b)}\quad &|\Phi(x_1)-\Phi(x_2)|
+|b(t,x_1,\theta,\gamma)-b(t,x_2,\theta,\gamma)|\\
&+|\sigma(t,x_1,\theta,\gamma)-\sigma(t,x_2,\theta,\gamma)|
+|f(t,x_1,y,z,\theta,\gamma)-f(t,x_2,y,z,\theta,\gamma)|\\
&+|g(t,x_1,y,\theta,\gamma)-g(t,x_2,y,\theta,\gamma)|
\leq L|x_1-x_2|,\\
\textnormal{(c)}\quad &|f(t,x,y_1,z_1,\theta,\gamma)-f(t,x,y_2,z_2,\theta,\gamma)|
\leq L_f(|y_1-y_2|+|z_1-z_2|),\\
&|g(t,x,y_1,\theta,\gamma)-g(t,x,y_2,\theta,\gamma)|
\leq L_g|y_1-y_2|,\\
\textnormal{(d)}\quad
&\bigl(|g|+|Dg|+|D^2g|+|\mathfrak d_tg|
 +|\mathfrak d_\omega g|+|D\mathfrak d_\omega g|\bigr)
 (t,x,y,\theta,\gamma)\leq L(1+|x|+|y|).
\end{aligned}
\end{equation*}
\textnormal{(iv)} $D$ satisfies the Assumptions \ref{HD} and \ref{HD'}.
\end{assumption}
Propositions \ref{estimate RSDE} and \ref{estimate GBSDE} yield the
following two lemmas, which state the flow and stability estimates used
in what follows.
\begin{lemma}\label{pro RSDE}
Suppose Assumption \ref{HSDG} holds. Given
$(\theta,\gamma)\in\Theta\times\Gamma$ and $p\geq2$, there exists
$C>0$, depending only on $L,T,p$, such that the following assertions hold
for the solution of RSDE (\ref{eq RSDE}), whenever
$0\leq \rho\leq t\leq s\leq T$ and
$\xi,\xi'\in L^0(\Omega,\mathcal F_\rho;\overline{D})$.

\begin{enumerate}[\upshape(i)]
\item The processes
$\left(X_r^{\rho,\xi;\theta,\gamma}\right)_{t\leq r\leq T}$ and
$\left(X_r^{t,X_t^{\rho,\xi;\theta,\gamma};\theta,\gamma}\right)_{t\leq r\leq T}$
are indistinguishable. The same is true of
$\left(K_r^{\rho,\xi;\theta,\gamma}-K_t^{\rho,\xi;\theta,\gamma}\right)_{t\leq r\leq T}$
and
$\left(K_r^{t,X_t^{\rho,\xi;\theta,\gamma};\theta,\gamma}\right)_{t\leq r\leq T}$.

\item $E\left[\sup\limits_{r\in[t,T]}|K_r^{t,\xi;\theta,\gamma}|^{2p}\mid\mathcal F_t\right]
\leq C(1+|\xi|^{2p})$, $P$-a.s.
\item $E\left[\sup\limits_{r\in[t,T]}|X_r^{t,\xi;\theta,\gamma}-X_r^{t,\xi';\theta,\gamma}|^{2p}
+\sup\limits_{r\in[t,T]}|K_r^{t,\xi;\theta,\gamma}-K_r^{t,\xi';\theta,\gamma}|^{2p}
\mid\mathcal F_t\right]\leq C|\xi-\xi'|^{2p}$, $P$-a.s.
\item $E\left[|X_s^{t,\xi;\theta,\gamma}-\xi|^{2p}
+|K_s^{t,\xi;\theta,\gamma}|^{2p}\mid\mathcal F_t\right]
\leq C|s-t|^p$, $P$-a.s.
\end{enumerate}
\end{lemma}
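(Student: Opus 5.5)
The plan is to reduce everything to the RSDE estimates of Proposition \ref{estimate RSDE}, applied to the frozen-control coefficients. Fix $(\theta,\gamma)\in\Theta\times\Gamma$ and set $\tilde b(s,x):=b(s,x,\theta_s,\gamma_s)$ and $\tilde\sigma(s,x):=\sigma(s,x,\theta_s,\gamma_s)$. Assumption \ref{HSDG}(ii),(iii) shows that $\tilde b,\tilde\sigma$ are progressively measurable in $(\omega,s)$, bounded by $L$, continuous in $x$, and Lipschitz in $x$ with constant $L$. Hence Assumption \ref{HRSDE} holds with $K=L$, uniformly in the controls. Together with Assumption \ref{HSDG}(iv), all hypotheses of Proposition \ref{estimate RSDE}, including those of its second part, are then satisfied, and the constants depend only on $L,T,p$ and $D$.

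For (i), the flow property, we will use pathwise uniqueness of the RSDE. The pair $\bigl(X_r^{\rho,\xi;\theta,\gamma},\,K_r^{\rho,\xi;\theta,\gamma}-K_t^{\rho,\xi;\theta,\gamma}\bigr)_{r\in[t,T]}$ solves (\ref{RSDE}) on $[t,T]$ with initial value $X_t^{\rho,\xi;\theta,\gamma}$ at time $t$, the same coefficients, and the same support condition $\dd K=\mathbf 1_{\partial D}(X)\dd K$. By the uniqueness result cited from \cite{Lions1984,Saisho}, it therefore coincides with $(X^{t,\eta},K^{t,\eta})$ for $\eta=X_t^{\rho,\xi;\theta,\gamma}$. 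Since both processes are continuous, they are indistinguishable. One subtlety is that $\eta$ is only $L^0$, but $D$ is bounded, so $\eta\in L^{2p}(\mathcal F_t;\overline D)$ automatically.

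The estimates in (ii)--(iv) are specializations of Proposition \ref{estimate RSDE}. Boundedness of $D$ puts every $\xi\in L^0(\mathcal F_\rho;\overline D)$ in $L^{2p}$. Assertion (ii) follows from (\ref{*3}), since $C\le C(1+|\xi|^{2p})$. For (iii), take $s=t$ in the first estimate and in the $K$-difference estimate; the latter is the one requiring Assumption \ref{HD'}. Because $\xi,\xi'$ are $\mathcal F_\rho\subset\mathcal F_t$-measurable, $E[|\xi-\xi'|^{2p}\mid\mathcal F_t]=|\xi-\xi'|^{2p}$. For (iv), the $K$ part is the second estimate of Proposition \ref{estimate RSDE} with deterministic $s$. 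The $X$ part is (\ref{estimate5}), derived inside that proof, which bounds $E[\sup_{r\in[t,s]}|X_r^{t,\xi}-\xi|^{2p}\mid\mathcal F_t]$ by $C_p|s-t|^p$. Alternatively, it follows from the first estimate with $\xi'=\xi$ and the convention $X_r^{s,\xi}=\xi$ for $r\le s$, evaluated at $r=s$.

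The main obstacle is bookkeeping rather than a new argument. We have to check that the constants in Proposition \ref{estimate RSDE} depend on the coefficients only through the bounds and Lipschitz constants, so that they are uniform in $(\theta,\gamma)$ and depend only on $L,T,p$ once $D$ is fixed. We also need to apply the pathwise uniqueness argument for the flow property with a random, merely $\mathcal F_t$-measurable initial condition. Both points are handled by the It\^o--Gronwall argument in the proof of Proposition \ref{estimate RSDE}, which uses only those constants, together with the boundedness of $\overline D$.
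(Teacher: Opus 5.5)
Your proposal is correct and follows the same route as the paper. The paper states this lemma as a direct consequence of Proposition \ref{estimate RSDE}, applied to the coefficients with the controls frozen; those coefficients satisfy Assumption \ref{HRSDE} with constants uniform in $(\theta,\gamma)$. As you outline, the flow property (i) comes from uniqueness of the RSDE solution, and (ii)--(iv) are specializations of (\ref{*3}), (\ref{estimate5}) and the $K$-difference estimate under Assumption \ref{HD'}.
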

\begin{lemma}\label{pro GRSDE}
\textnormal{(i)} For all $(t, \theta, \gamma) \in[0, T) \times \Theta \times \Gamma$ and every $\xi \in L^2\left(\Omega, \mathcal{F}_t;\overline{D}\right)$, GBSDE (\ref{1.3}) has a unique solution $\left(Y^{t,\xi;\theta,\gamma}, Z^{t,\xi;\theta,\gamma}\right) \in \mathcal{S}^2([t, T];\mathbb{R}) \times L^2\left([t, T];\mathbb{R}^m\right)$ and the solution satisfies
\begin{equation*}
\begin{aligned}
E\left[\sup\limits_{s \in[t, T]}\left|Y_s^{t,\xi;\theta,\gamma}\right|^2+\int_t^T\left|Z_s^{t,\xi;\theta,\gamma}\right|^2 \dd s\bigg|\mathcal{F}_t\right] \leq &C, \quad P\text{-a.s.}, \\
\left|Y_t^{t,\xi;\theta,\gamma}\right| \leq &C, \quad P\text{-a.s.};
\end{aligned}
\end{equation*}
\textnormal{(ii)} Given another $\xi' \in L^2\left(\Omega, \mathcal{F}_t;\overline{D}\right)$, the following estimates hold $P$-a.s.:
\begin{equation*}
\begin{aligned}
E\left[\sup\limits_{s \in[t, T]}\left|Y_s^{t,\xi;\theta,\gamma}-Y_s^{t,\xi';\theta,\gamma}\right|^2+\int_t^T\left|Z_s^{t,\xi;\theta,\gamma}-Z_s^{t,\xi';\theta,\gamma}\right|^2 \dd s\bigg|\mathcal{F}_t\right] \leq &C|\xi-\xi'|,\\
\left|Y_t^{t,\xi;\theta,\gamma}-Y_t^{t,\xi';\theta,\gamma}\right| \leq &C|\xi-\xi'|^{\frac{1}{2}}.
\end{aligned}
\end{equation*}
\end{lemma}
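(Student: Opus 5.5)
Lemma \ref{pro GRSDE} is a specialization of the general GBSDE estimates of Section 3 to the controlled coefficients. Fix $(\theta,\gamma)\in\Theta\times\Gamma$ and freeze the controls in the coefficients. That is, set $b^{\theta,\gamma}(s,x):=b(s,x,\theta_s,\gamma_s)$, and define $\sigma^{\theta,\gamma}$, $f^{\theta,\gamma}(s,x,y,z)$ and $g^{\theta,\gamma}(s,x,y)$ in the same way. Since $\theta,\gamma$ are progressively measurable and the coefficients are predictable in $(\omega,t)$ and continuous in the control variables (Assumption \ref{HSDG}(ii)), these frozen coefficients are progressively measurable in $(\omega,s)$. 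The first step is to check that they satisfy Assumptions \ref{HRSDE} and \ref{HGBSDE} with constants depending only on $L$.

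Boundedness and Lipschitz continuity of $b,\sigma$ in $x$ give Assumption \ref{HRSDE}. The Lipschitz bound in $y$ from \ref{HSDG}(iii)(c) gives the monotonicity conditions (a) with $\lambda_1=L_f$ and $\lambda_2=L_g$. Conditions (b) and (c) follow from (iii)(a),(b),(d). The one point needing care is that $g^{\theta,\gamma}$ belongs to $\mathcal C^3_{\mathcal F}$ with derivative bounds uniform in the control. For this I would use that only the composite quantities actually enter the proof of Proposition \ref{estimate GBSDE}, namely the It\^o--Wentzell expansion of $g(l,X'_l,Y'_l)$. Those quantities are $\mathfrak d_t g,\mathfrak d_\omega g,Dg,D^2g,D\mathfrak d_\omega g$ evaluated along the path with the frozen controls, and they are controlled by (iii)(d). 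The main obstacle is this step: justifying the product-formula argument (\ref{3.29}) when the control process enters $g$, which does not in general make $g(\cdot,\cdot,\cdot,\theta_\cdot,\gamma_\cdot)$ an It\^o--Wentzell field. If it cannot be justified directly, I would first prove the estimates for simple (piecewise constant) controls, where on each piece $g$ has the required form, and then pass to the limit by density of simple controls together with the stability Proposition \ref{stability GBSDE}.

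Once the hypotheses are verified, part (i) follows from existence and uniqueness \cite[Theorem~1.7]{Pardoux and Zhang} and the first estimate of Proposition \ref{estimate GBSDE}. The bound $|Y_t^{t,\xi;\theta,\gamma}|\le C$ follows from (\ref{jing}), because $\overline D$ is bounded and $Y_t$ is $\mathcal F_t$-measurable, so its conditional second moment equals $|Y_t|^2$.

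For part (ii), apply the second estimate of Proposition \ref{estimate GBSDE} with $s=t$. Since $\xi,\xi'$ are $\mathcal F_t$-measurable, $(E[|\xi-\xi'|^4\mid\mathcal F_t])^{1/4}=|\xi-\xi'|$ and the time term $|s-t|^{1/2}$ vanishes, so the right-hand side is $C|\xi-\xi'|$. Evaluating at $r=t$ and again using $\mathcal F_t$-measurability gives $|Y_t^{t,\xi}-Y_t^{t,\xi'}|^2\le C|\xi-\xi'|$, which is the square-root bound. The dependence of $C$ only on $L,T$ (and $D$) follows because every constant in Propositions \ref{estimate RSDE} and \ref{estimate GBSDE} depends only on the bounds verified in the first step, uniformly in $(\theta,\gamma)$.
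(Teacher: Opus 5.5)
\textbf{Where your proposal stands.} Your route is exactly the paper's: freeze the controls, check Assumptions~\ref{HRSDE} and~\ref{HGBSDE}, and read off Propositions~\ref{estimate RSDE} and~\ref{estimate GBSDE} with $s=t$. The paper states the lemma as an immediate consequence of those two propositions and never addresses the point you flag. Part~(i) and the bookkeeping in~(ii) are fine. The a priori bound uses only the growth of $g$ and $E[e^{\lambda K_T}\mid\mathcal F_t]\le C_\lambda$. For $\mathcal F_t$-measurable $\xi,\xi'$ and $s=t$, the right-hand side is $C|\xi-\xi'|$. But the step you single out is a genuine gap, and neither of your ways around it closes it.

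\textbf{Why your two workarounds fail.} Bounds on $Dg,\mathfrak d_tg,\mathfrak d_\omega g,D\mathfrak d_\omega g$ at frozen $(\theta,\gamma)$ do not help. The product formula (\ref{3.29}) needs $l\mapsto h_l=g(l,X'_l,Y'_l,\theta_l,\gamma_l)$ to be a semimartingale, and a merely progressively measurable control destroys that. The simple-control fallback fails for two reasons.
\begin{itemize}
\item For piecewise constant controls, $h$ jumps at each switching time $s_i$. The product formula then produces extra terms $e^{\lambda s_i+\mu K_{s_i}}\hat Y_{s_i}\hat K_{s_i}(h_{s_i+}-h_{s_i})$. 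Their sum is bounded only by the number of switches times $C\sup|\hat Y|\,|\hat K|$, so the constant is not uniform.
\item Proposition~\ref{stability GBSDE} measures the $g$-error through $\sup_r\overline g_r$, i.e.\ uniformly in time. Simple controls approximate a progressively measurable control only in $\dd t\otimes\dd P$-measure. That topology cannot see $\int g\,\dd K$, because $\dd K$ is singular with respect to $\dd t$.
\end{itemize}

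\textbf{Why no repair of this kind can work.} Estimate~(ii) can actually fail once $g$ depends on the controls. The difficulty that (\ref{3.29}) is designed to sidestep is this: $\int h\,\dd\hat K$ is governed by the total variation of $\dd K^{t,\xi}-\dd K^{t,\xi'}$, whereas Proposition~\ref{estimate RSDE} only controls $\sup|\hat K|$. For instance, take the following data, all satisfying Assumption~\ref{HSDG}:
\begin{itemize}
\item $D$ the unit disk in $\mathbb R^2$, $b=0$, $\sigma=I$, $f=0$, $\Phi=0$;
\item $\Theta_0=[-1,1]$ and $g(t,x,y,\theta,\gamma)=\theta$;
\item a fixed $x_0\in\partial D$ and the control $\theta_s=\mathbf 1_{\partial D}(X^{t,x_0}_s)$.
\end{itemize}
Then $J(t,x_0;\theta,\gamma)=E[K^{t,x_0}_T]>0$. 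For $x\neq x_0$, however, $J(t,x;\theta,\gamma)=E[\int_t^T\mathbf 1_{\partial D}(X^{t,x_0}_s)\,\dd K^{t,x}_s]=0$. I expect this to hold because $\dd K^{t,x}$ should not charge the times at which both processes lie on $\partial D$: their radial parts are reflected diffusions whose noises have correlation strictly less than one. I have not verified this boundary-charging property for the disk, so treat it as the step you would need to check. If it holds, $J(t,\cdot;\theta,\gamma)$ jumps at $x_0$. Note also that this $\theta$ vanishes $\dd t\otimes\dd P$-a.e.

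\textbf{When your argument does go through.} It is complete if $g$ does not depend on $(\theta,\gamma)$. Then $g(\cdot,x,y)\in\mathcal C^3_{\mathcal F}$ and Proposition~\ref{estimate GBSDE} applies verbatim; this is the situation of Section~\ref{sec:uniqueness}, where $g=g_0(t,x)$. It also works if the admissible controls are restricted so that $s\mapsto g(s,x,y,\theta_s,\gamma_s)$ is itself an It\^o--Wentzell field, with characteristics bounded uniformly over the class.
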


The preceding lemmas give the following bounds for the value functions and the payoff.
\begin{proposition}\label{bound}
Suppose Assumption \ref{HSDG} holds. Then
\begin{equation*}
\underset{(t, x) \in[0, T] \times \overline{D}}{\esssup}\ \ \underset{(\theta, \gamma) \in \Theta \times \Gamma}{\esssup} \max \{|V(t, x)|,|U(t, x)|,|J(t,x;\theta,\gamma)|\} \leq C, \quad P\text{-a.s.}
\end{equation*}
(Recall that $J(t,x;\theta,\gamma)$, $V(t,x)$ and $U(t,x)$ are defined in (\ref{1.3}), (\ref{lower function}) and (\ref{upper function}), respectively.) Moreover, there exists a constant $L_0>0$ such that, for every $t\in[0,T]$ and $x,\bar{x}\in\overline{D}$,
\begin{equation*}
\begin{aligned}
&|U(t, x)-U(t, \bar{x})|+|V(t, x)-V(t, \bar{x})|
+\underset{(\theta, \gamma) \in \Theta \times \Gamma}{\esssup}
|J(t,x;\theta,\gamma)-J(t,\bar{x};\theta,\gamma)|\\
\leq &L_0|x-\bar{x}|^{\frac{1}{2}}, \quad P\text{-a.s.}
\end{aligned}
\end{equation*}
\end{proposition}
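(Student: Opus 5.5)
The plan is to derive both assertions of Proposition~\ref{bound} for the payoff $J$ first, uniformly in the controls, and then pass them to $V$ and $U$ through the essential infimum/supremum.

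\textbf{Uniform bound.} For fixed $(\theta,\gamma)\in\Theta\times\Gamma$ I freeze the controls. The coefficients $b(s,x,\theta_s,\gamma_s)$, $\sigma(s,x,\theta_s,\gamma_s)$, $f(s,x,y,z,\theta_s,\gamma_s)$, $g(s,x,y,\theta_s,\gamma_s)$ are then progressively measurable random coefficients. By Assumption~\ref{HSDG}(iii) they satisfy Assumptions~\ref{HRSDE} and \ref{HGBSDE}, with constants depending only on $L,T$ and not on the controls. The one-sided conditions in Assumption~\ref{HGBSDE}(ii)(a) follow from the Lipschitz bounds with $\lambda_1=L_f$ and $\lambda_2=L_g$. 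Lemma~\ref{pro GRSDE}(i) then gives $|J(t,x;\theta,\gamma)|=|Y^{t,x;\theta,\gamma}_t|\le C$ $P$-a.s., with $C$ independent of $(t,x,\theta,\gamma)$. This independence is the crucial bookkeeping point; it comes from the explicit dependence of the constants in Propositions~\ref{estimate RSDE} and \ref{estimate GBSDE} only on $K,T,D,p$ and the bounds on the coefficients. The bound passes to ess sup/ess inf over controls and strategies, since $|V|,|U|\le \esssup_{\theta,\gamma}|J|$. Taking the essential supremum over $(t,x)$ is legitimate because the constant is deterministic.

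\textbf{Hölder estimate.} For the space regularity, Lemma~\ref{pro GRSDE}(ii) with $\xi=x$, $\xi'=\bar x$ (deterministic, hence $\mathcal F_t$-measurable) gives $|J(t,x;\theta,\gamma)-J(t,\bar x;\theta,\gamma)|\le C|x-\bar x|^{1/2}$ $P$-a.s. Again the constant is uniform in the controls. This bound originates from Proposition~\ref{estimate GBSDE}, where the square root comes from the $\hat K$ moments and the product-formula treatment of $\dd\hat K$. For $V$, I fix $\mu\in\mathcal M$ and note that for each $\theta$ the pair $(\theta,\mu(\theta))$ is admissible. Hence
\[
\esssup_\theta J(t,x;\theta,\mu(\theta))\le \esssup_\theta J(t,\bar x;\theta,\mu(\theta))+C|x-\bar x|^{1/2}.
\]
Taking $\essinf_\mu$ on both sides and exchanging the roles of $x$ and $\bar x$ gives $|V(t,x)-V(t,\bar x)|\le C|x-\bar x|^{1/2}$. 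The same argument applies to $U$ with $\alpha(\gamma)$.

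\textbf{Main obstacle.} The delicate point is the monotonicity of essential extrema: inequalities holding $P$-a.s. for each fixed index must transfer to the ess sup/ess inf. This is standard, since the ess sup is the a.s. smallest upper bound and is attained along a countable subfamily. A second point is that the strategies produce controls from the same families $\Theta,\Gamma$, so the uniform constants apply. I would also note that $J(t,x;\cdot)$ is $\mathcal F_t$-measurable, so no conditioning issues arise; $L_0$ is then the maximum of the constants obtained.
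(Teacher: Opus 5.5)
Your proposal is correct and follows the same route as the paper. The paper obtains Proposition \ref{bound} directly from Lemma \ref{pro GRSDE}(i)--(ii), i.e.\ from Propositions \ref{estimate RSDE} and \ref{estimate GBSDE} with constants uniform in the controls, and then passes the bounds through the essential infimum and supremum over strategies and controls exactly as you do.

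One caveat: your claim that the frozen-control coefficients satisfy Assumption \ref{HGBSDE} is slightly too strong. For merely progressively measurable controls, $s\mapsto g(s,x,y,\theta_s,\gamma_s)$ need not lie in $\mathcal C^3_{\mathcal F}$. This does not change your argument, because you, like the paper, invoke Lemma \ref{pro GRSDE}, which is stated directly under Assumption \ref{HSDG}.
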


Following \cite{Li08sicon}, we define the admissible controls and strategies for the game.
\begin{definition}
An admissible control process $\left(\theta_r\right)_{r \in[t, s]}$ (resp. $\left(\gamma_r\right)_{r \in[t, s]}$) for player I (resp. II) on $[t,s]$, $0\leq t<s\leq T$, is an $\left\{\mathcal{F}_r\right\}_{r\in[t,s]}$-progressively measurable process taking values in $\Theta_0$ (resp. $\Gamma_0$). The set of all admissible controls for player I (resp. II) on $[t, s]$ is denoted by $\Theta_{t, s}$ (resp. $\Gamma_{t, s}$). We identify two processes $\theta$ and $\theta'$ in $\Theta_{t, s}$ and write $\theta \equiv \theta'$ on $[t, s]$, if $P\{\theta=\theta'\text{ a.e. on }[t,s]\}=1$.
\end{definition}

\begin{definition}\label{strategy}
 A nonanticipative strategy for player I on $[t,s]$, $0\leq t<s\leq T$, is a mapping $\alpha\colon\Gamma_{t,s}\to\Theta_{t,s}$ such that, for any stopping time $S\in\mathcal T_{t,s}$ and any $\gamma_1,\gamma_2\in\Gamma_{t,s}$, if $\gamma_1\equiv\gamma_2$ on $[t,S]$, then $\alpha(\gamma_1)\equiv\alpha(\gamma_2)$ on $[t,S]$, where $[t,S]=\{(r,\omega)\in[0,T]\times\Omega:t\leq r\leq S(\omega)\}$. Nonanticipative strategies for player II on $[t,s]$, namely mappings $\mu\colon\Theta_{t,s}\to\Gamma_{t,s}$, are defined symmetrically. The set of all nonanticipative strategies $\alpha\colon\Gamma_{t,s}\to\Theta_{t,s}$ for player I on $[t,s]$ is denoted by $\mathcal A_{t,s}$. The corresponding set for player II is denoted by $\mathcal M_{t,s}$.
\end{definition}

The preceding control and strategy definitions also apply to stochastic
intervals $[t,s]$, where $t,s\in\mathcal T^0$ and $t\leq s$.
We extend the backward semigroup introduced in \cite{Peng1997} for BSDEs to the present GBSDE setting. For $(t,x)\in[0,T]\times\overline D$, stopping times $\tau\in\mathcal T_{t,T}$ and $\sigma\in\mathcal T_{t,\tau}$, controls $\theta\in\Theta_{t,\tau}$ and $\gamma\in\Gamma_{t,\tau}$, and $\xi\in L^2(\mathcal F_\tau;\mathbb R)$, define $G_{\sigma,\tau}^{t,x;\theta,\gamma}[\xi]:=\overline Y_\sigma^{t,x;\theta,\gamma}$, where $(\overline Y_s^{t,x;\theta,\gamma},\overline Z_s^{t,x;\theta,\gamma})_{s\in[t,\tau]}$ solves the following GBSDE:
\begin{equation*}
\left\{\begin{aligned}
-\dd \overline{Y}_s^{t,x;\theta,\gamma}={}&f\left(s, X_s^{t,x;\theta,\gamma}, \overline{Y}_s^{t,x;\theta,\gamma}, \overline{Z}_s^{t,x;\theta,\gamma}, \theta_s, \gamma_s\right) \dd s-\overline{Z}_s^{t,x;\theta,\gamma}\dd W_s \\
&\quad+g\left(s, X_s^{t,x;\theta,\gamma}, \overline{Y}_s^{t,x;\theta,\gamma}, \theta_s, \gamma_s\right) \dd K_s^{t,x;\theta,\gamma}, \quad s \in[t, \tau],\\
\overline{Y}_\tau^{t,x;\theta,\gamma}={}&\xi.
\end{aligned}\right.
\end{equation*}
Here $(X^{t,x;\theta,\gamma},K^{t,x;\theta,\gamma})$ is the solution of (\ref{eq RSDE}). The backward semigroup has the following semigroup property: for all $\tau\in\mathcal{T}_{t,T}$,
\begin{equation*}
J(t,x;\theta,\gamma)=Y_{t}^{t,x;\theta,\gamma}=G_{t,T}^{t,x;\theta,\gamma}\left[\Phi(X_T^{t,x;\theta,\gamma})\right]=G_{t,\tau}^{t,x;\theta,\gamma}\left[Y_{\tau}^{t,x;\theta,\gamma}\right],\quad P\text{-a.s.}
\end{equation*}
We now prove the continuity of the value functions and the payoff.
\begin{theorem}\label{continuity}
Suppose Assumption \ref{HSDG} holds. Then $V(\cdot,\cdot)$, $U(\cdot,\cdot)$, and $J(\cdot,\cdot;\theta,\gamma)$ are continuous on $[0,T]\times\overline{D}$, $P$-a.s.
\end{theorem}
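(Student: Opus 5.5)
Spatial regularity is already given by Proposition \ref{bound}: $V$, $U$ and $J(t,\cdot;\theta,\gamma)$ are $\frac12$-Hölder in $x$, uniformly in $t$ and in the controls, $P$-a.s. So the work is continuity in time, and the plan is to obtain a modulus in $t$, uniform in $x\in\overline D$. A modulus in $t$ uniform in $x$, plus the uniform Hölder bound in $x$, gives joint continuity on $[0,T]\times\overline D$. Since the value functions are defined only up to null sets for each $(t,x)$, I would first prove the estimates on a countable dense set of $(t,x)$. I would then extend by uniform continuity and use this extension as the continuous version.

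\textbf{Step 1 (payoff).} Fix $0\le t\le s\le T$, $x\in\overline D$ and $(\theta,\gamma)$. By the flow property in Lemma \ref{pro RSDE}(i) and uniqueness for the GBSDE, $Y_s^{t,x;\theta,\gamma}=Y_s^{s,X_s^{t,x;\theta,\gamma};\theta,\gamma}$. Using the backward semigroup, write
\[
J(t,x;\theta,\gamma)-J(s,x;\theta,\gamma)
=\Bigl(G^{t,x;\theta,\gamma}_{t,s}[Y_s^{t,x;\theta,\gamma}]-Y_s^{t,x;\theta,\gamma}\Bigr)
+\Bigl(Y_s^{s,X_s^{t,x};\theta,\gamma}-Y_s^{s,x;\theta,\gamma}\Bigr).
\]
For the second term, Lemma \ref{pro GRSDE}(ii) gives a pathwise bound $C|X_s^{t,x}-x|^{1/2}$. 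Its $\mathcal F_t$-conditional expectation is then at most $C|s-t|^{1/4}$ by Lemma \ref{pro RSDE}(iv).

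For the first term, the GBSDE on $[t,s]$ has bounded $Y$ (by (\ref{jing})), $f$ of linear growth, and $|g|\le C$. Hence
\[
\Bigl|E\bigl[Y_t-Y_s\mid\mathcal F_t\bigr]\Bigr|\le C\,E\Bigl[\int_t^s(1+|Z_r|)\,\dd r+K_s^{t,x}\Bigm|\mathcal F_t\Bigr]\le C|s-t|^{1/2}.
\]
Here I use Lemma \ref{pro RSDE}(iv) for $K$ and Lemma \ref{pro GRSDE}(i) with Cauchy–Schwarz for $Z$. All constants depend only on $L,T$, so the bound is uniform in controls and in $x$.

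\textbf{The main obstacle.} $Y_s^{s,x}=J(s,x;\theta,\gamma)$ is $\mathcal F_s$-measurable, not $\mathcal F_t$-measurable. Conditional-expectation bounds therefore do not directly give pathwise continuity of $t\mapsto J(t,x)$. I would handle this with a Kolmogorov-type argument. Take $L^{2p}$ versions of the above estimates, using Proposition \ref{estimate GBSDE} with higher moments and the $2p$-moment bounds of Proposition \ref{estimate RSDE} for $K$ and $X$. This should give
\[
E\bigl|J(t,x)-J(s,\bar x)\bigr|^{2p}\le C\bigl(|t-s|^{p/2}+|x-\bar x|^{p/2}\bigr).
\]
For large $p$, Kolmogorov's criterion on $[0,T]\times\overline D$ then yields a continuous version. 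An alternative route, if the moments are awkward, is to note that $J(\cdot,x)$ is the continuous process $Y^{0,\cdot}$ restarted; the flow property gives continuity of $s\mapsto Y_s^{s,x}$ through $s\mapsto Y_s^{0,x'}$ composed with $X$.

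\textbf{Step 2 (value functions).} For $V$ and $U$ I would combine the uniform-in-controls estimates of Step 1 with the DPP (Appendix \ref{app:DPP}):
\[
V(t,x)=\operatorname*{ess\,inf}_{\mu}\operatorname*{ess\,sup}_{\theta}G^{t,x;\theta,\mu(\theta)}_{t,s}\bigl[V(s,X_s^{t,x;\theta,\mu(\theta)})\bigr].
\]
The backward semigroup is Lipschitz in its terminal value in conditional $L^2$. Its difference from the terminal value on $[t,s]$ is $O(|s-t|^{1/2})$, uniformly in controls, by the Step 1 computation. Combined with the $\frac12$-Hölder bound of Proposition \ref{bound}, this gives $|V(t,x)-E[V(s,x)\mid\mathcal F_t]|\le C|s-t|^{1/4}$. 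The same inequality holds for $U$.

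To upgrade this to pathwise continuity, the Kolmogorov argument is again needed. Since essinf/esssup are taken over $\mathcal F_s$-measurable objects, I would instead follow \cite{Qiu22AAP}. First show that the terminal-value-type functional $\tilde V(t,x;s)$ built with $V(s,\cdot)$ is continuous. Then use the fact that $V(s,\cdot)$ is bounded and Hölder, so the family is equicontinuous. This measurability/version issue is the step I expect to be the hardest, and I would follow \cite{Qiu22AAP} closely there.
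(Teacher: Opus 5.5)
There is a genuine gap, and it sits at the step you flag yourself: turning $\mathcal F_t$-conditional bounds into pathwise continuity in $t$. The Kolmogorov route cannot work, because the moment bound $E|J(t,x)-J(s,\bar x)|^{2p}\leq C(|t-s|^{p/2}+|x-\bar x|^{p/2})$ is false under Assumption \ref{HSDG}.

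The reason is structural. The diagonal increment $Y^{t,x}_t-Y^{s,x}_s$ contains $Y^{t,x}_t-Y^{t,x}_s$, and hence the stochastic integral $\int_t^s Z_r\,\dd W_r$. But $Z$ only satisfies a conditional $L^2$ bound, with no rate in $s-t$. Proposition \ref{estimate GBSDE} does not help here: it compares $Y^{t,\xi}_r$ and $Y^{s,\xi'}_r$ at a common time $r\geq s$, not along the diagonal.

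Here is a concrete counterexample. Take $m=1$, $b=\sigma=f=g=0$, and $\Phi=\sin(\int_0^T h(r)\,\dd W_r)$, where $h$ is deterministic and $v(t):=\int_t^T h(r)^2\,\dd r=1/|\log(T-t)|$ near $T$. Then $V=U=J=E[\Phi\mid\mathcal F_t]=e^{-v(t)/2}\sin(\int_0^t h\,\dd W)$. A direct computation gives $E|J(T,x)-J(t,x)|^2=E[\Phi^2]-E[J(t,x)^2]\geq\frac12(1-e^{-v(t)})$. By Jensen, every $2p$-th moment is therefore at least of order $|\log(T-t)|^{-p}$, so no power-type moment bound can hold.

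Your alternative route fails as well. The flow property gives $Y^{0,x'}_s=J(s,X^{0,x'}_s)$, which is $J$ evaluated at a moving random point; no choice of $x'$ pins $X^{0,x'}_s$ to the fixed point $x$. Finally, the Step 2 fallback (follow \cite{Qiu22AAP}, equicontinuity of $\tilde V(t,x;s)$) does not name any mechanism that would actually produce pathwise continuity.

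The missing idea is the paper's: a cylindrical approximation combined with Doob's maximal inequality.
\begin{itemize}
\item Replace $(\Phi,b,\sigma,f,g)$ by smooth functions of finitely many samples $W_{t_1},\dots,W_{t_n}$ (\cite[Lemma~5.5]{Qiu18sicon}, \cite[Lemma~2.1]{Qiu22AAP}).
\item The approximating game is then Markovian in $(x,W_t)$ with the past samples frozen. Hence $V^\varepsilon(s,x)=\tilde V^\varepsilon(s,x,W_{t_1},\ldots,W_{t_{n-1}},W_s)$ is continuous in $s$.
\item Propositions \ref{SRSDE} and \ref{stability GBSDE}, uniformly in the controls, bound $|V^\varepsilon(s,x)-V(s,x)|^2$ by powers of $E[\,\cdot\mid\mathcal F_s]$ applied to approximation errors. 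These errors are dominated by $\mathcal F_T$-measurable variables that do not depend on $s$.
\item Doob's inequality for these martingales yields $\|V^\varepsilon(\cdot,x)-V(\cdot,x)\|_{\mathcal S^4}\to0$. This is exactly the device that converts $\mathcal F_s$-conditional bounds into supremum-in-time control.
\end{itemize}

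Alternatively, your own estimates already suffice without Kolmogorov. These are $|V(t,x)-E[V(s,x)\mid\mathcal F_t]|\leq C|s-t|^{1/4}$ from Step 2, and the analogue for $J$ from Step 1. On a grid of mesh $\Delta$, $V(\cdot,x)$ stays within $C\Delta^{1/4}$ of the continuous Brownian martingale $E[V(t_{k+1},x)\mid\mathcal F_\cdot]$ on each $[t_k,t_{k+1}]$. Hence $\lim_{\delta\downarrow0}\sup\{|V(t,x)-V(t',x)|:t,t'\in Q,\ |t-t'|\leq\delta\}\leq4C\Delta^{1/4}$ for a countable dense set $Q$ containing the grid points. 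Letting $\Delta\downarrow0$ gives continuity.

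If you take this route, apply the DPP only between deterministic times. The paper's DPP at general stopping times itself invokes the present theorem (in the last step of Lemma \ref{app:stopping-value}), whereas the deterministic-time case does not.
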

\begin{proof}
First, by Proposition \ref{bound} we see
\begin{equation*}
|J(t,x_1;\theta,\gamma)-J(t,x_2;\theta,\gamma)|\leq C|x_1-x_2|^{\frac{1}{2}},\quad P\text{-a.s.}
\end{equation*}
for all $x_1,x_2\in\overline{D}$, where the constant $C$ is independent of $\theta,\gamma$. This shows that $V(t,\cdot)$, $U(t,\cdot)$, and $J(t,\cdot;\theta,\gamma)$ are continuous on $\overline{D}$, $P$-a.s., for all $t\in[0,T]$ and $(\theta,\gamma)\in\Theta\times\Gamma$.

Next, fix $x\in\overline D$ and consider the continuity in time. We prove only the continuity of $t\mapsto V(t,x)$; the proofs for $U$ and $J$ are analogous. Given any $\varepsilon>0$, \cite[Lemma~5.5]{Qiu18sicon} and \cite[Lemma~2.1]{Qiu22AAP} allow to choose $n\geq4$, a partition $0=t_0<t_1<\cdots<t_{n-1}<t_n=T$, and deterministic functions $\Phi_n \in C_b^3\left(\mathbb{R}^{mn+d}\right)$,
\begin{equation*}
b_n^i, \sigma_n^{i j} \in C_b^3\left([0, T] \times \Theta_0 \times \Gamma_0;C_b^3(\mathbb{R}^{mn+d}) \right), \quad 1 \leq i \leq d, 1 \leq j \leq m,
\end{equation*}
as well as smooth functions $f_n$ and $g_n$ defined on the corresponding
finite-dimensional domains, such that their derivatives of positive order are bounded,
and their zeroth-order dependence on $(y,z)$ has at most linear growth. This choice can be made such that the following three conditions are fulfilled:\\
\textnormal{(i)} Defined by
\begin{equation*}
\begin{aligned}
&\Phi^{\varepsilon} :=\sup\limits_{x \in \mathbb{R}^d}\left|\Phi_n\left(W_{t_1}, \ldots, W_{t_n}, x\right)-\Phi(x)\right|,\\
&b_t^{\varepsilon} :=\sup\limits_{(x, \theta, \gamma) \in \mathbb{R}^d \times \Theta_0 \times \Gamma_0}\left|b_n\left(W_{t_1 \wedge t}, \ldots, W_{t_n \wedge t}, t, x, \theta, \gamma\right)-b(t, x, \theta, \gamma)\right|, \\
&\sigma_t^{\varepsilon} :=\sup\limits_{(x, \theta, \gamma) \in \mathbb{R}^d \times \Theta_0 \times \Gamma_0}\left|\sigma_n\left(W_{t_1 \wedge t}, \ldots, W_{t_n \wedge t}, t, x, \theta, \gamma\right)-\sigma(t, x, \theta, \gamma)\right|,
\end{aligned}
\end{equation*}
\begin{equation*}
\begin{aligned}
&f_t^{\varepsilon}:=\sup\limits_{(x, y, z, \theta, \gamma) \in \mathbb{R}^d \times \mathbb{R} \times \mathbb{R}^m \times \Theta_0 \times \Gamma_0}\left|f_n\left(W_{t_1 \wedge t}, \ldots, W_{t_n \wedge t}, t, x, y, z, \theta, \gamma\right)-f(t, x, y, z, \theta, \gamma)\right|,\\
&g_t^{\varepsilon}:=\sup\limits_{(x, y, \theta, \gamma)\in \mathbb{R}^d \times \mathbb{R} \times \Theta_0 \times \Gamma_0}\left|g_n\left(W_{t_1 \wedge t}, \ldots, W_{t_n \wedge t}, t, x, y, \theta, \gamma\right)-g(t, x, y, \theta, \gamma)\right|,\quad t\in[0,T],
\end{aligned}
\end{equation*}
the mapping $\Phi^\varepsilon$ is $\mathcal F_T$-measurable, the processes $b^\varepsilon$, $\sigma^\varepsilon$, $f^\varepsilon$, and $g^\varepsilon$ are $\{\mathcal F_t\}_{t\in[0,T]}$-adapted, and
\begin{equation*}
\left\|\Phi^{\varepsilon}\right\|_{L^8\left(\mathcal{F}_T;\mathbb{R}\right)}+\left\|f^{\varepsilon}\right\|_{\mathcal{S}^8(0,T;\mathbb{R})}+\left\|g^{\varepsilon}\right\|_{\mathcal{S}^8(0,T;\mathbb{R})}+\left\|b^{\varepsilon}\right\|_{L^8\left(0,T;\mathbb{R}^d\right)}+\left\|\sigma^{\varepsilon}\right\|_{L^8\left(0,T;\mathbb{R}^{d\times m}\right)}\leq\varepsilon,
\end{equation*}
where $\varepsilon=\frac{1}{n}$.\\
\begingroup
\setlength{\emergencystretch}{1em}
\textnormal{(ii)} $\left\{\Phi_n,b_n,\sigma_n\right\}_{n\geq4}$ and the zero-level
families $f_n(\cdot,\allowbreak\cdot,0,0,\cdot,\cdot)$ and
$g_n(\cdot,\allowbreak\cdot,0,\cdot,\cdot)$ are uniformly bounded by a constant
independent of $n$;\\
\textnormal{(iii)} For $n\geq 4$, the functions $\Phi_n,b_n,\sigma_n,f_n,g_n$ are uniformly Lipschitz continuous in their respective spatial variables, with a Lipschitz constant $L_c$ independent of $n$ and $\varepsilon$. Together with (ii), this gives a common linear-growth bound for $f_n$ and $g_n$.
\par
\endgroup

We denote by $G_{s,T}^{n,s,x;\theta,\gamma}[\cdot]$ the backward semigroup associated with the generators $f_n$ and $g_n$. For all $(s,x)\in[0,T)\times\overline{D}$, we set
\begin{equation*}
V^{\varepsilon}(s, x)=\underset{\mu \in \mathcal{M}}{\essinf}\ \underset{\theta \in \Theta}{\esssup}\  G_{s, T}^{n, s,x;\theta,\mu(\theta)}\left[\Phi_n\left(W_{t_1}, \ldots, W_{t_n}, X_T^{s,x;\theta,\mu(\theta),n}\right)\right],
\end{equation*}
where $X^{s,x;\theta,\gamma,n}$ satisfies the RSDE
\begin{equation*}
\left\{\begin{aligned}
\dd X_t^{s,x;\theta,\gamma,n}={}&b_n\left(W_{t_1 \wedge t}, \ldots, W_{t_n \wedge t}, t, X_t^{s,x;\theta,\gamma,n}, \theta_t, \gamma_t\right) \dd t \\
& +\sigma_n\left(W_{t_1 \wedge t}, \ldots, W_{t_n \wedge t}, t, X_t^{s,x;\theta,\gamma,n}, \theta_t, \gamma_t\right) \dd W_t \\
& +D\phi_D(X_t^{s,x;\theta,\gamma,n})\dd K_t^{s,x;\theta,\gamma,n},\\
\dd K_t^{s,x;\theta,\gamma,n}={}&\mathbf{1}_{\partial D}(X_t^{s,x;\theta,\gamma,n})\dd K_t^{s,x;\theta,\gamma,n},\\
X_s^{s,x;\theta,\gamma,n}={}&x.
\end{aligned}\right.
\end{equation*}
As in \cite{Qiu22AAP}, we include the Brownian motion $W$ in the state process and obtain a Markovian stochastic differential game in an augmented state space. By \cite{Li08sicon,Fan}, its value function $V^{\varepsilon}(s,x)$, for $s\in[t_{n-1},T]$, has the form
\begin{equation*}
V^{\varepsilon}(s, x)=\tilde{V}^{\varepsilon}\left(s, x, W_{t_1}, \ldots, W_{t_{n-1}}, W_s\right),
\end{equation*}
where $\tilde V^\varepsilon$ is the value function associated with the following augmented state dynamics on $[t_{n-1},T]$:
\begin{equation*}
\left\{\begin{aligned}
\dd X_t^{s,x,y;\theta,\gamma,n}
={}&b_n\left(W_{t_1},\ldots,W_{t_{n-1}},W_t^{s,y},t,
X_t^{s,x,y;\theta,\gamma,n},\theta_t,\gamma_t\right)\dd t\\
&\quad+\sigma_n\left(W_{t_1},\ldots,W_{t_{n-1}},W_t^{s,y},t,
X_t^{s,x,y;\theta,\gamma,n},\theta_t,\gamma_t\right)\dd W_t\\
&\quad+D\phi_D(X_t^{s,x,y;\theta,\gamma,n})
\dd K_t^{s,x,y;\theta,\gamma,n},\\
\dd W_t^{s,y}={}&\dd W_t,\qquad t\in[s,T],\\
X_s^{s,x,y;\theta,\gamma,n}={}&x,\qquad W_s^{s,y}=y,
\end{aligned}\right.
\end{equation*}
and it is given by
\begin{equation*}
\begin{aligned}
&\tilde{V}^{\varepsilon}
 \left(s,x,W_{t_1},\ldots,W_{t_{n-1}},y\right)\\
:={}&\underset{\mu\in\mathcal M}{\essinf}\
 \underset{\theta\in\Theta}{\esssup}\
 G_{s,T}^{n,s,x;\theta,\mu(\theta)}
 \left[\Phi_n\left(W_{t_1},\ldots,W_{t_{n-1}},W_T^{s,y},
 X_T^{s,x,y;\theta,\mu(\theta),n}\right)\right].
\end{aligned}
\end{equation*}
Because $\tilde{V}^{\varepsilon}$ is the lower value function of a Markovian stochastic differential game, it is continuous in time, and so is $V^{\varepsilon}$. By Lemma \ref{pro GRSDE}(i), $|V^\varepsilon(t,x)|\leq C$.
In particular, since $V^\varepsilon(\cdot,x)$ is continuous, it belongs to
$\mathcal S^4(\mathbb R)$.

By the RSDE stability result in Proposition \ref{SRSDE}, for all $(\theta,\mu)\in\Theta\times\mathcal{M}$,
\begin{equation*}
\begin{split}
&E\left[\sup\limits_{s \leq t \leq T}\left|X_t^{s,x;\theta,\mu(\theta),n}-X_t^{s,x;\theta,\mu(\theta)}\right|^2+\sup\limits_{s \leq t \leq T}\left|K_t^{s,x;\theta,\mu(\theta),n}-K_t^{s,x;\theta,\mu(\theta)}\right|^2\bigg|\mathcal{F}_s\right] \\
\leq &C_0 E\left[\int_s^T\left(\left|b_t^{\varepsilon}\right|^4+\left|\sigma_t^{\varepsilon}\right|^4\right) \dd t\bigg|\mathcal{F}_s\right]^\frac{1}{2},
\end{split}
\end{equation*}
where $C_0$ is independent of $n$, $\varepsilon$, $s$, $x$, and $(\theta,\mu)$. By the definitions of $V$ and $V^\varepsilon$, Proposition
\ref{stability GBSDE} and the preceding RSDE estimate give
\begin{equation*}
\begin{aligned}
&|V^{\varepsilon}(s,x)-V(s,x)|^2
\leq \underset{(\mu,\theta)\in\mathcal M\times\Theta}{\esssup}
\left|Y_s^{n,s,x;\theta,\mu(\theta)}
-Y_s^{s,x;\theta,\mu(\theta)}\right|^2\\
\leq &C_1\bigg\{
E\left[\int_0^T\bigl(|b_t^\varepsilon|^4+|\sigma_t^\varepsilon|^4\bigr)\dd t
\biggm|\mathcal F_s\right]^{\frac{1}{2}}+E\left[\int_0^T\bigl(|b_t^\varepsilon|^4+|\sigma_t^\varepsilon|^4\bigr)\dd t
\biggm|\mathcal F_s\right]^{\frac14}
+E\left[|\Phi^\varepsilon|^4\mid\mathcal F_s\right]^{\frac{1}{2}}\\
&+E\left[\sup\limits_{r\in[s,T]}|f_r^\varepsilon|^4\mid\mathcal F_s\right]^{\frac{1}{2}}
+E\left[\sup\limits_{r\in[s,T]}|g_r^\varepsilon|^4\mid\mathcal F_s\right]^{\frac{1}{2}}\bigg\}.
\end{aligned}
\end{equation*}
Here $C_1$ is a constant independent of $n$, $\varepsilon$, $s$, and $x$. By taking the supremum over $s$ and applying Doob's inequality, we obtain
\begin{equation*}
\begin{aligned}
&\|V^\varepsilon(\cdot,x)-V(\cdot,x)\|_{\mathcal S^4(\mathbb R)}^4\\
\leq &C\bigg\{
E[|\Phi^\varepsilon|^8]^{\frac{1}{2}}
+E[\sup\limits_{t\in[0,T]}|f_t^\varepsilon|^8]^{\frac{1}{2}}
+E[\sup\limits_{t\in[0,T]}|g_t^\varepsilon|^8]^{\frac{1}{2}}\\
&\quad+E\left[\int_0^T(|b_t^\varepsilon|^8+|\sigma_t^\varepsilon|^8)\dd t\right]^{\frac{1}{2}}
+E\left[\int_0^T(|b_t^\varepsilon|^8+|\sigma_t^\varepsilon|^8)\dd t\right]^{\frac14}\bigg\}\\
\leq &C(\varepsilon^4+\varepsilon^2)\longrightarrow0,
\qquad \varepsilon\longrightarrow0.
\end{aligned}
\end{equation*}
Hence, $V(\cdot, x) \in \mathcal{S}^4(\mathbb{R})$. In particular, $V(t, x)$ is continuous in $t$. Combined with its $\frac{1}{2}$-H\"{o}lder continuity in $x$, uniform with respect to $t$, this completes the proof.
\end{proof}
\begin{remark}
In the nonrecursive case with a homogeneous Neumann boundary condition,
\begin{equation*}
J(t,x;\theta,\gamma)=E\left[\int_{t}^{T}f(s,X_s^{t,x;\theta,\gamma},\theta_s,\gamma_s)\dd s + \Phi(X_T^{t,x;\theta,\gamma})\bigg|\mathcal{F}_t\right],
\end{equation*}
the continuity properties of the value functions hold under Assumption \ref{HD} alone. Indeed, the\linebreak payoff contains no integral with respect to $K$, and continuity in the initial state follows from the stability estimate for $X$, which does not require Assumption \ref{HD'}.
\end{remark}
We next state the DPP. Its proof is given in Appendix \ref{app:DPP}.
\begin{theorem}\label{DPP}
Suppose Assumption \ref{HSDG} holds. For all $\tau,\hat\tau\in\mathcal T^0$ with $\tau\leq\hat\tau$ and $\xi\in L^2(\mathcal F_\tau;\overline D)$, the lower value function satisfies
\begin{equation*}
V(\tau, \xi)=\underset{\mu \in \mathcal{M}_{\tau, \hat\tau}}{\essinf} \esssup\limits_{\theta \in \Theta_{\tau, \hat\tau}} G_{\tau, \hat\tau}^{\tau,\xi;\theta,\mu(\theta)}\left[V\left(\hat\tau, X_{\hat\tau}^{\tau,\xi;\theta,\mu(\theta)}\right)\right], \quad P\text{-a.s.},
\end{equation*}
and for the upper value function $U(\tau, \xi)$, we have
\begin{equation*}
U(\tau, \xi)=\underset{\alpha \in \mathcal{A}_{\tau, \hat\tau}}{\esssup} \essinf\limits_{\gamma \in \Gamma_{\tau, \hat\tau}} G_{\tau, \hat\tau}^{\tau,\xi;\alpha(\gamma),\gamma}\left[U\left(\hat\tau, X_{\hat\tau}^{\tau,\xi;\alpha(\gamma),\gamma}\right)\right], \quad P\text{-a.s.}
\end{equation*}
\end{theorem}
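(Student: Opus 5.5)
We follow the scheme of \cite{Li08sicon} and \cite{Qiu22AAP}, adapted to the GBSDE backward semigroup, and give the argument only for $V$; the case of $U$ is symmetric. Write
\begin{equation*}
W(\tau,\xi):=\underset{\mu \in \mathcal{M}_{\tau, \hat\tau}}{\essinf}\ \esssup\limits_{\theta \in \Theta_{\tau, \hat\tau}} G_{\tau, \hat\tau}^{\tau,\xi;\theta,\mu(\theta)}\left[V\left(\hat\tau, X_{\hat\tau}^{\tau,\xi;\theta,\mu(\theta)}\right)\right].
\end{equation*}

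\textbf{Step 1: deterministic initial data.} We first treat $\tau=t$ deterministic and $\xi=x\in\overline D$. Then we pass to simple random variables $\xi=\sum_i x_i\mathbf 1_{A_i}$ with $A_i\in\mathcal F_\tau$, using that strategies and controls can be pasted along $\mathcal F_\tau$-partitions. Next we treat general $\xi$ by approximation. The approximation rests on the uniform $\frac12$-H\"older continuity of Proposition \ref{bound}, and on the Lipschitz-type dependence of $G$ on its terminal value and initial point; the latter follows from Lemma \ref{pro GRSDE}, the stability estimates of Propositions \ref{estimate RSDE}--\ref{stability GBSDE}, and the continuity of Theorem \ref{continuity}. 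Finally, for a stopping time $\tau$ we approximate by stopping times taking finitely many values, again using the time continuity of $V$ and the $K$-estimates of Proposition \ref{estimate RSDE}, which need Assumption \ref{HD'}.

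\textbf{Step 2: the inequality $V\le W$, up to $\varepsilon$.} Fix $\varepsilon>0$. For each $\mu_1\in\mathcal M_{\tau,\hat\tau}$ and each $\theta_1$, we need an $\varepsilon$-optimal strategy for the remaining game started at $(\hat\tau,X_{\hat\tau})$. To construct it:
\begin{itemize}
\item Partition $\overline D$, which is bounded, into finitely many Borel sets $O_j$ of diameter less than $\delta$.
\item For each $j$, use the essinf property (the family is downward directed) to choose $\mu^j$ with $\esssup_\theta J(\hat\tau,y_j;\theta,\mu^j(\theta))\le V(\hat\tau,y_j)+\varepsilon$.
\item Define the concatenated strategy as $\mu_1$ on $[\tau,\hat\tau]$ and as $\sum_j \mathbf 1_{O_j}(X_{\hat\tau})\mu^j(\theta|_{(\hat\tau,T]})$ afterwards. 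Nonanticipativity follows from that of each piece.
\end{itemize}
The semigroup property $J=G_{\tau,\hat\tau}[Y_{\hat\tau}]$, the comparison theorem for GBSDEs from \cite{Pardoux and Zhang}, and the continuous dependence of $G$ on its terminal value then give $V(\tau,\xi)\le W(\tau,\xi)+C\varepsilon+C\delta^{1/2}$.

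\textbf{Step 3: the inequality $V\ge W$, up to $\varepsilon$.} Take $\mu$ that is $\varepsilon$-optimal for $V(\tau,\xi)$. Its restriction to $[\tau,\hat\tau]$ lies in $\mathcal M_{\tau,\hat\tau}$. For each $\theta_1$ on $[\tau,\hat\tau]$, the map $\theta_2\mapsto\mu(\theta_1\oplus\theta_2)$ is a strategy on $[\hat\tau,T]$, so $V(\hat\tau,X_{\hat\tau})\le\esssup_{\theta_2}J(\hat\tau,X_{\hat\tau};\theta_1\oplus\theta_2,\mu(\cdot))$. To pass the esssup over $\theta_2$ inside $G_{\tau,\hat\tau}$, we choose $\theta_2$ $\varepsilon$-optimal via the upward-directed property of the family, combined with the measurable selection Theorem \ref{mst} or a countable pasting argument. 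Monotonicity and continuity of $G$ then give $W\le V+C\varepsilon$.

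\textbf{Main obstacle.} The main difficulty is the continuity of the backward semigroup in its terminal datum and initial point in the presence of the $\dd K$ term. Estimating $G[\eta_1]-G[\eta_2]$ requires the exponential moments (\ref{*4}) of $K$, together with the weight $e^{\mu K}$ used in Proposition \ref{estimate GBSDE}. This yields bounds of the form $|G[\eta_1]-G[\eta_2]|^2\le CE[|\eta_1-\eta_2|^4\mid\mathcal F_\tau]^{1/2}$. A secondary technical point is keeping every error bound uniform in the controls, since essinf/esssup are taken over uncountable families.
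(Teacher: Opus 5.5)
Your proposal is correct and follows essentially the paper's route. Your Step 1 is the paper's Lemma \ref{app:stopping-value}: it identifies $V(\tau,\xi)$ with the game value $\mathcal V(\tau,\xi)$ started at a stopping time and random state, passing through deterministic, simple and general $\xi$, then finitely-valued and general $\tau$. Your Steps 2--3 are the paper's two inequalities, obtained by restriction and concatenation of strategies, a small-diameter partition of $\overline D$ with the $\frac12$-H\"older estimate, comparison, and stability of $G$ in its terminal value.

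The differences are cosmetic. You fix an arbitrary $\mu_1$ for $V\le W$ and an $\varepsilon$-optimal $\mu$ for $W\le V$, whereas the paper does the reverse. Also, only deterministic shifts of the terminal value are needed, so your fourth-moment stability bound suffices where the paper uses $|G[\eta]-G[\eta']|\le C_*(E[|\eta-\eta'|^2\mid\mathcal F_\tau])^{1/2}$.
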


We now extend the definition of viscosity solutions in \cite{Qiu22AAP} to
Neumann boundary conditions. We first introduce the nonlinear operators used in what follows.
For any $M,N\geq0$ and stopping times $\tau_1$ and $\tau_2$ with
$\tau_1\leq\tau_2$, $P$-a.s., we set
\begin{equation*}
\overline{\mathcal{E}}_{\tau_1, \tau_2}^{M,N;x,\theta,\gamma}[\xi]=\overline{Y}_{\tau_1}, \quad \underline{\mathcal{E}}_{\tau_1, \tau_2}^{M,N;x,\theta,\gamma}[\xi]=\underline{Y}_{\tau_1}, \quad \text{ for } \xi \in L^2\left(\Omega, \mathcal{F}_{\tau_2}, P\right),
\end{equation*}
where $\left(\overline{Y}_s,\overline{Z}_s\right)_{\tau_1 \leq s \leq \tau_2}$ and $\left(\underline{Y}_s,\underline{Z}_s\right)_{\tau_1 \leq s \leq \tau_2}$ are the solutions of the following GBSDEs:
\begin{equation}\label{01}
\overline{Y}_t=\xi+\displaystyle\int_t^{\tau_2} M\left(\left|\overline{Y}_s\right|+\left|\overline{Z}_s\right|\right) \dd s + \displaystyle\int_t^{\tau_2} N \left|\overline{Y}_s\right|\dd K_s^{\tau_1,x;\theta,\gamma}-\displaystyle\int_t^{\tau_2} \overline{Z}_s \dd W_s, \quad \tau_1 \leq t \leq \tau_2,
\end{equation}
and
\begin{equation}\label{02}
\underline{Y}_t=\xi-\displaystyle\int_t^{\tau_2} M\left(\left|\underline{Y}_s\right|+\left|\underline{Z}_s\right|\right) \dd s - \displaystyle\int_t^{\tau_2} N \left|\underline{Y}_s\right|\dd K_s^{\tau_1,x;\theta,\gamma}-\displaystyle\int_t^{\tau_2} \underline{Z}_s \dd W_s, \quad \tau_1 \leq t \leq \tau_2,
\end{equation}
respectively. Thus, $M$ controls the $\dd s$ generator and $N$ controls the
$\dd K_s$ generator. Here $K^{\tau_1,x;\theta,\gamma}$ is the increasing
process in the reflected state equation (\ref{eq RSDE}). We then set
\begin{equation}\label{x1}
\overline{\mathcal{E}}_{\tau_1, \tau_2}^{M,N}[\xi]=\underset{(x,\gamma,\theta)\in\overline D\times\Gamma\times\Theta}{\esssup}\overline{\mathcal{E}}_{\tau_1, \tau_2}^{M,N;x,\theta,\gamma}[\xi],
\end{equation}
\begin{equation}\label{x2}
\underline{\mathcal{E}}_{\tau_1, \tau_2}^{M,N}[\xi]=\underset{(x,\gamma,\theta)\in\overline D\times\Gamma\times\Theta}{\essinf}\underline{\mathcal{E}}_{\tau_1, \tau_2}^{M,N;x,\theta,\gamma}[\xi].
\end{equation}
When $N=0$, the local-time generator disappears, and these operators reduce
to the nonlinear expectations used for the Cauchy problem in
\cite{Qiu22AAP}, with $M$ as their single parameter.
They satisfy the following properties.
\begin{lemma}\label{function}
For all stopping times $\tau_1\leq\tau_2$, the operators
$\overline{\mathcal{E}}_{\tau_1,\tau_2}^{M,N}$ and
$\underline{\mathcal{E}}_{\tau_1,\tau_2}^{M,N}$ map
$L^2(\Omega,\mathcal{F}_{\tau_2};\mathbb{R})$ into
$L^2(\Omega,\mathcal{F}_{\tau_1};\mathbb{R})$ and satisfy, for
$\xi,\xi_1,\xi_2\in L^2(\Omega,\mathcal{F}_{\tau_2};\mathbb{R})$:\\
\textnormal{(i)} $\overline{\mathcal{E}}_{\tau_1, \tau_2}^{M,N}[\xi]
=-\underline{\mathcal{E}}_{\tau_1, \tau_2}^{M,N}[-\xi]$, $P$-a.s.; for
bounded nonnegative $\mathcal{F}_{\tau_1}$-measurable random variables
$\rho$, both operators are positively homogeneous in the sense that
$\overline{\mathcal E}_{\tau_1,\tau_2}^{M,N}[\rho\xi]
=\rho\overline{\mathcal E}_{\tau_1,\tau_2}^{M,N}[\xi]$ and
$\underline{\mathcal E}_{\tau_1,\tau_2}^{M,N}[\rho\xi]
=\rho\underline{\mathcal E}_{\tau_1,\tau_2}^{M,N}[\xi]$.\\
\textnormal{(ii)} If $0\leq M\leq\widehat M$ and $0\leq N\leq\widehat N$, then
$\overline{\mathcal{E}}_{\tau_1, \tau_2}^{M,N}[\xi]
\leq\overline{\mathcal{E}}_{\tau_1, \tau_2}^{\widehat M,\widehat N}[\xi]$
and
$\underline{\mathcal{E}}_{\tau_1, \tau_2}^{M,N}[\xi]
\geq\underline{\mathcal{E}}_{\tau_1, \tau_2}^{\widehat M,\widehat N}[\xi]$,
$P$-a.s.\\
\textnormal{(iii)} The following inequalities hold, $P$-a.s.:
\begin{equation}\label{*}
\underline{\mathcal{E}}_{\tau_1, \tau_2}^{L_f,L_g}\left[\xi_1-\xi_2\right] \leq G_{\tau_1, \tau_2}^{\tau_1,x;\theta,\gamma}\left[\xi_1\right]-G_{\tau_1, \tau_2}^{\tau_1,x;\theta,\gamma}\left[\xi_2\right] \leq \overline{\mathcal{E}}_{\tau_1, \tau_2}^{L_f,L_g}\left[\xi_1-\xi_2\right].
\end{equation}
\end{lemma}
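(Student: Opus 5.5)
The plan is to reduce all three properties to statements about the individual GBSDEs (\ref{01})--(\ref{02}) for fixed $(x,\theta,\gamma)$, and then pass to the essential supremum/infimum in (\ref{x1})--(\ref{x2}).

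\textbf{Well-posedness and integrability.} For fixed $(x,\theta,\gamma)$ the generators $M(|y|+|z|)$ and $N|y|$ are Lipschitz. By Proposition \ref{estimate RSDE}, (\ref{*4}), $K^{\tau_1,x;\theta,\gamma}$ has exponential moments of every order. Hence the Pardoux--Zhang theory gives a unique solution, and the same exponential-weight argument as in Proposition \ref{estimate GBSDE} gives $|\overline Y_{\tau_1}|^2\le C\,E[e^{\mu K_{\tau_2}}|\xi|^2\mid\mathcal F_{\tau_1}]$. To make this bound uniform in $(x,\theta,\gamma)$, I would use the uniform conditional exponential bound (\ref{*4}) together with H\"older's inequality. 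This yields the estimate for $\xi\in L^{2+}$. For general $\xi\in L^2$, I expect this to be the delicate point. I would first try to show that the essential supremum is dominated by a single $L^2$ variable built from $\xi$. Failing that, I would restrict the mapping statement to the integrability class actually used later, e.g.\ bounded $\xi$.

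\textbf{Property (i).} If $(\underline Y,\underline Z)$ solves (\ref{02}) with terminal value $-\xi$, then $(-\underline Y,-\underline Z)$ solves (\ref{01}) with terminal value $\xi$, because the generators are even in $(y,z)$. Uniqueness then gives the identity for each $(x,\theta,\gamma)$. Since $\operatorname{ess\,sup}(-\,\cdot)=-\operatorname{ess\,inf}(\cdot)$, the identity passes to the operators. For positive homogeneity, multiply the solution of (\ref{01}) by $\rho$ on $[\tau_1,\tau_2]$; as $\rho\ge0$ is $\mathcal F_{\tau_1}$-measurable, $\rho|y|=|\rho y|$ and $\rho\overline Z$ is admissible. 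Uniqueness again gives the claim, and the essential supremum commutes with multiplication by $\rho\ge0$.

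\textbf{Property (ii).} I would apply the comparison theorem for GBSDEs of Pardoux--Zhang, pathwise in $(x,\theta,\gamma)$. The generators are ordered: $M(|y|+|z|)\le\widehat M(|y|+|z|)$, and likewise for $N$ with $\dd K\ge0$. This gives $\overline Y^{M,N}\le\overline Y^{\widehat M,\widehat N}$, and the same holds after taking essential suprema. The lower operators follow by (i).

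\textbf{Property (iii), the main step.} Set $\delta Y=Y^1-Y^2$ and $\delta Z=Z^1-Z^2$, where $Y^i$ solve the GBSDE defining $G$ with terminal values $\xi_i$. Then
\[
-\dd\delta Y_s=\bigl(f(\cdot,Y^1,Z^1)-f(\cdot,Y^2,Z^2)\bigr)\dd s+\bigl(g(\cdot,Y^1)-g(\cdot,Y^2)\bigr)\dd K_s-\delta Z_s\,\dd W_s .
\]
By Assumption \ref{HSDG}(iii)(c), the driver increments are bounded by $L_f(|\delta Y|+|\delta Z|)$ and $L_g|\delta Y|$. The comparison theorem for GBSDEs with the same $K=K^{\tau_1,x;\theta,\gamma}$ then shows
\[
\underline{\mathcal E}^{L_f,L_g;x,\theta,\gamma}_{\tau_1,\tau_2}[\xi_1-\xi_2]\le\delta Y_{\tau_1}\le\overline{\mathcal E}^{L_f,L_g;x,\theta,\gamma}_{\tau_1,\tau_2}[\xi_1-\xi_2].
\]
To justify this, write $\delta Y$ as the solution of a GBSDE whose driver is dominated by that of (\ref{01}) at $(\delta Y,\delta Z)$, and compare; that comparison needs one-sided Lipschitz generators, which holds here. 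Finally, the particular $(x,\theta,\gamma)$ is one element of the index set in (\ref{x1})--(\ref{x2}), so (\ref{*}) follows.
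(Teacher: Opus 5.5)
Your proofs of (i)--(iii) match the paper's. You use uniqueness for the sign flip and for multiplication by $\rho$, then the GBSDE comparison theorem for each fixed $(x,\theta,\gamma)$ with the common process $K^{\tau_1,x;\theta,\gamma}$, then pass to the essential supremum/infimum over the index set. These steps are correct and more explicit than the paper's one-line version.

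The gap is the mapping claim for general $\xi\in L^2(\Omega,\mathcal F_{\tau_2};\mathbb R)$, which you leave open with a fallback to bounded or $L^{2+}$ data. That fallback is not harmless. In Definition \ref{vs} the operators are applied to $\inf_y(\phi-u)(\bar\tau\wedge\hat\tau,y)$ with $u$ only in $\mathcal S^2(C_b(\overline D))$, so the argument is merely square integrable.

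The obstruction comes from your route, not from the problem. The energy estimate $|\overline Y_{\tau_1}|^2\le C\,E[e^{\mu K_{\tau_2}}|\xi|^2\mid\mathcal F_{\tau_1}]$ squares $\xi$ before separating it from $e^{\mu K}$, so H\"older then costs a moment of $\xi$ above two. Linearize instead. Along the solution write $M|y|=a_s y$, $M|z|=\langle b_s,z\rangle$ and $N|y|=c_s y$, with $|a|,|b|\le M$ and $|c|\le N$ progressive. Then
\[
\overline{\mathcal E}^{M,N;x,\theta,\gamma}_{\tau_1,\tau_2}[\xi]
=E\Bigl[\xi\,\Gamma_{\tau_1,\tau_2}\exp\Bigl(\int_{\tau_1}^{\tau_2}a_s\,\dd s+\int_{\tau_1}^{\tau_2}c_s\,\dd K_s\Bigr)\Bigm|\mathcal F_{\tau_1}\Bigr].
\]
Here $\Gamma_{\tau_1,\tau_2}$ is the stochastic exponential of $\int b\,\dd W$, so $E[\Gamma_{\tau_1,\tau_2}^4\mid\mathcal F_{\tau_1}]\le e^{6M^2T}$. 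Apply conditional Cauchy--Schwarz to split off $\xi$ first, then H\"older, together with (\ref{*4}). The constant in (\ref{*4}) depends only on the bounds in Assumption \ref{HSDG}, not on $(x,\theta,\gamma)$. This gives
\[
\bigl|\overline{\mathcal E}^{M,N;x,\theta,\gamma}_{\tau_1,\tau_2}[\xi]\bigr|\le C\bigl(E[|\xi|^2\mid\mathcal F_{\tau_1}]\bigr)^{1/2},
\]
with $C$ uniform in the index. This is exactly the single dominating $L^2$ variable you were looking for. Hence the essential supremum, and by (i) the essential infimum, lies in $L^2(\Omega,\mathcal F_{\tau_1};\mathbb R)$.

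For existence with unbounded $\xi$, truncate the terminal value. The difference of two solutions linearizes in the same way, so the same bound makes the values at $\tau_1$ Cauchy in $L^2$. This is the device the paper itself uses for (\ref{app:semigroup-stability}). With it your proof is complete.
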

\begin{proof}
First, by standard estimates for GBSDEs (see \cite{Pardoux and Zhang}) and
the boundedness of $\overline{D}$, both operators are well defined in
$L^2(\Omega,\mathcal F_{\tau_1};\mathbb R)$. The uniqueness of the solutions gives (i), and the
comparison theorem for GBSDEs proves the componentwise monotonicity in (ii).
It remains to prove (iii). For $i=1,2$,
$G_{s, \tau_2}^{\tau_1,x;\theta,\gamma}[\xi_i]
=\overline{Y}_s^{\tau_1,x;\theta,\gamma,\xi_i}$, $s\in[\tau_1,\tau_2]$,
is the solution of the following GBSDE:
\begin{equation*}
\left\{\begin{aligned}
-\dd \overline{Y}_s^{\tau_1,x;\theta,\gamma,\xi_i}={}& f\left(s, X_s^{\tau_1,x;\theta,\gamma}, \overline{Y}_s^{\tau_1,x;\theta,\gamma,\xi_i}, \overline{Z}_s^{\tau_1,x;\theta,\gamma,\xi_i}, \theta_s, \gamma_s\right) \dd s-\overline{Z}_s^{\tau_1,x;\theta,\gamma,\xi_i}\dd W_s \\
&\quad+g\left(s, X_s^{\tau_1,x;\theta,\gamma}, \overline{Y}_s^{\tau_1,x;\theta,\gamma,\xi_i}, \theta_s, \gamma_s\right) \dd K_s^{\tau_1,x;\theta,\gamma},\quad s \in[\tau_1, \tau_2];\\
\overline{Y}_{\tau_2}^{\tau_1,x;\theta,\gamma,\xi_i}={}&\xi_i,
\end{aligned}\right.
\end{equation*}
where $(X^{\tau_1,x;\theta,\gamma},K^{\tau_1,x;\theta,\gamma})$ is the solution of (\ref{eq RSDE}). For all control processes $\theta,\gamma$, all $y_1,y_2\in\mathbb{R}$, and all $z_1,z_2\in\mathbb{R}^m$, the following inequalities hold $P$-a.s.:
\begin{equation*}
f\left(s, X_s^{\tau_1,x;\theta,\gamma}, y_1, z_1, \theta_s, \gamma_s\right)-f\left(s, X_s^{\tau_1,x;\theta,\gamma}, y_2, z_2, \theta_s, \gamma_s\right)\leq L_f(|y_1-y_2|+|z_1-z_2|),
\end{equation*}
\begin{equation*}
g\left(s, X_s^{\tau_1,x;\theta,\gamma}, y_1, \theta_s, \gamma_s\right)-g\left(s, X_s^{\tau_1,x;\theta,\gamma}, y_2, \theta_s, \gamma_s\right)\leq L_g|y_1-y_2|.
\end{equation*}
Thus, the comparison theorem for GBSDEs gives
\begin{equation*}
\begin{aligned}
&G_{\tau_1, \tau_2}^{\tau_1,x;\theta,\gamma}\left[\xi_1\right]
-G_{\tau_1, \tau_2}^{\tau_1,x;\theta,\gamma}\left[\xi_2\right]\\
\leq &\overline{\mathcal{E}}_{\tau_1, \tau_2}^{L_f,L_g;x,\theta,\gamma}[\xi_1-\xi_2]\\
\leq &\underset{(x,\gamma,\theta)\in\overline D\times\Gamma\times\Theta}{\esssup}
\overline{\mathcal{E}}_{\tau_1, \tau_2}^{L_f,L_g;x,\theta,\gamma}[\xi_1-\xi_2]
=\overline{\mathcal{E}}_{\tau_1, \tau_2}^{L_f,L_g}[\xi_1-\xi_2].
\end{aligned}
\end{equation*}
The first inequality in (\ref{*}) follows analogously.
\end{proof}
We now introduce the notion of viscosity solutions. For every $M,N\geq0$,
$(u,\tau)\in\mathcal S^2(C_b(\overline{D}))\times\mathcal T^0$,
$\Omega_\tau\in\mathcal F_\tau$ with $P(\Omega_\tau)>0$, and
$\xi\in L^0(\Omega_\tau,\mathcal F_\tau;\overline{D})$, we define the
classes of test fields
\begin{equation*}
\begin{aligned}
\underline{\mathcal{G}}^{M,N} u\left(\tau,\xi;\Omega_\tau\right)
&:=\bigg\{\phi\in
\mathcal C_{\mathcal F}^3([0,T]\times\mathbb R^d):
\text{ for some }\hat\tau\in\mathcal T_+^\tau,
(\phi-u)(\tau, \xi) \mathbf{1}_{\Omega_\tau}\\
&\quad=\underset{\bar\tau \in \mathcal{T}^\tau}{\essinf}\
\underline{\mathcal{E}}_{\tau, \bar\tau \wedge \hat\tau}^{M,N}
\left[\inf\limits_{y \in\overline{D}}(\phi-u)(\bar\tau \wedge \hat\tau, y)\right]
\mathbf{1}_{\Omega_\tau}=0,\quad P\text{-a.s.}\bigg\},
\end{aligned}
\end{equation*}
and
\begin{equation*}
\begin{aligned}
\overline{\mathcal{G}}^{M,N} u\left(\tau,\xi;\Omega_\tau\right)
&:=\bigg\{\phi\in
\mathcal C_{\mathcal F}^3([0,T]\times\mathbb R^d):
\text{ for some }\hat\tau\in\mathcal T_+^\tau,
(\phi-u)(\tau, \xi) \mathbf{1}_{\Omega_\tau}\\
&\quad=\underset{\bar\tau \in \mathcal{T}^\tau}{\esssup}\
\overline{\mathcal{E}}_{\tau, \bar\tau \wedge \hat\tau}^{M,N}
\left[\sup\limits_{y \in\overline{D}}(\phi-u)(\bar\tau \wedge \hat\tau, y)\right]
\mathbf{1}_{\Omega_\tau}=0,\quad P\text{-a.s.}\bigg\}.
\end{aligned}
\end{equation*}
We call $\hat\tau$ the stopping time associated with the corresponding test
relation. If either test class is nonempty, then necessarily
$0\leq\tau<T$ on $\Omega_\tau$.

\begin{definition}\label{vs}
We say that $u \in \mathcal{S}^2\left(C_b\left(\overline{D}\right)\right)$ is a viscosity subsolution (resp. supersolution) of BSPDE (\ref{HJBI1}) (equivalently (\ref{HJBI3})) if $u(T,x)\leq\Phi(x)$ (resp. $u(T,x)\geq\Phi(x)$) for all $x\in\overline D$, $P$-a.s., and there exist $M_0,N_0\geq0$ such that, for all $M\geq M_0$, $N\geq N_0$, $\tau\in\mathcal T^0$, $\Omega_\tau\in\mathcal F_\tau$ with $P(\Omega_\tau)>0$, $\xi\in L^0(\Omega_\tau,\mathcal F_\tau;\overline D)$, and all $\phi\in\underline{\mathcal G}^{M,N}u(\tau,\xi;\Omega_\tau)$ (resp. $\phi\in\overline{\mathcal G}^{M,N}u(\tau,\xi;\Omega_\tau)$), the following conditions hold:
\begin{equation}\label{vs1}
\underset{[\tau,T]\times D\ni(s, x) \rightarrow\left(\tau^{+}, \xi\right)}{\essliminf}\footnote{For a random field $F$ on $[0,T]\times D$, the essential lower limit at $(t_0,x_0)$ is $\lim_{\delta\downarrow0}\essinf\{F(s,x):t_0<s<t_0+\delta,\quad x\in B_\delta(x_0)\cap D\}$. The essential upper limit is defined analogously.}\left\{-\mathfrak{d}_s \phi-\mathbb{H}_{-}\left(D^2 \phi, D \mathfrak{d}_\omega \phi, D \phi, \phi, \mathfrak{d}_\omega \phi\right)\right\}(s, x) \leq 0,
\end{equation}
$P$-a.s. on $\Omega_{\tau}\cap\{\xi\in D\}$ $\Bigl($resp.,
\begin{equation}\label{vs2}
\underset{[\tau,T]\times D\ni(s, x) \rightarrow\left(\tau^{+}, \xi\right)}{\esslimsup}\left\{-\mathfrak{d}_s \phi-\mathbb{H}_{-}\left(D^2 \phi, D \mathfrak{d}_\omega \phi, D \phi, \phi, \mathfrak{d}_\omega \phi\right)\right\}(s, x) \geq 0,
\end{equation}
$P$-a.s. on $\Omega_{\tau}\cap\{\xi\in D\}$ $\Bigr)$
and
\begin{equation}\label{vs3}
\underset{[\tau,T]\times \partial D\ni(s, x) \rightarrow\left(\tau^{+}, \xi\right)}{\essliminf}\left\{-\mathbb{G}_-\left(\mathfrak{d}_s \phi,\frac{\partial \phi}{\partial \vec n}, D^2 \phi, D \mathfrak{d}_\omega\phi, D \phi, \phi, \mathfrak{d}_\omega\phi\right)\right\}(s,x)\leq 0,
\end{equation}
$P$-a.s. on $\Omega_{\tau}\cap\{\xi\in\partial D\}$ $\Bigl($resp.,
\begin{equation}\label{vs4}
\underset{[\tau,T]\times \partial D\ni(s, x) \rightarrow\left(\tau^{+}, \xi\right)}{\esslimsup}\left\{-\mathbb{G}_+\left(\mathfrak{d}_s \phi,\frac{\partial \phi}{\partial \vec n}, D^2 \phi, D \mathfrak{d}_\omega\phi, D \phi, \phi, \mathfrak{d}_\omega\phi\right)\right\}(s,x)\geq 0,
\end{equation}
$P$-a.s. on $\Omega_{\tau}\cap\{\xi\in\partial D\}$ $\Bigr)$, where $\mathbb H_-$, $\mathbb G_-$, and $\mathbb G_+$ are defined in (\ref{h-}).
The function $u$ is a viscosity solution of BSPDE (\ref{HJBI1}) (equivalently (\ref{HJBI3})) if it is both a viscosity subsolution and a viscosity supersolution of (\ref{HJBI1}). If the above conditions (\ref{vs1})--(\ref{vs4}) hold with thresholds $(M_0,N_0)$, we call $u$ an $(M_0,N_0)$-viscosity subsolution, supersolution, or solution, respectively.
\end{definition}
\begin{remark}\label{vs5}
As in \cite[Remark~4.3]{Qiu22AAP}, continuity of the coefficients allows
(\ref{vs1}) to be replaced by
\begin{equation*}
\underset{s \rightarrow \tau^{+}}{\essliminf}\left\{-\mathfrak{d}_s \phi-\mathbb{H}_{-}\left(D^2 \phi, D \mathfrak{d}_\omega \phi, D \phi, \phi, \mathfrak{d}_\omega \phi\right)\right\}(s, \xi) \leq 0,
\end{equation*}
and (\ref{vs2}), (\ref{vs3}), and (\ref{vs4}) can also be replaced similarly.
\end{remark}
The following lemma is proved as in \cite[Remark~4.2]{Qiu22AAP}.
\begin{lemma}\label{mon}
If $u$ is a viscosity subsolution (resp. supersolution) of BSPDE (\ref{HJBI3}), then, for every $\lambda\in\mathbb{R}$, $\tilde{u}(t, x):=e^{\lambda t} u(t, x)$ is a viscosity subsolution (resp. supersolution) of
\begin{equation*}
\left\{\begin{array}{l}
-\mathfrak{d}_t \tilde{u}(t, x)=\mathbb{K}\left(t, x, D^2 \tilde{u}(t, x), D \mathfrak{d}_\omega \tilde{u}(t, x), D \tilde{u}(t, x), \tilde{u}(t, x), \mathfrak{d}_\omega \tilde{u}(t, x)\right),
\quad(t, x) \in Q; \\
\mathbb{F}_-\left(t, x,\mathfrak{d}_t \tilde{u}(t, x),\frac{\partial\tilde{u}}{\partial\vec n}(t,x), D^2 \tilde{u}(t, x), D \mathfrak{d}_\omega \tilde{u}(t, x), D \tilde{u}(t, x), \tilde{u}(t, x), \mathfrak{d}_\omega \tilde{u}(t, x)\right) = 0,\quad x \in \partial D;\\
\mathbb{F}_+\left(t, x,\mathfrak{d}_t \tilde{u}(t, x),\frac{\partial\tilde{u}}{\partial\vec n}(t,x), D^2 \tilde{u}(t, x), D \mathfrak{d}_\omega \tilde{u}(t, x), D \tilde{u}(t, x), \tilde{u}(t, x), \mathfrak{d}_\omega \tilde{u}(t, x)\right) = 0,\quad x \in \partial D;\\
\tilde{u}(T, x)=e^{\lambda T} \Phi(x), \quad x \in \overline{D},
\end{array}\right.
\end{equation*}
where $(t,x,A,B,p,y,z)\in[0,T]\times\mathbb R^d\times\mathbb R^{d\times d}
\times\mathbb R^{d\times m}\times\mathbb R^d\times\mathbb R\times\mathbb R^m$, and
\begin{equation*}
\begin{aligned}
\mathbb{K}(t, x, A, B, p, y, z):={}&-\lambda y+e^{\lambda t} \mathbb{H}_{-}\left(t, x, e^{-\lambda t} A, e^{-\lambda t} B, e^{-\lambda t} p, e^{-\lambda t} y, e^{-\lambda t} z\right),
\end{aligned}
\end{equation*}
and $\mathbb F_-$ and $\mathbb F_+$ are defined in the same way as $\mathbb G_-$ and $\mathbb G_+$. Under this exponential transformation, only the $\dd s$-parameter of the nonlinear expectation must be augmented by at most $|\lambda|$; the $\dd K_s$-parameter $N$ is unchanged. Therefore, without loss of generality, we may assume that $\mathbb H_-(t,x,A,B,p,y,z)$ is decreasing in $y$.
\end{lemma}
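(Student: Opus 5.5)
The plan is to verify the definition directly, mirroring \cite[Remark~4.2]{Qiu22AAP} while keeping track of the extra boundary term. Fix $\lambda\in\mathbb R$ and let $u$ be an $(M_0,N_0)$-viscosity subsolution; the supersolution case is symmetric. I will show that $\tilde u=e^{\lambda t}u$ is an $(M_0+|\lambda|,N_0)$-viscosity subsolution of the transformed equation.

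The terminal condition is immediate, since $\tilde u(T,x)=e^{\lambda T}u(T,x)\le e^{\lambda T}\Phi(x)$.

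For the test fields, take $M\ge M_0+|\lambda|$, $N\ge N_0$, and $\tilde\phi\in\underline{\mathcal G}^{M,N}\tilde u(\tau,\xi;\Omega_\tau)$ with associated stopping time $\hat\tau$. Put $\phi:=e^{-\lambda t}\tilde\phi$. Then $\phi\in\mathcal C^3_{\mathcal F}$, with $\mathfrak d_t\phi=e^{-\lambda t}(\mathfrak d_t\tilde\phi-\lambda\tilde\phi)$ and $\mathfrak d_\omega\phi=e^{-\lambda t}\mathfrak d_\omega\tilde\phi$. The key step is to show
\[
\phi\in\underline{\mathcal G}^{M-|\lambda|,N}u(\tau,\xi;\Omega_\tau).
\]
We have $(\phi-u)(\tau,\xi)=e^{-\lambda\tau}(\tilde\phi-\tilde u)(\tau,\xi)=0$ on $\Omega_\tau$. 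For the infimum part, write $\eta:=\inf_y(\tilde\phi-\tilde u)(\bar\tau\wedge\hat\tau,y)$, so that $\inf_y(\phi-u)(\bar\tau\wedge\hat\tau,y)=e^{-\lambda(\bar\tau\wedge\hat\tau)}\eta$. We then compare, for each fixed $(x,\theta,\gamma)$, the solution $\underline Y$ of (\ref{02}) with terminal value $e^{-\lambda(\bar\tau\wedge\hat\tau)}\eta$ and parameters $(M-|\lambda|,N)$, against the solution with terminal value $\eta$ and parameters $(M,N)$. The process $e^{\lambda s}\underline Y_s$ solves a GBSDE with terminal value $\eta$, $\dd K$-generator $-N|\cdot|$ (unchanged, because $e^{\lambda s}$ has no $\dd K$ part), and $\dd s$-generator $-(M-|\lambda|)(|y|+|z|)+\lambda y\ge -M(|y|+|z|)$. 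The comparison theorem for GBSDEs then gives
\[
e^{\lambda\tau}\,\underline{\mathcal E}^{M-|\lambda|,N}_{\tau,\bar\tau\wedge\hat\tau}\bigl[e^{-\lambda(\bar\tau\wedge\hat\tau)}\eta\bigr]\ge\underline{\mathcal E}^{M,N}_{\tau,\bar\tau\wedge\hat\tau}[\eta].
\]
The reverse inequality follows by the symmetric argument with the roles swapped. Rather than seeking exact equality, it suffices to bracket the quantity: the essential infimum over $\bar\tau$ is $\le0$ by taking $\bar\tau=\tau$ (the value there is $0$), and it is $\ge0$ by the displayed inequality together with the hypothesis on $\tilde\phi$. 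Together with the monotonicity in Lemma \ref{function}(ii), this gives membership in $\underline{\mathcal G}^{M-|\lambda|,N}u$. Since $M-|\lambda|\ge M_0$ and $N\ge N_0$, the viscosity inequalities (\ref{vs1}) and (\ref{vs3}) hold for $\phi$.

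Next, translate these inequalities to $\tilde\phi$. In the interior, substitute $\phi=e^{-\lambda s}\tilde\phi$:
\[
-\mathfrak d_s\phi-\mathbb H_-(\ldots\phi\ldots)=e^{-\lambda s}\bigl[-\mathfrak d_s\tilde\phi-\mathbb K(\ldots\tilde\phi\ldots)\bigr].
\]
This holds because $\lambda\tilde\phi$ is absorbed into the $-\lambda y$ term of $\mathbb K$, and the positive factor $e^{-\lambda s}$ preserves the sign of the $\essliminf$. On the boundary, $\partial\phi/\partial\vec n=e^{-\lambda s}\partial\tilde\phi/\partial\vec n$, and every argument scales the same way. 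Defining $\mathbb F_\pm$ by the same recipe as $\mathbb K$, namely
\[
\mathbb F_\pm(\alpha,\beta,\ldots,y,\ldots)=e^{\lambda t}\,\mathbb G_\pm(e^{-\lambda t}(\alpha-\lambda y),e^{-\lambda t}\beta,\ldots),
\]
and using that $\vee$ and $\wedge$ commute with multiplication by positive constants, we obtain (\ref{vs3}) for $\tilde\phi$ with $\mathbb F_-$ in place of $\mathbb G_-$.

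The main obstacle is the test-class step. One must check that the reflection term $N|y|\,\dd K$ is genuinely unaffected by the transformation; it is, since $e^{\lambda s}$ is of finite variation in $s$ only. One must also justify that the comparison theorem applies uniformly over $(x,\theta,\gamma)$ before taking the essential infimum in (\ref{x2}).

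Finally, for the claim that $\mathbb H_-$ may be assumed decreasing in $y$: choosing $\lambda\ge L_f$ gives $\mathbb K(\ldots,y,\ldots)=-\lambda y+(\text{a term that is }L_f\text{-Lipschitz in }y)$, which is nonincreasing in $y$.
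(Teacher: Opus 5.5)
Your argument is correct and is essentially the route the paper takes, which it leaves as ``proved as in Qiu's Remark~4.2.'' You pass test fields through $\phi=e^{-\lambda t}\tilde\phi$, compare the rescaled lower nonlinear expectations via the GBSDE comparison theorem with the $dK$-generator $-N|y|$ unchanged, and then rescale the interior and boundary operators by the positive factor $e^{-\lambda s}$.

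There are two harmless slips. First, the $ds$-generator of $e^{\lambda s}\underline Y_s$ is $-(M-|\lambda|)(|y|+|z|)-\lambda y$, not $+\lambda y$; it is still $\ge -M(|y|+|z|)$, so the comparison goes through. Second, the ``reverse inequality'' you mention does not hold with these parameters, but your bracketing argument never uses it.
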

\begin{remark}
In Lemma \ref{function}, the GBSDE comparison theorem is applied for the proof of (ii) and (iii) with the same
increasing process $K^{\tau_1,x;\theta,\gamma}$. Taking the essential supremum and the essential infimum in
(\ref{x1}) and (\ref{x2}), respectively, then removes the dependence on
$(x,\theta,\gamma)$.
\end{remark}
\section{The existence result for the stochastic HJBI equation}
In this section, we prove that the lower value function $V$ is a viscosity
solution of the stochastic HJBI equation (\ref{HJBI3}) in the two-parameter
sense of Definition \ref{vs}. The proof for $U$ is similar.
\begin{theorem}\label{existence}
Suppose Assumption \ref{HSDG} holds. Then, the lower value function $V$ is an
$(L_f,L_g)$-viscosity solution of the stochastic HJBI equation
(\ref{HJBI3}).
\end{theorem}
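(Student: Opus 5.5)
We prove only the subsolution property; the supersolution property is obtained by the symmetric argument, exchanging $\underline{\mathcal G}$ with $\overline{\mathcal G}$, $\mathbb G_-$ with $\mathbb G_+$, and essinf with esssup.

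The terminal condition $V(T,x)=\Phi(x)$ is immediate from the definition of $V$. By Theorem \ref{continuity}, together with the proof there giving $V(\cdot,x)\in\mathcal S^4$ and the uniform bounds of Proposition \ref{bound}, we have $V\in\mathcal S^2(C_b(\overline D))$.

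Fix $M\geq L_f$, $N\geq L_g$, $\tau$, $\Omega_\tau$, $\xi$, and $\phi\in\underline{\mathcal G}^{M,N}V(\tau,\xi;\Omega_\tau)$ with associated stopping time $\hat\tau$. We argue by contradiction and suppose that (\ref{vs1}) or (\ref{vs3}) fails on a set of positive measure. After shrinking $\Omega_\tau$, by Remark \ref{vs5} and continuity of the coefficients, there exist $\varepsilon>0$ and a stopping time $\tau'\in\mathcal T^\tau_+$ with $\tau'\leq\hat\tau$ such that one of two cases holds.

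\emph{Interior case} ($\xi\in D$). We have $-\mathfrak d_s\phi-\mathbb H_-(\cdots)(s,x)\geq\varepsilon$ for $s\in[\tau,\tau']$ and $x$ in a small ball $B_\delta(\xi)\subset D$. We also stop the state at its exit from that ball, so it does not reach $\partial D$ and $K\equiv0$.

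\emph{Boundary case} ($\xi\in\partial D$). We have $-\mathbb G_-(\cdots)\geq\varepsilon$ near $(\tau,\xi)$ on the boundary. Since $G_-$ is a maximum of the two expressions, for each $\theta$ there is (up to $\varepsilon$-optimality) $\gamma$ with \emph{both} $\mathfrak d_s\phi+H\leq-\varepsilon$ and $\partial\phi/\partial\vec n+g\leq-\varepsilon$. By continuity, the inequality $\mathfrak d_s\phi+H\leq-\varepsilon/2$ extends to a neighborhood in $\overline D$.

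We then build player II's strategy. Using the measurable selection Theorem \ref{mst} applied to $(\omega,s,x,\theta)\mapsto\{\gamma:\ldots\}$, we construct a measurable map $\gamma^*(s,x,\theta)$ attaining the essinf in $\mathbb H_-$ (respectively $\mathbb G_-$) up to $\varepsilon/4$. We define the nonanticipative strategy $\mu(\theta)_s:=\gamma^*(s,X_s,\theta_s)$, where $X$ solves (\ref{eq RSDE}); this is well defined and nonanticipative, as in \cite{Qiu22AAP}. For any $\theta$, we apply the It\^o--Wentzell formula to $\phi(s,X_s)$ with $X=X^{\tau,\xi;\theta,\mu(\theta)}$ on $[\tau,\tau'']$, where $\tau''$ is $\tau'$ stopped at exit from the neighborhood. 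This expresses $\phi(s,X_s)$ as a GBSDE with generator $-(\mathfrak d_s\phi+\cdots)$ in $ds$ and $-D\phi\,\vec n$ in $dK_s$, and $z$-component $\mathfrak d_\omega\phi+D\phi\sigma$, which matches the argument $z+p^*\sigma$ in $H$. Comparing with $G^{\tau,\xi;\theta,\mu(\theta)}_{\tau,\tau''}[\phi(\tau'',X_{\tau''})]$, the drift gap is at least $\varepsilon/2$ in $ds$ and at least $\varepsilon$ in $dK_s$. By the GBSDE comparison theorem we get
\[
\phi(\tau,\xi)\geq G^{\tau,\xi;\theta,\mu(\theta)}_{\tau,\tau''}[\phi(\tau'',X_{\tau''})]+c\,\varepsilon\,E\left[(\tau''-\tau)+K_{\tau''}\mid\mathcal F_\tau\right].
\]
Here $c>0$ follows from a linearization with bounded coefficients, and the exponential moment bound (\ref{*4}) controls the $dK$ weights.

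Next we use the test relation. By Lemma \ref{function}(iii),
\[
G[\phi(\tau'',X_{\tau''})]-G[V(\tau'',X_{\tau''})]\geq\underline{\mathcal E}^{L_f,L_g}_{\tau,\tau''}\left[\inf_y(\phi-V)(\tau'',y)\right],
\]
and by Lemma \ref{function}(ii) this is at least $\underline{\mathcal E}^{M,N}[\cdots]$, which is $\geq0$ by $\phi\in\underline{\mathcal G}^{M,N}V$. Taking esssup over $\theta$ and then using the DPP (Theorem \ref{DPP}) with $\hat\tau=\tau''$ yields
\[
V(\tau,\xi)\leq\phi(\tau,\xi)-c\,\varepsilon\,\operatorname{essinf}_\theta E[(\tau''-\tau)\mid\mathcal F_\tau]
\]
on $\Omega_\tau$. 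This contradicts $\phi(\tau,\xi)=V(\tau,\xi)$ there.

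The main obstacle is the uniform positivity: we need $E[\tau''-\tau\mid\mathcal F_\tau]\geq c_0>0$ uniformly in $\theta$. Since $\tau''$ depends on the exit time of $X$ from a $\delta$-ball, we would first use Lemma \ref{pro RSDE}(iv): $P(\sup_{[\tau,\tau+h]}|X-\xi|\geq\delta\mid\mathcal F_\tau)\leq Ch^p\delta^{-2p}$, uniformly in controls. This gives a uniform lower bound provided $\tau'-\tau$ is bounded below on a positive-measure subset, which we arrange by shrinking $\Omega_\tau$. A secondary difficulty is measurability of $\gamma^*$ in $x$ jointly with $\omega$, and handling the set-valued normal, but $\vec n=D\phi_D$ is continuous under Assumption \ref{HD'}.
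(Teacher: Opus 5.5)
\textbf{Main gap: the DPP at a control-dependent time.} Your architecture matches the paper's proof: argue by contradiction, pick a strategy of player~II that makes the residual strictly negative near $(\tau,\xi)$, apply It\^o--Wentzell to $\phi(s,X_s)$, use Lemma~\ref{function} with the test relation to replace $V$ by $\phi$, then invoke the DPP. The step that closes your argument is not justified, however.

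You apply Theorem~\ref{DPP} with $\hat\tau=\tau''$, where $\tau''$ is $\tau'$ stopped at the exit of $X^{\tau,\xi;\theta,\mu(\theta)}$ from the $\delta$-ball. So $\tau''=\tau''(\theta)$ depends on the control. Theorem~\ref{DPP} is stated and proved only for a fixed stopping time $\hat\tau$: the sets $\mathcal M_{\tau,\hat\tau}$, $\Theta_{\tau,\hat\tau}$ live on a common interval, and Appendix~\ref{app:DPP} concatenates strategies at that fixed time.

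The inequality you need is $V(\tau,\xi)\le\esssup_{\theta}G^{\tau,\xi;\theta,\mu(\theta)}_{\tau,\tau''(\theta)}[V(\tau''(\theta),X_{\tau''(\theta)})]$. It does not follow from the fixed-horizon DPP at $\tau'$ plus the flow property. After the exit your $\mu$ is no longer near-optimal, so nothing bounds the continuation value $G_{\tau'',\tau'}[V(\tau',X_{\tau'})]$ from above by $V(\tau'',X_{\tau''})$. Getting it would require switching, at the $\theta$-dependent time $\tau''(\theta)$ and from the random state $X_{\tau''}$, to $\varepsilon$-optimal continuation strategies while keeping nonanticipativity. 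That is a DPP at control-dependent stopping times, a separate result. The same gap affects your ``symmetric'' supersolution part.

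\textbf{What the paper does instead.} The missing idea is to use the DPP only at a control-independent horizon and to pay for the exit. The paper takes $\hat\tau=\tau+\alpha$ and writes $Y^{1}=G_{\cdot,\tau+\alpha}[\varphi(\tau+\alpha,X_{\tau+\alpha})]-\varphi(\cdot,X_\cdot)$ as a GBSDE with terminal value $0$ (Lemma~\ref{relationship}). On $[\tau,\tau^*]$, with $\tau^*$ the exit time, it compares $Y^1$ with equations with driver $C_L(|y|+|z|)-k/2$ (and $C_L|y|-k/2$ in $\dd K$ at the boundary). These are in turn compared with the explicit solution $Y^2_\tau=-\frac{k}{2C_L}(1-e^{-C_L\alpha})$ of (\ref{5.5}). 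The event $\{\tau^*<\tau+\alpha\}$ costs only $O(\alpha^{3/2})$ in the interior and $O(\alpha^{3/2}+k\alpha^{5/4})$ at the boundary. This uses $P(\tau^*<\tau+\alpha\mid\mathcal F_\tau)\le C\alpha^4\delta_0^{-8}$ and, for the $\dd K$ term, (\ref{*4}). Dividing by $\alpha$ and letting $\alpha\downarrow0$ gives the contradiction.

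\textbf{Your positivity step.} The paper's route also removes this step, which as written is not right. The linearization weight contains a stochastic exponential coming from the $z$-dependence of $f$. So $\phi(\tau,\xi)-G[\phi(\tau'',X_{\tau''})]\ge c\varepsilon E[\tau''-\tau\mid\mathcal F_\tau]$ under $P$ does not follow directly; you would need a Girsanov change of measure and exit estimates under it. The explicit $Y^2$ with $Z^2=0$ supplies the main term directly.

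\textbf{Your feedback strategy.} Defining $\mu(\theta)_s=\gamma^*(s,X_s,\theta_s)$ requires solving a closed-loop RSDE whose coefficients are only measurable in $x$, so it need not be well defined. Select at the frozen point instead, $\gamma^*(s,\xi,\theta_s)$. Then use the Lipschitz continuity of $F$ and $G$ in $x$, uniformly in the controls, to keep the strict inequalities on the whole $\delta$-ball, which is all that (\ref{newstar}) requires.
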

\begin{proof}
We prove Theorem \ref{existence} by combining BSDE techniques with the DPP.
For a global test field
$\varphi\in\mathcal C_{\mathcal F}^3([0,T]\times\mathbb R^d)$, we define
\begin{equation*}
\begin{aligned}
F(s, x, y, z, \theta, \gamma)
={}&\mathfrak{d}_s \varphi(s, x)+\operatorname{tr}\left(\frac{1}{2} \sigma \sigma^{*}(s, x, \theta, \gamma) D^2 \varphi(s, x)+\sigma(s, x, \theta, \gamma)\left(D \mathfrak{d}_\omega \varphi(s, x)\right)^{*}\right) \\
& \quad+b^{*}(s, x, \theta, \gamma) D \varphi(s, x) \\
& \quad+f\left(s, x, y+\varphi(s, x), z+(D \varphi(s, x))^{*} \sigma(s, x, \theta, \gamma)+\mathfrak{d}_\omega \varphi(s, x), \theta, \gamma\right),\\
G(s,x,y,\theta,\gamma)={}&\frac{\partial{\varphi}}{\partial\vec n}(s,x)+g(s,x,y+\varphi(s,x),\theta,\gamma).
\end{aligned}
\end{equation*}
By Assumption \ref{HSDG}, the functions $F$ and $G$, evaluated along the
state and the controls, are uniformly Lipschitz in $(y,z)$ and in $y$,
respectively. Their Lipschitz constants are independent of the controls.
For a controlled trajectory $(X,K)$ starting at $t\leq T_0<T$, put
$F_s^0:=F(s,X_s,0,0,\theta_s,\gamma_s)$ and
$G_s^0:=G(s,X_s,0,\theta_s,\gamma_s)$. By Definition
\ref{testfunction} and Proposition \ref{estimate RSDE}, for every $\lambda>0$,
\begin{equation}\label{FG-integrability}
 E\int_t^{T_0}e^{\lambda K_s}
 \bigl(|F_s^0|^2\dd s+|G_s^0|^2\dd K_s\bigr)<\infty.
\end{equation}

Fix $M\geq L_f$ and $N\geq L_g$. We assume that, for
$\xi\in L^0(\Omega_\tau,\mathcal F_\tau;\overline{D})$, the test field
$\varphi$ belongs to
$\underline{\mathcal G}^{M,N}V(\tau,\xi;\Omega_\tau)$.

\noindent\textbf{Step 1.} We show that $V$ is a viscosity subsolution.

By Proposition \ref{bound} and Theorem \ref{continuity}, we have
$V\in \mathcal S^4(C_b(\overline{D}))$, and $V(T,x)=\Phi(x)$ for all
$x\in\overline{D}$. We first work on the event $\{\xi\in D\}$. Note that
$\{\omega:\xi(\omega)\in D\}=\bigcup_{n\geq1}
\{\omega:\operatorname{dist}(\xi(\omega),\partial D)>1/n\}$. Thus it is
enough to work on
$\mathcal D_n=\{\operatorname{dist}(\xi,\partial D)>1/n\}$. By Remark
\ref{vs5}, it suffices to verify the following inequality for every
$\varphi\in\underline{\mathcal G}^{M,N}V(\tau,\xi;\Omega_\tau)$, with
$\tau\in\mathcal T^0$, $\Omega_\tau\in\mathcal F_\tau$ of positive
probability, and
$\xi\in L^0(\Omega_\tau,\mathcal F_\tau;\overline{D})$, $P$-a.s. on
$\Omega_\tau\cap\mathcal D_n$:
\begin{equation}\label{5*}
\essliminf_{s \rightarrow \tau^{+}}\left\{-\mathfrak{d}_s \varphi-\mathbb{H}_{-}\left(D^2 \varphi, D \mathfrak{d}_\omega \varphi, D \varphi, \varphi, \mathfrak{d}_\omega \varphi\right)\right\}(s, \xi) \leq 0.
\end{equation}
Suppose that (\ref{5*}) does not hold. Then there exist $k>0$, $\Omega_\tau'\in\mathcal{F}_\tau$, and $\delta'>0$ such that $\Omega_\tau'\subset\Omega_\tau\cap\mathcal D_n$, $P(\Omega_\tau')>0$, and
\begin{equation*}
\underset{s \in\left[\tau, (\tau+\delta') \wedge T\right]}{\esssup} F_0(s, \xi, 0,0)\leq -k<0,\quad P\text{-a.s. on }\Omega_\tau',
\end{equation*}
where
\begin{equation*}
F_0(s, \xi, 0,0)=\underset{\theta \in \Theta_0}{\esssup}\ \underset{\gamma \in \Gamma_0}{\essinf}\ F(s, \xi, 0,0, \theta, \gamma).
\end{equation*}
Again, we may choose $T_0\in(0,T)$ such that $P(\{\tau<T_0\}\cap\Omega'_\tau)>0$. Without loss of generality, we assume $\hat\tau=T_0$ and $\Omega_\tau=\Omega'_\tau=\Omega$.
The predictable measurable selection theorem and the continuity of $F$ yield a nonanticipative strategy $\widehat{\gamma}\in\mathcal M_{\tau,\tau+\delta}$ and $0<\delta<\frac{1}{2n}$ such that
\begin{equation}\label{newstar}
F(\tau', \xi', 0,0, \theta_{\tau'}, \widehat{\gamma}(\theta)_{\tau'}) \leq-\frac{k}{2} \quad P\text{-a.s. on }\{\tau'\in[\tau,\tau+\delta]\}\cap O_{\delta,\xi'}(\xi)\cap\mathcal{D}_n,
\end{equation}
for all $\tau'\in\mathcal T^\tau$, $\theta\in\Theta_{\tau,\tau+\delta}$, and
$\xi'\in L^0(\Omega,\mathcal F_{\tau'};\overline{D})$, where
$O_{\delta,\xi'}(\xi):=\{\omega:|\xi'(\omega)-\xi(\omega)|\leq\delta\}$.
Indeed, $\widehat\gamma$ can be chosen through a jointly measurable, adapted
function $\gamma(s,\omega,x,\theta)$ such that
$\widehat\gamma(\theta)_{\tau'}=\gamma(\tau',\omega,\xi',\theta_{\tau'})$.
Let $\delta_0=\delta'\wedge\delta\wedge\frac{1}{2}(T-T_0)$. For any $\alpha\in(0,\delta_0]$, we consider the following GBSDE:
\begin{equation}\label{1}
\left\{\begin{aligned}
-\dd Y_s^{1, \theta,\gamma}={}&F\left(s, X_s^{\tau,\xi;\theta,\gamma}, Y_s^{1, \theta,\gamma}, Z_s^{1, \theta,\gamma}, \theta_s,\gamma_s\right) \dd s+G\left(s, X_s^{\tau,\xi;\theta,\gamma}, Y_s^{1,\theta,\gamma},\theta_s,\gamma_s\right) \dd K_s^{\tau,\xi;\theta,\gamma} \\
& -Z_s^{1, \theta,\gamma} \dd W_s, \quad s \in[\tau, \tau+\alpha]; \\
Y_{\tau+\alpha}^{1, \theta,\gamma}={}&0,
\end{aligned}\right.
\end{equation}
where $(X_s^{\tau,\xi;\theta,\gamma},K_s^{\tau,\xi;\theta,\gamma})$ solves (\ref{eq RSDE}) and $(\theta,\gamma)\in\Theta_{\tau,\tau+\alpha}\times\Gamma_{\tau,\tau+\alpha}$. By (\ref{FG-integrability}) and the standard GBSDE theory, there exists a unique solution, which satisfies the following identity.
\begin{lemma}\label{relationship}
Let $\varphi\in\mathcal C_{\mathcal F}^3([0,T]\times\mathbb R^d)$. Then, for every $(\theta,\gamma)\in\Theta_{\tau,\tau+\alpha}\times\Gamma_{\tau,\tau+\alpha}$ and for all $s\in[\tau,\tau+\alpha]$,
\begin{equation*}
Y_s^{1, \theta,\gamma}=G_{s, \tau+\alpha}^{\tau,\xi;\theta,\gamma}\left[\varphi\left(\tau+\alpha, X_{\tau+\alpha}^{\tau,\xi;\theta,\gamma}\right)\right]-\varphi\left(s, X_s^{\tau,\xi;\theta,\gamma}\right), \quad P\text{-a.s.}
\end{equation*}
\end{lemma}
\begin{proof}
By definition, $\overline{Y}_s^{\tau,\xi,\theta,\gamma}:=G_{s, \tau+\alpha}^{\tau,\xi;\theta,\gamma}\left[\varphi\left(\tau+\alpha, X_{\tau+\alpha}^{\tau,\xi;\theta,\gamma}\right)\right]$, $s\in[\tau,\tau+\alpha]$, is the first component of the solution to the GBSDE
\begin{equation*}
\left\{\begin{aligned}
-\dd \overline{Y}_s^{\tau,\xi;\theta,\gamma}={}&f\left(s, X_s^{\tau,\xi;\theta,\gamma}, \overline{Y}_s^{\tau,\xi;\theta,\gamma}, \overline{Z}_s^{\tau,\xi;\theta,\gamma}, \theta_s, \gamma_s\right) \dd s-\overline{Z}_s^{\tau,\xi;\theta,\gamma}\dd W_s \\
& +g\left(s, X_s^{\tau,\xi;\theta,\gamma}, \overline{Y}_s^{\tau,\xi;\theta,\gamma}, \theta_s, \gamma_s\right) \dd K_s^{\tau,\xi;\theta,\gamma}, \quad s \in[\tau, \tau+\alpha], \\
\overline{Y}_{\tau+\alpha}^{\tau,\xi;\theta,\gamma}={}&\varphi\left(\tau+\alpha, X_{\tau+\alpha}^{\tau,\xi;\theta,\gamma}\right).
\end{aligned}\right.
\end{equation*}
It remains to prove $Y_s^{1,\theta,\gamma}=\overline{Y}_s^{\tau,\xi;\theta,\gamma}-\varphi\left(s, X_s^{\tau,\xi;\theta,\gamma}\right)$. Application of the It\^o--Wentzell formula to $\varphi\left(s, X_s^{\tau,\xi;\theta,\gamma}\right)$ shows that this difference satisfies GBSDE (\ref{1}) with terminal value $Y_{\tau+\alpha}^{1,\theta,\gamma}=0$.
\end{proof}

Since $\varphi\in\underline{\mathcal{G}}^{M,N}V(\tau,\xi;\Omega_\tau)$ and, by our convention, $\Omega_\tau=\Omega$, the DPP (Theorem \ref{DPP}) gives, for every $\alpha\in(0,\delta_0]$,
\begin{equation}\label{eq:subsolution-dpp}
\varphi(\tau, \xi)=V(\tau, \xi)=\underset{\mu \in \mathcal{M}_{\tau,\tau+\alpha}}{\essinf}\ \underset{\theta \in \Theta_{\tau,\tau+\alpha}}{\esssup}\  G_{\tau, \tau+\alpha}^{\tau,\xi;\theta,\mu(\theta)}\left[V\left(\tau+\alpha, X_{\tau+\alpha}^{\tau,\xi;\theta,\mu(\theta)}\right)\right], \quad P\text{-a.s.}
\end{equation}
Since $\varphi\in\underline{\mathcal{G}}^{M,N}V(\tau,\xi;\Omega_\tau)$,
it follows from Lemma \ref{function} that, for $M\geq L_f$, $N\geq L_g$,
\begin{equation}\label{eq:subsolution-tangency}
\begin{aligned}
&G_{\tau,\tau+\alpha}^{\tau,\xi;\theta,\mu(\theta)}
\left[\varphi\left(\tau+\alpha,X_{\tau+\alpha}^{\tau,\xi;\theta,\mu(\theta)}\right)\right]-G_{\tau,\tau+\alpha}^{\tau,\xi;\theta,\mu(\theta)}
\left[V\left(\tau+\alpha,X_{\tau+\alpha}^{\tau,\xi;\theta,\mu(\theta)}\right)\right]\\
\geq &\underline{\mathcal E}_{\tau,\tau+\alpha}^{L_f,L_g}
\left[\varphi\left(\tau+\alpha,X_{\tau+\alpha}^{\tau,\xi;\theta,\mu(\theta)}\right)
-V\left(\tau+\alpha,X_{\tau+\alpha}^{\tau,\xi;\theta,\mu(\theta)}\right)\right]\\
\geq &\underline{\mathcal E}_{\tau,\tau+\alpha}^{L_f,L_g}
\left[\inf\limits_{y\in\overline D}\bigl(\varphi(\tau+\alpha,y)-V(\tau+\alpha,y)\bigr)\right]\\
\geq &\underline{\mathcal E}_{\tau,\tau+\alpha}^{M,N}
\left[\inf\limits_{y\in\overline D}\bigl(\varphi(\tau+\alpha,y)-V(\tau+\alpha,y)\bigr)\right]\\
\geq &\varphi(\tau,\xi)-V(\tau,\xi)=0,\qquad P\text{-a.s.}
\end{aligned}
\end{equation}
Consequently, Lemma \ref{relationship} and the relations (\ref{eq:subsolution-dpp}), (\ref{eq:subsolution-tangency}) imply
\begin{equation*}
\begin{aligned}
&\underset{\mu\in\mathcal M_{\tau,\tau+\alpha}}{\essinf}
 \underset{\theta\in\Theta_{\tau,\tau+\alpha}}{\esssup}
 Y_\tau^{1,\theta,\mu(\theta)}\\
={}&\underset{\mu\in\mathcal M_{\tau,\tau+\alpha}}{\essinf}
 \underset{\theta\in\Theta_{\tau,\tau+\alpha}}{\esssup}
 \bigg\{G_{\tau,\tau+\alpha}^{\tau,\xi;\theta,\mu(\theta)}
 \left[\varphi\left(\tau+\alpha,X_{\tau+\alpha}^{\tau,\xi;\theta,\mu(\theta)}\right)\right]
 -\varphi(\tau,\xi)\bigg\}
\geq 0,\quad P\text{-a.s.}
\end{aligned}
\end{equation*}
Therefore, for the strategy $\mu=\widehat{\gamma}$ introduced in (\ref{newstar}), we have
\begin{equation*}
\underset{\theta \in \Theta_{\tau,\tau+\alpha}}{\esssup} Y_\tau^{1, \theta, \widehat{\gamma}(\theta)} \geq 0, \quad P\text{-a.s.}
\end{equation*}
As in the proof of the DPP (see \cite{Li08sicon,Qiu22AAP}), Theorem \ref{mst} allows to choose, for every $\varepsilon>0$, a control process $\theta^{\varepsilon}\in\Theta_{\tau,\tau+\alpha}$ such that, $P$-a.s.,
\begin{equation}\label{sanjiao}
Y_\tau^{1, \theta^\varepsilon, \widehat{\gamma}\left(\theta^\varepsilon\right)} \geq-\varepsilon \alpha.
\end{equation}
For this $\theta^{\varepsilon}\in\Theta_{\tau,\tau+\alpha}$, we define
$\tau^{*}:=\inf\left\{s>\tau:\,|X_s^{\tau,\xi;\theta^{\varepsilon},\widehat{\gamma}(\theta^{\varepsilon})}-\xi|\geq\delta_0\right\}\wedge(\tau+\alpha)$.
Then, for $s\in[\tau,\tau^{*}]$, we have
$|X_s^{\tau,\xi;\theta^{\varepsilon},\widehat{\gamma}(\theta^{\varepsilon})}-\xi|
\leq\delta_0$. Since
$\dd K_s^{\tau,\xi;\theta^{\varepsilon},\widehat{\gamma}(\theta^{\varepsilon})}=0$ on
$\mathcal{D}_n$, it follows that
\begin{equation*}
\begin{aligned}
Y_s^{1,\theta^{\varepsilon},\widehat{\gamma}(\theta^{\varepsilon})}={}&Y_{\tau^*}^{1,\theta^{\varepsilon},\widehat{\gamma}(\theta^{\varepsilon})}+\int_s^{\tau^*} F\left(r, X_r^{\tau,\xi;\theta^{\varepsilon},\widehat{\gamma}(\theta^{\varepsilon})}, Y_r^{1,\theta^{\varepsilon},\widehat{\gamma}(\theta^{\varepsilon})}, Z_r^{1,\theta^{\varepsilon},\widehat{\gamma}(\theta^{\varepsilon})},\theta_r^{\varepsilon},\widehat{\gamma}(\theta^{\varepsilon})_r\right) \dd r\\
&\quad-\int_s^{\tau^*} Z_r^{1,\theta^{\varepsilon},\widehat{\gamma}(\theta^{\varepsilon})} \dd W_r,\qquad s\in[\tau,\tau^{*}].
\end{aligned}
\end{equation*}
Let us consider the following BSDE:
\begin{equation}\label{5.5}
\left\{\begin{aligned}
-\dd Y_s^2 & =\left(C_L\left(\left|Y_s^2\right|+\left|Z_s^2\right|\right)-\frac{1}{2} k\right) \dd s-Z_s^2 \dd W_s,\quad s \in[\tau, \tau+\alpha], \\
Y_{\tau+\alpha}^2 & =0,
\end{aligned}\right.
\end{equation}
whose unique solution is given by
\begin{equation}\label{Y2}
Y_s^2=-\frac{k}{2 C_L}\left(1-e^{C_L(s-(\tau+\alpha))}\right), Z_s^2=0, \quad s \in[\tau, \tau+\alpha].
\end{equation}
We also need the BSDE
\begin{equation}\label{5.7}
\left\{\begin{aligned}
-\dd Y_s^3 & =\left(C_L\left(\left|Y_s^3\right|+\left|Z_s^3\right|\right)-\frac{1}{2} k\right) \dd s-Z_s^3 \dd W_s, \quad s \in[\tau, \tau^*], \\
Y_{\tau^*}^3 & =Y_{\tau^*}^{1,\theta^{\varepsilon},\widehat{\gamma}(\theta^{\varepsilon})}.
\end{aligned}\right.
\end{equation}
In both BSDEs the constant $C_L:=1+\max\{L_f,L_g\}$ controls the Lipschitz dependence of $F$ on
$(y,z)$ and that of $G$ on $y$.
\begin{lemma}\label{23}
We have $Y_{\tau}^{1,\theta^{\varepsilon},\widehat{\gamma}(\theta^{\varepsilon})}\leq Y_\tau^3$ and $|Y_\tau^2-Y_\tau^3|\leq C\alpha^{\frac{3}{2}}$, $P$-a.s. on $\mathcal{D}_n$, where $C$ is independent of $\theta^{\varepsilon}$, $\widehat{\gamma}$, and $\alpha$.
\end{lemma}
\begin{proof}
By (\ref{newstar}), we have
\begin{equation*}
\begin{split}
&F(s,X_s^{\tau,\xi;\theta^{\varepsilon},\widehat{\gamma}(\theta^{\varepsilon})},y,z,\theta^{\varepsilon}_s,\widehat{\gamma}(\theta^{\varepsilon})_s)\\
\leq &F(s,X_s^{\tau,\xi;\theta^{\varepsilon},\widehat{\gamma}(\theta^{\varepsilon})},0,0,\theta^{\varepsilon}_s,\widehat{\gamma}(\theta^{\varepsilon})_s)+C_L\left(|y|+|z|\right)\\
\leq &C_L\left(|y|+|z|\right)-\frac{1}{2}k,\quad\text{on }\mathcal{D}_n.
\end{split}
\end{equation*}
Since $\mathcal{D}_n\in\mathcal{F}_{\tau}$, the BSDE comparison theorem yields
\begin{equation*}
Y_s^{1,\theta^{\varepsilon},\widehat{\gamma}(\theta^{\varepsilon})}\leq Y_s^3,\quad s\in[\tau,\tau^*],\quad P\text{-a.s. on }\mathcal{D}_n.
\end{equation*}
Standard estimates for BSDE (\ref{1}) (see \cite{Pardoux and Peng,Peng1997}) show that, $P$-a.s.,
\begin{equation*}
\begin{aligned}
&E\left[\left|Y_{\tau^*}^{1,\theta^{\varepsilon},\widehat{\gamma}(\theta^{\varepsilon})}\right|^2
\bigg|\mathcal{F}_\tau\right]\\
\leq &CE\left[|\tau+\alpha-\tau^{*}|^2\bigm|\mathcal{F}_\tau\right]
+CE\left[\left(K_{\tau+\alpha}^{\tau,\xi;\theta^{\varepsilon},\widehat{\gamma}(\theta^{\varepsilon})}
-K_{\tau^*}^{\tau,\xi;\theta^{\varepsilon},\widehat{\gamma}(\theta^{\varepsilon})}\right)^2
\bigg|\mathcal{F}_\tau\right]\\
\leq &CE\left[|\tau+\alpha-\tau^{*}|^2\bigm|\mathcal{F}_\tau\right]
+E\left[|\tau+\alpha-\tau^{*}|\bigm|\mathcal{F}_\tau\right].
\end{aligned}
\end{equation*}
Moreover, from (\ref{estimate5}),
\begin{equation}\label{Y1}
\begin{split}
&E\left[\sup\limits_{\tau\leq s \leq \tau+\alpha}|X_s^{\tau,\xi;\theta^{\varepsilon},\widehat{\gamma}(\theta^{\varepsilon})}-\xi|^8\mid\mathcal{F}_\tau\right]\leq C\alpha^4,
\end{split}
\end{equation}
so that Markov's inequality yields
\begin{equation}\label{fangkuang}
P\left\{\tau^*<\tau+\alpha\mid\mathcal{F}_\tau\right\}\leq P\left\{\sup\limits_{\tau\leq s \leq \tau+\alpha}|X_s^{\tau,\xi;\theta^{\varepsilon},\widehat{\gamma}(\theta^{\varepsilon})}-\xi|\geq\delta_0\biggm|\mathcal{F}_\tau\right\}\leq\frac{C}{\delta_0^8} \alpha^4.
\end{equation}
By using (\ref{5.5})--(\ref{5.7}) and combining this estimate with (\ref{Y1}), we obtain
\begin{equation*}
E\left[\left|Y_{\tau^*}^{1,\theta^{\varepsilon},\widehat{\gamma}(\theta^{\varepsilon})}\right|^2\bigg|\mathcal{F}_\tau\right]\leq C\alpha P\left\{\tau^*<\tau+\alpha\mid\mathcal{F}_\tau\right\}\leq\frac{C}{\delta_0^8} \alpha^3.
\end{equation*}
The BSDE stability estimate therefore gives
\begin{equation}\label{5.8}
\begin{aligned}
&\left|Y_\tau^2-Y_\tau^3\right|\\
\leq &C\left(E\left[\left|Y_{\tau^{*}}^2-Y_{\tau^{*}}^3\right|^2
\bigg|\mathcal{F}_\tau\right]\right)^{\frac{1}{2}}
\leq C\left(E\left[\left|Y_{\tau^{*}}^2\right|^2
\bigg|\mathcal{F}_\tau\right]\right)^{\frac{1}{2}}
+C\left(E\left[\left|Y_{\tau^{*}}^3\right|^2
\bigg|\mathcal{F}_\tau\right]\right)^{\frac{1}{2}}\\
\leq &C\frac{k}{2}\left(1-e^{-C_L\alpha}\right)
\left(P\left\{\tau^*<\tau+\alpha\bigm|\mathcal{F}_\tau\right\}\right)^{\frac{1}{2}}
+C\left(E\left[\left|Y_{\tau^{*}}^{1,\theta^{\varepsilon},\widehat{\gamma}(\theta^{\varepsilon})}\right|^2
\bigg|\mathcal{F}_\tau\right]\right)^{\frac{1}{2}}\\
\leq &C\frac{k}{2}\left(1-e^{-C_L\alpha}\right)
\frac{1}{\delta_0^4}\alpha^2
+\frac{C}{\delta_0^4}\alpha^{\frac{3}{2}}
\leq C\alpha^{\frac{3}{2}}.
\end{aligned}
\end{equation}
This estimate holds for all $\alpha \in(0, \delta_0]$ and completes the proof.
\end{proof}
We now continue the proof of Theorem \ref{existence}. From the preceding lemma we have
\begin{equation}\label{123}
Y_\tau^{1,\theta^{\varepsilon},\widehat{\gamma}(\theta^{\varepsilon})}\leq Y_\tau^3\leq Y_{\tau}^2+|Y_{\tau}^3-Y_{\tau}^2|\leq Y_\tau^2 +C\alpha^{\frac{3}{2}},\quad P\text{-a.s. on }\mathcal{D}_n.
\end{equation}
Substituting (\ref{Y2}) into (\ref{123}), we obtain
\begin{equation}\label{1234}
Y_\tau^{1,\theta^{\varepsilon},\widehat{\gamma}(\theta^{\varepsilon})}\leq -\frac{k}{2 C_L}\left(1-e^{-C_L\alpha}\right) +C\alpha^{\frac{3}{2}},\quad P\text{-a.s. on }\mathcal{D}_n,
\end{equation}
and combining (\ref{sanjiao}) with (\ref{1234}) leads to
\begin{equation*}
-\varepsilon\leq\frac{-k}{2C_L}\frac{1-e^{-C_L\alpha}}{\alpha}+C \alpha^{\frac{1}{2}}.
\end{equation*}
Thus, letting $\alpha\downarrow0$ and afterwards $\varepsilon\downarrow0$ yields
$0\leq-k/2$, contradicting $k>0$. Consequently, for every
$\varphi\in\underline{\mathcal{G}}^{M,N}V(\tau,\xi;\Omega_\tau)$ with
$\tau\in\mathcal{T}^{0}$, $\Omega_{\tau}\in\mathcal{F}_\tau$,
$P(\Omega_\tau)>0$, and
$\xi\in L^0(\Omega_\tau,\mathcal{F}_\tau;D)$, the following holds $P$-a.s. on
$\Omega_\tau\cap\mathcal{D}_n$:
\begin{equation}\label{****}
\essliminf_{s \rightarrow \tau^{+}}\left\{-\mathfrak{d}_s \varphi-\mathbb{H}_{-}\left(D^2 \varphi, D \mathfrak{d}_\omega \varphi, D \varphi, \varphi, \mathfrak{d}_\omega \varphi\right)\right\}(s, \xi) \leq 0.
\end{equation}
Moreover, since $\{\omega:\xi(\omega)\in D\}=\bigcup\limits_{n\geq 1}\{\omega:\operatorname{dist}(\xi(\omega),\partial D)>\frac{1}{n}\}$, relation (\ref{****}) also holds on $\{\omega:\xi\in D\}$.

Next, on $\overline{\mathcal{D}}:=\{\xi\in \partial D\}$, we must prove
\begin{equation*}
\essliminf_{s\rightarrow \tau^{+}}\left\{-\mathbb{G}_{-}\left(\mathfrak{d}_s\varphi,\frac{\partial{\varphi}}{\partial\vec n},D^2 \varphi, D \mathfrak{d}_\omega \varphi, D \varphi, \varphi, \mathfrak{d}_\omega \varphi\right)\right\}(s,\xi)\leq 0.
\end{equation*}
Recall that $\mathbb{G}_{-}$ is defined in (\ref{h-}). If this inequality failed, then, as in the case of $\mathcal D_n$, there would exist $k>0$, $\delta'\in(0,1)$, and $\Omega_\tau'\in\mathcal F_\tau$ satisfying $\Omega_\tau'\subset\Omega_\tau$ and $P(\Omega_\tau'\cap\overline{\mathcal D})>0$ such that
\begin{equation*}
\underset{s \in\left[\tau, (\tau+\delta') \wedge T\right]}{\esssup} \bar{G}_0(s, \xi, 0,0)\leq -k<0,\quad P\text{-a.s. on }\Omega_\tau'\cap\overline{\mathcal{D}},
\end{equation*}
where
\begin{equation*}
\bar{G}_0(s, \xi, 0,0)=\underset{\theta \in \Theta_0}{\esssup}\ \underset{\gamma \in \Gamma_0}{\essinf}\ \bar{G}(s, \xi, 0, 0, \theta, \gamma)=\underset{\theta \in \Theta_0}{\esssup}\ \underset{\gamma \in \Gamma_0}{\essinf}\ \left(F(s, \xi, 0, 0, \theta, \gamma)\vee G(s, \xi, 0, \theta, \gamma)\right).
\end{equation*}
Again, we may choose $T_0\in(0,T)$ such that $P(\{\tau<T_0\}\cap\Omega'_\tau\cap\overline{\mathcal D})>0$. Without loss of generality, we assume $\hat\tau=T_0$ and $\Omega_\tau=\Omega'_\tau\cap\overline{\mathcal D}=\Omega$.
Then similarly to the case of $\mathcal D_n$, the predictable measurable selection theorem allows to choose a nonanticipative strategy $\widehat{\gamma}\in\mathcal M_{\tau,\tau+\delta}$ and $\delta>0$ such
that, for all $\tau'\in\mathcal T^\tau$, $\theta\in\Theta_{\tau,\tau+\delta}$, and all
$\xi'\in L^0(\Omega,\mathcal{F}_{\tau'};\overline{D})$,
\begin{equation*}
\bar{G}(\tau', \xi', 0,0, \theta_{\tau'}, \widehat{\gamma}(\theta)_{\tau'}) \leq-\frac{k}{2}
\quad P\text{-a.s. on }\{\tau'\in[\tau,\tau+\delta]\}
\cap O_{\delta,\xi'}(\xi)\cap\overline{\mathcal{D}}.
\end{equation*}
Now let $\alpha\in(0,\delta_0]$, where
$\delta_0=\delta\wedge\delta'\wedge\frac{1}{2}(T-T_0)$. We consider
GBSDE (\ref{1}), and analogously to (\ref{sanjiao}) we can select for any $\varepsilon>0$, a control process
$\theta^{\varepsilon}\in\Theta_{\tau,\tau+\alpha}$ such that
\begin{equation}\label{quan1}
Y_{\tau}^{1,\theta^{\varepsilon},\widehat{\gamma}(\theta^{\varepsilon})}\geq-\varepsilon\alpha,\qquad P\text{-a.s.}
\end{equation}
Let us consider again
$\tau^{*}:=\inf\left\{s>\tau:\,|X_s^{\tau,\xi;\theta^{\varepsilon},\widehat{\gamma}(\theta^{\varepsilon})}-\xi|\geq\delta_0\right\}\wedge(\tau+\alpha)$.
Since
$|X_s^{\tau,\xi;\theta^{\varepsilon},\widehat{\gamma}(\theta^{\varepsilon})}-\xi|
\leq\delta_0$ for $s\in[\tau,\tau^{*}]$, we have on
$\overline{\mathcal{D}}$
\begin{equation*}
F(s,X_s^{\tau,\xi;\theta^{\varepsilon},\widehat{\gamma}(\theta^{\varepsilon})},y,z,\theta_s^{\varepsilon},\widehat{\gamma}(\theta^{\varepsilon})_s)\leq C_L\left(|y|+|z|\right)-\frac{1}{2}k,
\end{equation*}
and
\begin{equation*}
G(s,X_s^{\tau,\xi;\theta^{\varepsilon},\widehat{\gamma}(\theta^{\varepsilon})},y,\theta_s^{\varepsilon},\widehat{\gamma}(\theta^{\varepsilon})_s)\leq C_L|y|-\frac{1}{2}k.
\end{equation*}
Inspired by the case of $\mathcal D_n$, we shall use the following GBSDE:
\begin{equation*}
\left\{\begin{array}{l}
-\dd Y_s^4=\left(C_L\left(\left|Y_s^4\right|+\left|Z_s^4\right|\right)-\frac{1}{2} k\right) \dd s+\left(C_L\left|Y_s^4\right|-\frac{1}{2} k\right) \dd K_s^{\tau,\xi;\theta^{\varepsilon},\widehat{\gamma}(\theta^{\varepsilon})}-Z_s^4 \dd W_s,\\
Y_{\tau^*}^4=Y_{\tau^*}^{1,\theta^{\varepsilon},\widehat{\gamma}(\theta^{\varepsilon})},
\end{array}\right.
\end{equation*}
and thanks to the comparison theorem for GBSDEs, we have
\begin{equation}\label{5.12}
Y_s^{1, \theta^{\varepsilon},\widehat{\gamma}(\theta^{\varepsilon})} \leq Y_s^4,\quad s \in[\tau, \tau^*],\quad P\text{-a.s. on }\overline{\mathcal{D}}.
\end{equation}
We shall also consider the following GBSDE:
\begin{equation}\label{5.13}
\left\{\begin{array}{l}
-\dd Y_s^5=\left(C_L\left(\left|Y_s^5\right|+\left|Z_s^5\right|\right)-\frac{1}{2} k\right) \dd s+\left(C_L\left|Y_s^5\right|-\frac{1}{2} k\right) \dd K_s^{\tau,\xi;\theta^{\varepsilon},\widehat{\gamma}(\theta^{\varepsilon})}-Z_s^5 \dd W_s, \\
Y_{\tau+\alpha}^5=0,
\end{array}\right.
\end{equation}
and recall BSDE (\ref{5.5}) too. Since $C_L\left|Y_s^2\right|-\frac{1}{2} k<0$ for $s\in[\tau,\tau+\alpha]$, the comparison theorem shows that $Y_s^5 \leq Y_s^2$ for $s \in[\tau, \tau+\alpha]$, $P$-a.s. Thus, as in (\ref{fangkuang}) and (\ref{5.8}), we have
\begin{equation}\label{new520}
\begin{aligned}
&\left|Y_\tau^4-Y_\tau^5\right|\\
\leq &C\left(E\left[\left|Y_{\tau^*}^4-Y_{\tau^*}^5\right|^2 \bigg| \mathcal{F}_\tau\right]\right)^{\frac{1}{2}}\\
\leq &C\left(E\left[\left|Y_{\tau^*}^4\right|^2 \bigg| \mathcal{F}_\tau\right]\right)^{\frac{1}{2}}
+C\left(E\left[\left|Y_{\tau^*}^5\right|^2 \bigg| \mathcal{F}_\tau\right]\right)^{\frac{1}{2}}\\
\leq &C\left(E\left[\left|Y_{\tau^*}^{1,\theta^{\varepsilon},\widehat{\gamma}(\theta^{\varepsilon})}\right|^2 \bigg| \mathcal{F}_\tau\right]\right)^{\frac{1}{2}}
+C\left(E\left[\left|Y_{\tau^*}^2\right|^2 \bigg| \mathcal{F}_\tau\right]\right)^{\frac{1}{2}}+C\left(E\left[\left|Y_{\tau^*}^5-Y_{\tau^*}^2\right|^2 \bigg| \mathcal{F}_\tau\right]\right)^{\frac{1}{2}}\\
\leq &C\alpha^{\frac{3}{2}}
+C\left(E\left[\left|Y_{\tau^*}^5-Y_{\tau^*}^2\right|^2 \bigg| \mathcal{F}_\tau\right]\right)^{\frac{1}{2}}.
\end{aligned}
\end{equation}
Here, by definition,
$Y_{\tau^*}^4=Y_{\tau^*}^{1,\theta^\varepsilon,\widehat\gamma(\theta^\varepsilon)}$.
This estimate holds for all $\alpha\in(0,\delta_0]$. Moreover, by standard estimates for GBSDEs (see \cite[Lemma~2.3]{Li and Tang}), Lemma \ref{pro RSDE}, and (\ref{fangkuang}), we obtain
\begin{equation*}
\begin{aligned}
&E\left[\left|Y_{\tau^*}^5-Y_{\tau^*}^2\right|^2 \bigg| \mathcal{F}_\tau\right]
\leq C E\left[\int_{\tau^*}^{\tau+\alpha}
e^{2K_s^{\tau,\xi;\theta^{\varepsilon},\widehat{\gamma}(\theta^{\varepsilon})}}
\left(C_L\left|Y_s^2\right|-\frac{k}{2}\right)^2
\dd K_s^{\tau,\xi;\theta^{\varepsilon},\widehat{\gamma}(\theta^{\varepsilon})}
\bigg|\mathcal{F}_\tau\right]\\
={}&C E\left[\int_{\tau^*}^{\tau+\alpha}
e^{2K_s^{\tau,\xi;\theta^{\varepsilon},\widehat{\gamma}(\theta^{\varepsilon})}}
\frac{k^2}{4}e^{2C_L(s-(\tau+\alpha))}
\dd K_s^{\tau,\xi;\theta^{\varepsilon},\widehat{\gamma}(\theta^{\varepsilon})}
\bigg|\mathcal{F}_\tau\right]\\
\leq &C\frac{k^2}{4}E\left[
\mathbf{1}_{\{\tau^*<\tau+\alpha\}}
e^{2K_{\tau+\alpha}^{\tau,\xi;\theta^{\varepsilon},\widehat{\gamma}(\theta^{\varepsilon})}}
\left(K_{\tau+\alpha}^{\tau,\xi;\theta^{\varepsilon},\widehat{\gamma}(\theta^{\varepsilon})}
-K_{\tau^*}^{\tau,\xi;\theta^{\varepsilon},\widehat{\gamma}(\theta^{\varepsilon})}\right)
\bigg|\mathcal{F}_\tau\right]\\
\leq &C\frac{k^2}{4}
\left(P\left\{\tau^*<\tau+\alpha\bigm|\mathcal{F}_\tau\right\}\right)^{\frac14}
\left(E\left[e^{8K_{\tau+\alpha}^{\tau,\xi;\theta^{\varepsilon},\widehat{\gamma}(\theta^{\varepsilon})}}
\bigg|\mathcal{F}_\tau\right]\right)^{\frac14}
\left(E\left[\left|K_{\tau+\alpha}^{\tau,\xi;\theta^{\varepsilon},\widehat{\gamma}(\theta^{\varepsilon})}
\mathbf{1}_{\{\tau^*<\tau+\alpha\}}\right|^2
\bigg|\mathcal{F}_\tau\right]\right)^{\frac{1}{2}}\\
\leq &Ck^2
\left(P\left\{\tau^*<\tau+\alpha\bigm|\mathcal{F}_\tau\right\}\right)^{\frac14}
\left(\alpha^2P\left\{\tau^*<\tau+\alpha\bigm|\mathcal{F}_\tau\right\}\right)^{\frac14}\\
\leq &Ck^2\alpha^{\frac52}.
\end{aligned}
\end{equation*}
Hence, (\ref{new520}) gives
\begin{equation}\label{5.16}
\left|Y_\tau^4-Y_\tau^5\right| \leq C \alpha^{\frac{3}{2}}+C k \alpha^{\frac{5}{4}},
\end{equation}
and combining (\ref{quan1}), (\ref{5.12}), (\ref{5.13}), and (\ref{5.16}) we obtain
\begin{equation*}
\begin{aligned}
&-\varepsilon\alpha
\leq Y_\tau^{1,\theta^{\varepsilon},\widehat{\gamma}(\theta^{\varepsilon})}
\leq Y_\tau^4
\leq Y_\tau^5+\left|Y_\tau^4-Y_\tau^5\right|
\leq Y_\tau^2+C\alpha^{\frac{3}{2}}+Ck\alpha^{\frac{5}{4}},
\qquad P\text{-a.s. on }\overline{\mathcal{D}}.
\end{aligned}
\end{equation*}
Consequently, (\ref{Y2}) implies
\begin{equation*}
-\varepsilon \leq-\frac{k}{2 C_L} \frac{1-e^{-C_L \alpha}}{\alpha}+C \alpha^{\frac{1}{2}}+C k \alpha^{\frac{1}{4}}.
\end{equation*}
Letting first $\alpha\downarrow0$ and then $\varepsilon\downarrow0$ gives
$0\leq-k/2$, contradicting $k>0$. Thus $V$ is a viscosity subsolution of
BSPDE (\ref{HJBI3}).

\noindent\textbf{Step 2.} We show that $V$ is a viscosity supersolution.

As in Step 1, fix $M\geq L_f$ and $N\geq L_g$. We first work on the event
$\{\xi\in D\}$. By Remark \ref{vs5}, it is sufficient to verify
the following inequality for every
$\varphi\in\overline{\mathcal G}^{M,N}V(\tau,\xi;\Omega_\tau)$, with
$\tau\in\mathcal T^0$, $\Omega_\tau\in\mathcal F_\tau$ of positive
probability and
$\xi\in L^0(\Omega_\tau,\mathcal F_\tau;\overline{D})$, $P$-a.s. on
$\Omega_\tau\cap\{\xi\in D\}$:
\begin{equation}\label{*kuang}
\esslimsup_{s \rightarrow \tau^{+}}\left\{-\mathfrak{d}_s \varphi-\mathbb{H}_{-}\left(D^2 \varphi, D \mathfrak{d}_\omega \varphi, D \varphi, \varphi, \mathfrak{d}_\omega \varphi\right)\right\}(s, \xi) \geq 0.
\end{equation}
As in Step 1, we prove the assertion on
$\Omega_\tau\cap\mathcal{D}_n$.
If (\ref{*kuang}) were not fulfilled on $\Omega_\tau\cap\mathcal D_n$, then there
would exist $\varepsilon>0$, $\Omega_\tau'\in\mathcal F_\tau$, and
$0<\delta'<1/(2n)$, with
$\Omega_\tau'\subset\Omega_\tau\cap\mathcal D_n$ and
$P(\Omega_\tau')>0$, such that
\begin{equation*}
\underset{s \in\left[\tau, (\tau+\delta') \wedge T\right]}{\essinf} F_0(s, \xi, 0,0)\geq \varepsilon>0,\quad P\text{-a.s. on }\Omega_\tau'.
\end{equation*}
Again, we may choose $T_0\in(0,T)$ such that $P(\{\tau<T_0\}\cap\Omega'_\tau)>0$. Without loss of generality, we assume $\hat\tau=T_0$ and $\Omega_\tau=\Omega'_\tau=\Omega$.
Then the predictable
measurable selection theorem and the continuity of $F$ show that there are
$\theta^{\varepsilon}\in\Theta_{\tau,\tau+\delta}$ and
$0<\delta\leq\delta'\wedge(T-T_0)$ such that, for every
$\alpha\in(0,\delta]$,
$\tau'\in\mathcal T_{\tau,\tau+\alpha}$,
$\gamma\in\Gamma_{\tau,\tau+\alpha}$, and
$\xi'\in L^0(\Omega,\mathcal{F}_{\tau'};\overline{D})$,
\begin{equation*}
F(\tau', \xi', 0,0, \theta^{\varepsilon}_{\tau'}, \gamma_{\tau'})
\geq\frac{\varepsilon}{2}
\quad P\text{-a.s. on }O_{\delta,\xi'}(\xi)\cap\mathcal{D}_n.
\end{equation*}
Let
$\delta_0=\delta\wedge\delta'\wedge\frac{1}{2}(T-T_0)$ and
$\alpha\in(0,\delta_0]$. We use the same notation $\theta^\varepsilon$
for its restriction to $[\tau,\tau+\alpha]$.
From Lemma \ref{function} and
$\varphi\in\overline{\mathcal{G}}^{M,N}V(\tau,\xi;\Omega_\tau)$ we deduce
\begin{equation*}
\begin{aligned}
&G_{\tau, \tau+\alpha}^{\tau,\xi;\theta,\mu(\theta)}\left[\varphi\left(\tau+\alpha, X_{\tau+\alpha}^{\tau,\xi;\theta,\mu(\theta)}\right)\right]-G_{\tau, \tau+\alpha}^{\tau,\xi;\theta,\mu(\theta)}\left[V\left(\tau+\alpha, X_{\tau+\alpha}^{\tau,\xi;\theta,\mu(\theta)}\right)\right] \\
\leq &\overline{\mathcal{E}}_{\tau, \tau+\alpha}^{L_f,L_g}\left[\varphi\left(\tau+\alpha, X_{\tau+\alpha}^{\tau,\xi;\theta,\mu(\theta)}\right)-V\left(\tau+\alpha, X_{\tau+\alpha}^{\tau,\xi;\theta,\mu(\theta)}\right)\right] \\
\leq &\overline{\mathcal{E}}_{\tau, \tau+\alpha}^{L_f,L_g}\left[\sup\limits_{y \in \overline{D}}\left(\varphi\left(\tau+\alpha, y\right)-V\left(\tau+\alpha, y\right)\right)\right] \\
\leq &\overline{\mathcal{E}}_{\tau, \tau+\alpha}^{M,N}\left[\sup\limits_{y \in \overline{D}}\left(\varphi\left(\tau+\alpha, y\right)-V\left(\tau+\alpha, y\right)\right)\right] \\
\leq &\varphi(\tau, \xi)-V(\tau, \xi)=0,
\end{aligned}
\end{equation*}
for $M\geq L_f$, $N\geq L_g$, and all
$(\theta,\mu)\in\Theta_{\tau,\tau+\alpha}\times
\mathcal M_{\tau,\tau+\alpha}$.
Thanks to Lemma \ref{relationship} and the assumption $\varphi\in\overline{\mathcal{G}}^{M,N}V(\tau,\xi;\Omega_\tau)$, we have the following inequality, $P$-a.s. on $\Omega$,
\begin{equation*}
\begin{aligned}
&Y_{\tau}^{1,\theta,\mu(\theta)}\\
={}&G_{\tau,\tau+\alpha}^{\tau,\xi;\theta,\mu(\theta)}
\left[\varphi\left(\tau+\alpha,X_{\tau+\alpha}^{\tau,\xi;\theta,\mu(\theta)}\right)\right]
-\varphi(\tau,\xi)\\
\leq &G_{\tau,\tau+\alpha}^{\tau,\xi;\theta,\mu(\theta)}
\left[V\left(\tau+\alpha,X_{\tau+\alpha}^{\tau,\xi;\theta,\mu(\theta)}\right)\right]
-V(\tau,\xi).
\end{aligned}
\end{equation*}
Then the DPP (Theorem \ref{DPP}) implies
\begin{equation}\label{5.25}
\underset{\mu \in \mathcal{M}_{\tau,\tau+\alpha}}{\essinf} \underset{\theta \in \Theta_{\tau,\tau+\alpha}}{\esssup} Y_\tau^{1, \theta,\mu(\theta)} \leq 0,\qquad P\text{-a.s. on }\Omega_\tau'.
\end{equation}
Put
\begin{equation*}
 A_\mu:=\esssup_{\theta\in\Theta_{\tau,\tau+\alpha}}
 Y_\tau^{1,\theta,\mu(\theta)},\qquad
 a:=\essinf_{\mu\in\mathcal M_{\tau,\tau+\alpha}}A_\mu.
\end{equation*}
Pasting on $\{A_{\mu_1}\leq A_{\mu_2}\}\in\mathcal F_\tau$ shows that
this family is downward directed. Choose $A_{\mu_n}\downarrow a$ and set
\begin{equation*}
 B_n:=\{A_{\mu_n}\leq a+\varepsilon'\}\setminus\bigcup_{i<n}B_i,
 \qquad \mu^{\varepsilon'}(\theta):=\sum_{n\geq1}\mathbf1_{B_n}\mu_n(\theta).
\end{equation*}
By nonanticipativity and (\ref{5.25}), $\mu^{\varepsilon'}\in\mathcal M_{\tau,\tau+\alpha}$ and
\begin{equation*}
 Y_\tau^{1,\theta,\mu^{\varepsilon'}(\theta)}
 \leq A_{\mu^{\varepsilon'}}\leq a+\varepsilon'\leq\varepsilon',
 \qquad\theta\in\Theta_{\tau,\tau+\alpha},\quad P\text{-a.s. on }\Omega_\tau'.
\end{equation*}
For this strategy, we define
\begin{equation*}
 \tau^{*}:=\inf\left\{s>\tau:\
 |X_s^{\tau,\xi;\theta^{\varepsilon},
 \mu^{\varepsilon'}(\theta^{\varepsilon})}-\xi|\geq\delta_0\right\}
 \wedge(\tau+\alpha).
\end{equation*}
Since
$\dd K_s^{\tau,\xi;\theta^{\varepsilon},
\mu^{\varepsilon'}(\theta^{\varepsilon})}=0$ on $\mathcal D_n$ for
$s\in[\tau,\tau^*]$, we have, for all such $s$,
\begin{equation*}
\begin{split}
Y_s^{1,\theta^{\varepsilon},\mu^{\varepsilon'}(\theta^{\varepsilon})}={}&Y_{\tau^*}^{1,\theta^{\varepsilon},\mu^{\varepsilon'}(\theta^{\varepsilon})}+\int_s^{\tau^*} F\left(r, X_r^{\tau,\xi;\theta^{\varepsilon},\mu^{\varepsilon'}(\theta^{\varepsilon})}, Y_r^{1,\theta^{\varepsilon},\mu^{\varepsilon'}(\theta^{\varepsilon})}, Z_r^{1,\theta^{\varepsilon},\mu^{\varepsilon'}(\theta^{\varepsilon})},\theta_r^{\varepsilon},\mu^{\varepsilon'}(\theta^{\varepsilon})_r\right) \dd r\\
&-\int_s^{\tau^*} Z_r^{1,\theta^{\varepsilon},\mu^{\varepsilon'}(\theta^{\varepsilon})} \dd W_r.
\end{split}
\end{equation*}
Let us consider the following BSDE:
\begin{equation*}
\left\{\begin{aligned}
-\dd \hat{Y}_s^2 & =\left(-C_L\left(\left|\hat{Y}_s^2\right|+\left|\hat{Z}_s^2\right|\right)+\frac{1}{2}\varepsilon\right) \dd s-\hat{Z}_s^2 \dd W_s,\quad s\in[\tau,\tau+\alpha], \\
\hat{Y}_{\tau+\alpha}^2 & =0,
\end{aligned}\right.
\end{equation*}
whose unique solution is given by
\begin{equation}\label{5.18}
\hat{Y}_s^2=\frac{\varepsilon}{2 C_L}\left(1-e^{C_L(s-(\tau+\alpha))}\right),\qquad \hat{Z}_s^2=0,\quad s \in[\tau, \tau+\alpha].
\end{equation}
We also need the BSDE
\begin{equation*}
\left\{\begin{aligned}
-\dd \hat{Y}_s^3 & =\left(-C_L\left(\left|\hat{Y}_s^3\right|+\left|\hat{Z}_s^3\right|\right)+\frac{1}{2} \varepsilon\right) \dd s-\hat{Z}_s^3 \dd W_s, \quad s \in\left[\tau, \tau^*\right], \\
\hat{Y}_{\tau^*}^3 & =Y_{\tau^*}^{1, \theta^\varepsilon, \mu^{\varepsilon'}\left(\theta^\varepsilon\right)}.
\end{aligned}\right.
\end{equation*}
Note that on $\mathcal{D}_n$, for $s\in[\tau,\tau^*]$,
\begin{equation*}
\begin{split}
&F(s,X_s^{\tau,\xi;\theta^{\varepsilon},\mu^{\varepsilon'}(\theta^{\varepsilon})},y,z,\theta_s^{\varepsilon},\mu^{\varepsilon'}(\theta^{\varepsilon})_s)\\
\geq &F(s,X_s^{\tau,\xi;\theta^{\varepsilon},\mu^{\varepsilon'}(\theta^{\varepsilon})},0,0,\theta_s^{\varepsilon},\mu^{\varepsilon'}(\theta^{\varepsilon})_s)-C_L\left(|y|+|z|\right)\\
\geq &-C_L\left(|y|+|z|\right)+\frac{1}{2}\varepsilon.
\end{split}
\end{equation*}
By arguing as in the proof of Lemma \ref{23}, we see that $Y_\tau^{1, \theta^\varepsilon, \mu^{\varepsilon'}\left(\theta^\varepsilon\right)} \geq \hat{Y}_\tau^3$ and $\left|\hat{Y}_\tau^2-\hat{Y}_\tau^3\right| \leq C \alpha^{\frac{3}{2}}$, $P$-a.s. on $\mathcal{D}_n$. Together with $Y_\tau^{1, \theta^\varepsilon, \mu^{\varepsilon'}(\theta^\varepsilon)} \leq \varepsilon'$ this gives
\begin{equation*}
\begin{aligned}
&\varepsilon'
\geq Y_\tau^{1,\theta^{\varepsilon},\mu^{\varepsilon'}(\theta^{\varepsilon})}
\geq \hat{Y}_\tau^3
\geq \hat{Y}_\tau^2-\left|\hat{Y}_\tau^2-\hat{Y}_\tau^3\right|
\geq \hat{Y}_\tau^2-C\alpha^{\frac{3}{2}},
\qquad P\text{-a.s. on }\mathcal{D}_n.
\end{aligned}
\end{equation*}
Set $\varepsilon'=\alpha^2$. Then (\ref{5.18}) gives
\begin{equation*}
\alpha \geq \frac{\varepsilon}{2 C_L} \frac{1-e^{-C_L \alpha}}{\alpha}-C \alpha^{\frac{1}{2}}.
\end{equation*}
Hence, letting $\alpha\downarrow0$ leads to the contradiction $0\geq\varepsilon/2$.
This proves (\ref{*kuang}) on $\mathcal{D}_n$, and so on
$\{\xi\in D\}$.

Next, on $\{\xi\in\partial D\}$, we must prove
\begin{equation*}
\esslimsup_{s\rightarrow \tau^{+}}\left\{-\mathbb{G}_{+}\left(\mathfrak{d}_s\varphi,\frac{\partial{\varphi}}{\partial\vec n},D^2 \varphi, D \mathfrak{d}_\omega \varphi, D \varphi, \varphi, \mathfrak{d}_\omega \varphi\right)\right\}(s,\xi)\geq 0.
\end{equation*}
If this were not satisfied, then there would exist $\varepsilon>0$, $\delta'>0$,
and $\Omega_\tau'\in\mathcal F_\tau$, with
$\Omega_\tau'\subset\Omega_\tau$ and
$P(\Omega_\tau'\cap\overline{\mathcal D})>0$, such that
\begin{equation*}
\underset{s \in\left[\tau, (\tau+\delta') \wedge T\right]}{\essinf}  \bar{\bar{G}}_0(s, \xi, 0,0)\geq \varepsilon>0,
\qquad P\text{-a.s. on }\Omega_\tau'\cap\overline{\mathcal D},
\end{equation*}
where
\begin{equation*}
\bar{\bar{G}}_0(s, \xi, 0,0)=\underset{\theta \in \Theta_0}{\esssup}\ \underset{\gamma \in \Gamma_0}{\essinf}\ \left(F(s, \xi, 0, 0, \theta, \gamma)\wedge G(s, \xi, 0, \theta, \gamma)\right).
\end{equation*}
Again, we may choose $T_0\in(0,T)$ such that $P(\{\tau<T_0\}\cap\Omega'_\tau\cap\overline{\mathcal D})>0$. Without loss of generality, we assume $\hat\tau=T_0$ and $\Omega_\tau=\Omega'_\tau\cap\overline{\mathcal D}=\Omega$.
By the same argument as further up, the predictable measurable selection theorem gives
$\theta^\varepsilon\in\Theta_{\tau,\tau+\delta}$ and
$0<\delta\leq\delta'\wedge(T-T_0)$ such that, for every
$\alpha\in(0,\delta]$,
$\tau'\in\mathcal T_{\tau,\tau+\alpha}$,
$\gamma\in\Gamma_{\tau,\tau+\alpha}$, and
$\xi'\in L^0(\Omega,\mathcal F_{\tau'};\overline{D})$,
\begin{equation}\label{new*}
F\left(\tau', \xi', 0,0, \theta^{\varepsilon}_{\tau'},
\gamma_{\tau'}\right)\wedge
G\left(\tau', \xi', 0, \theta^{\varepsilon}_{\tau'},
\gamma_{\tau'}\right)
\geq\frac{\varepsilon}{2}
\quad P\text{-a.s. on }
O_{\delta,\xi'}(\xi)\cap\overline{\mathcal{D}}.
\end{equation}
Let $\delta_0=\delta\wedge\delta'\wedge\frac{1}{2}(T-T_0)$ and
$\alpha\in(0,\delta_0]$. We use the same notation $\theta^\varepsilon$
for its restriction to $[\tau,\tau+\alpha]$. Repeating the tangency and DPP
argument leading to (\ref{5.25}) on this localized event, we see that, for every
$\varepsilon'>0$, there exists
$\mu^{\varepsilon'}\in\mathcal{M}_{\tau,\tau+\alpha}$ such that
$Y_\tau^{1, \theta^\varepsilon, \mu^{\varepsilon'}(\theta^\varepsilon)}
\leq \varepsilon'$.
For this strategy, we define
\begin{equation*}
\tau^{*}:=\inf\left\{s>\tau:\
|X_s^{\tau,\xi;\theta^{\varepsilon},\mu^{\varepsilon'}(\theta^{\varepsilon})}-\xi|
\geq\delta_0\right\}\wedge(\tau+\alpha).
\end{equation*}
Then (\ref{new*}) allows to derive the following inequalities on
$\overline{\mathcal{D}}$, for $s\in[\tau,\tau^*]$:
\begin{equation*}
\begin{split}
F(s,X_s^{\tau,\xi;\theta^{\varepsilon},\mu^{\varepsilon'}(\theta^{\varepsilon})},y,z,\theta_s^\varepsilon,\mu^{\varepsilon'}(\theta^{\varepsilon})_s)\geq &-C_L\left(|y|+|z|\right)+\frac{1}{2}\varepsilon,\\
G(s,X_s^{\tau,\xi;\theta^{\varepsilon},\mu^{\varepsilon'}(\theta^{\varepsilon})},y,\theta_s^\varepsilon,\mu^{\varepsilon'}(\theta^{\varepsilon})_s)\geq &-C_L |y|+\frac{1}{2}\varepsilon.
\end{split}
\end{equation*}
Consequently, the comparison theorem for GBSDEs gives
$Y_s^{1,\theta^{\varepsilon},\mu^{\varepsilon'}(\theta^{\varepsilon})}
\geq\hat{Y}_s^4$ for $s\in[\tau,\tau^*]$, $P$-a.s. on
$\overline{\mathcal{D}}$, where $\hat{Y}^4$ is defined by the following
GBSDE:
\begin{equation*}
\left\{\begin{array}{l}
-\dd \hat{Y}_s^4=\left(-C_L\left(\left|\hat{Y}_s^4\right|+\left|\hat{Z}_s^4\right|\right)+\frac{1}{2} \varepsilon\right) \dd s+\left(-C_L\left|\hat{Y}_s^4\right|+\frac{1}{2} \varepsilon\right) \dd K_s^{\tau,\xi;\theta^{\varepsilon},\mu^{\varepsilon'}(\theta^{\varepsilon})}-\hat{Z}_s^4 \dd W_s, \\
\hat{Y}_{\tau^*}^4=Y_{\tau^*}^{1,\theta^{\varepsilon},\mu^{\varepsilon'}(\theta^{\varepsilon})}.
\end{array}\right.
\end{equation*}
We want to compare the preceding GBSDE with the following one:
\begin{equation*}
\left\{\begin{array}{l}
-\dd \hat{Y}_s^5=\left(-C_L\left(\left|\hat{Y}_s^5\right|+\left|\hat{Z}_s^5\right|\right)+\frac{1}{2} \varepsilon\right) \dd s+\left(-C_L\left|\hat{Y}_s^5\right|+\frac{1}{2} \varepsilon\right) \dd K_s^{\tau,\xi;\theta^{\varepsilon},\mu^{\varepsilon'}(\theta^{\varepsilon})}-\hat{Z}_s^5 \dd W_s, \\
\hat{Y}_{\tau+\alpha}^5=0.
\end{array}\right.
\end{equation*}
Since $-C_L\left|\hat{Y}_s^2\right|+\frac{1}{2} \varepsilon>0$, the comparison theorem for GBSDEs gives $\hat{Y}_s^5 \geq \hat{Y}_s^2$ for $s \in[\tau, \tau+\alpha]$, $P$-a.s. On the other hand, arguing as in (\ref{5.16}) we obtain
\begin{equation*}
\left|\hat{Y}_\tau^4-\hat{Y}_\tau^5\right| \leq C \alpha^{\frac{3}{2}}+C \varepsilon \alpha^{\frac{5}{4}}.
\end{equation*}
Hence,
\begin{equation*}
\begin{aligned}
\varepsilon'
\geq Y_\tau^{1,\theta^{\varepsilon},\mu^{\varepsilon'}(\theta^{\varepsilon})}
\geq \hat{Y}_\tau^4
\geq \hat{Y}_\tau^5-\left|\hat{Y}_\tau^4-\hat{Y}_\tau^5\right|
\geq \hat{Y}_\tau^2-C\alpha^{\frac{3}{2}}-C\varepsilon\alpha^{\frac{5}{4}},
\qquad P\text{-a.s. on }\overline{\mathcal{D}}.
\end{aligned}
\end{equation*}
Therefore, substituting (\ref{5.18}) for $s=\tau$ and choosing $\varepsilon'=\alpha^2$, we have
\begin{equation*}
\alpha \geq \frac{\varepsilon}{2 C_L} \frac{1-e^{-C_L \alpha}}{\alpha}-C \alpha^{\frac{1}{2}}-C \varepsilon  \alpha^{\frac{1}{4}}.
\end{equation*}
Finally, taking the limit $\alpha\downarrow0$ leads to a contradiction and completes the proof.
\end{proof}

\section{The uniqueness result in a one-dimensional linear case}\label{sec:uniqueness}

In this section, we prove a comparison theorem for a one-dimensional linear
stochastic HJBI equation with a nonlinear Neumann boundary condition. This
gives the uniqueness of bounded $(L_y,0)$-viscosity solutions. Assumption
\ref{HU} is imposed only in this section, in addition to Assumption
\ref{HSDG}, and Theorem \ref{existence} still applies. The controls enter the
running payoff but not the state equation.

\subsection{Additional assumptions and the main result}

We consider the one-dimensional case. For notational simplicity,
we take $D=(0,1)$ and $\overline{D}=[0,1]$, although the arguments below remain
valid for an arbitrary bounded interval. Consistent with the sign convention in
the reflected equation (\ref{eq RSDE}), we take the inward normals to be
\begin{equation}\label{uniq-normal}
 \vec n(0)=1,\qquad \vec n(1)=-1.
\end{equation}
Let $W=(W^0,W^1)$, where $W^0$ and $W^1$ are two independent one-dimensional
Brownian motions, and let $\{\mathcal F_t^0\}_{t\in[0,T]}$ be the augmented
filtration generated by $W^0$.

\begin{assumption}\label{HU}
We impose the following conditions.

\noindent{\rm(i)} The state coefficients are independent of
$(\theta,\gamma)$, and the reflected state equation is as follows
\begin{equation*}
\left\{\begin{aligned}
\dd X_s={}&b(s,X_s)\dd s+\sigma^0(s,X_s)\dd W_s^0
       +\sigma^1(s,X_s)\dd W_s^1+\vec n(X_s)\dd K_s,\\
\dd K_s={}&\mathbf{1}_{\{0,1\}}(X_s)\dd K_s,\qquad K\text{ is nondecreasing},\\
X_t={}&x\in[0,1].
\end{aligned}\right.
\end{equation*}

\noindent{\rm(ii)} The drift $b$ is $\{\mathcal F_t^0\}$-progressively
measurable, uniformly bounded and uniformly Lipschitz continuous in $x$. The
functions $\sigma^0,\sigma^1$ are deterministic, belong to
$C_b^3([0,T]\times\mathbb R)$, and are uniformly Lipschitz continuous in $x$.
There is a constant $\kappa>0$ such that
\begin{equation*}
 |\sigma^1(t,x)|^2\geq\kappa,\qquad (t,x)\in[0,T]\times[0,1].
\end{equation*}

\noindent{\rm(iii)} The generators have the form
\begin{equation}\label{uniq-linear-data}
 f(t,x,y,z,\theta,\gamma)=L_y y+f_0(t,x,\theta,\gamma),\qquad
 g(t,x,y,\theta,\gamma)=g_0(t,x).
\end{equation}
Here $L_y\geq0$ is a deterministic constant and $f_0$ is
$\{\mathcal F_t^0\}$-progressively measurable, bounded,
continuous in $(x,\theta,\gamma)$ and uniformly Lipschitz continuous in $x$.
The random field $g_0$ is $\{\mathcal F_t^0\}$-progressively measurable,
continuous in $x$, and belongs to
$\mathcal C_{\mathcal F}^3([0,T]\times\mathbb R)$.

\noindent{\rm(iv)} The terminal condition $\Phi$ is
$\mathcal F_T^0$-measurable, bounded and uniformly Lipschitz continuous in
$x$.
\end{assumption}

Putting
\begin{equation*}
 a(t,x):=\frac{1}{2}\left(|\sigma^0(t,x)|^2+|\sigma^1(t,x)|^2\right),\qquad
 F_0(t,x):=\underset{\theta\in\Theta_0}{\esssup}\
 \underset{\gamma\in\Gamma_0}{\essinf}
 f_0(t,x,\theta,\gamma),
\end{equation*}
the lower Hamiltonian in (\ref{h-}) is affine and takes the form
\begin{equation*}
 \mathbb H_-(t,x,A,B,p,y,z)
 =a(t,x)A+\sigma^0(t,x)B^0+\sigma^1(t,x)B^1
   +b(t,x)p+L_y y+F_0(t,x),
\end{equation*}
where $B=(B^0,B^1)$. Thus the equation is linear after optimizing over the
controls.

\begin{theorem}\label{uniq-main}
Suppose that the Assumptions \ref{HSDG} and \ref{HU} hold. If
$u\in\mathcal S^\infty(C([0,1]))$ is an
$(L_y,0)$-viscosity subsolution and
$v\in\mathcal S^\infty(C([0,1]))$ is an $(L_y,0)$-viscosity supersolution of
(\ref{HJBI3}) with terminal data $\Phi$, then
\begin{equation*}
 u(t,x)\leq v(t,x),\qquad (t,x)\in[0,T]\times[0,1],\quad P\text{-a.s.}
\end{equation*}
The exceptional set can be chosen independently of $(t,x)$. Consequently,
(\ref{HJBI3}) has at most one bounded $(L_y,0)$-viscosity solution in
$\mathcal S^\infty(C([0,1]))$. Under (\ref{uniq-linear-data}), $f$ is
$L_y$-Lipschitz in $y$ and $g$ is independent of $y$, so $L_f=L_y$ and
$L_g=0$. Proposition \ref{bound} and Theorem \ref{existence} therefore show
that the lower value function $V$ is a bounded $(L_y,0)$-viscosity solution
and hence is the unique solution in this class. If the Isaacs condition holds,
then $V=U$ and the stochastic differential game has a value.
\end{theorem}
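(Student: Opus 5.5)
The proof follows the strategy announced in the introduction. It has three stages: homogenize the boundary condition, prove a weak comparison between a viscosity semisolution and a smooth adapted field, and squeeze $u$ and $v$ between smooth comparison fields built from finite-dimensional approximations.

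\emph{Step 1: homogenization.} Choose a smooth adapted field $\ell\in\mathcal C^3_{\mathcal F}([0,T]\times\mathbb R)$, $\{\mathcal F^0_t\}$-adapted, built from $g_0$. A concrete choice is $\ell(t,x)=-\tfrac12 g_0(t,0)(1-x)^2-\tfrac12 g_0(t,1)x^2$, so that $\partial_{\vec n}\ell+g_0=0$ at $x=0,1$ with the normals (\ref{uniq-normal}). Replacing $u,v$ by $u-\ell,v-\ell$ turns (\ref{HJBI3}) into an equation with the same affine Hamiltonian and a modified $\{\mathcal F^0_t\}$-adapted source $\tilde F_0$. The boundary operators become $(\alpha+H)\vee\beta$ and $(\alpha+H)\wedge\beta$ with $g\equiv0$. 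Because $\ell$ is smooth, the test classes $\underline{\mathcal G}^{L_y,0}$ and $\overline{\mathcal G}^{L_y,0}$ are invariant under this shift, so the transformed $u,v$ are still $(L_y,0)$-semisolutions. By Lemma \ref{mon} with $\lambda=L_y$, we may also assume that the $y$-coefficient vanishes; the parameter $N=0$ is unchanged.

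\emph{Step 2: weak comparison with smooth fields.} Let $\psi\in\mathcal C^3_{\mathcal F}$ be a smooth field that is a strict classical supersolution: $-\mathfrak d_t\psi-\mathbb H_-(\dots)\geq\eta>0$ in $Q$, $\partial_{\vec n}\psi>0$ (strictly) on $\{0,1\}$, and $\psi(T)\geq\Phi$. We claim $u\leq\psi$. Argue by contradiction on an event of positive probability and let $\tau$ be the first time at which $\operatorname{ess\,sup}$-type quantities become positive. We do not localize at a random maximizer directly. Instead we use the nonlinear expectations $\underline{\mathcal E}^{L_y,0}$, which with $N=0$ reduce to the operators of \cite{Qiu22AAP}, and follow the argument of \cite[Theorem~5.1]{Qiu22AAP}. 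This yields $\tau,\Omega_\tau,\xi$ such that $\psi+c$ belongs to $\underline{\mathcal G}^{L_y,0}u(\tau,\xi;\Omega_\tau)$ for a suitable $\mathcal F_\tau$-measurable shift $c$. At interior points, (\ref{vs1}) contradicts the strict inequality $\eta>0$. At boundary points, (\ref{vs3}) says that the maximum of $-(\mathfrak d_t\psi+H)$ and $-\partial_{\vec n}\psi$ has essential liminf $\leq0$. This is incompatible with both quantities being strictly negative, which is exactly why the strict Neumann inequality is imposed. The symmetric statement $v\geq\psi$ holds for strict smooth subsolutions.

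\emph{Step 3: construction of comparison fields (main obstacle).} We must produce, for each $\varepsilon>0$, smooth strict super- and subsolutions $\psi^\pm_\varepsilon$ with $\|\psi^+_\varepsilon-\psi^-_\varepsilon\|_{\mathcal S^\infty}\to0$. Then $u\leq\psi^+_\varepsilon$ and $\psi^-_\varepsilon\leq v$; in addition $\psi^-_\varepsilon\leq u$ by applying Step 2 with the roles reversed, which is valid because $u$ is also compared against a subsolution. Letting $\varepsilon\to0$ gives the ordering of $u$ and $v$ through the common limit.

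To build the fields, approximate $b,\tilde F_0,\Phi$ as in the proof of Theorem \ref{continuity} by smooth functions of $(W^0_{t_1\wedge t},\dots,W^0_{t_n\wedge t})$. On each interval $[t_{j-1},t_j]$ this gives a deterministic parabolic Neumann problem in the variables $(t,x,w)\in[t_{j-1},t_j]\times[0,1]\times\mathbb R$, with frozen past values as parameters. Its operator is
\[
a\,\partial_{xx}+\sigma^0\partial_{xw}+\tfrac12\partial_{ww}+b_n\partial_x+\tilde F_{0,n},
\]
which is uniformly parabolic precisely because $|\sigma^1|^2\geq\kappa$: the matrix $\begin{pmatrix}a&\sigma^0/2\\ \sigma^0/2&1/2\end{pmatrix}$ has determinant $\tfrac14|\sigma^1|^2$. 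The boundary condition is an oblique/Neumann condition on $x\in\{0,1\}$. Classical Schauder theory gives $C^{2+\beta}$ solutions, and we solve backward interval by interval. Evaluating at $w=W^0_t$ and applying It\^o's formula yields a field in $\mathcal C^3_{\mathcal F}$ solving the approximate BSPDE.

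The approximation errors are adapted quantities that are small in $\mathcal S^8$-type norms, not uniformly in $\omega$. We correct for them by adding an adapted term $\pm(\varepsilon+\rho_\varepsilon(t))$, where $\rho_\varepsilon$ solves a BSDE driven by the error. We also add $\pm\varepsilon\, h(x)$ with $h$ smooth and $\partial_{\vec n}h>0$ at both endpoints, which makes the Neumann inequality strict. Controlling these corrections uniformly enough to obtain $L^\infty$-type closeness, as required in the class $\mathcal S^\infty$ and in Step 2, is the delicate point. If needed, we will pass to the bounds in conditional expectation and use the boundedness of $u,v$.

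Finally, uniqueness and the identification of $V$ follow as stated: $L_f=L_y$ and $L_g=0$, so Theorem \ref{existence} applies and Proposition \ref{bound} gives boundedness. Under the Isaacs condition $\mathbb H_-=\mathbb H_+$ and the boundary operators coincide, so $U$ solves the same problem and therefore $U=V$.
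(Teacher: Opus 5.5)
Your three-stage plan matches the paper's: lift $g_0$, compare semisolutions with smooth adapted fields, and build those fields from cylindrical approximations, deterministic Neumann problems made uniformly parabolic by $|\sigma^1|^2\geq\kappa$, and an error BSDE. However, the boundary case of Step~2 yields no contradiction as written. After homogenization the boundary quantity in (\ref{vs3}) is a minimum, not a maximum: $-\widetilde{\mathbb G}_{-,\lambda}(\phi)=\mathcal R[\phi]\wedge(-\partial_{\vec n}\phi)$, see (\ref{uniq-boundary-residual}). So a subsolution is contradicted at a boundary maximizer only if $\mathcal R[\phi]>0$ \emph{and} $\partial_{\vec n}\phi<0$. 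You impose $\partial_{\vec n}\psi>0$ and add $+\varepsilon h$ with $\partial_{\vec n}h>0$. Then (\ref{vs3}) holds automatically at boundary points and nothing is contradicted. The upper field needs a strictly negative normal derivative, e.g.\ $+\varepsilon q$ with $q=(x-\tfrac12)^2$, so that $\partial_{\vec n}q=-1$ at both endpoints (Lemma \ref{uniq-strictification}). Your $\ell$ has the same sign slip, since it gives $\partial_{\vec n}\ell=+g_0$. Your perturbations also do not make the residual strict. Once the $y$-coefficient is normalized to $0$, a constant contributes nothing to $\mathcal R$, and $\varepsilon h$ contributes $-\varepsilon(ah''+bh')$, which has no sign. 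The paper adds $\eta(S-t)$, which contributes $+\eta$, and takes $\varepsilon\leq\eta/(2C_q)$.

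Second, the test field in Step~2 cannot be $\psi+c$ with an $\mathcal F_\tau$-measurable constant $c$. At a first hitting time, $m_s=\max_x(u-\psi)^+(s,x)$ typically keeps increasing. Moreover, $\underline{\mathcal E}^{M,0}$ with $M>0$ discounts constants: by (\ref{uniq-E-explicit-lower}) it is $\eta e^{M(\rho-\tau)}$, not $\eta$, that is mapped to $\eta$. Hence the test-class requirement $\underline{\mathcal E}^{M,0}_{\tau,\rho}\bigl[\inf_y(\psi+c-u)(\rho,y)\bigr]\geq0$ fails in general. The paper instead takes the nonlinear Snell envelope (Lemma \ref{uniq-Snell}) of $m_s+c(s-t)$ with $c=(2M+1)\Lambda$ and stops at its contact time. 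It then adds the growing correction $\beta^c_s=\mathscr Y_\tau e^{M(s-\tau)}-c(s-t)$, which contributes $c-M\mathscr Y_\tau e^{M(s-\tau)}$ to the residual. Keeping this positive needs the short-time condition (\ref{uniq-short-time}), hence the backward induction over short intervals in Theorem \ref{uniq-weak-comparison}.

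Two further points concern Step~3. Your claim $\psi^-_\varepsilon\leq u$ is unjustified, since $u$ is only a subsolution, and it is not needed. The ``delicate'' $\mathcal S^\infty$-closeness is not an obstacle. Because $u\leq\psi^+_\varepsilon$ and $\psi^-_\varepsilon\leq v$ hold pathwise, $u-v\leq\psi^+_\varepsilon-\psi^-_\varepsilon$, which is $2Y^\varepsilon$ in the paper. Then $\|Y^\varepsilon\|_{\mathcal S^8}\to0$ gives a.s.\ uniform convergence along a subsequence.
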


The proof of Theorem \ref{uniq-main} proceeds in three steps. First, we normalize the
nonhomogeneous boundary condition and the zeroth-order term. Second, we prove
a weak comparison theorem between a viscosity semisolution and a smooth
comparison field. Third, we construct smooth upper and lower comparison
fields using deterministic uniformly parabolic systems.

\subsection{Nonlinear expectations with \texorpdfstring{$N=0$}{N=0} and normalization}

When $N=0$, the local-time terms in (\ref{01})--(\ref{02}) disappear. More
precisely, for $M\geq0$, stopping times $\tau\leq\rho$, and
$\xi\in L^2(\Omega,\mathcal F_\rho,P)$, (\ref{01}) and (\ref{02}) take the form
\begin{align}
 \overline Y_s
 &=\xi+\int_s^\rho M\bigl(|\overline Y_r|+|\overline Z_r|\bigr)\dd r
       -\int_s^\rho\overline Z_r\dd W_r,\label{uniq-upper-bsde}\\
 \underline Y_s
 &=\xi-\int_s^\rho M\bigl(|\underline Y_r|+|\underline Z_r|\bigr)\dd r
       -\int_s^\rho\underline Z_r\dd W_r,
 \qquad \tau\leq s\leq\rho.\label{uniq-lower-bsde}
\end{align}
Then
\begin{equation*}
 \overline{\mathcal E}_{\tau,\rho}^{M,0}[\xi]:=\overline Y_\tau,
 \qquad
 \underline{\mathcal E}_{\tau,\rho}^{M,0}[\xi]:=\underline Y_\tau.
\end{equation*}
Observe that these operators are independent of $(x,\theta,\gamma)$. Throughout this section, ``subsolution'' and ``supersolution'' refer to
bounded viscosity semisolutions with zero $\dd K_s$-coefficient and a finite
$\dd s$-coefficient.

\begin{lemma}\label{uniq-E-properties}
Fix $M\geq0$. The operators $\overline{\mathcal E}^{M,0}$ and
$\underline{\mathcal E}^{M,0}$ are monotone and time-consistent, and
\begin{equation}\label{uniq-E-duality}
 \underline{\mathcal E}_{\tau,\rho}^{M,0}[\xi]
 =-\overline{\mathcal E}_{\tau,\rho}^{M,0}[-\xi].
\end{equation}
Moreover, for $\xi_1,\xi_2\in L^2(\Omega,\mathcal F_\rho; \mathbb R)$,
\begin{equation}\label{uniq-E-superadditive}
 \underline{\mathcal E}_{\tau,\rho}^{M,0}[\xi_1+\xi_2]
 \geq \underline{\mathcal E}_{\tau,\rho}^{M,0}[\xi_1]
      +\underline{\mathcal E}_{\tau,\rho}^{M,0}[\xi_2].
\end{equation}
If $\eta\geq0$ is $\mathcal F_\tau$-measurable and $\tau\leq\rho$, then
\begin{equation}\label{uniq-E-explicit-lower}
 \underline{\mathcal E}_{\tau,\rho}^{M,0}
 [\eta e^{M(\rho-\tau)}]=\eta.
\end{equation}
If, in addition, $\rho$ is $\mathcal F_\tau$-measurable (in particular, if
$\rho$ and $\tau$ are deterministic), then
\begin{equation}\label{uniq-E-explicit-upper}
 \overline{\mathcal E}_{\tau,\rho}^{M,0}[\eta]
 =\eta e^{M(\rho-\tau)}.
\end{equation}
\end{lemma}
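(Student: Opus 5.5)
Every assertion reduces to standard properties of ordinary BSDEs with the Lipschitz generators $h^\pm(y,z)=\pm M(|y|+|z|)$. When $N=0$ the $\dd K$ terms in (\ref{01})--(\ref{02}) vanish, so (\ref{uniq-upper-bsde})--(\ref{uniq-lower-bsde}) are classical BSDEs on the stochastic interval $[\tau,\rho]$. Existence, uniqueness and the comparison theorem follow from \cite{Pardoux and Peng,Peng1997}.

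\emph{Monotonicity} comes from the comparison theorem, since both generators are Lipschitz with constant $M$. \emph{Time-consistency}, i.e. $\overline{\mathcal E}^{M,0}_{\tau,\rho}[\xi]=\overline{\mathcal E}^{M,0}_{\tau,\sigma}[\overline{\mathcal E}^{M,0}_{\sigma,\rho}[\xi]]$ for $\tau\le\sigma\le\rho$, comes from uniqueness. The solution on $[\tau,\rho]$, restricted to $[\sigma,\rho]$, solves the BSDE there with the same terminal value. On $[\tau,\sigma]$ it solves the BSDE with terminal value $\overline Y_\sigma$.

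\emph{Duality} (\ref{uniq-E-duality}): if $(\overline Y,\overline Z)$ solves (\ref{uniq-upper-bsde}) with datum $-\xi$, then $(-\overline Y,-\overline Z)$ solves (\ref{uniq-lower-bsde}) with datum $\xi$. This holds because $|-y|+|-z|=|y|+|z|$. Uniqueness then gives the identity.

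\emph{Superadditivity} (\ref{uniq-E-superadditive}): let $(Y^i,Z^i)$ solve (\ref{uniq-lower-bsde}) with datum $\xi_i$. The pair $(Y^1+Y^2,Z^1+Z^2)$ solves a BSDE with terminal value $\xi_1+\xi_2$ and driver
\[-M(|Y^1|+|Z^1|+|Y^2|+|Z^2|)\le -M(|Y^1+Y^2|+|Z^1+Z^2|).\]
The inequality is the triangle inequality. So this sum is a subsolution of the BSDE (\ref{uniq-lower-bsde}) with datum $\xi_1+\xi_2$. The comparison theorem (one driver Lipschitz, the other dominated by it along the solution) yields $Y^1_\tau+Y^2_\tau\le \underline{\mathcal E}^{M,0}_{\tau,\rho}[\xi_1+\xi_2]$.

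\emph{Explicit formulas.} For (\ref{uniq-E-explicit-lower}), try $\underline Y_s=\eta e^{M(\rho-s)}$, $\underline Z=0$ for $s\in[\tau,\rho]$. This process is adapted on $[\tau,\rho]$ because $\eta$ is $\mathcal F_\tau$-measurable and $\rho$ is a stopping time: on $\{s\le\rho\}$ the value $e^{M(\rho-s)}$ cannot be measurable unless $\rho$ is known.

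That adaptedness point is the real obstacle. I would instead take $\underline Y_s=\eta e^{M(\rho-s)}$ read backwards in the form $\underline Y_s=\eta e^{-M(s-\tau)}\cdot e^{M(\rho-\tau)}$, which still contains $\rho$. So I would reformulate. Write $\underline Y_s:=\eta e^{M(s-\tau)}$, which is adapted, satisfies $\underline Y_\tau=\eta$ and $\underline Y_\rho=\eta e^{M(\rho-\tau)}$, and solves
\[\dd\underline Y_s=M\underline Y_s\,\dd s=M|\underline Y_s|\,\dd s\]
because $\eta\ge0$. With $Z=0$ this is exactly (\ref{uniq-lower-bsde}) in forward form with terminal value $\eta e^{M(\rho-\tau)}$. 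Uniqueness then gives (\ref{uniq-E-explicit-lower}) with no measurability requirement on $\rho$.

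For (\ref{uniq-E-explicit-upper}), the candidate is $\overline Y_s=\eta e^{M(\rho-s)}$, $Z=0$. This solves $-\dd\overline Y=M|\overline Y|\dd s$ with $\overline Y_\rho=\eta$. It is adapted on $[\tau,\rho]$ precisely when $\rho$ is $\mathcal F_\tau$-measurable, which is the extra hypothesis. Uniqueness then gives $\overline Y_\tau=\eta e^{M(\rho-\tau)}$.

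The only delicate step is this adaptedness check. Square integrability is immediate if $\eta\in L^2$; for general nonnegative $\eta$, multiply by $\mathbf 1_{\{\eta\le n\}}$ and use the positive homogeneity of Lemma \ref{function}(i).
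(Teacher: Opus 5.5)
Your proof is correct and follows the paper's argument essentially line by line: standard Lipschitz-BSDE theory for monotonicity and time-consistency, the sign flip $(Y,Z)\mapsto(-Y,-Z)$ for (\ref{uniq-E-duality}), the triangle inequality plus comparison for (\ref{uniq-E-superadditive}), and the explicit solutions $\eta e^{M(s-\tau)}$ and, for $\mathcal F_\tau$-measurable $\rho$, $\eta e^{M(\rho-s)}$ with $Z=0$ for the two formulas. The one thing to clean up is your abandoned first attempt at (\ref{uniq-E-explicit-lower}), which you should delete in favour of the forward-written candidate you end with: the candidate $\eta e^{M(\rho-s)}$ is not adapted and also has the wrong terminal value ($\eta$ rather than $\eta e^{M(\rho-\tau)}$), and the sentence calling it adapted contradicts itself.
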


\begin{proof}
The existence and the uniqueness of the solution, together with monotonicity and
time consistency, are standard properties of Lipschitz BSDEs. Replacing
$(Y,Z)$ by $(-Y,-Z)$ in the lower equation gives (\ref{uniq-E-duality}). For two solutions of the lower
equation, the triangular inequality provides
\begin{equation*}
 -M(|Y^1+Y^2|+|Z^1+Z^2|)
 \geq-\sum_{i=1}^2M(|Y^i|+|Z^i|).
\end{equation*}
Thus, we get from the comparison theorem for BSDEs (\ref{uniq-E-superadditive}). The pair
$Y_s=\eta e^{M(s-\tau)}, Z_s=0$, $\tau\leq s\leq\rho$, is, obviously, a solution of (\ref{uniq-lower-bsde}) and proves
(\ref{uniq-E-explicit-lower}). If $\rho$ is $\mathcal F_\tau$-measurable,
then $\overline Y_s=\eta e^{M(\rho-s)},\overline Z_s=0$ is a solution of (\ref{uniq-upper-bsde}) and we get
(\ref{uniq-E-explicit-upper}).
\end{proof}

Let $w\in\mathcal S^\infty(0,T;C([0,1]))$ be a viscosity subsolution
or supersolution of the original problem
\begin{equation*}
\left\{\begin{aligned}
&-\mathfrak d_tw-aD^2w-\sigma^0D\mathfrak d_{\omega^0}w
 -\sigma^1D\mathfrak d_{\omega^1}w-bDw-L_yw-F_0=0,\\
&\partial_{\vec n}w+g_0=0,\qquad w(T)=\Phi.
\end{aligned}\right.
\end{equation*}
We next make the boundary condition homogeneous. We put
$g_L(t):=g_0(t,0)$ and $g_R(t):=g_0(t,1)$, and define
\begin{equation*}
 \ell(t,x):=-g_L(t)x+\frac{1}{2}\big(g_L(t)+g_R(t)\big)x^2.
\end{equation*}
Then
\begin{equation*}
 D\ell(t,x)=-g_L(t)+\big(g_L(t)+g_R(t)\big)x.
\end{equation*}
By (\ref{uniq-normal}),
\begin{equation*}
 \frac{\partial\ell}{\partial\vec n}(t,0)=-g_L(t),\qquad
 \frac{\partial\ell}{\partial\vec n}(t,1)=-g_R(t).
\end{equation*}
Consequently, if $w$ satisfies
$\partial w/\partial\vec n+g_0=0$, then $\overline w:=w-\ell$ satisfies the
homogeneous Neumann condition.

Put
\begin{equation*}
 F^\ell:=F_0+\mathfrak d_t\ell+aD^2\ell
 +\sigma^0D\mathfrak d_{\omega^0}\ell
 +\sigma^1D\mathfrak d_{\omega^1}\ell+bD\ell+L_y\ell.
\end{equation*}
For notational simplicity, the arguments $(t,x)$ of $F^\ell$ and of the
coefficients in this formula are omitted.
Since $\ell$ is $\{\mathcal F_t^0\}$-adapted,
$\mathfrak d_{\omega^1}\ell=0$. The field $\overline w=w-\ell$ satisfies
\begin{equation*}
 -\mathfrak d_t\overline w-aD^2\overline w
 -\sigma^0D\mathfrak d_{\omega^0}\overline w
 -\sigma^1D\mathfrak d_{\omega^1}\overline w
 -bD\overline w-L_y\overline w-F^\ell=0.
\end{equation*}

Fix $\lambda>0$, put $\lambda_*:=\lambda+L_y$, and define
\begin{equation}\label{uniq-exponential}
 \widetilde w(t,x)=e^{\lambda_* t}\overline w(t,x),\qquad
 \widetilde F=e^{\lambda_* t}F^\ell,\qquad
 \widetilde\Phi=e^{\lambda_* T}(\Phi-\ell(T)).
\end{equation}
Then the normalized equation is
\begin{equation}\label{uniq-normalized-equation}
 \mathcal R[\widetilde w]:=
 -\mathfrak d_t\widetilde w-aD^2\widetilde w
 -\sigma^0D\mathfrak d_{\omega^0}\widetilde w
 -\sigma^1D\mathfrak d_{\omega^1}\widetilde w
 -bD\widetilde w+\lambda\widetilde w-\widetilde F=0,
\end{equation}
with
\begin{equation}\label{uniq-normalized-boundary}
 \frac{\partial\widetilde w}{\partial\vec n}=0,\qquad
 \widetilde w(T)=\widetilde\Phi.
\end{equation}

Here normalization means that the boundary condition is homogeneous and
the zeroth-order coefficient is $\lambda>0$.

Due to Lemma \ref{mon}, the test-field transformations used here, namely
the lifting and exponential transformations, preserve viscosity subsolutions
and supersolutions. The transformed threshold is
$\widehat M_0:=L_y+\lambda_*$, while $N=0$ remains unchanged.

For the normalized equation, put
\begin{equation*}
 \widetilde{\mathbb H}_{-,\lambda}(A,B,p,y,z)
 :=aA+\sigma^0B^0+\sigma^1B^1+bp-\lambda y+\widetilde F,
\end{equation*}
and let $\widetilde{\mathbb G}_{\pm,\lambda}$ be the corresponding boundary
operators with boundary generator equal to zero. For
$\phi\in\mathcal C_{\mathcal F}^3([0,T]\times\mathbb R)$, we have
\begin{equation*}
 \mathfrak d_t\phi+
 \widetilde{\mathbb H}_{-,\lambda}
 (D^2\phi,D\mathfrak d_\omega\phi,D\phi,\phi,
 \mathfrak d_\omega\phi)
 =-\mathcal R[\phi].
\end{equation*}
Hence we obtain
\begin{equation}\label{uniq-boundary-residual}
 -\widetilde{\mathbb G}_{-,\lambda}(\phi)
 =\mathcal R[\phi]\wedge\left(-\frac{\partial\phi}{\partial\vec n}\right),
 \qquad
 -\widetilde{\mathbb G}_{+,\lambda}(\phi)
 =\mathcal R[\phi]\vee\left(-\frac{\partial\phi}{\partial\vec n}\right).
\end{equation}
Here the $B$-argument in $\widetilde{\mathbb G}_{\pm,\lambda}$ is
$B=(D\mathfrak d_{\omega^0}\phi,D\mathfrak d_{\omega^1}\phi)$.

\subsection{Weak comparison with smooth random fields}

We first compare an arbitrary viscosity semisolution with a smooth random
field. A random field $\varphi$ will
be called a smooth upper comparison field if it is adapted and continuous on
$[0,T]\times[0,1]$, uniformly Lipschitz continuous in $x$, satisfies
$\varphi^-\in\mathcal S^\infty(C([0,1]))$, and has a finite deterministic
partition $0=s_0<s_1<\cdots<s_{N_p}=T$
such that $\varphi$ belongs to $\mathcal C_{\mathcal F}^3$ on every regular
time interval $(s_{j-1},s_j)$ and
\begin{equation}\label{uniq-upper-field}
 \varphi(T,x)\geq\widetilde\Phi(x),\qquad
 \mathcal R[\varphi](t,x)\geq0,\qquad
 \frac{\partial\varphi}{\partial\vec n}(t,0)
 =\frac{\partial\varphi}{\partial\vec n}(t,1)=0.
\end{equation}
Here the residual inequality is imposed on
$(s_{j-1},s_j)\times(0,1)$ for every $j$. A smooth lower comparison field is
defined by reversing the terminal and residual
inequalities in (\ref{uniq-upper-field}) and requiring
$\varphi^+\in\mathcal S^\infty(C([0,1]))$.

\begin{lemma}\label{uniq-strictification}
Let $\varphi$ be a smooth upper comparison field and let
$[r,S]\subset[0,T]$ be a deterministic interval whose interior is contained
in one of the regular time intervals of $\varphi$. Fix $q\in C_b^\infty(\mathbb R)$ so
that $q(x)=(x-\frac{1}{2})^2$ on a neighbourhood of $[0,1]$, and put
\begin{equation}\label{uniq-Cq}
 C_q:=2\|a\|_\infty+\|b\|_\infty+\frac{\lambda}{4}.
\end{equation}
For every $\eta>0$ and every
\begin{equation*}
 0<\varepsilon\leq\frac{\eta}{2C_q},
\end{equation*}
the field
\begin{equation}\label{uniq-direct-strict-field}
 \psi^{\eta,\varepsilon}(t,x)
 :=\varphi(t,x)+\eta(S-t)+\varepsilon q(x),
 \qquad (t,x)\in[r,S]\times[0,1],
\end{equation}
has the regularity required in Definition \ref{testfunction} on every compact
subinterval of $(r,S)$ and satisfies
\begin{equation}\label{uniq-strict-field}
 \psi^{\eta,\varepsilon}(S,x)\geq\varphi(S,x),\qquad
 \mathcal R[\psi^{\eta,\varepsilon}](t,x)\geq\frac{\eta}{2},
 \qquad
 \frac{\partial\psi^{\eta,\varepsilon}}{\partial\vec n}(t,i)
 =-\varepsilon,\quad i=0,1.
\end{equation}
Moreover,
\begin{equation}\label{uniq-strict-uniform-error}
 \|\psi^{\eta,\varepsilon}-\varphi\|_
 {L^\infty(\Omega\times[r,S]\times[0,1])}
 \leq\eta(S-r)+\frac{\varepsilon}{4}.
\end{equation}
In particular, $\psi^{\eta,\varepsilon}$ converges uniformly to $\varphi$
as $\eta,\varepsilon\downarrow0$.

If $\varphi$ is a smooth lower comparison field, then
\begin{equation*}
 \psi_-^{\eta,\varepsilon}(t,x)
 :=\varphi(t,x)-\eta(S-t)-\varepsilon q(x)
\end{equation*}
satisfies the corresponding inequalities
\begin{equation*}
 \psi_-^{\eta,\varepsilon}(S,x)\leq\varphi(S,x),\qquad
 \mathcal R[\psi_-^{\eta,\varepsilon}](t,x)\leq-\frac{\eta}{2},
 \qquad
 \frac{\partial\psi_-^{\eta,\varepsilon}}{\partial\vec n}(t,i)
 =\varepsilon,\quad i=0,1.
\end{equation*}
\end{lemma}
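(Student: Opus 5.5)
The proof is a direct computation, exploiting that the residual operator $\mathcal R$ in (\ref{uniq-normalized-equation}) is linear. We write
$\psi^{\eta,\varepsilon}=\varphi+h$ with $h(t,x):=\eta(S-t)+\varepsilon q(x)$. The perturbation $h$ is deterministic, so it lies in $\mathcal C_{\mathcal F}^3$ on every compact subinterval of $(r,S)$, with $\mathfrak d_t h=-\eta$ and $\mathfrak d_\omega h=0$. On every compact subinterval of the open regular interval containing $(r,S)$, $\varphi$ has the regularity of Definition \ref{testfunction}. Hence $\psi^{\eta,\varepsilon}$ inherits this regularity there, and
\begin{equation*}
\mathfrak d_t\psi^{\eta,\varepsilon}=\mathfrak d_t\varphi-\eta,\qquad
\mathfrak d_\omega\psi^{\eta,\varepsilon}=\mathfrak d_\omega\varphi .
\end{equation*}
The cross terms $\sigma^iD\mathfrak d_{\omega^i}$ are therefore unaffected by $h$. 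Using linearity, we get
\begin{equation*}
\mathcal R[\psi^{\eta,\varepsilon}]=\mathcal R[\varphi]+\eta-\varepsilon\bigl(aD^2q+bDq\bigr)+\lambda\bigl(\eta(S-t)+\varepsilon q\bigr).
\end{equation*}

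Next we bound the $\varepsilon$-terms on $[0,1]$. There $q(x)=(x-\tfrac12)^2$, so $D^2q=2$, $|Dq|=|2x-1|\leq1$ and $0\leq q\leq\tfrac14$. Since $\lambda>0$ and $t\leq S$, the last bracket is nonnegative and may be dropped. This gives
\begin{equation*}
\mathcal R[\psi^{\eta,\varepsilon}]\geq \mathcal R[\varphi]+\eta-\varepsilon\bigl(2\|a\|_\infty+\|b\|_\infty\bigr)\geq \eta-\varepsilon C_q .
\end{equation*}
Here we used $\mathcal R[\varphi]\geq0$ from (\ref{uniq-upper-field}) on the interior. By (\ref{uniq-Cq}), $C_q\geq 2\|a\|_\infty+\|b\|_\infty$, and the assumption $\varepsilon\leq\eta/(2C_q)$ then yields $\mathcal R[\psi^{\eta,\varepsilon}]\geq\eta/2$. (The summand $\lambda/4$ in $C_q$ is harmless slack; it would matter only if one did not discard the $\lambda q$ term.)

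For the boundary, $Dq(0)=-1$ and $Dq(1)=1$. With the inward normals (\ref{uniq-normal}) this gives $\partial q/\partial\vec n(i)=-1$ for $i=0,1$. Combined with the homogeneous Neumann condition on $\varphi$, we obtain $\partial\psi^{\eta,\varepsilon}/\partial\vec n(t,i)=-\varepsilon$.

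At $t=S$ we have $\psi^{\eta,\varepsilon}(S,x)=\varphi(S,x)+\varepsilon q(x)\geq\varphi(S,x)$. The uniform error bound (\ref{uniq-strict-uniform-error}) follows from $0\leq h\leq\eta(S-r)+\varepsilon/4$ on $[r,S]\times[0,1]$. The lower-field statement follows by applying the same computation to $-h$; equivalently, the signs flip by linearity.

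The only point needing care is the regularity claim. The residual inequality for $\varphi$ is assumed only on the open regular intervals, so we restrict to compact subintervals of $(r,S)$. Adding the deterministic smooth $h$ does not affect the $\mathcal S^4$/$L^4$ integrability requirements, because $q\in C_b^\infty$ and $h$ is bounded.
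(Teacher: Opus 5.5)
Your proof is correct and is essentially the paper's own argument: the same direct computation of $\mathcal R[\psi^{\eta,\varepsilon}]$ using $\mathfrak d_t\psi^{\eta,\varepsilon}=\mathfrak d_t\varphi-\eta$, the same boundary check via $\partial q/\partial\vec n(i)=-1$, and the same use of $0\le q\le\frac14$ for the terminal and uniform-error bounds. The only difference is minor and harmless. You discard the nonnegative term $\lambda\varepsilon q$, whereas the paper bounds the whole $\varepsilon$-bracket in absolute value by $C_q$, so you correctly observe that the summand $\lambda/4$ in $C_q$ is slack. Strictly speaking $\mathcal R$ is affine rather than linear because of $\widetilde F$, but your computation only uses the linear difference $\mathcal R[\varphi+h]-\mathcal R[\varphi]$, so nothing changes.
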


\begin{proof}
Since $S$ is deterministic, the field in
(\ref{uniq-direct-strict-field}) is adapted and has the same stochastic
regularity as $\varphi$. Moreover,
\begin{equation*}
 \mathfrak d_t\psi^{\eta,\varepsilon}
 =\mathfrak d_t\varphi-\eta,\qquad
 \mathfrak d_{\omega^k}\psi^{\eta,\varepsilon}
 =\mathfrak d_{\omega^k}\varphi,\quad k=0,1.
\end{equation*}
Since
\begin{equation*}
 Dq(x)=2x-1,\qquad D^2q(x)=2,\qquad\text{on a neighbourhood of }[0,1],
\end{equation*}
substitution into the definition of $\mathcal R$ gives
\begin{equation*}
\begin{split}
 \mathcal R[\psi^{\eta,\varepsilon}]
 ={}&\mathcal R[\varphi]+\eta+\lambda\eta(S-t)\\
 &+\varepsilon\left[
 -2a(t,x)-b(t,x)(2x-1)
 +\lambda\left(x-\frac{1}{2}\right)^2\right].
\end{split}
\end{equation*}
For $x\in[0,1]$,
\begin{equation*}
 \left|-2a(t,x)-b(t,x)(2x-1)+\lambda\left(x-\frac{1}{2}\right)^2\right|
 \leq 2\|a\|_\infty+\|b\|_\infty+\frac{\lambda}{4}=C_q.
\end{equation*}
Since $\lambda>0$, $S-t\geq0$, and
$\mathcal R[\varphi]\geq0$, it follows that
\begin{equation*}
 \mathcal R[\psi^{\eta,\varepsilon}]
 \geq\eta-C_q\varepsilon\geq\frac{\eta}{2}.
\end{equation*}

By (\ref{uniq-normal}),
\begin{equation*}
 \frac{\partial q}{\partial\vec n}(0)
 =\vec n(0)Dq(0)=-1,\qquad
 \frac{\partial q}{\partial\vec n}(1)
 =\vec n(1)Dq(1)=-1.
\end{equation*}
The homogeneous Neumann condition for $\varphi$ therefore yields the boundary
identity in (\ref{uniq-strict-field}). Since $q\geq0$,
\begin{equation*}
 \psi^{\eta,\varepsilon}(S,x)
 =\varphi(S,x)+\varepsilon q(x)\geq\varphi(S,x).
\end{equation*}
Finally, $0\leq q\leq1/4$ on $[0,1]$, which proves
(\ref{uniq-strict-uniform-error}). The lower comparison field follows by
changing the signs of the two perturbations. The opposite signs provide strict
barriers for the two semisolution inequalities, allowing the same weak
comparison argument to be used symmetrically below.
\end{proof}

Next we compare a viscosity subsolution (supersolution) with a smooth comparison field. Fix $M\geq\widehat M_0$ and $N=0$, and choose $\delta_M>0$ sufficiently small such that
\begin{equation}\label{uniq-short-time}
	(2M+1)\delta_Me^{M\delta_M}<\frac{1}{2},\qquad M(2M+1)\delta_M\leq\frac{M+1}{2}.
\end{equation}

The next nonlinear optimal stopping lemma defines an admissible test field at a
suitable stopping time.

\begin{lemma}\label{uniq-Snell}
Let $\mathscr Y$ be a bounded continuous adapted process on $[t,S]$ and put
\begin{equation}\label{uniq-Snell-def}
 \mathscr Z_s:=\underset{\rho\in\mathcal T_{s,S}}{\esssup}
 \overline{\mathcal E}_{s,\rho}^{M,0}[\mathscr Y_\rho].
\end{equation}
Then $\mathscr Z$ has a continuous version and is the smallest
$\overline{\mathcal E}^{M,0}$-supermartingale dominating $\mathscr Y$. If
\begin{equation*}
 \tau:=\inf\{s\geq t:\mathscr Z_s=\mathscr Y_s\}\wedge S,
\end{equation*}
then
\begin{equation}\label{uniq-Snell-contact}
 \mathscr Z_t=\overline{\mathcal E}_{t,\tau}^{M,0}[\mathscr Y_\tau],
 \qquad
 \mathscr Y_\tau\geq
 \overline{\mathcal E}_{\tau,\rho}^{M,0}[\mathscr Y_\rho]
 \quad\text{for every }\rho\in\mathcal T_{\tau,S}.
\end{equation}
\end{lemma}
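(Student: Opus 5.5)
The plan is to use the standard theory of reflected BSDEs, or equivalently nonlinear optimal stopping under a $g$-expectation. The operator $\overline{\mathcal E}^{M,0}$ is the $g$-expectation with Lipschitz generator $h(y,z)=M(|y|+|z|)$, which is independent of $(x,\theta,\gamma)$.

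First we identify $\mathscr Z$. Let $(\widehat Y,\widehat Z,A)$ be the unique solution of the reflected BSDE on $[t,S]$ with lower obstacle $\mathscr Y$, terminal value $\mathscr Y_S$ and generator $h$:
\begin{equation*}
\widehat Y_s=\mathscr Y_S+\int_s^S M\bigl(|\widehat Y_r|+|\widehat Z_r|\bigr)\dd r+A_S-A_s-\int_s^S\widehat Z_r\dd W_r,\qquad \widehat Y\geq\mathscr Y.
\end{equation*}
Here $A$ is continuous and nondecreasing, and the Skorokhod condition $\int_t^S(\widehat Y_r-\mathscr Y_r)\dd A_r=0$ holds. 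Existence and uniqueness follow from El Karoui et al., since $\mathscr Y$ is bounded and continuous and $h$ is Lipschitz. We then use the classical representation
\begin{equation*}
\widehat Y_s=\esssup_{\rho\in\mathcal T_{s,S}}\overline{\mathcal E}^{M,0}_{s,\rho}[\mathscr Y_\rho].
\end{equation*}
It is proved by comparison: for any $\rho$, on $[s,\rho]$ the process $\widehat Y$ solves a BSDE with the extra nondecreasing term $A$ and terminal value $\widehat Y_\rho\geq\mathscr Y_\rho$, so the comparison theorem gives $\widehat Y_s\geq\overline{\mathcal E}_{s,\rho}[\mathscr Y_\rho]$. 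For the reverse inequality we take $\rho_s:=\inf\{r\geq s:\widehat Y_r=\mathscr Y_r\}\wedge S$. On $[s,\rho_s]$ we have $A$ constant, by the Skorokhod condition and continuity, and $\widehat Y_{\rho_s}=\mathscr Y_{\rho_s}$, so equality is attained. Hence $\mathscr Z=\widehat Y$ has a continuous version, and it is an $\overline{\mathcal E}^{M,0}$-supermartingale because it solves a BSDE with an added increasing process (comparison plus time consistency from Lemma~\ref{uniq-E-properties}).

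Next we prove minimality. If $\mathscr Z'$ is any $\overline{\mathcal E}$-supermartingale dominating $\mathscr Y$, then monotonicity gives $\mathscr Z'_s\geq\overline{\mathcal E}_{s,\rho}[\mathscr Z'_\rho]\geq\overline{\mathcal E}_{s,\rho}[\mathscr Y_\rho]$ for every $\rho$. Taking the essential supremum yields $\mathscr Z'\geq\mathscr Z$. Some care is needed for stopping times in the supermartingale property, but for continuous (or right-continuous) $\mathscr Z'$ this follows from the optional sampling theorem for $g$-supermartingales (Peng's nonlinear Doob–Meyer decomposition).

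Finally we establish (\ref{uniq-Snell-contact}). With $\tau=\rho_t$, the first identity is exactly the attainment argument above at $s=t$. For the second relation, continuity of $\mathscr Z$ and $\mathscr Y$ gives $\mathscr Z_\tau=\mathscr Y_\tau$: if $\tau<S$ this holds by the definition of the infimum and closedness of the contact set, and if $\tau=S$ it holds because $\mathscr Z_S=\mathscr Y_S$. Then, for $\rho\in\mathcal T_{\tau,S}$, the supermartingale property between the stopping times $\tau\leq\rho$ gives $\mathscr Y_\tau=\mathscr Z_\tau\geq\overline{\mathcal E}_{\tau,\rho}[\mathscr Z_\rho]\geq\overline{\mathcal E}_{\tau,\rho}[\mathscr Y_\rho]$. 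The main technical point is this step at random initial times: the representation and comparison must be applied on stochastic intervals $[\tau,\rho]$. This is handled by the flow/time-consistency property of the reflected BSDE, since $\widehat Y$ restricted to $[\tau,S]$ solves the same reflected BSDE started at $\tau$, so the comparison argument runs verbatim with $s$ replaced by the stopping time $\tau$.
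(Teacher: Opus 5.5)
Your proof is correct and follows essentially the same route as the paper. Both identify $\mathscr Z$ with the first component of the reflected BSDE with obstacle $\mathscr Y$ and generator $M(|y|+|z|)$, get the lower bound by comparison, obtain equality at the first contact time (where the reflecting process is flat), and deduce (\ref{uniq-Snell-contact}) from the supermartingale property together with $\mathscr Z_\tau=\mathscr Y_\tau$. The only difference is that the paper constructs the reflected BSDE solution explicitly by penalization and monotone limits, whereas you cite El Karoui et al.\ for its existence and uniqueness; you are also slightly more explicit than the paper about optional sampling in the minimality step.
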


\begin{proof}
For $k\geq1$, let $(\mathscr Z^k,\Gamma^k)$ solve the penalized BSDE
\begin{equation*}
\begin{split}
 \mathscr Z_s^k={}&\mathscr Y_S+
 \int_s^S\{M(|\mathscr Z_r^k|+|\Gamma_r^k|)
        +k(\mathscr Y_r-\mathscr Z_r^k)^+\}\dd r
 -\int_s^S\Gamma_r^k \dd W_r.
\end{split}
\end{equation*}
The comparison theorem implies that $\mathscr Z^k$ increases with $k$.
Put
\begin{equation*}
 L_s^k:=k\int_t^s(\mathscr Y_r-\mathscr Z_r^k)^+\dd r,
 \qquad s\in[t,S].
\end{equation*}
It\^o's formula, the Burkholder--Davis--Gundy inequality, and the boundedness
of $\mathscr Y$ give
\begin{equation*}
 E\left[\sup\limits_{t\leq s\leq S}|\mathscr Z_s^k|^2
       +\int_t^S|\Gamma_s^k|^2\dd s+|L_S^k|^2\right]
 \leq C E\left[\sup\limits_{t\leq s\leq S}|\mathscr Y_s|^2\right],
\end{equation*}
where $C$ depends only on $M$ and $T$, and is independent of $k$.
The monotone stability argument therefore yields that $\mathscr Z:=\lim_{k\to\infty}\mathscr Z^k$ is the first component of the solution $(\mathscr Z,\Gamma,L)$ of the reflected BSDE
\begin{equation}\label{uniq-reflected-Snell}
\left\{\begin{aligned}
&\mathscr Z_s=\mathscr Y_S+\int_s^S M(|\mathscr Z_r|+|\Gamma_r|)\dd r+L_S-L_s-\int_s^S\Gamma_r\dd W_r,\\
&\mathscr Z_s\geq\mathscr Y_s,\qquad \int_t^S(\mathscr Z_s-\mathscr Y_s)\dd L_s=0,\\
&L\text{ is continuous and nondecreasing},\quad L_t=0,\quad E[L_S^2]<\infty.
\end{aligned}\right.
\end{equation}
Since the obstacle and the Brownian filtration are continuous, $\mathscr Z$
and $L$ have continuous versions. Equation (\ref{uniq-reflected-Snell}) shows
that $\mathscr Z$ is a nonlinear supermartingale. Conversely, every such
supermartingale dominating $\mathscr Y$ dominates the right-hand side of
(\ref{uniq-Snell-def}); hence $\mathscr Z$ is its Snell envelope. Before
$\tau$, the latter condition in (\ref{uniq-reflected-Snell}) implies
that $L$ is constant. Stopping the equation at $\tau$ proves the first identity
in (\ref{uniq-Snell-contact}). The second follows from the supermartingale
property and $\mathscr Z\geq\mathscr Y$.
\end{proof}

\begin{lemma}\label{uniq-hitting-time-test}
Let $0\leq r<S\leq T$ be deterministic and assume
$S-r\leq\delta_M$. Let $\widetilde u$ be a bounded viscosity
subsolution of the normalized equation, and let $\psi\in\mathcal{C}_\mathcal{F}^3$  have the regularity
of Definition \ref{testfunction} on every compact subinterval of $(r,S)$
and, for some $\eta>0$ and $\varepsilon>0$, satisfy the residual and boundary conditions in
(\ref{uniq-strict-field}) on $[r,S]$. Assume in addition that
\begin{equation}\label{uniq-contact-terminal-order}
 \widetilde u(S,x)\leq\psi(S,x),
 \qquad x\in[0,1],\quad P\text{-a.s.}
\end{equation}
Then, if
\begin{equation*}
 P\left(
 \sup\limits_{(s,x)\in[r,S]\times[0,1]}
 (\widetilde u(s,x)-\psi(s,x))>0
 \right)>0,
\end{equation*}
there exist a stopping time $\tau\leq S$, an event
$\Omega_\tau\in\mathcal F_\tau$ of positive probability with
$\Omega_\tau\subset\{\tau<S\}$, a random variable
$\xi_\tau\in L^0(\Omega_\tau,\mathcal F_\tau;[0,1])$, and a test field
\begin{equation}\label{uniq-contact-class}
 \varphi_c\in\underline{\mathcal G}^{M,0}\widetilde u
 (\tau,\xi_\tau;\Omega_\tau)
\end{equation}
with associated $\hat\tau$ such that $\hat\tau>\tau$ on $\Omega_\tau$ and
\begin{equation}\label{uniq-contact-contradiction}
\begin{aligned}
&\essliminf_{s\downarrow\tau}\mathcal R[\varphi_c](s,\xi_\tau)>0,\qquad\text{on }\Omega_\tau\cap\{\xi_\tau\in(0,1)\},\\
&\frac{\partial\varphi_c}{\partial\vec n}(s,\xi_\tau)=-\varepsilon,\quad\tau\leq s\leq\hat\tau,\quad\text{on }\Omega_\tau\cap\{\xi_\tau\in\{0,1\}\}.
\end{aligned}
\end{equation}
\end{lemma}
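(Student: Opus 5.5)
The plan is to produce the contact point by a nonlinear optimal-stopping argument applied to the running maximal gap between $\widetilde u$ and $\psi$. I would then lift $\psi$ by an $\mathcal F_\tau$-measurable, time-dependent constant so that it touches $\widetilde u$ from above in the sense of $\underline{\mathcal G}^{M,0}$.

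First I set $\mathscr Y_s:=\sup_{x\in[0,1]}(\widetilde u-\psi)(s,x)$ for $s\in[r,S]$. Since $\widetilde u\in\mathcal S^\infty(C([0,1]))$ and $\psi$ is continuous and bounded, $\mathscr Y$ is a bounded, continuous, adapted process. By (\ref{uniq-contact-terminal-order}), $\mathscr Y_S\leq0$, and by hypothesis $P(\sup_{[r,S]}\mathscr Y>0)>0$.

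I apply Lemma \ref{uniq-Snell} with $t$ replaced by suitable starting times and obtain the Snell envelope $\mathscr Z$ and the contact time $\tau$. To get an event of positive probability on which $\mathscr Y_\tau>0$, I would start the envelope at a deterministic (rational) time $t'\in[r,S)$ with $P(\mathscr Z_{t'}>0)>0$. Such a $t'$ exists because $\mathscr Z\geq\mathscr Y$, $\mathscr Z$ is continuous, and $\sup\mathscr Y>0$ with positive probability. On $\{\mathscr Z_{t'}>0\}$, the first identity in (\ref{uniq-Snell-contact}) forces $\mathscr Y_\tau>0$, since $\overline{\mathcal E}^{M,0}$ maps nonpositive data to nonpositive values. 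It follows that $\tau<S$ there, because $\mathscr Y_S\leq0$. I put $\Omega_\tau:=\{\mathscr Y_\tau>0\}\in\mathcal F_\tau$. I then choose $\xi_\tau$ as an $\mathcal F_\tau$-measurable maximizer of $x\mapsto(\widetilde u-\psi)(\tau,x)$, using Theorem \ref{mst} with the closed random argmax set, and take $\hat\tau:=S$.

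Next I define the test field $\varphi_c(s,x):=\psi(s,x)+\mathscr Y_\tau e^{M(s-\tau)}$ for $s\geq\tau$. Then $(\varphi_c-\widetilde u)(\tau,\xi_\tau)=0$ and $\inf_y(\varphi_c-\widetilde u)(\rho,y)=\mathscr Y_\tau e^{M(\rho-\tau)}-\mathscr Y_\rho$. By superadditivity (\ref{uniq-E-superadditive}), duality (\ref{uniq-E-duality}) and the explicit formula (\ref{uniq-E-explicit-lower}),
\begin{equation*}
\underline{\mathcal E}^{M,0}_{\tau,\rho}\left[\mathscr Y_\tau e^{M(\rho-\tau)}-\mathscr Y_\rho\right]\geq \mathscr Y_\tau-\overline{\mathcal E}^{M,0}_{\tau,\rho}[\mathscr Y_\rho]\geq0
\end{equation*}
for every $\rho\in\mathcal T_{\tau,S}$, by the second relation in (\ref{uniq-Snell-contact}). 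Equality holds at $\rho=\tau$. Hence the essential infimum in the definition of $\underline{\mathcal G}^{M,0}$ is $0$ on $\Omega_\tau$, which gives (\ref{uniq-contact-class}). The added term is spatially constant, so $\partial_{\vec n}\varphi_c=\partial_{\vec n}\psi=-\varepsilon$ at $0$ and $1$, which is the boundary part of (\ref{uniq-contact-contradiction}). The lifted field inherits the $\mathcal C_{\mathcal F}^3$ regularity of $\psi$ on compact subintervals, because the added term is an $\mathcal F_\tau$-measurable constant times a deterministic smooth function of time.

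The main obstacle is the interior residual. We have
\begin{equation*}
\mathcal R[\varphi_c]=\mathcal R[\psi]+(\lambda-M)\mathscr Y_\tau e^{M(s-\tau)}\geq\frac{\eta}{2}-M\,\mathscr Y_\tau e^{M\delta_M},
\end{equation*}
and $\mathscr Y_\tau$ need not be small compared with $\eta$. I would first try to make the gap small by restricting to the event where $\mathscr Y_\tau$ is below a threshold of order $\eta/(2Me^{M\delta_M})$. To do this, I would start the Snell envelope at the last rational time before the gap first becomes positive, so that continuity of $\mathscr Y$ keeps $\mathscr Z_{t'}$ small while still positive with positive probability. I would then use the short-time conditions (\ref{uniq-short-time}) to control $e^{M\delta_M}$ and the factor $M(2M+1)\delta_M$. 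If this localization fails, the fallback is to rescale: apply the construction to $\widetilde u-\psi$ on a shrinking neighbourhood of the first positivity time, so that the lift $\mathscr Y_\tau$ is forced below $\eta/4$. This would give $\essliminf_{s\downarrow\tau}\mathcal R[\varphi_c](s,\xi_\tau)\geq\eta/4>0$ on $\Omega_\tau\cap\{\xi_\tau\in(0,1)\}$.
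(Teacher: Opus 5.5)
Your construction of the contact point follows the paper: the Snell envelope of the maximal gap, the measurable maximizer $\xi_\tau$, the lift of $\psi$ by $\mathscr Y_\tau e^{M(s-\tau)}$, and the check of the $\underline{\mathcal G}^{M,0}$ relation via (\ref{uniq-E-duality}), (\ref{uniq-E-superadditive}), (\ref{uniq-E-explicit-lower}) and (\ref{uniq-Snell-contact}). However, the interior residual, which you flag yourself, is a genuine gap, and neither of your repairs closes it.

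With your lift, $\mathcal R[\varphi_c]=\mathcal R[\psi]+(\lambda-M)\mathscr Y_\tau e^{M(s-\tau)}$, and $M\geq\widehat M_0=2L_y+\lambda\geq\lambda$. So you need $\mathscr Y_\tau<\eta/(2(M-\lambda))$ on an event of positive probability inside $\{\mathscr Y_\tau>0\}$.

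Starting the envelope where $\mathscr Z_{t'}$ is small does not give this. $\mathscr Z$ is only an $\overline{\mathcal E}^{M,0}$-supermartingale, and its martingale part can carry it upward. A small value of $\mathscr Z_{t'}=\overline{\mathcal E}^{M,0}_{t',\tau}[\mathscr Y_\tau]$ is compatible with $\mathscr Y_\tau$ being $0$ with high conditional probability and large otherwise, so $\{0<\mathscr Y_\tau<\eta/(2(M-\lambda))\}$ may be null. Moreover, ``the last rational time before the gap first becomes positive'' is not a deterministic time.

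Localizing in time around the random first positivity time does not help either. It replaces the horizon $S$ by a stopping time at which the gap is still positive. You then lose the terminal order $\mathscr Y_S\leq0$, which is what forces $\tau<\hat\tau$, and the contact may occur at the horizon itself.

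The missing idea is the paper's time penalization of the obstacle. Put $m_s=\max_x(\widetilde u-\psi)^+(s,x)$ and $\Lambda=\|\sup_s m_s\|_{L^\infty}$, pick a deterministic $t\in(r,S)$ with $P(m_t>\Lambda/2)>0$, and set $c=(2M+1)\Lambda$. The paper uses the obstacle $\mathscr Y_s=m_s+c(s-t)$ and, after $\tau$, the lift $\beta^c_s=\mathscr Y_\tau e^{M(s-\tau)}-c(s-t)$. The extra term contributes $+c$ to $-\mathfrak d_t\beta^c$; see (\ref{uniq-contact-residual}). Condition (\ref{uniq-short-time}), which is where $S-r\leq\delta_M$ enters, then yields two things:
\begin{itemize}
\item $c(S-t)e^{M(S-t)}<\Lambda/2$, hence $\tau<S$ with positive probability on $\{m_t>\Lambda/2\}$;
\item $c-M\mathscr Y_\tau\geq\frac{M+1}{2}\Lambda$, hence a strictly positive residual whatever the size of the gap.
\end{itemize}

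A cheaper repair of your own route would also work. Replace $\psi$ by $\psi+\Lambda-\kappa$ with $0<\kappa<\min\{\Lambda,\eta/(2M)\}$. Since $\lambda>0$, this keeps the residual bound $\eta/2$, the Neumann data and the terminal order. The gap becomes at most $\kappa$ pathwise while still being positive with positive probability. Your lift then gives $\essliminf_{s\downarrow\tau}\mathcal R[\varphi_c](s,\xi_\tau)\geq\eta/2-M\kappa>0$.

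Finally, $\mathscr Y_\tau e^{M(s-\tau)}$ is not adapted for $s<\tau$, so your $\varphi_c$ is not in $\mathcal C_{\mathcal F}^3([0,T]\times\mathbb R)$ as you claim. The paper replaces it by $e^{Ms}E[\mathscr Y_\tau e^{-M\tau}\mid\mathcal F_s]$, which coincides with it after $\tau$, and invokes the martingale representation theorem.
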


\begin{proof}
Let us define
\begin{equation*}
 m_s:=\max_{x\in[0,1]}(\widetilde u(s,x)-\psi(s,x))^+,\qquad
 \Lambda:=\left\|\sup\limits_{r\leq s\leq S}m_s\right\|_{L^\infty}.
\end{equation*}
The process $m$ is continuous and adapted, and
(\ref{uniq-contact-terminal-order}) gives $m_S=0$. The hypothesis gives
$\Lambda>0$. By continuity of $m$ and the rational-time separability, there is a
deterministic $t\in(r,S)$ such that
$\Omega_t:=\{m_t>\Lambda/2\}$ has positive probability. Since
$t\in(r,S)$, the assumption on the interval gives $S-t\leq\delta_M$.
Put $c=(2M+1)\Lambda$ and
$\mathscr Y_s=m_s+c(s-t)$. Let $\mathscr Z$ be the Snell envelope introduced in Lemma
\ref{uniq-Snell}, with its first hitting time denoted by $\tau$. We claim that
$P(\Omega_t\cap\{\tau<S\})>0$. Otherwise, we would have $\tau=S$ on $\Omega_t$, and the locality of
the nonlinear expectation, Lemma
\ref{uniq-E-properties}, (\ref{uniq-short-time}), and (\ref{uniq-Snell-contact}) give, on
$\Omega_t$,
\begin{equation*}
 \mathscr Z_t=\overline{\mathcal E}_{t,S}^{M,0}[c(S-t)]
 =c(S-t)e^{M(S-t)}<\Lambda/2<m_t=\mathscr Y_t.
\end{equation*}
But this contradicts $\mathscr Z\geq\mathscr Y$. Thus
$\Omega_\tau:=\Omega_t\cap\{\tau<S\}$ is of strictly positive probability. Moreover, on
$\{\tau<S\}$, (\ref{uniq-E-explicit-upper}) and
(\ref{uniq-Snell-contact}) give
\begin{equation*}
\begin{aligned}
m_{\tau}+c(\tau-t)
={}\mathscr Y_{\tau}
={}\mathscr Z_{\tau}
\geq \overline{\mathcal E}_{\tau,S}^{M,0}[\mathscr Y_S]
={}\overline{\mathcal E}_{\tau,S}^{M,0}[c(S-t)]
={}c(S-t)e^{M(S-\tau)}.
\end{aligned}
\end{equation*}
Consequently,
\begin{equation*}
 m_{\tau}\geq
 c\big[(S-t)e^{M(S-\tau)}-(\tau-t)\big]>0
 \quad\text{on }\{\tau<S\}.
\end{equation*}
In particular, $m_\tau>0$ on $\Omega_\tau$.

By the measurable selection theorem, we can choose
$\xi_\tau\in L^0(\Omega_\tau,\mathcal F_\tau;[0,1])$ such that
\begin{equation}\label{uniq-measurable-max}
 m_\tau=\widetilde u(\tau,\xi_\tau)-\psi(\tau,\xi_\tau).
\end{equation}
Put
\begin{equation*}
 \chi:=\mathscr Y_\tau e^{-M\tau}\in L^\infty(\mathcal F_\tau),
 \qquad \mathscr M_s:=E[\chi\mid\mathcal F_s].
\end{equation*}
By the martingale representation theorem and the fact that $\chi$ is
$\mathcal F_\tau$-measurable,
\begin{equation*}
 \mathscr M_s=\mathscr M_0+\int_0^{s\wedge\tau}Z_r^\chi\dd W_r,
 \qquad 0\leq s\leq T.
\end{equation*}
We define the following adapted process, which does not involve the variable $x$:
\begin{equation*}
 \beta^{c}_s=e^{Ms}\mathscr M_s-c(s-t),\qquad 0\leq s\leq T.
\end{equation*}
Since $\chi$ is bounded, the Burkholder--Davis--Gundy inequality gives
\begin{equation*}
 E\left[\left(\int_0^T|Z_s^\chi|^2\dd s\right)^2\right]
 \leq C E[|\chi-E\chi|^4]<\infty.
\end{equation*}
Thus $\beta^c\in\mathcal C_{\mathcal F}^3([0,T]\times\mathbb R)$.
For $s\geq\tau$, $\mathscr M_s=\mathscr Y_\tau e^{-M\tau}$
and $Z_s^\chi=0$, $\dd s\otimes\dd P$-a.e., and so
\begin{equation*}
 \beta^{c}_s=\mathscr Y_\tau e^{M(s-\tau)}-c(s-t),\qquad
 \mathfrak d_t\beta^{c}(s)=M\mathscr Y_\tau e^{M(s-\tau)}-c,\qquad
 \mathfrak d_\omega\beta^{c}(s)=0.
\end{equation*}
Set $\varphi_c:=\psi+\beta^c$ and $\hat\tau:=S$.
By (\ref{uniq-measurable-max}), $\varphi_c(\tau,\xi_\tau)=\widetilde
u(\tau,\xi_\tau)$ on $\Omega_\tau$. All identities and inequalities in the
remaining part of this proof are understood on $\Omega_\tau$. For
$\rho\in\mathcal T_{\tau,S}$, the definition of $m$ and that of $\mathscr Y$ give
\begin{equation*}
 \inf\limits_{x\in[0,1]}(\varphi_c-\widetilde u)(\rho,x)
 \geq\mathscr Y_\tau e^{M(\rho-\tau)}-\mathscr Y_\rho
 \quad\text{on }\Omega_\tau.
\end{equation*}
Using (\ref{uniq-E-duality}), (\ref{uniq-E-superadditive}),
(\ref{uniq-E-explicit-lower}) and (\ref{uniq-Snell-contact}), we obtain
\begin{equation*}
 \underline{\mathcal E}_{\tau,\rho}^{M,0}
 \left[\inf\limits_{x\in[0,1]}(\varphi_c-\widetilde u)(\rho,x)\right]
 \geq\mathscr Y_\tau-
 \overline{\mathcal E}_{\tau,\rho}^{M,0}[\mathscr Y_\rho]\geq0
 \quad\text{on }\Omega_\tau,
\end{equation*}
and equality holds between all three terms of the above inequality for $\rho=\tau$. This proves
(\ref{uniq-contact-class}).

For $\tau<s\leq\hat\tau$,
\begin{equation}\label{uniq-contact-residual}
 \mathcal R[\varphi_c]
 =\mathcal R[\psi]+c-M\mathscr Y_\tau e^{M(s-\tau)}+\lambda\beta^{c}_s.
\end{equation}
Because $\mathscr Y_\tau\leq\Lambda+c(S-t)$, condition
(\ref{uniq-short-time}) implies
\begin{equation*}
 c-M\mathscr Y_\tau\geq\frac{M+1}{2}\Lambda.
\end{equation*}
Letting $s\downarrow\tau$ in (\ref{uniq-contact-residual}) and using $\beta^c_\tau=m_\tau>0$, we obtain
\begin{equation*}
\begin{aligned}
\essliminf_{s\downarrow\tau}\mathcal R[\varphi_c](s,\xi_\tau)
&\geq\frac{\eta}{2}+c-M\mathscr Y_\tau+\lambda m_\tau\\
&\geq\frac{\eta}{2}+\frac{M+1}{2}\Lambda+\lambda m_\tau>0,\qquad\text{on }\Omega_\tau\cap\{\xi_\tau\in(0,1)\}.
\end{aligned}
\end{equation*}
Moreover, since $\beta^c$ does not depend on $x$, the normal derivative of $\varphi_c$ agrees with that of $\psi$. Hence
\begin{equation*}
\frac{\partial\varphi_c}{\partial\vec n}(s,\xi_\tau)=-\varepsilon,\quad\tau\leq s\leq\hat\tau,\quad\text{on }\Omega_\tau\cap\{\xi_\tau\in\{0,1\}\}.
\end{equation*}
This proves (\ref{uniq-contact-contradiction}).
\end{proof}

\begin{theorem}\label{uniq-weak-comparison}
Fix a finite $\widehat M_0\geq0$, and let $\widetilde u$ be a bounded
$(\widehat M_0,0)$-viscosity subsolution of
(\ref{uniq-normalized-equation})--(\ref{uniq-normalized-boundary}) and
$\varphi$ be a smooth upper comparison field. Then
$\widetilde u\leq\varphi$. Symmetrically, if $\widetilde v$ is a bounded
$(\widehat M_0,0)$-viscosity supersolution and $\varphi$ is a smooth lower
comparison field, then
$\varphi\leq\widetilde v$.
\end{theorem}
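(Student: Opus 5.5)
We argue by contradiction using Lemma~\ref{uniq-strictification} and Lemma~\ref{uniq-hitting-time-test}, proceeding backward in time over short deterministic intervals. Fix $M\geq\widehat M_0$ (and $M$ at least the threshold for which $\widetilde u$ is a subsolution), $N=0$, and $\delta_M$ as in (\ref{uniq-short-time}). Refine the deterministic partition $0=s_0<\dots<s_{N_p}=T$ of $\varphi$ so that every subinterval has length at most $\delta_M$; call the refined points $0=r_0<r_1<\dots<r_J=T$. We prove by backward induction on $j$ that $\widetilde u\leq\varphi$ on $[r_{j-1},r_j]\times[0,1]$, $P$-a.s. The base case uses $\widetilde u(T,\cdot)\leq\widetilde\Phi\leq\varphi(T,\cdot)$. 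The inductive hypothesis at the right endpoint gives $\widetilde u(r_j,\cdot)\leq\varphi(r_j,\cdot)$. Continuity of $\widetilde u$ and $\varphi$ in $(t,x)$ then makes the exceptional set independent of $(t,x)$.

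On a fixed interval $[r,S]=[r_{j-1},r_j]$, take $\eta>0$ and $\varepsilon\in(0,\eta/(2C_q)]$, and let $\psi=\psi^{\eta,\varepsilon}$ be as in (\ref{uniq-direct-strict-field}). Lemma~\ref{uniq-strictification} gives $\mathcal R[\psi]\geq\eta/2$ on $(r,S)\times(0,1)$, $\partial\psi/\partial\vec n=-\varepsilon$ at $x=0,1$, and $\psi(S,\cdot)\geq\varphi(S,\cdot)\geq\widetilde u(S,\cdot)$, so (\ref{uniq-contact-terminal-order}) holds. Suppose $P(\sup_{[r,S]\times[0,1]}(\widetilde u-\psi)>0)>0$. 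Lemma~\ref{uniq-hitting-time-test} then supplies $\tau$, $\Omega_\tau$, $\xi_\tau$ and $\varphi_c\in\underline{\mathcal G}^{M,0}\widetilde u(\tau,\xi_\tau;\Omega_\tau)$ satisfying (\ref{uniq-contact-contradiction}). Up to the normalizations of Lemma~\ref{mon}, the subsolution property of $\widetilde u$ gives the following.

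On $\Omega_\tau\cap\{\xi_\tau\in(0,1)\}$, (\ref{vs1}) in the form of Remark~\ref{vs5} gives $\essliminf_{s\downarrow\tau}\mathcal R[\varphi_c](s,\xi_\tau)\leq0$. This contradicts the first line of (\ref{uniq-contact-contradiction}).

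On $\Omega_\tau\cap\{\xi_\tau\in\{0,1\}\}$, (\ref{vs3}) together with (\ref{uniq-boundary-residual}) gives $\essliminf\{\mathcal R[\varphi_c]\wedge(-\partial\varphi_c/\partial\vec n)\}\leq0$. Here $-\partial\varphi_c/\partial\vec n=\varepsilon>0$. We also need $\mathcal R[\varphi_c]>0$ near $(\tau,\xi_\tau)$ for boundary points. The residual identity (\ref{uniq-contact-residual}) holds up to the boundary by continuity of the coefficients and of $\psi$'s derivatives, so the same lower bound $\eta/2+\frac{M+1}{2}\Lambda+\lambda m_\tau$ applies there. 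I expect this to be the main technical point: one must check that the inequality $\mathcal R[\psi]\geq\eta/2$ extends to $x\in\{0,1\}$, which follows from the fact that $\psi\in\mathcal C^3_{\mathcal F}$ on all of $\mathbb R$ in $x$. The minimum is then positive, again a contradiction.

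Since $\Omega_\tau$ has positive probability, one of the two cases occurs with positive probability, so $\widetilde u\leq\psi^{\eta,\varepsilon}$ on $[r,S]\times[0,1]$. Letting $\eta,\varepsilon\downarrow0$ and using (\ref{uniq-strict-uniform-error}) gives $\widetilde u\leq\varphi$ on $[r,S]$, which completes the induction step. The supersolution statement is proved symmetrically with $\psi_-^{\eta,\varepsilon}$. Alternatively, it follows by applying the subsolution case to $-\widetilde v$ and $-\varphi$: the equation is linear, and the duality (\ref{uniq-E-duality}) interchanges $\overline{\mathcal G}$ and $\underline{\mathcal G}$.
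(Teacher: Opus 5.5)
Your proposal is correct and follows the paper's proof essentially step by step. The shared steps are: backward induction over a refinement of the regularity partition into intervals of length at most $\delta_M$, strictification by Lemma~\ref{uniq-strictification}, the contact test field from Lemma~\ref{uniq-hitting-time-test}, a contradiction with (\ref{vs1}) in the interior and with (\ref{vs3}) via (\ref{uniq-boundary-residual}) at the endpoints, and finally $\eta\downarrow0$ using (\ref{uniq-strict-uniform-error}). Two of your additions are sound refinements rather than a different argument: your check that the residual lower bound extends by continuity to $x\in\{0,1\}$ (the paper's boundary case needs this, but Lemma~\ref{uniq-hitting-time-test} only states it on $\{\xi_\tau\in(0,1)\}$), and your duality route to the supersolution case via (\ref{uniq-E-duality}).
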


\begin{proof}
We prove the subsolution statement. Let
$0=s_0<s_1<\cdots<s_{N_p}=T$ be a deterministic refinement of the regularity
partition of $\varphi$ such that every interval $[s_{j-1},s_j]$ satisfies
(\ref{uniq-short-time}). We proceed by a backward induction on $j$.

The terminal condition for viscosity subsolutions and
(\ref{uniq-upper-field}) give
\begin{equation*}
 \widetilde u(T,x)\leq\widetilde\Phi(x)
 \leq\varphi(T,x),\qquad x\in[0,1].
\end{equation*}
Suppose, for some $j$, that
\begin{equation*}
 \widetilde u(s_j,x)\leq\varphi(s_j,x),
 \qquad x\in[0,1],\quad P\text{-a.s.}
\end{equation*}
For $\eta>0$, choose
$0<\varepsilon=\varepsilon(\eta)\leq\eta/(2C_q)$ (recall that $C_q$ is defined in (\ref{uniq-Cq})), and let
$\psi^{\eta,\varepsilon}$ be the field introduced in
(\ref{uniq-direct-strict-field}) with $S=s_j$. By
(\ref{uniq-strict-field}),
\begin{equation*}
 \widetilde u(s_j,x)\leq\varphi(s_j,x)
 \leq\psi^{\eta,\varepsilon}(s_j,x).
\end{equation*}
If $\widetilde u>\psi^{\eta,\varepsilon}$ somewhere on
$[s_{j-1},s_j]\times[0,1]$ with positive probability, Lemma
\ref{uniq-hitting-time-test} provides a test field $\varphi_c$ satisfying
(\ref{uniq-contact-contradiction}).

For $\xi_\tau$ defined in (\ref{uniq-measurable-max}), Remark \ref{vs5}
and (\ref{uniq-contact-contradiction}) give, on $\Omega_\tau\cap\{\xi_\tau\in(0,1)\}$,
\begin{equation*}
 0\geq\essliminf_{s\downarrow\tau}\mathcal R[\varphi_c](s,\xi_\tau)>0.
\end{equation*}
On $\Omega_\tau\cap\{\xi_\tau\in\{0,1\}\}$, (\ref{uniq-boundary-residual}) gives instead
\begin{equation*}
 0\geq\essliminf_{s\downarrow\tau}
 \bigl(-\widetilde{\mathbb G}_{-,\lambda}(\varphi_c)\bigr)(s,\xi_\tau)
 =\essliminf_{s\downarrow\tau}
 \bigl(\mathcal R[\varphi_c](s,\xi_\tau)\wedge\varepsilon\bigr)>0.
\end{equation*}
Both contradict the viscosity subsolution inequalities. Hence
\begin{equation*}
 \widetilde u\leq\psi^{\eta,\varepsilon}
 \quad\text{on }[s_{j-1},s_j]\times[0,1].
\end{equation*}
Letting $\eta\downarrow0$ and using
(\ref{uniq-strict-uniform-error}) yields
\begin{equation*}
 \widetilde u\leq\varphi
 \quad\text{on }[s_{j-1},s_j]\times[0,1].
\end{equation*}
Thus the inequality needed at $s_{j-1}$ holds. Repeating this argument over
the finitely many intervals proves $\widetilde u\leq\varphi$ on
$[0,T]\times[0,1]$.

The supersolution statement follows in the same way, using
$\psi_-^{\eta,\varepsilon}$ from Lemma \ref{uniq-strictification} and the
upper test classes.
\end{proof}

\subsection{Deterministic systems and smooth adapted comparison fields}

The weak comparison theorem cannot be applied directly to two nonsmooth
viscosity semisolutions. We now construct smooth upper and lower comparison
fields converging to the same limit. Only the common noise $W^0$ enters the
random coefficients. The independent noise $W^1$ remains in the state
equation and provides uniform ellipticity for the augmented deterministic
system.

In this subsection the number of intervals in a cylindrical approximation is
denoted by $n$. Thus the symbol $N$ remains reserved for the local-time
parameter in $\mathcal E^{M,N}$.

\paragraph{Cylindrical approximation.}
By the same cylindrical approximation argument as in the proof of Theorem \ref{continuity}, applied to $b,\widetilde F,\widetilde\Phi$ with respect to the common filtration $\{\mathcal F_t^0\}$, for every $\varepsilon>0$, there exist a partition $0=t_0<t_1<\cdots<t_n=T$ and bounded smooth deterministic functions
\begin{equation*}
\begin{aligned}
b_n&=b_n(y_1,\ldots,y_n,t,x),\qquad\widetilde F_n=\widetilde F_n(y_1,\ldots,y_n,t,x),\\
\widetilde\Phi_n&=\widetilde\Phi_n(y_1,\ldots,y_n,x),
\end{aligned}
\end{equation*}
such that the approximation errors
\begin{equation*}
\begin{aligned}
\delta b_t^\varepsilon&:=\sup\limits_{x\in[0,1]}|b_n(W^0_{t_1\wedge t},\ldots,W^0_{t_n\wedge t},t,x)-b(t,x)|,\\
\delta F_t^\varepsilon&:=\sup\limits_{x\in[0,1]}|\widetilde F_n(W^0_{t_1\wedge t},\ldots,W^0_{t_n\wedge t},t,x)-\widetilde F(t,x)|,\\
\delta\Phi^\varepsilon&:=\sup\limits_{x\in[0,1]}|\widetilde\Phi_n(W^0_{t_1},\ldots,W^0_{t_n},x)-\widetilde\Phi(x)|
\end{aligned}
\end{equation*}
satisfy
\begin{equation*}
\|\delta\Phi^\varepsilon\|_{L^8(\Omega)}+\|\delta b^\varepsilon\|_{L^8(\Omega;L^1(0,T))}+\|\delta F^\varepsilon\|_{L^8(\Omega;L^1(0,T))}<\varepsilon.
\end{equation*}
Moreover, $\delta b^\varepsilon$ and $\delta F^\varepsilon$ are $\{\mathcal F_t^0\}$-adapted, and $\delta\Phi^\varepsilon$ is $\mathcal F_T^0$-measurable. The functions $b_n,\widetilde F_n,\widetilde\Phi_n$ are uniformly bounded and uniformly Lipschitz continuous in $x$, with constants independent of $n$ and $\varepsilon$. The existence of the above approximating functions can be established by arguments similar to those used in \cite[Lemma~5.5]{Qiu18sicon} and \cite[Lemma~2.1]{Qiu22AAP}. Set
\begin{equation*}
\begin{aligned}
b^\varepsilon(t,x)&:=b_n(W^0_{t_1\wedge t},\ldots,W^0_{t_n\wedge t},t,x),\\
\widetilde F^\varepsilon(t,x)&:=\widetilde F_n(W^0_{t_1\wedge t},\ldots,W^0_{t_n\wedge t},t,x),\\
\widetilde\Phi^\varepsilon(x)&:=\widetilde\Phi_n(W^0_{t_1},\ldots,W^0_{t_n},x).
\end{aligned}
\end{equation*}

For the subsequent analysis, we need the approximations $\widetilde\Phi_n$
to satisfy the Neumann compatibility condition
\begin{equation}\label{uniq-terminal-compatibility}
 D_x\widetilde\Phi_n(\mathbf y,0)=D_x\widetilde\Phi_n(\mathbf y,1)=0,
 \qquad \mathbf y=(y_1,\ldots,y_n)\in\mathbb R^n.
\end{equation}
To this end, we use the following construction. Set
\begin{equation*}
 r(x)=\begin{cases}x,&0\leq x\leq1,\\2-x,&1\leq x\leq2,\end{cases}
 \qquad r(x+2k)=r(x),\quad k\in\mathbb Z,
\end{equation*}
and put $\overline\Phi_n(\mathbf y,x):=\widetilde\Phi_n(\mathbf y,r(x))$. Notice that $\overline\Phi_n$ and $\widetilde\Phi_n$ coincide for $x\in[0,1]$.
Take the even kernel
\begin{equation*}
 \zeta(z):=\begin{cases}c_0\exp\bigl(-1/(1-z^2)\bigr),&|z|<1,\\
 0,&|z|\geq1,\end{cases}
\end{equation*}
with the constant $c_0>0$ chosen so that $\int_{\mathbb R}\zeta(z)\dd z=1$ and define
\begin{equation*}
 \widetilde\Phi_{n,\delta}(\mathbf y,x):=\int_{\mathbb R}\zeta(z)
 \overline\Phi_n(\mathbf y,x-\delta z)\dd z.
\end{equation*}
If $L_\Phi$ is a common spatial Lipschitz constant of $\widetilde\Phi_n$, then
\begin{equation*}
 \|\widetilde\Phi_{n,\delta}-\widetilde\Phi_n\|_{\infty,\mathbb R^n\times[0,1]}
 \leq L_\Phi\delta\int_{\mathbb R}|z|\zeta(z)\dd z,
 \qquad \|D_x\widetilde\Phi_{n,\delta}\|_\infty\leq L_\Phi.
\end{equation*}
Moreover, we have
\begin{equation*}
 \widetilde\Phi_{n,\delta}(\mathbf y,-x)=\widetilde\Phi_{n,\delta}(\mathbf y,x),
 \qquad
 \widetilde\Phi_{n,\delta}(\mathbf y,1+x)=\widetilde\Phi_{n,\delta}(\mathbf y,1-x).
\end{equation*}
Indeed, by the evenness of $\overline\Phi_n(\mathbf y,\cdot)$ and $\zeta$, and the change of variables $z\mapsto-z$,
\begin{equation*}
	\begin{aligned}
		\widetilde\Phi_{n,\delta}(\mathbf y,-x)
		&=\int_{\mathbb R}\zeta(z)\overline\Phi_n(\mathbf y,x+\delta z)\dd z\\
		&=\int_{\mathbb R}\zeta(z)\overline\Phi_n(\mathbf y,x-\delta z)\dd z
		=\widetilde\Phi_{n,\delta}(\mathbf y,x).
	\end{aligned}
\end{equation*}
The second identity follows similarly. Differentiating these identities with respect to $x$ and setting $x=0$, we obtain
\begin{equation*}
 -D_x\widetilde\Phi_{n,\delta}(\mathbf y,0)=D_x\widetilde\Phi_{n,\delta}(\mathbf y,0),
 \qquad
 D_x\widetilde\Phi_{n,\delta}(\mathbf y,1)=-D_x\widetilde\Phi_{n,\delta}(\mathbf y,1).
\end{equation*}
Hence
\begin{equation*}
 D_x\widetilde\Phi_{n,\delta}(\mathbf y,0)=D_x\widetilde\Phi_{n,\delta}(\mathbf y,1)=0.
\end{equation*}
We now set
\begin{equation*}
 \widetilde\Phi_n:=\widetilde\Phi_{n,1/n}.
\end{equation*}
For $t\in[t_{j-1},t_j)$, the variables
$y_1,\ldots,y_{j-1}$ represent the past common-noise coordinates and $y$
represents the current one, as in the finite-dimensional construction of
\cite{Qiu22AAP}. Define
\begin{equation*}
\begin{split}
 b_{n,j}(y_1,\ldots,y_{j-1},y,t,x)&:=b_n\bigl(y_1,\ldots,y_{j-1},
 \underbrace{y,\ldots,y}_{n-j+1},t,x\bigr),\\
 \widetilde F_{n,j}(y_1,\ldots,y_{j-1},y,t,x)&:=\widetilde F_n\bigl(y_1,\ldots,y_{j-1},
 \underbrace{y,\ldots,y}_{n-j+1},t,x\bigr).
\end{split}
\end{equation*}
We look for deterministic functions
$v_{n,j}(t,x,y_1,\ldots,y_{j-1},y)$ satisfying
\begin{equation}\label{uniq-augmented-PDE}
\begin{split}
&-\partial_tv_{n,j}-\frac{1}{2}D_{yy}v_{n,j}-aD_{xx}v_{n,j}
 -\sigma^0D_{xy}v_{n,j}-b_{n,j}D_xv_{n,j}
 +\lambda v_{n,j}-\widetilde F_{n,j}=0,
\end{split}
\end{equation}
with the homogeneous boundary condition
\begin{equation}\label{uniq-augmented-boundary}
 D_xv_{n,j}(t,0,y_1,\ldots,y_{j-1},y)
 =D_xv_{n,j}(t,1,y_1,\ldots,y_{j-1},y)=0.
\end{equation}
On the last interval the terminal condition is
\begin{equation}\label{uniq-last-terminal}
 v_{n,n}(T,x,y_1,\ldots,y_{n-1},y)
 =\widetilde\Phi_n(y_1,\ldots,y_{n-1},y,x),
\end{equation}
and on the preceding intervals it is given recursively by
\begin{equation}\label{uniq-interface}
\begin{split}
&v_{n,j}(t_j,x,y_1,\ldots,y_{j-1},y)\\
&\qquad=v_{n,j+1}(t_j,x,y_1,\ldots,y_{j-1},y,y),
\qquad 1\leq j<n.
\end{split}
\end{equation}

\begin{proposition}
For every cylindrical approximation and every $j$, the problem
(\ref{uniq-augmented-PDE})--(\ref{uniq-interface}) has a unique bounded
classical solution. There exists $\alpha\in(0,1)$ such that, for every compact
interval $I\Subset(t_{j-1},t_j)$,
\begin{equation}\label{uniq-higher-regularity}
 v_{n,j}\in C^{2+\alpha/2,4+\alpha}
 \bigl(I\times[0,1]\times\mathbb R\bigr)
\end{equation}
in the variables $(t,x,y)$. The corresponding norms are locally bounded
uniformly with respect to the past parameters. Moreover, $v_{n,j}$ is smooth
in those parameters. No regularity across the time interfaces is required.
\end{proposition}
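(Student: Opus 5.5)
Since the interface condition (\ref{uniq-interface}) couples $v_{n,j}$ only to $v_{n,j+1}$ at $t_j$, I will argue by backward induction on $j=n,n-1,\ldots,1$. At each step, the past parameters $\mathbf y'=(y_1,\ldots,y_{j-1})$ are frozen and enter only as smooth parameters. For fixed $\mathbf y'$, (\ref{uniq-augmented-PDE})--(\ref{uniq-augmented-boundary}) is a linear parabolic equation in $(t,x,y)\in(t_{j-1},t_j)\times(0,1)\times\mathbb R$. It carries an oblique (here conormal-type, $D_x v=0$) boundary condition on the two flat pieces $\{x=0\}$ and $\{x=1\}$. The second-order part has matrix
\[
\begin{pmatrix} a & \tfrac12\sigma^0\\ \tfrac12\sigma^0 & \tfrac12\end{pmatrix},
\]
whose determinant is $\tfrac12 a-\tfrac14|\sigma^0|^2=\tfrac14|\sigma^1|^2\geq\kappa/4$ by Assumption \ref{HU}(ii). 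The operator is therefore uniformly parabolic, with $C^3_b$ coefficients in $(t,x)$, and the zeroth-order coefficient $\lambda>0$ is harmless. The drift $b_{n,j}$ and the source $\widetilde F_{n,j}$ are bounded, smooth in $(\mathbf y',y)$ and Lipschitz in $x$. If H\"older regularity in $t$ is lacking, I will note that the cylindrical construction can be taken smooth in $t$, as in \cite[Lemma~5.5]{Qiu18sicon}.

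For existence and uniqueness at the $j$-th step, I will first apply the weak maximum principle for Neumann problems on the unbounded strip $[0,1]\times\mathbb R$. I will use the barrier $e^{\mu(t_j-t)}(1+|y|^2)^{1/2}$ to control growth in $y$, and rely on $\lambda>0$. This gives uniqueness in the class of bounded classical solutions, together with the bound $\|v_{n,j}\|_\infty\leq \|{\rm terminal}\|_\infty+\|\widetilde F_{n,j}\|_\infty/\lambda$. For existence, I will solve on truncated domains $[0,1]\times(-R,R)$, with Neumann data in $x$ and, say, Dirichlet data equal to a cutoff of the terminal value in $y$, using the classical Schauder theory for oblique-derivative problems (Ladyzhenskaya--Solonnikov--Ural'tseva, Lieberman). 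Then I will let $R\to\infty$, using the uniform sup bound and interior/boundary Schauder estimates that are local in $y$, together with Arzel\`a--Ascoli. The terminal datum for $j=n$ is $\widetilde\Phi_n$, which is smooth and satisfies the compatibility condition (\ref{uniq-terminal-compatibility}). For $j<n$, the datum $v_{n,j+1}(t_j,\cdot,\mathbf y',y,y)$ is bounded and Lipschitz by the inductive hypothesis, and this is all that is needed for a bounded classical solution on the open interval, continuous up to $t_j$. Indeed, no regularity across time interfaces is claimed.

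For the higher regularity (\ref{uniq-higher-regularity}), I will work on $I\Subset(t_{j-1},t_j)$ and bootstrap. The first Schauder estimate gives $C^{1+\alpha/2,2+\alpha}$ locally. Tangential derivatives in $y$ preserve the boundary condition, since $D_x(D_y v)=0$ on $x\in\{0,1\}$, and solve the same type of problem with differentiated coefficients. The $x$-derivative is the delicate direction. My first attempt is to reflect evenly across $x=0$ and $x=1$, exactly as in the construction of $\widetilde\Phi_{n,\delta}$, and obtain a periodic problem on $\mathbb R$. This works cleanly if $\sigma^0$, $b$ and $\widetilde F$ are extended compatibly, which requires evenness of $a$, $\widetilde F$ and oddness of $b$, $\sigma^0$ about the endpoints. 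In general I expect to have to fall back on higher-order Schauder estimates for oblique problems, which require compatibility conditions at corners. On compact subintervals $I$ strictly inside $(t_{j-1},t_j)$ these can be avoided by cutoffs in $t$. The boundary being flat and the coefficients being $C^3_b$ allows regularity up to $C^{4+\alpha}$ in $x$, once the operator is differentiated in $t$ and the equation is used to trade $\partial_t$ for second spatial derivatives. \textbf{This step, obtaining $C^{2+\alpha/2,4+\alpha}$ up to the Neumann boundary with only $C^3_b$ diffusion and Lipschitz-in-$x$ drift and source, is the main obstacle.} If the coefficient regularity turns out to be insufficient, I would strengthen the cylindrical approximation so that $b_n$ and $\widetilde F_n$ are smooth in $x$, which the mollification construction permits.

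Finally, I will obtain smoothness in the past parameters $\mathbf y'$ by differentiating the equation in $\mathbf y'$. Each derivative solves the same Neumann problem with a source built from derivatives of $b_{n,j}$ and $\widetilde F_{n,j}$ and from lower derivatives of $v_{n,j}$. The uniform Schauder bounds and the maximum principle then give local uniformity with respect to $\mathbf y'$, and the induction closes.
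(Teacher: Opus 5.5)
Your proposal is correct and follows essentially the paper's route: backward induction with frozen past parameters, uniform parabolicity from $\det=\tfrac14|\sigma^1|^2\ge\kappa/4$ for the augmented matrix, oblique-derivative Schauder theory (Lieberman) on truncations of the strip, a boundary bootstrap for the higher regularity, the maximum principle with $\lambda>0$ for uniqueness, and differentiation in the past parameters. The step you flag as the main obstacle is resolved in the paper exactly as your fallback suggests, because $b_n$ and $\widetilde F_n$ are chosen smooth with bounded derivatives from the start; your explicit $y$-barrier and truncation/compactness argument stands in for the paper's appeal to Angiuli--Lorenzi, and for $j<n$ you sidestep the corner compatibility condition by asking only for continuity up to $t_j$, whereas the paper verifies that condition recursively.
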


\begin{proof}
Fix $j$ and the past parameters. To match the time convention of
\cite[Theorem~5.19]{Lieberman1996}, set $s=t_j-t$, $z=(x,y)$ and
$u(s,z)=v_{n,j}(t_j-s,z)$. After multiplying the time-reversed equation by
$-1$, it takes the form
\begin{equation*}
 Lu=-\widehat F_{n,j},\qquad
 L=-\partial_s+\sum_{i,k=1}^2A^{ik}\partial_{z_i z_k}
 +\sum_{i=1}^2B^i\partial_{z_i}+c,
\end{equation*}
where $z_1=x$, $z_2=y$, the hats denote the time-reversed coefficients, and
\begin{equation*}
 A=(A^{ik})_{1\leq i,k\leq2}=\frac{1}{2}
 \begin{pmatrix}
 |\widehat\sigma^0|^2+|\widehat\sigma^1|^2&\widehat\sigma^0\\
 \widehat\sigma^0&1
 \end{pmatrix},\qquad
 B=(\widehat b_{n,j},0),\qquad c=-\lambda.
\end{equation*}
We verify the hypotheses of that theorem.

First, with
\begin{equation*}
 C_\sigma:=1+\|\sigma^0\|_\infty^2+\|\sigma^1\|_\infty^2,
\end{equation*}
uniform ellipticity follows directly from the structure of the augmented
matrix. Indeed, for $\zeta=(\xi,\eta)\in\mathbb R^2$,
\begin{equation*}
 2\zeta A\zeta^*
 =|\widehat\sigma^0\xi+\eta|^2
  +|\widehat\sigma^1|^2\xi^2.
\end{equation*}
Put $Q:=2A$. Then
$\det Q=|\widehat\sigma^1|^2\geq\kappa$, while
$\operatorname{tr}Q\leq C_\sigma$. Since a positive definite $2\times2$ matrix
satisfies
$\lambda_{\min}(Q)\geq\det Q/\operatorname{tr}Q$, we obtain
\begin{equation*}
 \frac{\kappa}{2C_\sigma}|\zeta|^2
 \leq \zeta A\zeta^*
 \leq \frac{C_\sigma}{2}|\zeta|^2,
 \qquad \operatorname{tr}A\leq\frac{C_\sigma}{2}.
\end{equation*}
The constants are uniform because the diffusion coefficients are not
approximated.
Thus the uniform parabolicity condition
\cite[(5.2a)]{Lieberman1996} holds, with constants independent of
$\varepsilon,n,j$ and the past parameters. Here we use the assumption
$|\sigma^1|^2\geq\kappa$. Since
$\sigma^0,\sigma^1\in C_b^3$ and the cylindrical functions
$\widehat b_{n,j}$ and $\widehat F_{n,j}$ are smooth with bounded derivatives,
the coefficient bounds in \cite[(5.8)]{Lieberman1996} hold for every fixed
$\alpha\in(0,1)$; the zeroth-order coefficient $c=-\lambda$ is constant.

Next, let $\gamma$ denote the inward unit normal in the notation of
\cite{Lieberman1996}. On $x=0$ take $\beta=\gamma=(1,0)$, while on $x=1$
take $\beta=\gamma=(-1,0)$, and in both cases put $\beta^0=0$. The boundary
operator $Mu=\beta\cdot Du+\beta^0u$ is then equivalent to the homogeneous
condition (\ref{uniq-augmented-boundary}). Moreover,
\begin{equation*}
 \beta\cdot\gamma=1,
\end{equation*}
so the obliqueness requirement \cite[(5.4)]{Lieberman1996} holds with
$\chi=1$. Condition \cite[(5.22)]{Lieberman1996} holds because
$|\beta^0|=0$. The required $H_{\delta-1}$ bounds also hold because $\beta$
is constant on each boundary component.

Finally, for every $1<\delta<2$, the smooth bounded forcing
$-\widehat F_{n,j}$ belongs to the intermediate space
$H_\alpha^{(2-\delta)}$ required in Theorem 5.19, the boundary data satisfy
$\psi=0\in H_{\delta-1}$, and the terminal data belong to $H_\delta$. On the
last interval the corner compatibility
$M\phi=\psi$ is exactly (\ref{uniq-terminal-compatibility}); on each preceding
interval it follows recursively from (\ref{uniq-interface}) and the Neumann
condition already obtained on the next interval. Hence
\cite[Theorem~5.19]{Lieberman1996} applies on every smooth bounded
truncation of the strip. Its Schauder estimate, followed by the standard
boundary bootstrap for the smooth coefficients, yields
(\ref{uniq-higher-regularity}). The existence and uniqueness of a bounded
classical solution on the whole $\mathbb{R}$ follow from \cite[Section~3]{AngiuliLorenzi2015}
and the variation-of-constants formula. Differentiating the equation with
respect to the past parameters and applying the same estimate proves smooth dependence on the past parameters. Starting with $j=n$ and using
(\ref{uniq-interface}), backward induction constructs the full deterministic
system. Uniqueness among bounded solutions follows from the maximum
principle, since $c=-\lambda<0$.
\end{proof}

Define the reconstructed random field by
\begin{equation*}
 \widetilde V^\varepsilon(t,x):=
 v_{n,j}(t,x,W^0_{t_1},\ldots,W^0_{t_{j-1}},W^0_t),\qquad
 t_{j-1}\leq t<t_j.
\end{equation*}
At the terminal time, set
$\widetilde V^\varepsilon(T,x):=\widetilde\Phi^\varepsilon(x)$.

\begin{lemma}\label{uniq-reconstructed-field}
The field $\widetilde V^\varepsilon$ is $\{\mathcal F_t^0\}$-adapted (and
hence $\{\mathcal F_t\}$-adapted) and continuous on $[0,T]\times[0,1]$. It
belongs to $\mathcal C_{\mathcal F}^3$ on each open time interval, satisfies the homogeneous Neumann
condition, and has terminal value
$\widetilde V^\varepsilon(T)=\widetilde\Phi^\varepsilon$. On every interval
$[t_{j-1},t_j]$, with the derivatives taken for $t\in(t_{j-1},t_j)$,
\begin{equation}\label{uniq-reconstructed-decomposition}
 \mathfrak d_t\widetilde V^\varepsilon
 =\partial_tv_{n,j}+\frac{1}{2}D_{yy}v_{n,j},\qquad
 \mathfrak d_{\omega^0}\widetilde V^\varepsilon=D_yv_{n,j},\qquad
 \mathfrak d_{\omega^1}\widetilde V^\varepsilon=0.
\end{equation}
Moreover, $\widetilde V^\varepsilon$ solves
\begin{equation}\label{uniq-approximating-BSPDE}
\begin{split}
&-\mathfrak d_t\widetilde V^\varepsilon-aD^2\widetilde V^\varepsilon
-\sigma^0D\mathfrak d_{\omega^0}\widetilde V^\varepsilon
-\sigma^1D\mathfrak d_{\omega^1}\widetilde V^\varepsilon
-b^\varepsilon D\widetilde V^\varepsilon
+\lambda\widetilde V^\varepsilon-\widetilde F^\varepsilon=0
\end{split}
\end{equation}
in the classical sense. If $X^{t,x,\varepsilon}$ denotes the reflected state
process with drift $b^\varepsilon$, then
\begin{equation}\label{uniq-global-representation}
\begin{split}
\widetilde V^\varepsilon(t,x)=E\bigg[&e^{-\lambda(T-t)}
\widetilde\Phi^\varepsilon(X_T^{t,x,\varepsilon})
+\int_t^Te^{-\lambda(s-t)}
\widetilde F^\varepsilon(s,X_s^{t,x,\varepsilon})\dd s
\,\bigg|\mathcal F_t\bigg].
\end{split}
\end{equation}
Moreover, for some constant $C_*$ independent of $\varepsilon,n,j$ it holds
\begin{equation}\label{uniq-uniform-Lipschitz}
 |\widetilde V^\varepsilon(t,x)-\widetilde V^\varepsilon(t,x')|
 \leq C_*|x-x'|,\qquad t\in[0,T],\quad x,x'\in[0,1],
 \quad P\text{-a.s.}
\end{equation}
In particular, $|D_x\widetilde V^\varepsilon|\leq C_*$ wherever the derivative
is defined.
\end{lemma}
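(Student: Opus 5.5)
We argue interval by interval, using the deterministic functions $v_{n,j}$ given by the preceding proposition. \emph{Adaptedness and continuity.} On $[t_{j-1},t_j)$ the field $\widetilde V^\varepsilon(t,x)=v_{n,j}(t,x,W^0_{t_1},\ldots,W^0_{t_{j-1}},W^0_t)$ is a continuous deterministic function of $(t,x)$ and of $W^0$ observed up to time $t$. Hence it is $\{\mathcal F^0_t\}$-adapted and continuous inside each interval. To see continuity across $t_j$, let $t\uparrow t_j$. Since $W^0_t\to W^0_{t_j}$, the interface condition (\ref{uniq-interface}) with $y_j=y=W^0_{t_j}$ gives the value of $v_{n,j+1}$ at $t_j$. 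At $T$ we use (\ref{uniq-last-terminal}) together with the fact that $\widetilde\Phi_n(y_1,\ldots,y_{n-1},y,x)$, evaluated at $y=W^0_{t_n}$, equals $\widetilde\Phi^\varepsilon$. Both one-sided limits require the classical solution to be continuous up to the time endpoints, which is part of the classical-solution statement; the $C^{2+\alpha/2,4+\alpha}$ bounds are only needed on interior compacts. Finally, the homogeneous Neumann condition is inherited directly from (\ref{uniq-augmented-boundary}).

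\emph{Decomposition and the BSPDE.} On a compact $I\Subset(t_{j-1},t_j)$ we apply the classical Itô formula in the variable $y=W^0_t$, with $x$ and the past parameters frozen. Because the coefficients are deterministic and the map is $C^{1,2}$ in $(t,y)$, this gives
\[
d\widetilde V^\varepsilon=(\partial_tv_{n,j}+\tfrac12 D_{yy}v_{n,j})\,dt+D_yv_{n,j}\,dW^0_t .
\]
Uniqueness of the Doob--Meyer decomposition then identifies $\mathfrak d_t$ and $\mathfrak d_{\omega^0}$, and since there is no $dW^1$ term, $\mathfrak d_{\omega^1}\widetilde V^\varepsilon=0$; this is (\ref{uniq-reconstructed-decomposition}). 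The bounded higher derivatives in (\ref{uniq-higher-regularity}) supply the $C^1_b$ and $\mathcal S^4_{\mathrm{loc}}$ integrability demanded by Definition \ref{testfunction}. Next we substitute into the left side of (\ref{uniq-approximating-BSPDE}). Using $D\mathfrak d_{\omega^0}\widetilde V^\varepsilon=D_{xy}v_{n,j}$, the left side becomes exactly (\ref{uniq-augmented-PDE}) evaluated along $y_k=W^0_{t_k}$ and $y=W^0_t$. By the definitions of $b_{n,j}$ and $\widetilde F_{n,j}$ this evaluation reproduces $b^\varepsilon$ and $\widetilde F^\varepsilon$, because $W^0_{t_k\wedge t}=W^0_t$ for $k\ge j$.

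\emph{Representation.} We apply the Itô--Wentzell formula (Lemma in Section 2) to $e^{-\lambda(s-t)}\widetilde V^\varepsilon(s,X^{t,x,\varepsilon}_s)$ on each $[t_{j-1}+\delta,t_j-\delta]\cap[t,T]$. The drift vanishes by (\ref{uniq-approximating-BSPDE}) and leaves only $-e^{-\lambda(s-t)}\widetilde F^\varepsilon\,ds$. The $dK$ term vanishes because $D\widetilde V^\varepsilon=0$ at $x\in\{0,1\}$, which is precisely where $K$ increases. The stochastic integrals have bounded integrands and are therefore true martingales. Letting $\delta\downarrow0$ uses the continuity of $\widetilde V^\varepsilon$ and of $X$. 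Telescoping over the intervals and taking $E[\cdot\mid\mathcal F_t]$ yields (\ref{uniq-global-representation}).

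\emph{Uniform Lipschitz bound.} This is the delicate step, because $C_*$ must be independent of $n$ and $\varepsilon$, whereas the Schauder estimates are not uniform in these parameters. We therefore work from (\ref{uniq-global-representation}) instead. By construction $\widetilde\Phi^\varepsilon$ and $\widetilde F^\varepsilon$ are Lipschitz in $x$ with constants independent of $n$ and $\varepsilon$; the mollification of $\widetilde\Phi_n$ preserves $L_\Phi$. Moreover $b^\varepsilon$ has a uniform Lipschitz constant and $\sigma^0,\sigma^1$ are fixed. Lemma \ref{pro RSDE}(iii), or the $X$-part of Proposition \ref{estimate RSDE}, then gives
\[
E\Bigl[\sup_s|X^{t,x,\varepsilon}_s-X^{t,x',\varepsilon}_s|\Bigm|\mathcal F_t\Bigr]\le C|x-x'| .
\]
The constant depends only on the bounds for the coefficients and on the interval $D$, and it involves no $K$-difference. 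Plugging this into (\ref{uniq-global-representation}) bounds $|\widetilde V^\varepsilon(t,x)-\widetilde V^\varepsilon(t,x')|$ by $C_*|x-x'|$ a.s. for each fixed $(t,x,x')$. Continuity of $\widetilde V^\varepsilon$ then allows us to choose a single exceptional set valid for all $(t,x,x')$, and the pointwise bound on $D_x\widetilde V^\varepsilon$ follows wherever the derivative exists.
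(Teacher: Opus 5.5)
Your proposal is correct and follows the paper's proof. The paper uses It\^o's formula in $y=W^0_t$, with the past samples frozen, for the decomposition; substitution into the augmented PDE for the BSPDE; the It\^o--Wentzell formula for the representation; and the representation together with RSDE stability and the $n$-, $\varepsilon$-uniform Lipschitz bounds of the cylindrical data for $C_*$. Your argument runs through the same steps and only fills in details the paper leaves implicit: interface continuity via (\ref{uniq-interface}), the identity $W^0_{t_k\wedge t}=W^0_t$ for $k\geq j$, the vanishing of the $\dd K$ term by the Neumann condition, and why the Schauder bounds alone cannot give a uniform constant.
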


\begin{proof}
$\widetilde V^\varepsilon$ is adapted. The continuity of Brownian paths and
(\ref{uniq-interface}) ensure continuity at each $t_j$. Applying It\^o's
formula to
$v_{n,j}(t,x,W^0_{t_1},\ldots,W^0_{t_{j-1}},W_t^0)$ now yields
\begin{equation*}
\dd \widetilde V^\varepsilon(t,x)
=\left(\partial_tv_{n,j}+\frac{1}{2}D_{yy}v_{n,j}\right)\dd t
+D_yv_{n,j}\dd W_t^0,
\end{equation*}
which proves (\ref{uniq-reconstructed-decomposition}); substitution into
(\ref{uniq-augmented-PDE}) gives (\ref{uniq-approximating-BSPDE}). The
regularity follows from (\ref{uniq-higher-regularity}), and the Neumann
condition follows from (\ref{uniq-augmented-boundary}). Finally, the
It\^o--Wentzell formula gives (\ref{uniq-global-representation}). The uniform
spatial Lipschitz bounds of the approximating data and the standard
stability estimate for RSDEs then imply
(\ref{uniq-uniform-Lipschitz}) directly.
\end{proof}

\subsection{Smooth barriers and completion of the comparison proof}

By the cylindrical approximation described above, we have
\begin{equation}\label{uniq-error-convergence}
 \|\delta\Phi^\varepsilon\|_{L^8}
 +\|\delta b^\varepsilon\|_{L^8(\Omega;L^1(0,T))}
 +\|\delta F^\varepsilon\|_{L^8(\Omega;L^1(0,T))}
 \longrightarrow0.
\end{equation}
Let $r_t^\varepsilon=C_*\delta b_t^\varepsilon+\delta F_t^\varepsilon$, where
$C_*$ is the constant introduced in Lemma \ref{uniq-reconstructed-field}. Define
\begin{equation}\label{uniq-error-BSDE}
 Y_t^\varepsilon:=E\left[\delta\Phi^\varepsilon
 +\int_t^T r_s^\varepsilon\dd s\,\bigg|\mathcal F_t^0\right].
\end{equation}
Then, by definition, $Y^\varepsilon\geq0$, and standard BSDE estimates together with
(\ref{uniq-error-convergence}) give
\begin{equation}\label{uniq-error-vanishes}
 \|Y^\varepsilon\|_{\mathcal S^8}\longrightarrow0.
\end{equation}
The martingale representation theorem applied to $Y^\varepsilon$ in (\ref{uniq-error-BSDE}) yields a process $Z^\varepsilon$ such
that
\begin{equation}\label{uniq-error-decomposition}
 \mathfrak d_tY^\varepsilon=-r^\varepsilon,\qquad
 \mathfrak d_{\omega^0}Y^\varepsilon=Z^\varepsilon,\qquad
 \mathfrak d_{\omega^1}Y^\varepsilon=0.
\end{equation}
As $Y^\varepsilon$ is independent of $x$, all its spatial derivatives vanish,
and, as a random field, it belongs to $\mathcal C_{\mathcal F}^3$.

\begin{lemma}\label{uniq-smooth-barriers}
The random fields
\begin{equation*}
 \overline\phi^\varepsilon
 :=\widetilde V^\varepsilon+Y^\varepsilon,
 \qquad
 \underline\phi^\varepsilon
 :=\widetilde V^\varepsilon-Y^\varepsilon
\end{equation*}
are, respectively, a smooth upper comparison field and a smooth lower
comparison field for the normalized equation.
\end{lemma}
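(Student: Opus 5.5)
The plan is to verify the defining properties (\ref{uniq-upper-field}) of a smooth upper comparison field directly for $\overline\phi^\varepsilon$, and then obtain the lower field by the sign-symmetric argument. The regularity part is immediate from Lemma \ref{uniq-reconstructed-field} and the remarks after (\ref{uniq-error-decomposition}).
\begin{itemize}
\item Both $\widetilde V^\varepsilon$ and $Y^\varepsilon$ are adapted and continuous, and both lie in $\mathcal C_{\mathcal F}^3$ on each open interval $(t_{j-1},t_j)$. So we take the deterministic partition $\{t_j\}$ as the regularity partition.
\item The uniform Lipschitz bound in $x$ comes from (\ref{uniq-uniform-Lipschitz}), since $Y^\varepsilon$ does not depend on $x$.
\item Boundedness of $(\overline\phi^\varepsilon)^-$ in $\mathcal S^\infty$ holds because $\widetilde V^\varepsilon$ is bounded and $Y^\varepsilon\geq0$. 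Indeed $\widetilde V^\varepsilon$ is bounded by (\ref{uniq-global-representation}) and the uniform bounds on $\widetilde\Phi_n,\widetilde F_n$.
\item The homogeneous Neumann condition is inherited from $\widetilde V^\varepsilon$, since $DY^\varepsilon=0$.
\end{itemize}

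For the terminal inequality, note that $Y^\varepsilon_T=\delta\Phi^\varepsilon$ because $\delta\Phi^\varepsilon$ is $\mathcal F^0_T$-measurable. Since $\mathcal F^0_T\subset\mathcal F_T$, this gives
\[
\overline\phi^\varepsilon(T,x)=\widetilde\Phi^\varepsilon(x)+\delta\Phi^\varepsilon\geq\widetilde\Phi(x).
\]
We also need $Y^\varepsilon$ to coincide with its $\mathcal F_t$-conditional version, so that its decomposition is (\ref{uniq-error-decomposition}) with respect to the full filtration. This holds by independence of $W^1$ from $\mathcal F^0$.

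For the residual, use linearity of $\mathcal R$ together with (\ref{uniq-approximating-BSPDE}) and (\ref{uniq-error-decomposition}). Since $\mathcal R$ differs from the operator in (\ref{uniq-approximating-BSPDE}) only through $b$ versus $b^\varepsilon$ and $\widetilde F$ versus $\widetilde F^\varepsilon$, we get
\[
\mathcal R[\widetilde V^\varepsilon]=(b^\varepsilon-b)D\widetilde V^\varepsilon+\widetilde F^\varepsilon-\widetilde F\geq -C_*\delta b^\varepsilon_t-\delta F^\varepsilon_t=-r^\varepsilon_t .
\]
For the $Y^\varepsilon$ part, all spatial derivatives of $Y^\varepsilon$ vanish, so $D\mathfrak d_{\omega}Y^\varepsilon=0$ and
\[
\mathcal R[Y^\varepsilon]=-\mathfrak d_tY^\varepsilon+\lambda Y^\varepsilon=r^\varepsilon+\lambda Y^\varepsilon\geq r^\varepsilon .
\]
Adding the two bounds gives $\mathcal R[\overline\phi^\varepsilon]\geq0$ on each $(t_{j-1},t_j)\times(0,1)$.

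The lower field is handled symmetrically:
\[
\mathcal R[\underline\phi^\varepsilon]\leq r^\varepsilon-r^\varepsilon-\lambda Y^\varepsilon\leq0,\qquad \underline\phi^\varepsilon(T)=\widetilde\Phi^\varepsilon-\delta\Phi^\varepsilon\leq\widetilde\Phi,
\]
and $(\underline\phi^\varepsilon)^+\leq|\widetilde V^\varepsilon|$ is bounded.

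The main obstacle is regularity. $Y^\varepsilon$ is only as regular as the martingale representation gives, and $\mathfrak d_tY^\varepsilon=-r^\varepsilon$ is merely an $L^8(\Omega;L^1)$ process. This clashes with the $L^4_{\mathrm{loc}}$ and $\mathcal S^4$ requirements of Definition \ref{testfunction} and with the pointwise residual inequality. I would first argue that the cylindrical errors can be chosen continuous and bounded on each interval, which gives the needed integrability. Failing that, I would read the residual inequality $\dd t\otimes\dd P$-a.e., which is how it is used through the essential limits in Theorem \ref{uniq-weak-comparison}.
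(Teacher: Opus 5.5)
Your proposal is correct and follows essentially the same route as the paper. The paper also bounds $\mathcal R[\widetilde V^\varepsilon]=(b^\varepsilon-b)D\widetilde V^\varepsilon+\widetilde F^\varepsilon-\widetilde F$ by $r^\varepsilon$ via (\ref{uniq-uniform-Lipschitz}), and uses the $x$-independence of $Y^\varepsilon$ with $\mathfrak d_tY^\varepsilon=-r^\varepsilon$ to get $\mathcal R[\widetilde V^\varepsilon\pm Y^\varepsilon]=\mathcal R[\widetilde V^\varepsilon]\pm(r^\varepsilon+\lambda Y^\varepsilon)$. (Strictly, $\mathcal R$ is affine, so what you call $\mathcal R[Y^\varepsilon]$ is its linear part.) It likewise reads off the terminal, Neumann and sign conditions from $Y^\varepsilon_T=\delta\Phi^\varepsilon\geq0$ and $DY^\varepsilon=0$.

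The regularity obstacle you flag is milder than you fear. Since $b,b_n,\widetilde F,\widetilde F_n,\widetilde\Phi,\widetilde\Phi_n$ are uniformly bounded, $r^\varepsilon$, $\delta\Phi^\varepsilon$ and hence $Y^\varepsilon$ are bounded, which is all Definition \ref{testfunction} needs; no continuity of the errors is required. Taking the version $\mathfrak d_tY^\varepsilon:=-r^\varepsilon$ then makes the residual inequality hold for every $x$, which is all the essential limits in Lemma \ref{uniq-hitting-time-test} use. Your remark on $\mathcal F_t$- versus $\mathcal F^0_t$-conditioning is a correct detail that the paper leaves implicit.
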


\begin{proof}
From (\ref{uniq-normalized-equation}) and (\ref{uniq-approximating-BSPDE}), the residual of
$\widetilde V^\varepsilon$ with respect to the original coefficients is
\begin{equation*}
 \mathcal R[\widetilde V^\varepsilon]
 =(b^\varepsilon-b)D\widetilde V^\varepsilon
  +(\widetilde F^\varepsilon-\widetilde F).
\end{equation*}
Therefore, by Lemma \ref{uniq-reconstructed-field},
\begin{equation*}
 |\mathcal R[\widetilde V^\varepsilon]|
 \leq C_*\delta b^\varepsilon+\delta F^\varepsilon
 =r^\varepsilon.
\end{equation*}
Since $Y^\varepsilon$ does not involve $x$, all its spatial derivatives,
including $D\mathfrak d_\omega Y^\varepsilon$, vanish. By
(\ref{uniq-normalized-equation}) and (\ref{uniq-error-decomposition}),
\begin{equation*}
 \mathcal R[\overline\phi^\varepsilon]
 =\mathcal R[\widetilde V^\varepsilon]
  +r^\varepsilon+\lambda Y^\varepsilon\geq0,
\end{equation*}
and
\begin{equation*}
 \mathcal R[\underline\phi^\varepsilon]
 =\mathcal R[\widetilde V^\varepsilon]
  -r^\varepsilon-\lambda Y^\varepsilon\leq0.
\end{equation*}
At $T$, $Y_T^\varepsilon=\delta\Phi^\varepsilon$, and hence
\begin{equation*}
 \overline\phi^\varepsilon(T,x)
 =\widetilde\Phi^\varepsilon(x)+\delta\Phi^\varepsilon
 \geq\widetilde\Phi(x),
 \qquad
 \underline\phi^\varepsilon(T,x)
 =\widetilde\Phi^\varepsilon(x)-\delta\Phi^\varepsilon
 \leq\widetilde\Phi(x).
\end{equation*}
Finally, adding or subtracting the space-independent field $Y^\varepsilon$
does not change the homogeneous Neumann condition. The required piecewise
regularity follows from Lemma \ref{uniq-reconstructed-field} and the preceding
properties of $Y^\varepsilon$. In addition,
\begin{equation*}
 (\overline\phi^\varepsilon)^-
 \leq(\widetilde V^\varepsilon)^-,\qquad
 (\underline\phi^\varepsilon)^+
 \leq(\widetilde V^\varepsilon)^+,
\end{equation*}
so the required $\mathcal S^\infty$ conditions in the definition of smooth
upper and lower comparison fields are satisfied.
\end{proof}

\begin{theorem}\label{uniq-normalized-comparison}
Fix a finite $\widehat M_0\geq0$. Let $\widetilde u$ be a bounded
$(\widehat M_0,0)$-viscosity subsolution and $\widetilde v$ a bounded
$(\widehat M_0,0)$-viscosity supersolution of the normalized equation. Then
\begin{equation}\label{uniq-normalized-order}
 \widetilde u(t,x)\leq\widetilde v(t,x),\qquad
 (t,x)\in[0,T]\times[0,1],\quad P\text{-a.s.}
\end{equation}
\end{theorem}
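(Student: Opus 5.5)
The plan is to sandwich both semisolutions between the two smooth barriers of Lemma \ref{uniq-smooth-barriers} and let $\varepsilon\downarrow0$. For each $\varepsilon>0$, Lemma \ref{uniq-smooth-barriers} gives a smooth upper comparison field $\overline\phi^\varepsilon=\widetilde V^\varepsilon+Y^\varepsilon$ and a smooth lower comparison field $\underline\phi^\varepsilon=\widetilde V^\varepsilon-Y^\varepsilon$ for the normalized problem (\ref{uniq-normalized-equation})--(\ref{uniq-normalized-boundary}). Theorem \ref{uniq-weak-comparison}, applied to the bounded $(\widehat M_0,0)$-subsolution $\widetilde u$ and to $\overline\phi^\varepsilon$, gives $\widetilde u\leq\overline\phi^\varepsilon$. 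Applied to the supersolution $\widetilde v$ and $\underline\phi^\varepsilon$, it gives $\underline\phi^\varepsilon\leq\widetilde v$. Hence, $P$-a.s. and for all $(t,x)$,
\begin{equation*}
 \widetilde u(t,x)-\widetilde v(t,x)\leq\overline\phi^\varepsilon(t,x)-\underline\phi^\varepsilon(t,x)=2Y_t^\varepsilon .
\end{equation*}
The decisive feature is that the $x$-dependent part $\widetilde V^\varepsilon$ cancels exactly. So no convergence of $\widetilde V^\varepsilon$ itself is needed, only smallness of the space-independent error $Y^\varepsilon$.

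Before using this, I would check the hypotheses of Theorem \ref{uniq-weak-comparison} carefully. First, the barrier fields must have finite deterministic regularity partitions. The partition $0=t_0<\dots<t_n=T$ of the cylindrical approximation serves, and Theorem \ref{uniq-weak-comparison} refines it internally to meet (\ref{uniq-short-time}). Second, $\overline\phi^\varepsilon$ must be uniformly Lipschitz in $x$, which follows from (\ref{uniq-uniform-Lipschitz}). Third, the threshold $M$ in the weak comparison must be at least the semisolution threshold $\widehat M_0$, and this is arranged by fixing $M\geq\widehat M_0$ there.

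One delicate point is that $Y^\varepsilon$ is only in $\mathcal S^8$, not bounded. I would therefore check that the weak comparison argument, in particular Lemma \ref{uniq-hitting-time-test} with its constant $\Lambda=\|\sup m\|_{L^\infty}$, only needs $\widetilde u-\psi$ bounded from above. This holds because $\widetilde u$ is bounded and $\overline\phi^\varepsilon\geq\widetilde V^\varepsilon$ with $\widetilde V^\varepsilon$ bounded. This is exactly what the requirement $(\overline\phi^\varepsilon)^-\in\mathcal S^\infty$ provides. The same check applies symmetrically for the lower field.

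It then remains to pass to the limit. By (\ref{uniq-error-vanishes}), $E[\sup_t|Y_t^\varepsilon|^8]\to0$, so along a sequence $\varepsilon_k\downarrow0$ we have $\sup_t Y_t^{\varepsilon_k}\to0$ almost surely. Since the left-hand side above does not depend on $\varepsilon$, this gives $\widetilde u\leq\widetilde v$ on a single full-measure set for all $(t,x)$. Uniformity in $(t,x)$ of the exceptional set follows because each comparison $\widetilde u\leq\overline\phi^\varepsilon$ holds for all $(t,x)$ simultaneously: both sides are continuous, so one can pass from rational points to all points.

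The main obstacle I anticipate is the bookkeeping in the weak comparison step rather than the limit. In particular, one has to confirm that the strictified fields $\psi^{\eta,\varepsilon}$ built from $\overline\phi^\varepsilon$ belong to $\mathcal C^3_{\mathcal F}$ on compact subintervals. This requires the integrability of $\mathfrak d_t Y^\varepsilon=-r^\varepsilon$ and of $Z^\varepsilon$, and $r^\varepsilon$ is only in $L^8(\Omega;L^1)$. If that regularity fails, I would first replace $r^\varepsilon$ by a smoothed, time-integrable upper bound, chosen so that the corresponding $Y^\varepsilon$ still tends to $0$ in $\mathcal S^8$.
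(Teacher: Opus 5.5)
Your proposal is correct and is essentially the paper's own proof. You apply Theorem \ref{uniq-weak-comparison} to the barriers $\widetilde V^\varepsilon\pm Y^\varepsilon$ of Lemma \ref{uniq-smooth-barriers}, subtract so that $\widetilde V^\varepsilon$ cancels and $\widetilde u-\widetilde v\le 2Y_t^\varepsilon$, and then use (\ref{uniq-error-vanishes}) to extract $\varepsilon_k\downarrow0$ with $\sup_tY_t^{\varepsilon_k}\to0$ almost surely. Your extra caution about $Y^\varepsilon$ does no harm but is unnecessary: the data $b,\widetilde F,\widetilde\Phi$ (using Assumption \ref{HSDG}(iii)(d) for $g_0$) and their cylindrical approximations are uniformly bounded, so $r^\varepsilon$ and $\delta\Phi^\varepsilon$, and hence $Y^\varepsilon$, are bounded, and no smoothing fallback is needed.
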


\begin{proof}
We apply Theorem \ref{uniq-weak-comparison} to the upper and lower fields introduced in
Lemma \ref{uniq-smooth-barriers}, so we obtain
\begin{equation*}
 \widetilde u\leq\widetilde V^\varepsilon+Y^\varepsilon,
 \qquad
 \widetilde V^\varepsilon-Y^\varepsilon\leq\widetilde v.
\end{equation*}
Taking the difference of both inequalities gives
\begin{equation*}
 \widetilde u(t,x)-\widetilde v(t,x)\leq2Y_t^\varepsilon.
\end{equation*}
By (\ref{uniq-error-vanishes}), we may choose a sequence
$\varepsilon_k\downarrow0$ such that
\begin{equation*}
 \sup\limits_{0\leq t\leq T}Y_t^{\varepsilon_k}\longrightarrow0,\qquad P\text{-a.s.}
\end{equation*}
Letting $k$ tend to infinity proves (\ref{uniq-normalized-order}).
\end{proof}

\begin{proof}[Proof of Theorem \ref{uniq-main}]
Set
\begin{equation*}
 \widetilde u=e^{\lambda_* t}(u-\ell),
 \qquad \widetilde v=e^{\lambda_* t}(v-\ell).
\end{equation*}
By the discussion following
(\ref{uniq-normalized-boundary}), $\widetilde u$ and $\widetilde v$ are,
respectively, a bounded $(\widehat M_0,0)$-viscosity subsolution and a bounded
$(\widehat M_0,0)$-viscosity supersolution of the normalized equation, with
$\widehat M_0=L_y+\lambda_*$ (Recall that $\lambda_*:=\lambda+L_y$ is defined before (\ref{uniq-exponential})). By Theorem
\ref{uniq-normalized-comparison}, we have $\widetilde u\leq\widetilde v$.
Since $e^{\lambda_* t}>0$ and the same lifting field is subtracted on both sides,
\begin{equation*}
 e^{\lambda_* t}(u-v)=\widetilde u-\widetilde v\leq0.
\end{equation*}
Thus $u\leq v$. If, moreover, both fields are bounded $(L_y,0)$-viscosity solutions,
interchanging their roles gives the reverse inequality, and therefore
uniqueness. Under (\ref{uniq-linear-data}), $L_f=L_y$ and $L_g=0$; hence
Proposition \ref{bound} and Theorem \ref{existence} give a bounded
$(L_y,0)$-viscosity solution, namely the lower value function. Under the Isaacs
condition, the lower and upper Hamiltonians and boundary operators coincide,
so $v$ and $u$ solve the same terminal-boundary problem. Uniqueness then
implies $v=u$.
\end{proof}

\begin{remark}
The finite-dimensional reduction above follows the approximation made in
\cite[Theorem~6.2]{Qiu22AAP}. However, the approaches differ in how uniform
parabolicity is obtained. In \cite{Qiu22AAP}, the random
coefficients and terminal data are first replaced by smooth cylindrical functions of finitely
many Brownian samples $W_{t_1},\ldots,W_{t_n}$. On each interval
$[t_{j-1},t_j)$, the samples from the preceding intervals are frozen as
parameters, whereas the current Brownian position $y=W_t$ is introduced as
an additional deterministic state variable. This converts the stochastic
equation into a deterministic parabolic HJBI equation in the joint variables
$(x,y)$.

Since the diffusion $\sigma(t)$ in \cite{Qiu22AAP} may be degenerate, the
probability space is enlarged by adding an independent $d$-dimensional Brownian
motion $B$, and the state equation is perturbed by $\delta_n\dd B_t$. The
second-order matrix of the resulting equation is
\begin{equation*}
 \mathbf A_{\delta_n}=\frac{1}{2}
 \begin{pmatrix}
  \sigma\sigma^*+\delta_n^2I_d&\sigma\\
  \sigma^*&I_m
 \end{pmatrix},
\end{equation*}
and hence, for $(\xi,\eta)\in\mathbb R^d\times\mathbb R^m$,
\begin{equation*}
 2\left\langle\mathbf A_{\delta_n}(\xi,\eta),(\xi,\eta)\right\rangle
 =|\sigma^*\xi+\eta|^2+\delta_n^2|\xi|^2.
\end{equation*}
Thus the approximating deterministic equation is uniformly elliptic for
every fixed $\delta_n>0$, although its ellipticity constant may vanish
as $\delta_n\downarrow0$. After solving these equations backward over the
cylindrical partition, the random comparison fields are reconstructed by
evaluating at $y=W_t$ and translating the spatial argument to
$x-\delta_nB_t$ (However, this construction cannot be adapted directly to
our interval $\overline D=[0,1]$: $x-\delta_nB_t$ may leave it,
and even after an extension the translated field generally does not satisfy the Neumann condition
at the fixed endpoints). The construction then adds and subtracts an error BSDE for
the cylindrical approximation. It also uses a correction of order $\delta_n$
driven by $|B_t|$ to control the errors created by this translation. The
resulting stability estimate is of order $\varepsilon^2+\delta_n^2$, so the original, possibly
degenerate problem is recovered by letting both the cylindrical error
$\varepsilon$ and the artificial-noise amplitude $\delta_n$ tend to zero. In
particular, $\delta_n\dd B_t$ is a regularizing device, not an assumption on
the original game.

In our present setting, only the common noise $W^0$ enters the random
coefficients, while the independent noise $W^1$ already acts in the original
state equation through $\sigma^1\dd W_t^1$. After adding the current
common-noise coordinate $y=W_t^0$, the joint second-order matrix is
\begin{equation*}
 \mathbf A_{\mathrm{aug}}=\frac{1}{2}
 \begin{pmatrix}
  |\sigma^0|^2+|\sigma^1|^2&\sigma^0\\
  \sigma^0&1
 \end{pmatrix},
\end{equation*}
with quadratic form
$|\sigma^0\xi+\eta|^2+|\sigma^1|^2\xi^2$. The assumption
$|\sigma^1|^2\geq\kappa$ therefore makes the augmented matrix uniformly
elliptic, with constants independent of the cylindrical approximation. Thus
no enlargement of the probability space, artificial diffusion, spatial
translation, or $\delta_n\downarrow0$ limit is needed. We use the cylindrical
approximation and deterministic reconstruction of \cite{Qiu22AAP}, while the
independent state noise provides uniform parabolicity directly.
\end{remark}

\begin{remark}[Possible higher-dimensional extension]
Let $D\subset\mathbb R^d$, let $W^0$ and $W^1$ take values in
$\mathbb R^{m_0}$ and $\mathbb R^{m_1}$, respectively, and let
$\sigma^0(t,x)\in\mathbb R^{d\times m_0}$ and
$\sigma^1(t,x)\in\mathbb R^{d\times m_1}$. The current common-noise
coordinate in the cylindrical approximation is then
$y\in\mathbb R^{m_0}$. In the joint variables $(x,y)$, the second-order
coefficient matrix of the augmented deterministic equation is
\begin{equation*}
 \mathbf A_{\mathrm{aug}}(t,x)=\frac{1}{2}
 \begin{pmatrix}
  \sigma^0(\sigma^0)^*+\sigma^1(\sigma^1)^*&\sigma^0\\
  (\sigma^0)^*&I_{m_0}
 \end{pmatrix}.
\end{equation*}
Indeed, for every $\xi\in\mathbb R^d$ and
$\eta\in\mathbb R^{m_0}$,
\begin{equation*}
 2\left\langle\mathbf A_{\mathrm{aug}}(\xi,\eta),(\xi,\eta)\right\rangle
 =\left|(\sigma^0)^*\xi+\eta\right|^2
  +\left|(\sigma^1)^*\xi\right|^2.
\end{equation*}
Consequently, a sufficient condition for uniform ellipticity is
\begin{equation*}
 \sigma^1(t,x)(\sigma^1(t,x))^*\geq\kappa I_d,
 \qquad \|\sigma^0(t,x)\|\leq K_0,
\end{equation*}
uniformly in $(t,x)$, where the matrix norm is defined by
$\|A\|:=\sup_{|z|=1}|Az|$. In fact,
\begin{equation*}
 2\mathbf A_{\mathrm{aug}}
 =\begin{pmatrix}I_d&\sigma^0\\0&I_{m_0}\end{pmatrix}
  \begin{pmatrix}\sigma^1(\sigma^1)^*&0\\0&I_{m_0}\end{pmatrix}
  \begin{pmatrix}I_d&0\\(\sigma^0)^*&I_{m_0}\end{pmatrix}.
\end{equation*}
The inverse of the first factor has an operator norm less than or equal to $1+K_0$.
Therefore, with $\mu:=\min\{\kappa,1\}$,
\begin{equation*}
 \left\langle\mathbf A_{\mathrm{aug}}(\xi,\eta),(\xi,\eta)\right\rangle
 \geq\frac{\mu}{2(1+K_0)^2}
 \bigl(|\xi|^2+|\eta|^2\bigr).
\end{equation*}
The matching uniform upper bound follows from the boundedness of
$\sigma^0$ and $\sigma^1$. This calculation also explains the role of the
independent noise: if $(\sigma^1)^*\xi=0$ for some nonzero $\xi$, then taking
$\eta=-(\sigma^0)^*\xi$ makes the quadratic form vanish. Thus the common
noise alone cannot make the augmented equation uniformly elliptic;
nondegeneracy of $\sigma^1$ in every state direction is the natural
condition and, in particular, requires $m_1\geq d$.

Under this condition, our deterministic-system argument extends to
$D\times\mathbb R^{m_0}$. On
$\partial D\times\mathbb R^{m_0}$ the augmented boundary direction is
$(\vec n,0)$ and remains uniformly oblique. The higher-dimensional parabolic
theory may require stronger smoothness and compatibility assumptions on the
boundary, coefficients and data.
\end{remark}

\begin{remark}[Role of our additional assumptions]
The restriction $N=0$ is used throughout the comparison proof. Since
$g(t,x,y,\theta,\gamma)=g_0(t,x)$, its Lipschitz constant in $y$ is zero.
Hence the nonlinear expectations contain no integral with respect to $K$ and
are independent of the state and the control parameters. This property is used in the stopping-time argument and in the construction of the smooth comparison fields. Let us emphasize that they reduce to those used in
\cite{Qiu22AAP}.

The other assumptions are also needed for the method we use. Since
$b,\sigma^0,\sigma^1$ do not depend on the controls and
$f=L_y y+f_0$ has no $z$-term, optimization over the controls changes only the
source term $F_0$. The resulting equation is therefore linear in the
derivatives of the unknown field. If the state coefficients depended on the
controls, the deterministic PDE would, in general, be fully nonlinear. The
condition $|\sigma^1|^2\geq\kappa$ gives uniform parabolicity of the augmented
equation, and the regularity assumptions allow us to apply classical Neumann
PDE theory. Finally, the form of $g_0$ allows the boundary condition to be made
homogeneous by a single lifting. These assumptions are used in our present
proof, but we do not claim that they are necessary for uniqueness.
\end{remark}

\appendix
\section{Further discussion of the domain assumptions}\label{sec:domain-discussion}
Previous studies of nonlinear Neumann boundary conditions often assume either
that the defining function $\phi_D$ belongs to $C_b^2(\mathbb{R}^d)$ or that
$D$ is convex. Both conditions imply Assumption \ref{HD}. This assumption also
allows certain nonsmooth and nonconvex domains.
\begin{proposition}\label{D}
Let $D$ be an open, connected, bounded subset of $\mathbb R^d$, and let
$\phi_D\in C_b^2(\mathbb R^d)$. Suppose
\begin{equation*}
D=\{x\in\mathbb R^d:\phi_D(x)>0\},\qquad
\partial D=\{x\in\mathbb R^d:\phi_D(x)=0\},\qquad
|D\phi_D(x)|=1\quad(x\in\partial D).
\end{equation*}
Then $D$ satisfies Assumption \ref{HD}.
\end{proposition}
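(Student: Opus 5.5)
We check the three parts of Assumption \ref{HD} in turn, with $\vec n=D\phi_D$ as the candidate inward normal at every boundary point. Throughout we use the $C^2_b$ bound $\|D^2\phi_D\|_\infty=:K_2$ and a second-order Taylor expansion of $\phi_D$ around boundary points.

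\textbf{Step 1: outer ball condition (i).} Fix $x\in\partial D$ and put $n:=D\phi_D(x)$, so $|n|=1$. For $y\in B(x-rn,r)$ write $y=x-rn+h$ with $|h|<r$. Taylor's formula gives
\[
\phi_D(y)\leq\langle n,h-rn\rangle+\tfrac{K_2}{2}|h-rn|^2 .
\]
Since $y$ lies in the ball, $|h-rn|^2<-2\langle n,h-rn\rangle r$. Substituting, $\phi_D(y)<\langle n,h-rn\rangle(1-K_2r)$. Also $\langle n,h-rn\rangle<0$, because $|h|<r$. Hence for $r\leq r_1<1/K_2$ we get $\phi_D<0$ on the ball, and so $D\phi_D(x)\in N_{x,r_1}$.

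Conversely, suppose $\vec n\in N_{x,r}$ is a unit vector. Then $\phi_D\leq0$ on $B(x-r\vec n,r)$. Since $\phi_D(x)=0$ and $x$ lies on the sphere of that ball, the first-order condition at $x$ forces $D\phi_D(x)$ to be a nonnegative multiple of $\vec n$. As both are unit vectors, $\vec n=D\phi_D(x)$. So $N_x=\{D\phi_D(x)\}=N_{x,r_0}$ with $r_0:=r_1$.

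\textbf{Step 2: condition (ii).} Take $l_x:=D\phi_D(x)$. By Step 1 every $\vec n\in N_y$ equals $D\phi_D(y)$. By uniform continuity,
\[
\langle D\phi_D(x),D\phi_D(y)\rangle\geq1-K_2|x-y|\geq\tfrac12
\]
for $|x-y|<\delta:=1/(2K_2)$. So (ii) holds with $\beta=2$.

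\textbf{Step 3: condition (iii).} Set $f_D:=c\,(\phi_D+C)$ with $C\geq\|\phi_D\|_\infty$, which keeps $f_D$ nonnegative and in $C^2_b$. Then on $\partial D$ we have $\langle Df_D,\vec n\rangle=c$. Choosing $c$ and then $\gamma_D:=cr_0$ gives $\gamma_D/r_0=c\leq c\leq2\gamma_D/r_0$.

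The only real obstacle is Step 1. It has two delicate points: the Taylor estimate must be uniform, which is exactly where $C^2$ is used, and we must show that $N_x$ contains no other vectors. The latter uses a one-sided tangency argument, and it is what the later estimates need, since they require $N_x$ to be a singleton.
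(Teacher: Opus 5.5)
Your proof is correct and has the same skeleton as the paper's: a Taylor bound at boundary points for the exterior ball, Lipschitz continuity of $D\phi_D$ for (ii), and $f_D$ built from $\phi_D$ for (iii). It differs in a few useful ways.

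\emph{Singleton normal.} The paper does not prove $N_x=\{D\phi_D(x)\}$; it quotes this fact from the literature. Your first-order tangency argument makes the proof self-contained: if $\phi_D\leq 0$ on $B(x-r\vec n,r)$ and $\phi_D(x)=0$, then $D\phi_D(x)$ is a nonnegative multiple of $\vec n$. This identification is exactly what is needed for $N_x=N_{x,r_0}$ in (i), and for replacing every $\vec n\in N_y$ by $D\phi_D(y)$ in (ii).

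\emph{Exterior ball.} The paper uses the one-sided constant $\lambda=\sup_\xi\max_{|y|=1}\langle D^2\phi_D(\xi)y,y\rangle\leq K_2$ and expands the quadratic term, which allows any $r_0<1/\lambda$. You absorb the quadratic term via $|h-rn|^2<-2r\langle n,h-rn\rangle$. This only gives $r_0<1/K_2$, but it is shorter, and any uniform radius suffices.

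\emph{Choice of $f_D$.} The paper takes $f_D=\phi_D$ and $\gamma_D=\frac23 r_0$, which is negative outside $\overline D$. Your $f_D=c(\phi_D+C)$ is nonnegative on all of $\mathbb R^d$, as Assumption \ref{HD}(iii) literally requires. The later estimates only use the boundedness of $f_D$, its sign on $\overline D$, and $Df_D$ on $\partial D$, so this is a cosmetic improvement rather than a needed correction.
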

\begin{proof}
By \cite{Wang}, $N_x=\{D\phi_D(x)\}$ for every $x\in\partial D$. It remains
to find $r_0>0$ such that $D\phi_D(x)\in N_{x,r}$ for every
$x\in\partial D$ and $0<r\leq r_0$.
We put
\begin{equation*}
 \lambda:=\sup\limits_{\xi\in\mathbb R^d}\max_{|y|=1}
 \left\langle D^2\phi_D(\xi)y,y\right\rangle<\infty,
\end{equation*}
and we choose $r_0>0$ arbitrarily if $\lambda\leq0$, and we choose it so that
$\lambda r_0<1$ if $\lambda>0$.
For arbitrary fixed $x\in\partial D$, we set $\vec n=D\phi_D(x)$, and let
$x_0=x-r_0\vec n$. For $x'\in B(x_0,r_0)$, let us write
$h=x'-x_0$ and $r=|h|<r_0$. Taylor's theorem along the line segment joining
$x$ and $x'$, together with $\phi_D(x)=0$, gives
\begin{equation*}
\begin{aligned}
&\phi_D(x')
\leq \left\langle \vec n,h-r_0\vec n\right\rangle
+\frac{\lambda}{2}|h-r_0\vec n|^2
={}-r_0+(1-\lambda r_0)\langle\vec n,h\rangle
+\frac{\lambda}{2}(r_0^2+r^2).
\end{aligned}
\end{equation*}
If $\lambda\leq0$, the Taylor estimate before expansion gives
\begin{equation*}
 \phi_D(x')\leq-r_0+\langle\vec n,h\rangle<0.
\end{equation*}
If $\lambda>0$, then $1-\lambda r_0>0$ and
$\langle\vec n,h\rangle\leq r$, so
\begin{equation*}
\begin{aligned}
&\phi_D(x')
\leq -r_0+(1-\lambda r_0)r+\frac{\lambda}{2}(r_0^2+r^2)
={}(r-r_0)\left(1+\frac{\lambda}{2}(r-r_0)\right)
<{}0.
\end{aligned}
\end{equation*}
Thus $x'\notin D$, and hence
$B(x-r_0D\phi_D(x),r_0)\cap D=\varnothing$. Moreover, for every
$0<r\leq r_0$,
\begin{equation*}
 B(x-rD\phi_D(x),r)\subset B(x-r_0D\phi_D(x),r_0),
\end{equation*}
so $D\phi_D(x)\in N_{x,r}$ for all $r\leq r_0$.
Next, we prove Assumption \ref{HD}(ii). For any $x\in\partial D$, set $l_x:=D\phi_D(x)$. Then, with $L=\|D^2\phi_D\|_{L^\infty(\mathbb R^d)}$, we have
\begin{equation*}
\begin{aligned}
\left\langle l_x,\vec n(y)\right\rangle
&=\left\langle D\phi_D(x),D\phi_D(y)\right\rangle\\
&=\left\langle D\phi_D(x),D\phi_D(y)-D\phi_D(x)\right\rangle+|D\phi_D(x)|^2\\
&\geq 1-L|y-x|,
\end{aligned}
\end{equation*}
for all $y\in B(x,\delta)\cap\partial D$. Choosing $\delta>0$ such that $1-L\delta>0$ and $\beta=\frac{1}{1-L\delta}$ verifies Assumption \ref{HD}(ii). For Assumption \ref{HD}(iii), choose $f_D=\phi_D$ and $\gamma_D=\frac{2}{3}r_0$.
\end{proof}
\begin{remark}\label{HDD}
\textnormal{(i)} The proof of Proposition \ref{D} also applies when
$\phi_D\in C_{\mathrm{Lip}}^1(\mathbb R^d)$. Let
$\chi\in C_c^\infty(\mathbb R^d)$ be equal to one on a neighborhood of the
compact set $\overline{D}$. For sufficiently small $\varepsilon>0$, choose
\begin{equation*}
 f_D=\chi\phi_D^\varepsilon,
 \qquad
 \phi_D^\varepsilon(x):=\int_{\mathbb R^d}
 \phi_D(x-\varepsilon y)\eta(y)\,\dd y,
\end{equation*}
where $\eta\in C_c^\infty(\mathbb R^d)$ is nonnegative, supported in the
unit ball, and has integral one. The\linebreak cutoff ensures that
$f_D\in C_b^2(\mathbb R^d)$, while $D f_D=D\phi_D^\varepsilon$ on $\partial D$.

Indeed, take $\gamma_D=\frac23r_0$. Since $D\phi_D$ is Lipschitz,
$D\phi_D^\varepsilon$ converges uniformly to $D\phi_D$. Thus, uniformly for
$x\in\partial D$,
\begin{equation*}
 \left\langle D f_D(x),D\phi_D(x)\right\rangle
 =\left\langle D\phi_D^\varepsilon(x),D\phi_D(x)\right\rangle
 \longrightarrow |D\phi_D(x)|^2=1.
\end{equation*}
Since $1$ lies in the interior of
$[\frac{\gamma_D}{r_0},\frac{2\gamma_D}{r_0}]=[\frac{2}{3},\frac{4}{3}]$, Assumption \ref{HD}(iii) follows for
$\varepsilon$ sufficiently small.

\textnormal{(ii)} If Assumption \ref{HD} holds, then for every
$x\in\partial D$, every $\vec n\in N_x$, and every
$x'\in\overline{D}$,
\begin{equation*}
 -\frac{1}{\gamma_D}\langle D f_D(x),\vec n\rangle|x-x'|^2
 +2\langle x-x',\vec n\rangle\leq0,
\end{equation*}
where $\gamma_D$ is the constant from Assumption \ref{HD}. Indeed, by
Assumption \ref{HD}(i), $\vec n\in N_{x,r_0}$. Put
$y=x-r_0\vec n$. Then $B(y,r_0)\cap D=\varnothing$ and hence
$|x'-y|\geq r_0$ for every $x'\in\overline{D}$. Furthermore,
\begin{equation*}
\begin{split}
&-\frac{1}{\gamma_D}\langle D f_D(x),\vec n\rangle|x-x'|^2
 +2\langle x-x',\vec n\rangle\\
\leq &-\frac1{r_0}|x-x'|^2+2\langle x-x',\vec n\rangle\\
={}&-\frac{1}{r_0}|x-y+y-x'|^2
 +2\left\langle x-y+y-x',\frac{x-y}{r_0}\right\rangle\\
={}&-\frac{1}{r_0}|x-y|^2-\frac{1}{r_0}|y-x'|^2
 +\frac{2}{r_0}|x-y|^2\leq-2r_0+2r_0=0.
\end{split}
\end{equation*}
\end{remark}
We finish with two examples showing that nonconvex or nonsmooth domains can
also satisfy Assumption \ref{HD}.
\begin{example}
Let
\begin{equation*}
 D=\left\{z\in\mathbb R^2:1<|z|^2<2\right\}.
\end{equation*}
This is a bounded, connected and nonconvex domain; for instance,
$(6/5,0)$ and $(-6/5,0)$ belong to $D$, whereas their midpoint does
not. We verify Assumption \ref{HD} directly.

Take $r_0=1/4$. On the inner boundary $|z|=1$, the inward unit normal is
$\vec n(z)=z$, whereas on the outer boundary $|z|=\sqrt2$, it is
$\vec n(z)=-z/\sqrt2$. The corresponding balls
\begin{equation*}
 B\left(z-r_0\vec n(z),r_0\right)
\end{equation*}
are contained in $\{|w|<1\}$ and $\{|w|>\sqrt2\}$, respectively.
Since both boundary components are smooth, their inward normals are unique;
hence $N_z=N_{z,r_0}\neq\varnothing$.

For Assumption \ref{HD}(ii), take $\delta=1/8$, $\beta=4$, and
$l_z=\vec n(z)$. Since $\sqrt2-1>\delta$, two boundary points at distance
less than $\delta$ lie on the same circle. Thus, for every
$z'\in B(z,\delta)\cap\partial D$,
\begin{equation*}
 \left\langle l_z,\vec n(z')\right\rangle
 =1-\frac{|z-z'|^2}{2|z|^2}
 >1-\frac1{128}>\frac14=\frac1\beta.
\end{equation*}

Finally, choose $\chi\in C_c^\infty(\mathbb R)$ such that
$\chi\equiv1$ on $[3/4,9/4]$, and set
\begin{equation*}
 f_D(z)=\chi(|z|^2)(|z|^2-1)(2-|z|^2).
\end{equation*}
Then $f_D\in C_b^\infty(\mathbb R^2)$ and
\begin{equation*}
 \left\langle D f_D(z),\vec n(z)\right\rangle
 =\begin{cases}
 2, & |z|=1,\\
 2\sqrt2, & |z|=\sqrt2.
 \end{cases}
\end{equation*}
Taking $\gamma_D=3/8$, we obtain the strict inequalities
\begin{equation*}
 \frac{\gamma_D}{r_0}=\frac32
 <\left\langle D f_D(z),\vec n(z)\right\rangle
 <3=\frac{2\gamma_D}{r_0},
\end{equation*}
which verify Assumption \ref{HD}(iii).
\end{example}

\begin{example}
Let
\begin{equation*}
 c_+=\left(\frac{1}{2},0\right),\qquad
 c_-=\left(-\frac{1}{2},0\right),\qquad
 D=B(c_+,1)\cap B(c_-,1).
\end{equation*}
The domain $D$ is bounded, open, connected and convex. Its boundary is
not differentiable at
\begin{equation*}
 p_\pm=\left(0,\pm\frac{\sqrt3}{2}\right),
\end{equation*}
where the two circular arcs meet with distinct tangent directions, so
$p_\pm$ are corner points of $\partial D$.

For a convex domain, every inward unit normal $\vec n\in N_z$ satisfies
\begin{equation*}
 \langle w-z,\vec n\rangle\geq0,
 \qquad w\in\overline{D}.
\end{equation*}
Consequently, for every $r>0$,
\begin{equation*}
 |w-(z-r\vec n)|^2
 =|w-z|^2+r^2+2r\langle w-z,\vec n\rangle
 \geq r^2.
\end{equation*}
Conversely, the metric-projection characterization for the closed convex
set $\overline{D}$ shows that every element of $N_z$ is such a supporting
normal. Thus $N_z=N_{z,r}$ for every $r>0$, and Assumption
\ref{HD}(i) holds with $r_0=1$.

Now we choose $\vartheta\in C_c^\infty(\mathbb R^2)$ such that
$\vartheta\equiv1$ on $B(0,1)$, and we put
\begin{equation*}
 f_D(z)=-\vartheta(z)|z|^2.
\end{equation*}
Since $\overline{D}\subset B(0,1)$, we have $D f_D(z)=-2z$ on
$\partial D$. On the regular arc contained in $\partial B(c_+,1)$,
the inward unit normal is $\vec n(z)=c_+-z$. This arc has $z_1\leq0$,
and
\begin{equation*}
 |z|^2-z_1=\frac34,\qquad
 \left\langle D f_D(z),\vec n(z)\right\rangle
 =|z|^2+\frac34\in\left[1,\frac32\right].
\end{equation*}
The same estimate holds on the other regular arc. At either corner
$p_\pm$, every $\vec n\in N_{p_\pm}$ is a unit vector in the cone
generated by $c_+-p_\pm$ and $c_--p_\pm$. It follows that
\begin{equation*}
 \frac32
 \leq\left\langle D f_D(p_\pm),\vec n\right\rangle
 \leq\sqrt3.
\end{equation*}
Therefore, for every $z\in\partial D$ and every $\vec n\in N_z$,
\begin{equation*}
 1\leq\left\langle D f_D(z),\vec n\right\rangle\leq\sqrt3.
\end{equation*}
Taking $\gamma_D=9/10$ and $r_0=1$ gives the strict bounds
\begin{equation*}
 \frac{\gamma_D}{r_0}=\frac9{10}
 <\left\langle D f_D(z),\vec n\right\rangle
 <\frac95=\frac{2\gamma_D}{r_0},
\end{equation*}
which verify Assumption \ref{HD}(iii).

It remains to verify Assumption \ref{HD}(ii). Take
\begin{equation*}
 \delta=\frac18,\qquad
 \beta=\frac4{\sqrt3},\qquad
 l_z=\frac{D f_D(z)}{|D f_D(z)|}.
\end{equation*}
The vector $l_z$ is well-defined on $\partial D$, since
$\frac{1}{2}\leq|z|\leq\sqrt3/2$. If
$z'\in B(z,\delta)\cap\partial D$ and $\vec n'\in N_{z'}$, then
\begin{equation*}
\begin{aligned}
&\left\langle D f_D(z),\vec n'\right\rangle
={}\left\langle D f_D(z'),\vec n'\right\rangle
+\left\langle D f_D(z)-D f_D(z'),\vec n'\right\rangle
\geq 1-2|z-z'|
>{}&\frac34.
\end{aligned}
\end{equation*}
Moreover, $|D f_D(z)|\leq\sqrt3$ on $\partial D$. Hence
\begin{equation*}
 \left\langle l_z,\vec n'\right\rangle
 >\frac{3}{4\sqrt3}
 =\frac{\sqrt3}{4}=\frac1\beta,
\end{equation*}
so Assumption \ref{HD}(ii) also holds with a strict inequality. This verifies
that Assumption \ref{HD} also covers bounded convex domains with genuine
corner points, where a globally smooth normal field is unavailable.
\end{example}

\section{Proof of the DPP (Theorem 4.2)}\label{app:DPP}

For stopping times $\tau,\hat\tau\in\mathcal T^0$ with $\tau\leq\hat\tau$ and controls $\theta_1\in\Theta_{\tau,\hat\tau}$ and $\theta_2\in\Theta_{\hat\tau,T}$, write
\begin{equation*}
	\theta_1\oplus\theta_2:=\theta_1\mathbf1_{[\tau,\hat\tau]}+\theta_2\mathbf1_{(\hat\tau,T]}.
\end{equation*}
We use the same notation for controls of player II.

For $\tau\in\mathcal T^0$ and $\xi\in L^2(\mathcal F_\tau;\overline D)$, put
\begin{equation*}
	\mathcal V(\tau,\xi):=\essinf_{\mu\in\mathcal M_{\tau,T}}\esssup_{\theta\in\Theta_{\tau,T}}Y_\tau^{\tau,\xi;\theta,\mu(\theta)}.
\end{equation*}

We first establish the following auxiliary result:
\begin{lemma}\label{app:stopping-value}
	Suppose Assumption \ref{HSDG} holds. Then, for every $\tau\in\mathcal T^0$ and $\xi\in L^2(\mathcal F_\tau;\overline D)$,
	\begin{equation*}
		\mathcal V(\tau,\xi)=V(\tau,\xi),\qquad P\text{-a.s.}
	\end{equation*}
\end{lemma}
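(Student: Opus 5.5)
We plan to go from deterministic initial data to random data in two stages: first simple random variables and stopping times, then general ones by approximation. The tools are the Lipschitz-in-$x$ regularity of Proposition \ref{bound} and Theorem \ref{continuity}, together with the flow and stability estimates of Lemmas \ref{pro RSDE} and \ref{pro GRSDE}.

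\textbf{Step 1 (deterministic $\tau=t$, simple $\xi$).} Take $\tau\equiv t$ deterministic and $\xi=\sum_{i}x_i\mathbf 1_{A_i}$, where $\{A_i\}$ is a finite $\mathcal F_t$-partition. Uniqueness for the RSDE and GBSDE gives $Y_t^{t,\xi;\theta,\gamma}=\sum_i\mathbf 1_{A_i}Y_t^{t,x_i;\theta,\gamma}$, and strategies and controls can be pasted along $\mathcal F_t$-measurable partitions. We then show that $\esssup_\theta$ and $\essinf_\mu$ commute with $\sum_i\mathbf 1_{A_i}(\cdot)$. Two facts drive this. First, the family $\{\esssup_\theta Y_t^{t,\xi;\theta,\mu(\theta)}\}_\mu$ is directed downward, since two strategies can be pasted on the set where one gives the smaller value. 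Second, the value does not depend on how $\theta,\mu$ are restricted to $[0,t)$. Following the argument of \cite{Li08sicon} and \cite{Qiu22AAP}, these give $\mathcal V(t,\xi)=\sum_i\mathbf 1_{A_i}V(t,x_i)=V(t,\xi)$. General $\xi\in L^2(\mathcal F_t;\overline D)$ follows by approximating with simple $\xi_n$, $|\xi-\xi_n|\le 1/n$. The estimate $|Y_t^{t,\xi;\theta,\gamma}-Y_t^{t,\xi_n;\theta,\gamma}|\le C|\xi-\xi_n|^{1/2}$ from Lemma \ref{pro GRSDE} holds uniformly in the controls, so it passes to $\mathcal V$. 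Proposition \ref{bound} gives the matching estimate for $V$.

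\textbf{Step 2 (simple stopping times).} If $\tau$ takes finitely many values $t_k$, set $B_k=\{\tau=t_k\}\in\mathcal F_{t_k}$. On each $B_k$ we restrict controls and strategies on $[\tau,T]$ to $[t_k,T]$, and conversely extend them from $[t_k,T]$ by pasting over $k$. Nonanticipativity with respect to stopping times (Definition \ref{strategy}) survives the pasting because the $B_k$ are $\mathcal F_\tau$-measurable. Step 1 then yields $\mathcal V(\tau,\xi)=\sum_k\mathbf 1_{B_k}V(t_k,\xi)=V(\tau,\xi)$.

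\textbf{Step 3 (general $\tau$).} This is the main obstacle. Let $\tau_n\downarrow\tau$ be dyadic stopping times. The right-hand side converges, $V(\tau_n,\xi)\to V(\tau,\xi)$, by the continuity of $V$ (Theorem \ref{continuity}). For the left-hand side we must show $|\mathcal V(\tau_n,\xi)-\mathcal V(\tau,\xi)|\to0$ uniformly in the controls. We would compare $Y^{\tau,\xi}$ with $Y^{\tau_n,\xi}$ using Proposition \ref{estimate GBSDE}, taking $s=\tau_n$, $\xi'=\xi$ and controls $(\theta,\gamma)$ concatenated as in $\oplus$. That estimate gives
\[
E\Bigl[\bigl|Y_{\tau_n}^{\tau,\xi}-Y_{\tau_n}^{\tau_n,\xi}\bigr|^2\Bigm|\mathcal F_\tau\Bigr]\le C|\tau_n-\tau|^{1/2},
\]
and the short-interval estimate $|Y_\tau-Y_{\tau_n}|\to0$ follows from Lemma \ref{pro RSDE}(iv) and the boundedness of $f$ and $g$.

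The delicate point is measurability. $\mathcal V(\tau_n,\xi)$ is $\mathcal F_{\tau_n}$-measurable, whereas the target is $\mathcal F_\tau$-measurable, so we cannot compare them directly. We will therefore work with the conditional expectations $E[\,\cdot\,|\mathcal F_\tau]$, using the backward semigroup $G_{\tau,\tau_n}$ and its Lipschitz property (Lemma \ref{function}(iii)) to transport the estimates. The strategies must be handled with care here: restricting a strategy on $[\tau,T]$ to $[\tau_n,T]$ and freezing the control on $[\tau,\tau_n]$ should cost only $C|\tau_n-\tau|^{1/4}$, uniformly in the controls. Letting $n\to\infty$ then concludes the proof.
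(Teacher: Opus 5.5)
Your proposal is correct and follows essentially the same route as the paper. It proceeds in three stages: deterministic time with simple and then general $\xi$; finitely-valued stopping times via the partition $\{\tau=t_k\}$; and general $\tau$ via dyadic $\tau_n\downarrow\tau$ with mesh $h_n$. The last stage combines the control-uniform short-time estimate $|Y_\tau^{\tau,\xi;\theta,\gamma}-E[Y_{\tau_n}^{\tau_n,\xi;\theta,\gamma}\mid\mathcal F_\tau]|\le Ch_n^{1/4}$, restriction/concatenation of strategies to obtain $|\mathcal V(\tau,\xi)-E[V(\tau_n,\xi)\mid\mathcal F_\tau]|\le Ch_n^{1/4}$, and continuity of $V$ with dominated convergence.

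The differences are cosmetic. The paper compares $Y^{\tau,\xi}_{\tau_n}$ with $Y^{\tau_n,\xi}_{\tau_n}$ through the flow property and Lemmas \ref{pro RSDE}(iv), \ref{pro GRSDE}(ii) rather than Proposition \ref{estimate GBSDE}. It also takes conditional expectations in the GBSDE directly instead of using $G_{\tau,\tau_n}$. One small correction: that step relies on $|Y|\le C$ and the $Z$-bound of Lemma \ref{pro GRSDE}(i), not on boundedness of $f$, which only has linear growth in $(y,z)$.
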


\begin{proof}
	First, fix a deterministic time $t\in[0,T]$. For $x\in\overline D$, we have $\mathcal V(t,x)=V(t,x)$. Thus, for a simple random variable $\xi=\sum_{i=1}^N x_i\mathbf1_{A_i}$, where $(A_i)_{i=1}^N$ is an $\mathcal F_t$-measurable partition and $x_i\in\overline D$,
	\begin{equation*}
		\mathcal V(t,\xi)=\sum_{i=1}^N\mathbf1_{A_i}\mathcal V(t,x_i)=\sum_{i=1}^N\mathbf1_{A_i}V(t,x_i)=V(t,\xi).
	\end{equation*}
	For a general $\xi\in L^2(\mathcal F_t;\overline D)$, the same identity follows by approximation with simple random variables, using Lemma \ref{pro GRSDE}(ii) and Proposition \ref{bound}.
	
	Next, suppose that $\tau$ takes values in $\{t_1,\ldots,t_N\}$. Fix $x_0\in\overline D$. Set $A_i:=\{\tau=t_i\}\in\mathcal F_{t_i}$ and define $\xi_i:=\xi\mathbf1_{A_i}+x_0\mathbf1_{A_i^c}$. Then $\xi_i\in L^2(\mathcal F_{t_i};\overline D)$ and
	\begin{equation*}
		\mathcal V(\tau,\xi)=\sum_{i=1}^N\mathbf1_{A_i}\mathcal V(t_i,\xi_i)=\sum_{i=1}^N\mathbf1_{A_i}V(t_i,\xi_i)=V(\tau,\xi).
	\end{equation*}
	
	Finally, for a general stopping time $\tau$ and $\xi\in L^2(\mathcal F_\tau;\overline D)$, we set $h_n:=T2^{-n}$ and $\tau_n:=h_n\lceil\tau/h_n\rceil\wedge T$. Then $\tau_n\downarrow\tau$ and $0\leq\tau_n-\tau\leq h_n$. Since $\mathcal F_\tau\subset\mathcal F_{\tau_n}$, the preceding case gives $\mathcal V(\tau_n,\xi)=V(\tau_n,\xi)$. The estimates in Lemma \ref{pro RSDE} and \ref{pro RSDE} also hold on stochastic intervals. Thus, taking conditional expectations in the GBSDE over $[\tau,\tau_n]$ and applying Lemmas \ref{pro RSDE}(i), (iv) and \ref{pro GRSDE}(i), (ii) gives, uniformly w.r.t the controls $\theta,\gamma$,
	\begingroup
	\allowdisplaybreaks[4]
	\begin{align}
		&\left|Y_\tau^{\tau,\xi;\theta,\gamma}-E\left[Y_{\tau_n}^{\tau_n,\xi;\theta,\gamma}\mid\mathcal F_\tau\right]\right|\nonumber\\
		&\leq\left|Y_\tau^{\tau,\xi;\theta,\gamma}-E\left[Y_{\tau_n}^{\tau,\xi;\theta,\gamma}\mid\mathcal F_\tau\right]\right|+E\left[\left|Y_{\tau_n}^{\tau,\xi;\theta,\gamma}-Y_{\tau_n}^{\tau_n,\xi;\theta,\gamma}\right|\mid\mathcal F_\tau\right]\nonumber\\
		&\leq E\left[\int_\tau^{\tau_n}\left|f\left(r,X_r^{\tau,\xi;\theta,\gamma},Y_r^{\tau,\xi;\theta,\gamma},Z_r^{\tau,\xi;\theta,\gamma},\theta_r,\gamma_r\right)\right|\dd r\mid\mathcal F_\tau\right]\nonumber\\
		&\quad+E\left[\int_\tau^{\tau_n}\left|g\left(r,X_r^{\tau,\xi;\theta,\gamma},Y_r^{\tau,\xi;\theta,\gamma},\theta_r,\gamma_r\right)\right|\dd K_r^{\tau,\xi;\theta,\gamma}\mid\mathcal F_\tau\right]\nonumber\\
		&\quad+E\left[\left|Y_{\tau_n}^{\tau_n,X_{\tau_n}^{\tau,\xi;\theta,\gamma};\theta,\gamma}-Y_{\tau_n}^{\tau_n,\xi;\theta,\gamma}\right|\mid\mathcal F_\tau\right]\nonumber\\
		&\leq CE\left[\int_\tau^{\tau_n}\left(1+|Y_r^{\tau,\xi;\theta,\gamma}|+|Z_r^{\tau,\xi;\theta,\gamma}|\right)\dd r\mid\mathcal F_\tau\right]\nonumber\\
		&\quad+CE\left[\left(1+\sup\limits_{\tau\leq r\leq\tau_n}|Y_r^{\tau,\xi;\theta,\gamma}|\right)K_{\tau_n}^{\tau,\xi;\theta,\gamma}\mid\mathcal F_\tau\right]+CE\left[|X_{\tau_n}^{\tau,\xi;\theta,\gamma}-\xi|^{\frac12}\mid\mathcal F_\tau\right]\nonumber\\
		&\leq C(h_n+h_n^{\frac12})+C\left(E\left[|X_{\tau_n}^{\tau,\xi;\theta,\gamma}-\xi|^2\mid\mathcal F_\tau\right]\right)^{\frac14}\nonumber\\
		&\leq C(h_n+h_n^{\frac12}+h_n^{\frac14})\leq Ch_n^{\frac14},\qquad P\text{-a.s.}\label{app:short-time-payoff}
	\end{align}
	\endgroup
	Let us prove that
	\begin{equation}\label{app:value-approximation}
		\left|\mathcal V(\tau,\xi)-E[V(\tau_n,\xi)\mid\mathcal F_\tau]\right|\leq Ch_n^{\frac14}.
	\end{equation}
	
	We fix arbitrarily $\varepsilon>0$. Using the definition of $\mathcal V(\tau_n,\xi)$ combined with standard arguments allows to choose $\nu_n^\varepsilon\in\mathcal M_{\tau_n,T}$ such that
	\begin{equation*}
		\esssup_{\vartheta\in\Theta_{\tau_n,T}}Y_{\tau_n}^{\tau_n,\xi;\vartheta,\nu_n^\varepsilon(\vartheta)}\leq\mathcal V(\tau_n,\xi)+\varepsilon=V(\tau_n,\xi)+\varepsilon,\qquad P\text{-a.s.}
	\end{equation*}
	We fix any $\gamma^0\in\Gamma_{\tau,\tau_n}$ and define
	\begin{equation*}
		\mu_n^\varepsilon(\theta):=\gamma^0\oplus\nu_n^\varepsilon(\theta|_{[\tau_n,T]}),\qquad\theta\in\Theta_{\tau,T}.
	\end{equation*}
	Then $\mu_n^\varepsilon\in\mathcal M_{\tau,T}$. Applying (\ref{app:short-time-payoff}), we obtain, for every $\theta\in\Theta_{\tau,T}$ and $\vartheta:=\theta|_{[\tau_n,T]}$,
	\begin{equation*}
		\begin{aligned}
			Y_\tau^{\tau,\xi;\theta,\mu_n^\varepsilon(\theta)}&\leq E\left[Y_{\tau_n}^{\tau_n,\xi;\vartheta,\nu_n^\varepsilon(\vartheta)}\mid\mathcal F_\tau\right]+Ch_n^{\frac14}\\
			&\leq E[V(\tau_n,\xi)\mid\mathcal F_\tau]+\varepsilon+Ch_n^{\frac14}.
		\end{aligned}
	\end{equation*}
	Thus, taking the essential supremum over $\theta$ yields
	\begin{equation*}
		\mathcal V(\tau,\xi)\leq E[V(\tau_n,\xi)\mid\mathcal F_\tau]+\varepsilon+Ch_n^{\frac14}.
	\end{equation*}
	For the lower bound, we fix any $\mu\in\mathcal M_{\tau,T}$ and $\theta^0\in\Theta_{\tau,\tau_n}$. The induced continuation strategy
	\begin{equation*}
		\mu_n(\vartheta):=\left.\mu(\theta^0\oplus\vartheta)\right|_{[\tau_n,T]},\qquad\vartheta\in\Theta_{\tau_n,T},
	\end{equation*}
	belongs to $\mathcal M_{\tau_n,T}$ by nonanticipativity. Observe that
	\begin{equation*}
		V(\tau_n,\xi)=\mathcal V(\tau_n,\xi)\leq\esssup_{\vartheta\in\Theta_{\tau_n,T}}Y_{\tau_n}^{\tau_n,\xi;\vartheta,\mu_n(\vartheta)},\qquad P\text{-a.s.}
	\end{equation*}
	Hence, for our given $\varepsilon>0$, standard arguments allow us to construct $\vartheta_n^\varepsilon\in\Theta_{\tau_n,T}$ such that
	\begin{equation*}
		Y_{\tau_n}^{\tau_n,\xi;\vartheta_n^\varepsilon,\mu_n(\vartheta_n^\varepsilon)}\geq\mathcal V(\tau_n,\xi)-\varepsilon=V(\tau_n,\xi)-\varepsilon,\qquad P\text{-a.s.}
	\end{equation*}
	Then, putting $\theta_n^\varepsilon:=\theta^0\oplus\vartheta_n^\varepsilon$, and applying (\ref{app:short-time-payoff}) we obtain
	\begin{equation*}
		\begin{aligned}
			\esssup_{\theta\in\Theta_{\tau,T}}Y_\tau^{\tau,\xi;\theta,\mu(\theta)}&\geq Y_\tau^{\tau,\xi;\theta_n^\varepsilon,\mu(\theta_n^\varepsilon)}
			\geq E\left[Y_{\tau_n}^{\tau_n,\xi;\vartheta_n^\varepsilon,\mu_n(\vartheta_n^\varepsilon)}\mid\mathcal F_\tau\right]-Ch_n^{\frac14}\\
			&\geq E[V(\tau_n,\xi)\mid\mathcal F_\tau]-\varepsilon-Ch_n^{\frac14},\qquad P\text{-a.s.}
		\end{aligned}
	\end{equation*}
	Consequently, taking the essential infimum over $\mu$ gives the lower bound. Letting $\varepsilon\downarrow0$ in both bounds proves (\ref{app:value-approximation}).
	
	By Proposition \ref{bound} and Theorem \ref{continuity}, $V$ is bounded and continuous. Moreover, $V(\tau,\xi)$ is $\mathcal F_\tau$-measurable. Hence, (\ref{app:value-approximation}) and dominated convergence give
	\begin{equation*}
		\begin{aligned}
			E\left[|\mathcal V(\tau,\xi)-V(\tau,\xi)|\right]&\leq Ch_n^{\frac14}+E\left[|V(\tau_n,\xi)-V(\tau,\xi)|\right]
			\rightarrow0,\quad n\rightarrow\infty.
		\end{aligned}
	\end{equation*}
	This proves the lemma.
\end{proof}

Now we are ready to give the following proof.
\begin{proof}[Proof of Theorem \ref{DPP}]
	We prove the assertion for $V$, that, for $U$ is analogous. We use the restriction and concatenation argument of \cite{Li08sicon}. Fix $\tau,\hat\tau\in\mathcal T^0$ with $\tau\leq\hat\tau$ and $\xi\in L^2(\mathcal F_\tau;\overline D)$. Then uniqueness for our GBSDE gives
	\begin{equation*}
		Y_\tau^{\tau,\xi;\theta,\gamma}=G_{\tau,\hat\tau}^{\tau,\xi;\theta,\gamma}[Y_{\hat\tau}^{\tau,\xi;\theta,\gamma}],\qquad Y_{\hat\tau}^{\tau,\xi;\theta,\gamma}=Y_{\hat\tau}^{\hat\tau,X_{\hat\tau}^{\tau,\xi;\theta,\gamma};\theta,\gamma}.
	\end{equation*}
	For $\eta,\eta'\in L^\infty(\mathcal F_{\hat\tau})$, the two GBSDEs defining $G_{\tau,\hat\tau}^{\tau,\xi;\theta,\gamma}[\eta]$ and $G_{\tau,\hat\tau}^{\tau,\xi;\theta,\gamma}[\eta']$ have the same increasing process $K$ defined by the RSDE (\ref{eq RSDE}). Thus, the standard linearization argument (see the proof of \cite[Theorem~1.4]{Pardoux and Zhang}), (\ref{*4}), and the conditional Cauchy--Schwarz inequality yield
	\begin{equation}\label{app:semigroup-stability}
		\left|G_{\tau,\hat\tau}^{\tau,\xi;\theta,\gamma}[\eta]-G_{\tau,\hat\tau}^{\tau,\xi;\theta,\gamma}[\eta']\right|\leq C_*\left(E[|\eta-\eta'|^2\mid\mathcal F_\tau]\right)^{\frac12},
	\end{equation}
	uniformly in the controls. Put
	\begin{equation*}
		\begin{aligned}
			I(\theta_1,\gamma_1)&:=G_{\tau,\hat\tau}^{\tau,\xi;\theta_1,\gamma_1}[V(\hat\tau,X_{\hat\tau}^{\tau,\xi;\theta_1,\gamma_1})],\ \theta_1\in\Theta_{\tau,\hat\tau},\  \gamma_1\in\Gamma_{\tau,\hat\tau},\\
			I_{\mu_1}&:=\esssup_{\theta_1\in\Theta_{\tau,\hat\tau}}I(\theta_1,\mu_1(\theta_1)),\qquad V_{\hat\tau}(\tau,\xi):=\essinf_{\mu_1\in\mathcal M_{\tau,\hat\tau}}I_{\mu_1}.
		\end{aligned}
	\end{equation*}
	
	We first prove $V_{\hat\tau}(\tau,\xi)\leq V(\tau,\xi)$. Fix $\mu\in\mathcal M_{\tau,T}$. For any $\theta_2\in\Theta_{\hat\tau,T}$, define
	\begin{equation*}
		\mu_1(\theta_1):=\left.\mu(\theta_1\oplus\theta_2)\right|_{[\tau,\hat\tau]},\qquad\theta_1\in\Theta_{\tau,\hat\tau}.
	\end{equation*}
	By nonanticipativity, $\mu_1\in\mathcal M_{\tau,\hat\tau}$ and its definition does not depend on the choice of $\theta_2$. Choose $(\theta_1^i)\subset\Theta_{\tau,\hat\tau}$ with $I_{\mu_1}=\sup_iI(\theta_1^i,\mu_1(\theta_1^i))$, $P$-a.s. For $\varepsilon>0$, put
	\begin{equation*}
		A_i:=\{I_{\mu_1}\leq I(\theta_1^i,\mu_1(\theta_1^i))+\varepsilon\}\setminus\bigcup_{j<i}A_j,\qquad\theta_1^\varepsilon:=\sum_i\mathbf1_{A_i}\theta_1^i.
	\end{equation*}
	Then, as $(A_i)$ is an $\mathcal F_\tau$-measurable partition, $\theta_1^\varepsilon\in\Theta_{\tau,\hat\tau}$, and we have
	\begin{equation}\label{app:first-selection}
		V_{\hat\tau}(\tau,\xi)\leq I_{\mu_1}\leq G_{\tau,\hat\tau}^{\tau,\xi;\theta_1^\varepsilon,\mu_1(\theta_1^\varepsilon)}[V(\hat\tau,X_{\hat\tau}^{\tau,\xi;\theta_1^\varepsilon,\mu_1(\theta_1^\varepsilon)})]+\varepsilon.
	\end{equation}
	For this $\theta_1^\varepsilon$, we define the continuation strategy
	\begin{equation*}
		\mu_2(\theta_2):=\left.\mu(\theta_1^\varepsilon\oplus\theta_2)\right|_{[\hat\tau,T]},\qquad\theta_2\in\Theta_{\hat\tau,T}.
	\end{equation*}
	Again, $\mu_2\in\mathcal M_{\hat\tau,T}$. Thanks to Lemma \ref{app:stopping-value}
	\begin{equation*}
		V(\hat\tau,X_{\hat\tau}^{\tau,\xi;\theta_1^\varepsilon,\mu_1(\theta_1^\varepsilon)})\leq\esssup_{\theta_2\in\Theta_{\hat\tau,T}}Y_{\hat\tau}^{\hat\tau,X_{\hat\tau}^{\tau,\xi;\theta_1^\varepsilon,\mu_1(\theta_1^\varepsilon)};\theta_2,\mu_2(\theta_2)}.
	\end{equation*}
	A similar construction on a countable $\mathcal F_{\hat\tau}$-measurable partition gives $\theta_2^\varepsilon\in\Theta_{\hat\tau,T}$ such that
	\begin{equation*}
		V(\hat\tau,X_{\hat\tau}^{\tau,\xi;\theta_1^\varepsilon,\mu_1(\theta_1^\varepsilon)})\leq Y_{\hat\tau}^{\hat\tau,X_{\hat\tau}^{\tau,\xi;\theta_1^\varepsilon,\mu_1(\theta_1^\varepsilon)};\theta_2^\varepsilon,\mu_2(\theta_2^\varepsilon)}+\varepsilon=Y_{\hat\tau}^{\tau,\xi;\theta^\varepsilon,\mu(\theta^\varepsilon)}+\varepsilon,
	\end{equation*}
	where $\theta^\varepsilon:=\theta_1^\varepsilon\oplus\theta_2^\varepsilon$. Here we have used that $\mu(\theta^\varepsilon)=\mu_1(\theta_1^\varepsilon)\oplus\mu_2(\theta_2^\varepsilon)$. By the comparison theorem, (\ref{app:semigroup-stability}), (\ref{app:first-selection}), and the flow property,
	\begin{equation*}
		\begin{aligned}
			V_{\hat\tau}(\tau,\xi)&\leq G_{\tau,\hat\tau}^{\tau,\xi;\theta^\varepsilon,\mu(\theta^\varepsilon)}[Y_{\hat\tau}^{\tau,\xi;\theta^\varepsilon,\mu(\theta^\varepsilon)}]+(1+C_*)\varepsilon\\
			&=Y_\tau^{\tau,\xi;\theta^\varepsilon,\mu(\theta^\varepsilon)}+(1+C_*)\varepsilon\\
			&\leq\esssup_{\theta\in\Theta_{\tau,T}}Y_\tau^{\tau,\xi;\theta,\mu(\theta)}+(1+C_*)\varepsilon.
		\end{aligned}
	\end{equation*}
	Hence, taking the essential infimum over $\mu$ and letting $\varepsilon\downarrow0$ we obtain the first inequality.
	
	We next prove $V(\tau,\xi)\leq V_{\hat\tau}(\tau,\xi)$. Analogously to above, by combining strategies on the sets of a suitable $\mathcal F_\tau$-measurable partition, we can choose $\mu_1^\varepsilon\in\mathcal M_{\tau,\hat\tau}$ such that, for every $\theta_1\in\Theta_{\tau,\hat\tau}$,
	\begin{equation}\label{app:second-selection}
		V_{\hat\tau}(\tau,\xi)\geq G_{\tau,\hat\tau}^{\tau,\xi;\theta_1,\mu_1^\varepsilon(\theta_1)}[V(\hat\tau,X_{\hat\tau}^{\tau,\xi;\theta_1,\mu_1^\varepsilon(\theta_1)})]-\varepsilon.
	\end{equation}
	We now take a Borel partition $(D_i)_{i\geq1}$ of $\overline D$, with $x_i\in D_i$ and $\operatorname{diam}(D_i)\leq\delta$, $i\geq 1$. For each $i$, Lemma \ref{app:stopping-value} and the same construction on a suitable $\mathcal F_{\hat\tau}$-measurable partition give a strategy $\nu_i^\varepsilon\in\mathcal M_{\hat\tau,T}$ satisfying
	\begin{equation*}
		\esssup_{\theta_2\in\Theta_{\hat\tau,T}}Y_{\hat\tau}^{\hat\tau,x_i;\theta_2,\nu_i^\varepsilon(\theta_2)}\leq V(\hat\tau,x_i)+\varepsilon.
	\end{equation*}
	For $\theta=\theta_1\oplus\theta_2\in\Theta_{\tau,T}$, we put $\zeta^{\theta_1}:=X_{\hat\tau}^{\tau,\xi;\theta_1,\mu_1^\varepsilon(\theta_1)}$ and define
	\begin{equation*}
		\mu^\varepsilon(\theta):=\mu_1^\varepsilon(\theta_1)\oplus\sum_i\mathbf1_{\{\zeta^{\theta_1}\in D_i\}}\nu_i^\varepsilon(\theta_2).
	\end{equation*}
	This strategy belongs to $\mathcal M_{\tau,T}$.
	We choose $\delta$ so that $2C\delta^{1/2}\leq\varepsilon$ for $C$ the constant appearing below. Standard spatial estimates and the flow property give, for every $\theta=\theta_1\oplus\theta_2$,
	\begin{equation*}
		\begin{aligned}
			Y_{\hat\tau}^{\tau,\xi;\theta,\mu^\varepsilon(\theta)}&\leq\sum_i\mathbf1_{\{\zeta^{\theta_1}\in D_i\}}\bigl(V(\hat\tau,x_i)+\varepsilon+C\delta^{\frac12}\bigr)\\
			&\leq V(\hat\tau,\zeta^{\theta_1})+2\varepsilon.
		\end{aligned}
	\end{equation*}
	As further up, the comparison theorem, (\ref{app:semigroup-stability}), and (\ref{app:second-selection}) yield
	\begin{equation*}
		V_{\hat\tau}(\tau,\xi)\geq Y_\tau^{\tau,\xi;\theta,\mu^\varepsilon(\theta)}-(1+2C_*)\varepsilon.
	\end{equation*}
	Taking the essential supremum over $\theta$ gives $V_{\hat\tau}(\tau,\xi)\geq V(\tau,\xi)-(1+2C_*)\varepsilon$. Finally, letting $\varepsilon\downarrow0$ completes the proof for $V$.
\end{proof}

\end{document}